\newcommand{\red}[1]{{
    #1}}
\colorlet{symbols}{black} 
\tikzset{
	dot/.style={circle,fill=symbols,draw=symbols,inner sep=0pt,minimum size=0.4pt},
	basic/.style={draw=symbols},
	>=stealth,
	}
\newtheorem{theorem}{Theorem}
\newtheorem{lemma}[theorem]{Lemma}
\newtheorem{proposition}[theorem]{Proposition}
\newtheorem{cor}[theorem]{Corollary}
\newtheorem{definition}[theorem]{Definition}
\newtheorem{remark}[theorem]{Remark}
\newcommand{\R}{{\mathbb R }}
\newcommand{\N}{{\mathbb N }}
\newcommand{\C}{{\mathbb C }}
\newcommand{\Z}{{\mathbb Z }}
\renewcommand{\Z}{{\mathbb Z}}
\newcommand{\step}{{\mathfrak S}}
\newcommand{\Mu}{M}
\DeclareMathOperator{\real}{\mathrm{Re}}
\DeclareMathOperator{\im}{\mathrm{Im}}
\DeclareMathOperator{\dist}{\mathrm{dist}}
\DeclareMathOperator{\trace}{\mathrm{tr}}
\newcommand{\newsection}[1]{\section{#1}\setcounter{theorem}{0}
 \setcounter{equation}{0}\par\noindent}
\renewcommand{\theequation}{\arabic{section}.\arabic{equation}}
\renewcommand{\thesubsection}{{\arabic{section}.\arabic{subsection}}}
\renewcommand{\thetheorem}{\arabic{section}.\arabic{theorem}}
\begin{document}
\title{Conserved energies for the cubic NLS in 1-d. }

\author{Herbert Koch}
\address{Mathematisches Institut   \\ Universit\"at Bonn }

\author{ Daniel Tataru}
\address {Department of Mathematics \\
  University of California, Berkeley}

\begin{abstract}
  We consider the cubic Nonlinear Schr\"odinger Equation (NLS) as well as the
  modified Korteweg-de Vries Equation (mKdV) in one space dimension.
  We prove that for each $s>-\frac12$ there exists a conserved energy
  which is equivalent to the $H^s$ norm of the solution.  For the
  Korteweg-de Vries Equation (KdV) there is a similar conserved energy for
  every $s\ge -1$. 


\end{abstract}
\maketitle

\maketitle 

\tableofcontents

\newsection{Introduction}
We consider the (de)focusing cubic Nonlinear Schr\"odinger  equation (NLS) 
\begin{equation}
i u_t + u_{xx} \pm 2 u |u|^2 = 0, \qquad u(0) = u_0,
\label{nls}\end{equation}
and the complex (de)focusing modified Korteweg-de Vries equation (mKdV) 
\begin{equation}
u_t + u_{xxx} \pm 2  (|u|^2u)_x  = 0, \qquad u(0) = u_0,
\label{mkdv}\end{equation}
with real or complex solutions in one space dimension, where the ``$+$'' sign corresponds
to the focusing problems.
The NLS equation \eqref{nls} 
is invariant with respect to the scaling
\[
u(x,t) \to \lambda u(\lambda x,\lambda^2 t),
\]
and mKdV \eqref{mkdv} is invariant with respect to 
\[
u(x,t) \to \lambda u(\lambda x,\lambda^3 t). 
\]
The initial datum for the two problems scales in the same way,
\begin{equation} \label{scaling}
u_0(x) \to \lambda u_0(\lambda x),
\end{equation}
 and so does the Sobolev space $\dot H^{-\frac12}$, which one may view as the
critical Sobolev space. 

The reason we consider the two flows simultaneously is that they are commuting 
Hamiltonian flows. They are in effect part of an infinite family of commuting 
Hamiltonian flows with respect to the symplectic form
\[
\omega(u,v) = \im \int u \bar v \ dx.
\]
Each of these Hamiltonians yield joint conservation laws for all of 
these flows. The first  several energies are as follows (see \eqref{Hj2}, Lemma \ref{lHj4} and Lemma \ref{lHj6} where formulas for the terms of degree 2,4, and 6 in $u$ are given):

\begin{equation} \label{eq:energies} 
\begin{split} 
H_0=&    \int |u|^2 dx, \\
 H_1=&   \frac1i \int u  \partial_x
              \bar u dx ,
\\ H_2=&  \int |u_x|^2 + |u|^4 dx, 
\\
H_3 = & i \int \ u_x \partial_x \overline{u}_x + 3  |u|^2 u \overline{u}_x dx,   
 \\
H_4 =&  \int  |u_{xx}|^2 + \frac32 |(u^2)_x|^2+   ||u|^2_x|^2   + 2 |u|^6 dx.
\end{split}
\end{equation}

The even ones are even with respect to complex conjugation and have a
positive definite principal part, and we will refer to them as
energies. The odd ones are odd under the replacement of $u$ by its
complex conjugate, and we will refer to them as momenta.  With respect
to the symplectic form above these commuting Hamiltonians generate
flows as follows: $H_0$ generates the phase shifts, $H_1$ generates
the group of translations, $H_2$ the NLS flow, $H_3$ the \red{m}KdV flow,
etc.

In this article we prove that we can extend this countable family of conservation laws
to a continuous family. For  all $s > -\frac12$ we construct conserved energies 
$E_s$ associated to $H^s$ solutions for our equations.  Our construction of these energies
relies heavily on the scattering transform associated to these problems, which
requires some extensive preliminaries. For the reader's benefit we will now state 
a preliminary form of our main result, which makes no reference to the scattering transform.
A more complete version will be presented at the end of the next section, after 
a substantial review of the scattering transform; see Theorem~\ref{t:main+}.

\begin{theorem}\label{energies} 
  There exists $\delta > 0$ so that for each $s > -\frac12$ and both for the focusing
  and defocusing case there exists an energy functional
\[
E_s :  \{ u \in H^s, \  \| u\|_{l^2 DU^2} \leq \delta \}   \to \R^+
\]
with the following properties:
\begin{enumerate}
\item $E_s$ is conserved along the NLS and mKdV flow.
\item  $E_s$ agrees with the linear $H^s$ energy up to quartic terms,   
\[
\left| E_s(u) - \|u\|_{H^s}^2 \right| \lesssim  \Vert u \Vert_{l^2 DU^2}^2 
\Vert u \Vert_{H^s}^2. 
\]
\item The    map 
\[
 \{ u \in H^\sigma, \  \| u\|_{l^2 DU^2} \leq \delta \}         \times  (-\frac12,\sigma] \ni (u,s) \to  E_s(u)  
\]
is  analytic in $u\in H^\sigma$,  analytic in $s$ for $s < \sigma$ and continuous in $s$ at $s = \sigma$.
\item The  functionals $E_s$ interpolate the energies $H_{2j}$ in the sense that  
\[ E_n(u) = \sum_{j=1}^n \binom{n}{j} H_{2j}(u). \label{identification} \] 
\end{enumerate}
\end{theorem}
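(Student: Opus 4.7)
The plan is to construct $E_s$ via a generating functional built from the scattering transform. In the small data regime $\|u\|_{l^2 DU^2}\le\delta$ the reflection coefficient $r(\xi;u)$ of the associated Zakharov--Shabat system is well defined and satisfies $|r|<1$ (for the defocusing sign; a small modification handles focusing). I would set
\[
E_s(u) \;:=\; -\frac{1}{\pi}\int_{\R} \log\bigl(1 - |r(\xi;u)|^2\bigr)\,(1+4\xi^{2})^{s}\, d\xi,
\]
and then verify the four properties in turn.

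Conservation (item 1) is immediate from the well-known fact that the NLS and mKdV flows act on $r$ by a pure phase $r(\xi,t)=e^{it\phi(\xi)}r(\xi,0)$, leaving $|r(\xi)|$ pointwise invariant; this is the standard linearizing effect of the scattering transform. The identification with $H_{2j}$ (item 4) I would obtain by expanding the integrand in two ways: $-\log(1-x)=\sum_{k\ge 1}x^{k}/k$ in the $|r|^2$ variable and $(1+4\xi^2)^n=\sum_{j=0}^n\binom{n}{j}(4\xi^2)^{j}$ in the weight. The $j$-th weight piece is the generating function for $H_{2j}$ under the classical trace formulas, so term-by-term matching produces exactly the binomial combination in \eqref{identification}. (This is the step where the choice of weight $(1+4\xi^{2})^{s}$ rather than $|\xi|^{2s}$ is dictated, since only the inhomogeneous weight produces binomial coefficients.)

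For item 2 and the analytic structure in item 3, I would use the Plancherel-type identity, established in the scattering-transform sections, that $\|u\|_{H^s}^{2}\sim \frac{1}{\pi}\int |r(\xi)|^{2}(1+4\xi^{2})^{s}\,d\xi$ uniformly on the small-data ball. Writing $-\log(1-x)=x+O(x^{2})$ gives
\[
E_s(u) - \frac{1}{\pi}\!\int |r|^{2}(1+4\xi^{2})^{s}\,d\xi \;=\; O\!\left(\int |r|^{4}(1+4\xi^{2})^{s}\,d\xi\right),
\]
and bounding the remainder by $\|r\|_{L^\infty}^{2}\cdot \int |r|^{2}(1+4\xi^2)^s d\xi\lesssim \|u\|_{l^2DU^2}^{2}\|u\|_{H^s}^{2}$ yields the quartic bound. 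Analyticity in $u$ follows from the analyticity of $u\mapsto r$ (iteration of the Volterra equation for the Jost solutions produces a convergent multilinear expansion in $u$), composed with the real-analytic map $r\mapsto -\log(1-|r|^2)$ on $|r|<1$. Analyticity in $s$ for $s<\sigma$ is transparent from the weight $(1+4\xi^{2})^{s}$, and continuity at $s=\sigma$ is a dominated-convergence argument.

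The main obstacle is the uniform Plancherel-type equivalence $\|u\|_{H^s}^{2}\sim \int|r|^{2}(1+4\xi^{2})^{s}\,d\xi$ over the full range $s>-\tfrac{1}{2}$. The scaling-critical endpoint $s=-\tfrac{1}{2}$ is exactly where this equivalence is expected to degenerate, so one needs the smallness in $l^2DU^{2}$ together with sharp bilinear/multilinear estimates on the scattering map to control the weight uniformly down to any $s>-\tfrac{1}{2}$. Once this equivalence is in hand, the rest of the theorem is essentially a combinatorial and analytic unpacking of the defining formula for $E_s$.
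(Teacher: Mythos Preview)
Your formula is, up to notation, the paper's heuristic definition \eqref{def}: since $|T|^2=1-|R|^2$ in the defocusing case, your integrand $-\log(1-|r|^2)$ is $-2\ln|T|$. The paper abandons this real-line formula precisely because it cannot be made rigorous at the regularity of the theorem. For real $\xi$ the Jost asymptotics, and hence $r(\xi)$, require integrability of $u$; for $u\in H^s$ with $s$ down to $-\tfrac12$ and no decay, $r(\xi)$ is simply not defined pointwise. The paper's central idea is to replace the real-axis integral by a contour integral over the positive imaginary half-line (Definitions~\ref{def-en}, \ref{def-en-f}), using that the \emph{transmission} coefficient $T(z)$ extends holomorphically to the upper half-plane and is controlled by $l^2DU^2$ type norms there (Corollaries~\ref{c:sum}, \ref{c:sum+}). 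Everything downstream---the multilinear expansion of $\ln T(i\tau/2)$, the $T_{2j}$ and $\tilde T_{2j}$ bounds in Sections~\ref{s:t2j}--\ref{s:tt2j-exp}, and the Hopf-algebra structure (Theorem~\ref{primitive}) needed to get the correct decay of $\tilde T_{2j}$---is built to make \emph{that} representation work. Your ``main obstacle'' (the Plancherel equivalence for $r$) is therefore not a technical hurdle but a symptom of working on the wrong contour: no such equivalence is available on the real line without $L^1$ control, and the paper never establishes one.

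The focusing case is a second genuine gap. There $|T|^2-|R|^2=1$, so $|r|>0$ with no bound by $1$, and $-\log(1-|r|^2)$ is not defined; ``small modification'' does not cover this. The paper handles focusing by again working on $i[1,\infty)$, where smallness in $l^2DU^2$ keeps $T$ pole-free (Corollary~\ref{c:sum}), and recovers the real-line picture only a posteriori via the trace formulas of Proposition~\ref{prop:trace}, which now include contributions from the poles of $T$. Finally, even granting your formula, the quartic bound is not as cheap as $\|r\|_{L^\infty}^2\lesssim\|u\|_{l^2DU^2}^2$: the paper needs the connected-integral structure of $\tilde T_{2j}$ and separate arguments near integer and half-integer $s$ (Section~\ref{s:proof}) to close the estimate uniformly.
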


The Banach space $l^2 DU^2 = L^2 +DU^2$ is the inhomogeneous version
of the $DU^2$ space, and is first introduced in Section \ref{s:t2j}
and in greater detail in Appendix \ref{a:uv}. It is a replacement for
the unusable scaling critical space $H^{-\frac12}$ and satisfies
\begin{equation}  
\Vert u \Vert_{l^2 DU^2} \lesssim \Vert u \Vert_{B^{-\frac12}_{2,1}} 
\lesssim \Vert u \Vert_{H^{s}}, \qquad s >-\frac12.
\end{equation} 

We note that  for half-integers
the energies $E_{k+\frac12}$ are not directly associated to the momenta.
For the sake of completeness, in Section~\ref{s:further} we discuss also
the construction of generalized momenta $P_s$, so that for half integers
$P_{k+\frac12}$ is a linear combination of the odd Hamiltonians 
$H_1,H_3,\cdots,H_{2k+1}$. 
 
To further clarify the assertions in the theorem, we note that for
simplicity we establish the energy conservation result for regular
initial data. By the local well-posedness theory, this extends to all
$H^s$ data above the (current) Sobolev local well-posedness threshold,
which is $s \geq 0$ for NLS, respectively $s \geq \frac14$ for
mKdV. If $s$ is below these thresholds, then the energy conservation
property holds for all data at the threshold, i.e. for $L^2$ data for
NLS, respectively $H^\frac14$ data for mKdV.  It is not known whether
the two problems are well-posed below these thresholds and above the
scaling; however, it is known that local uniformly continuous
dependence fails, see \cite{MR2376575}.
One key consequence of our result is that if the initial datum is in
$H^s$ then the solutions remain bounded in $H^s$ globally in
time in a uniform fashion:

\begin{cor}\label{c:large-data}
Let $s > -\frac12$, $R > 0$ and $u_0$ be an initial datum for either NLS or mKdV so that 
\[
\| u_0\|_{H^s} \leq R
\]
Then the corresponding solution $u$  satisfies the global bound
\[
\| u(t)\|_{H^s} \lesssim F(R,s) : = \left\{ \begin{array}{ll} R+ R^{1+2s} & s \geq 0 \cr
R+ R^{\frac{1+4s}{1+2s}} &  s < 0 \end{array} \right.
\]
\end{cor}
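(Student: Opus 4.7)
The plan is to combine the scaling symmetry \eqref{scaling} with the conserved energy $E_s$ of Theorem~\ref{energies}. Given $u_0$ with $\|u_0\|_{H^s}\leq R$, I would set $v_0(x):=\lambda u_0(\lambda x)$ for a suitable $\lambda\in(0,1]$. Since the NLS and mKdV flows commute with this rescaling (up to time rescaling by $\lambda^2$, respectively $\lambda^3$), any global $H^s$ bound on the solution $v$ launched from $v_0$ converts back to one on $u$ via the identity $u(x,t)=\lambda^{-1}v(x/\lambda,\cdot)$. The goal is to pick $\lambda$ so that $v_0$ lies in the $l^2 DU^2$-ball of radius $\delta$ where Theorem~\ref{energies} applies.

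A short Fourier-side computation gives $\|v_0\|_{H^s}^2\lesssim \lambda R^2$ for $s\geq 0$ (separating the $L^2$ and $\dot H^s$ pieces and using $\lambda^{1+2s}\leq\lambda$ for $\lambda\leq 1$), and $\|v_0\|_{H^s}^2\lesssim \lambda^{1+2s}R^2$ for $s\in(-\tfrac12,0)$ (using $(1+\lambda^2|\eta|^2)^s\leq \lambda^{2s}(1+|\eta|^2)^s$). Together with the embedding $\|\cdot\|_{l^2 DU^2}\lesssim_s \|\cdot\|_{H^s}$, this dictates the choice $\lambda\sim\min(1,\delta^2/R^2)$ in the first case and $\lambda\sim\min(1,(\delta/R)^{2/(1+2s)})$ in the second, ensuring $\|v_0\|_{l^2 DU^2}\leq\delta$. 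Theorem~\ref{energies} then yields $E_s(v(t))=E_s(v_0)$ together with the equivalence $|E_s(v)-\|v\|_{H^s}^2|\lesssim\|v\|_{l^2 DU^2}^2\|v\|_{H^s}^2$; a short bootstrap propagating $\|v(t)\|_{l^2 DU^2}\leq\delta$ via the embedding closes the argument and gives $\|v(t)\|_{H^s}\lesssim\|v_0\|_{H^s}$ for all $t$. Scaling back---and, when $s\geq 0$, also invoking the conservation $\|u(t)\|_{L^2}=\|u_0\|_{L^2}$ to control the low-frequency part---produces the stated bound $F(R,s)$.

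The main obstacle is the bootstrap on $\|v(t)\|_{l^2 DU^2}$: Theorem~\ref{energies} only guarantees conservation of $E_s$ on the small-$l^2 DU^2$ set, so one must verify that this smallness is preserved by the flow. This requires careful constant-tracking so that the embedding and the energy-norm equivalence chain together consistently; the argument closes provided $\delta$ is sufficiently small relative to the embedding constant. A secondary subtlety, particularly for $s$ close to $-\tfrac12$, arises in the scale-back step: recovering the sharp exponent $(1+4s)/(1+2s)$ in $F$ requires a frequency-localized refinement of the naive estimate $\|u\|_{H^s}^2\lesssim \lambda^{-1}\|v\|_{H^s}^2$, which one obtains by applying the rescaling argument to each dyadic piece of $u_0$ separately and summing.
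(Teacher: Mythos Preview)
Your approach matches the paper's exactly: rescale via \eqref{scaling} with the stated $\lambda$ to bring the data into the small-$l^2DU^2$ regime, apply Theorem~\ref{energies}, and scale back. The paper's own proof is a two-sentence sketch with precisely the same choices of $\lambda$; you supply the details it omits---the bootstrap propagating $\|v(t)\|_{l^2DU^2}\le\delta$ via the embedding $H^s\hookrightarrow l^2DU^2$, and for $s\ge 0$ the separate invocation of $L^2$-conservation to recover the $R$ term---and these additions are correct.

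One caution on the $s<0$ scale-back. Your proposed refinement, ``applying the rescaling argument to each dyadic piece of $u_0$ separately and summing,'' cannot work as written: the equation is nonlinear, so different dilations of different frequency pieces of the data do not evolve independently and cannot be recombined. The honest chain
\[
\|u(t)\|_{H^s}^2 \le \lambda^{-1}\|v(t)\|_{H^s}^2 \lesssim \lambda^{-1}\|v_0\|_{H^s}^2 \le \lambda^{2s}R^2
\]
produces only the exponent $1/(1+2s)$, not the stated $(1+4s)/(1+2s)$. The paper's terse sketch does not resolve this discrepancy either. A genuine sharpening along the lines you have in mind would require not a decomposition of the initial data but rather a single conserved energy with an \emph{adapted weight} such as $(1+\lambda^2\xi^2)^s$---this is exactly what the frequency-envelope and generalized-energy machinery of Section~\ref{s:proof} provides, and is the correct substitute for your per-dyadic idea.
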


This follows directly from the above theorem  if $R \ll 1$. For larger 
$R$  it still follows from the theorem, but only after applying the scaling 
\eqref{scaling}. Here one needs to make the choice   $\lambda = cR^{-2}$, with $c \ll 1$ if 
$s \geq 0$, respectively $\lambda = cR^{-\frac{2}{1+2s}}$ if $s < 0$.

This work is a natural continuation of earlier work of the
authors~\cite{MR2995102}, \cite{MR2353092} and of Christ, Colliander
and Tao~\cite{MR2376575}, where apriori $H^s$ bounds for NLS are
obtained for a restricted range $-\frac14 \leq s < 0$, without
explicit use of integrable structures, but also without providing
conserved energies.  The same ideas were later implemented for mKdV by
the second author together with Christ and Holmer ~\cite{MR3058496}
for $s > - \frac18$, as well as for KdV by Liu ~\cite{MR3292346} for
$s > - \frac45$.  Together with Buckmaster the first author has proven
uniform in time $H^{-1}$ bounds for the KdV equation
\cite{MR3400442} using the Miura map but no inverse scattering
techniques.  

In contrast, Theorem~\ref{energies} relies on inverse
scattering methods via the AKNS formalism, see for instance \cite{MR0450815,
  MR1462745}. Likely our ideas here also extend to other completely
integrable systems. As an illustration of this, in Section~\ref{s:kdv} 
we state the similar result for the KdV problem; we also outline its proof,
which conveniently uses exactly the same algebraic structure as the 
NLS/mKdV equations.

  In the periodic case Kappeler, Schaad  and Topalov 
\cite{MR2454610, MR2267286}    and
Kappeler and Grebert \cite{MR3203027} have proved related results for the KdV equation, for mKdV and for the defocusing
NLS relying on complex algebraic geometry. Also in the KdV case, 
there is independent work of Killip-Visan-Zhang~\cite{KVZ} in this direction in the range $-1 \leq s < 1$,
both  in the periodic case and on the real line.

The structure of the paper is as
follows. Sections~\ref{inverse}-\ref{s:proof} contain the proof of our
main result, beginning with an outline of the scattering transform in
Section~\ref{inverse}, which concludes with a more complete form of
our main result, see Theorem~\ref{t:main+}.  This is followed by an
analysis of the terms in the formal series for the conserved energies
in the subsequent sections, and a final summation argument in
Section~\ref{s:proof}.  It also contains a further discussion of our
main results, touching on issues such as conserved momenta and frequency
envelope bounds.  Section~\ref{s:further} provides detailed formulas
for Taylor expansion of $E_s$ of degree $4$ for all $s$, and of degree
$6$ in $u$ for the $H_j$.  Section~\ref{s:kdv} discusses the
corresponding results for the KdV equation. All these equations are
closely related. They are part of hierarchies of integrable equations
with a Lax operator as essential building block. For defocusing NLS
and modified KdV the Lax operator is
\[  \mathcal{L}= i\left( \begin{matrix} \partial_x & -u \\ \bar u & -\partial_x \end{matrix} \right), \]   
in the focusing case it is 
\[  \mathcal{L}= i\left( \begin{matrix} \partial_x & -u \\ -\bar u & -\partial_x \end{matrix} \right), \]   
and for the KdV equation we have 
\[  \mathcal{L}= i\left( \begin{matrix} \partial_x & u \\ 1 & -\partial_x \end{matrix} \right).  \]

There are also three appendices to the paper, all self-contained,
establishing some results which are useful here but may also be of
independent interest.   The first appendix is devoted to a Hopf algebra structure
arising in connection with the multilinear integrals appearing in the
expansion for our conserved energies. We need it for the proof of
Theorem \ref{primitive}.  This drastically simplifies the analysis for
large $s$. The connection of Theorem \ref{primitive} to Hopf algebras
was pointed out to us by Martin Hairer, which we gratefully
acknowledge.  The second appendix is concerned with $U^p$ and
$V^p$ spaces and their properties, which are heavily used in all our
estimates. The third appendix collects results on integrable hierarchies.

\newsection{An outline of the construction}
\label{inverse}

\subsection{ An overview of the scattering transform}
Here we recall some basic facts about the inverse scattering transform
for NLS and mKdV, which can be found in many places and in particular in Faddeev and Takhtajan \cite{MR905674} .  Both the NLS evolution \eqref{nls} and the mKdV
evolution \eqref{mkdv} are completely integrable, so we have at our
disposal the inverse scattering transform conjugating the nonlinear
flow to the corresponding linear flow.  To describe their Lax pairs
 we consider the system
\begin{equation}\begin{split}
\psi_x = &\left(\begin{matrix} -iz & u \\ \bar u  &iz \end{matrix} 
\right) \psi   \\
\psi_t = & i\left(\begin{matrix}-[2z^2 +|u|^2] & -2izu+u_x \\
-2 i z\bar u - \bar u_x  & 2z^2 +|u|^2\end{matrix}\right)\psi   
\end{split} \label{laxpairnls}
\end{equation}
 where $z$ is a complex parameter. The defocusing
NLS equation arises as compatibility condition for the system
\eqref{laxpairnls}: For fixed $z$ there exist two unique solutions
$\psi_1$, $\psi_2$ to \eqref{laxpairnls} with $\psi_1(0,0)=(1,0)$ and
$\psi_2(0,0)=(0,1)$ if and only if $u$ satisfies the NLS  equation.  The above is  often  referred 
to in the literature as the Lax pair for NLS.

If instead we want the canonical form $\mathcal L,\mathcal P$ with 
\[
\mathcal L_t = [\mathcal P,\mathcal L]
\]
then we should view the first equation above as $\mathcal L \psi = z \psi$
where
\[
\mathcal L = i \left(\begin{matrix} \partial_x & - u \\ \bar u  & - \partial_x
 \end{matrix} 
\right) 
\]
and $\mathcal P$ is given by the second matrix in \eqref{laxpairnls} where $z$ has been eliminated 
using the relations $L \psi = z \psi$,
\[
\begin{split}
\mathcal P = & \ 2i \left(\begin{matrix}- 1 &  0 \\
0 & 1 \end{matrix}\right) \mathcal L^2 +
2 \left(\begin{matrix} 0 & u  \\
\bar u   & 0 \end{matrix}\right) \mathcal L + 
i \left(\begin{matrix}-|u|^2 & u_x \\
-\bar u_x  & |u|^2\end{matrix}\right)
\\[1mm]
= & \ - 2i \left(\begin{matrix} -\partial_x^2 + |u|^2  &   u_x \\ - \bar u_x & \partial_x^2 - |u|^2   \end{matrix}\right) + 2 i
\left(\begin{matrix} |u|^2  & -u \partial_x  \\
\bar u \partial_x   & - |u|^2 \end{matrix}\right)  + 
i \left(\begin{matrix}-|u|^2 & u_x \\
-\bar u_x  & |u|^2\end{matrix}\right)
\\[1mm]
= & \ 
i \left(\begin{matrix} 2 \partial_x^2 - |u|^2  &  -  u \partial_x -   \partial_x u \\[1mm]  \bar u \partial_x + \partial_x \bar u & -2 \partial_x^2 + |u|^2   \end{matrix}\right)
\end{split}
\]
It is interesting to note that this operator is formally skew adjoint and it generates a unitary evolution operator $U(t)$ so that 
\[  \mathcal{L}(u(t)) =  U(t) \mathcal{L}(u(0)) U^*(t). \] 
This is equivalent to the pair of Kappeler and Grebert \cite{MR3203027}. 
The same applies for the defocusing mKdV problem with respect to the system
\begin{equation}\begin{split}
\psi_x = &\left(\begin{matrix} -iz & u \\ \bar u  &iz \end{matrix} 
\right) \psi  \\
\psi_t = & i\left(\begin{matrix}-[4z^3 +2z  |u|^2]+ i(u_x \bar u -u \bar u_x) & 
-4iz^2u +2zu_x + i( u_{xx}-2|u|^2u) \\[1mm]
-4iz^2 \bar u -2z\bar u_x + i(\bar u_{xx} - 2|u|^2\bar u)  & 4z^3 +2z|u|^2-i( u_x \bar u -u \bar u_x)    \end{matrix}\right)\psi  
\end{split} \label{laxpairmkdv}
\end{equation}
The operator $\mathcal P$ of the corresponding Lax pair can also be easily computed 
using the same algorithm as above.

The Lax pairs associated to the focusing equations are similar up to
some sign twists resp. the replacement of $\bar u$ by $-\bar
u$. Precisely, for the focusing NLS problem we have the system
\begin{equation}\begin{split}
\psi_x = &\left(\begin{matrix} -iz & u \\ - \bar u  &iz \end{matrix} 
\right) \psi   \\[2mm]
\psi_t = & i\left(\begin{matrix}-[2z^2 - |u|^2] & -2izu+ u_x \\
2iz\bar u + \bar u_x  & 2z^2 - |u|^2\end{matrix}\right)\psi   
\end{split} \label{laxpairnls-f}
\end{equation}
while for the focusing mKdV we have the related system
\begin{equation}\begin{split}
\psi_x = &\left(\begin{matrix} -iz & u \\ - \bar u  &iz \end{matrix} 
\right) \psi  \\
\psi_t = & i\left(\begin{matrix}-[4z^3 -2z |u|^2]-i(u_x\bar u  - u \bar u_x)    & -4i z^2 u +2zu_x+ i(u_{xx}+2|u|^2u)  \\[1mm]
 4i z^2 \bar u +2z \bar u_x+ i(\bar u_{xx} +2|u|^2)\bar u  & 4z^3 -2z|u|^2+ i(u_x\bar u  - u \bar u_x)     \end{matrix}\right)\psi  
\end{split} \label{laxpairmkdv-f}
\end{equation}
Much of this formalism here and below can be found in the seminal paper by Ablowitz, Kaup, Newell and Segur \cite{MR0450815}.  We collect the calculations leading to the Lax pairs above in Appendix \ref{appNLS}.  
The scattering transform associated to both the defocusing NLS and the
defocusing mKdV is defined via the first equation of
\eqref{laxpairnls} resp. \eqref{laxpairmkdv} which we write as linear
system
\begin{equation}\label{scatter}
\left\{ 
\begin{array}{l}
\dfrac{d\psi_1}{dx} = -i z \psi_1 + u \psi_2 
\\[5mm]
\dfrac{d\psi_2}{dx} =  i z \psi_2 + \bar u \psi_1
\end{array}
\right.
\end{equation}
The last $+\bar u $ turns into $-\bar u $ for the focusing case.
In the defocusing case, the scattering data for this problem is
obtained for $z=\xi$, real, by considering the relation between the
asymptotics for $\psi$ at $\pm \infty$.  Precisely, one considers the
Jost solutions $\psi_l$ and $\psi_r$ with asymptotics
\[
\psi_l (\xi,x,t) = \left( \begin{array}{c}    e^{-i\xi x}  \cr 
0 \end{array}\right) + o(1) \ \ \ \text{ as $x \to -\infty$},
\quad \psi_l (\xi,x,t) = \left( \begin{array}{c}   T^{-1}(\xi)  e^{-i\xi x}  \cr 
R(\xi) T^{-1}(\xi)  e^{i\xi x} \end{array}\right) + o(1) \ \ \ \text{ as $x \to \infty$},
\] 
respectively
\[
\psi_r (\xi,x,t) = \left( \begin{array}{c}  L(\xi)  T^{-1}(\xi)  e^{-i\xi x}  \cr 
T^{-1}(\xi)  e^{i\xi x} \end{array}\right) + o(1) \ \ \ \text{ as $x \to -\infty$},
\quad \psi_l (\xi,x,t) = \left( \begin{array}{c}   0  \cr 
e^{i\xi x} \end{array}\right) + o(1) \ \ \ \text{ as $x \to \infty$}.
\] 
These are viewed as initial value problems with datum at $-\infty$,
respectively $+\infty$ \red{and are easy to define for Schwartz functions $u$}.

 We note that the $T$'s in the two solutions
$\psi_l$ and $\psi_r$ are the same since the Wronskian of the two
solutions is constant:
\[ \det ( \psi_l ,\psi_r) \to T^{-1}(\xi) \qquad \text{ for } x \to \pm \infty. \] 
The quantity $|\psi_1|^2 - |\psi_2|^2$ is also conserved,
which shows that on the real line we have
\[
|T| \leq 1, \qquad |T|^2 = 1- |R|^2 = 1-|L|^2  
\]
 Further, we have the symmetry $(\psi_1,\psi_2) \to (\bar
\psi_2,\bar \psi_1)$ which via the Wronskian leads to
\[
L \bar T = - \bar R T
\]

More generally for any $z$ in the \red{open}  upper half plane there exist the Jost 
solutions 
\[
\psi_l (z,x,t) = \left( \begin{array}{c}    e^{-iz x}  \cr 
0 \end{array}\right)+ o(1) e^{\im z x}  \ \ \ \text{ as $x \to -\infty$},
\] 
\[ 
 \psi_l (z,x,t) = \left( \begin{array}{c}   T^{-1}(z)  e^{-iz x}  \cr 
0 \end{array}\right) + o(1)e^{\im zx}  \ \ \ \text{ as $x \to \infty$},
\] 
This provides a holomorphic extension of $T^{-1}$ to the upper half-space.
In the defocusing case it also provides a holomorphic extension of $T$ to the upper half space
with $|T| \leq 1$.  To see that this extension is holomorphic, one needs to show that there are no
$z$ for which $T^{-1}(z) = 0$.  Indeed, a straightforward ODE analysis
shows that such solutions $\psi$ would have to decay exponentially at
both ends,
\[
 \psi= \left( \begin{matrix}  e^{-izx} \\ 0 \end{matrix} \right) (1+ o(1)) 
\text{ as } x \to -\infty,          \qquad   
  \psi= \left( \begin{matrix}  0 \\ e^{izx}  \end{matrix} \right) (d+ o(1))  \text{ as } x\to \infty  
\] 
Then one can view $z$ and $\psi$ as an eigenvalue/eigenfunction for the problem
\[
\mathcal{L} \psi = z \psi, \qquad \mathcal{L} = \left( \begin{array}{cc}   i \partial_x & -i u  \cr 
i \bar u &  -i \partial_x
 \end{array}\right) 
\]
In the defocusing case $\mathcal{L}$ is self-adjoint, so no such
eigenfunctions exist outside the continuous spectrum $\R$ of
$\mathcal{L}$.  
To see that this extension satisfies $|T| \leq 1$ we begin with the simpler case $u \in L^1$,
when one can easily show that in the upper half space we have
\[
\lim_{|z| \to \infty} T(z) = 1 
\]
and then the property  $|T| \leq 1$ on the real line extends to the upper half-space by the maximum 
principle resp. a version of Liouville's theorem. For more general $u \in l^2 DU^2$ this property extends by density.

In the previous discussion we neglected the time dependence. The evolution of 
$T$, $L$ and $R$ can by obtained  as a  consequence of the existence of the Lax pair by considering the $t$ depends near $\pm \infty$. It follows that
as $u$ evolves along the \eqref{nls} flow, the functions $L,R,T$
evolve according to
\[
T_t = 0, \qquad L_t = - 4i \xi^2 L, \qquad R_t = 4i\xi^2 R 
\]
and if $u$ evolves according to \eqref{mkdv}  
\[
T_t = 0, \qquad L_t = - 8i \xi^3 L, \qquad R_t = 8i\xi^3 R.
\]

The scattering map for $u$ is given by 
\[
u \to R,
 \]
and the map $u \to R$ conjugates the NLS flow \eqref{nls} to the
(Fourier transform of) the linear Schroedinger equation, and
simultaneously the mKdV flow to the linear Airy flow. Reconstructing
$u$ from $R$ requires solving a Riemann-Hilbert problem, see
\cite{MR1207209} for this approach for the  mKdV
equation.

A key difference between real and nonreal $z$ is that for real $\xi$,
one essentially needs $u\in L^1$ in order to define the scattering
data $L(\xi)$ and $T(\xi)$ in a pointwise fashion. This restricts the
use of the inverse scattering transform to localized, rather than
$L^2$ data. On the other hand, for $z$ in the open upper half space it
suffices to have some $L^2$ type bound on $u$ in order to define
$T(z)$.  We greatly exploit this property, as our fractional conserved
energies are all defined in terms of only the values of $T$ away from
the real axis.

The situation is more complicated in the focusing case, due to the
fact that $T$ is no longer holomorphic in the upper half space, which
is closely related to the fact that the corresponding Lax operator
$\mathcal L$ is no longer self-adjoint. 
Now the quantity $|\psi_1|^2 +|\psi_2|^2$ is conserved, which shows that
\[ 
|T|^2 - |R|^2 = 1 
\]
on the real line. The function $T^{-1}$ still has a holomorphic extension to the 
upper half-plane, so as above we get the relation $|T| \geq 1$ in the upper half-plane.
But now  $T$ may have poles in the upper half-space, which 
 correspond to nonreal eigenvalues of $\mathcal L$.  Arguing either by the 
holomorphy of $T^{-1}$ or by standard Fredholm theory, the poles of $T$ 
must be isolated in the open upper half space, though they can accumulate 
on the real line. At this point the understanding of the spectrum of the
AKNS operator is limited.  Zhou \cite{MR1008796} has
constructed an example of a Schwartz potential with infinitely many
eigenvalues, showing that the situation is much more complex than for Schr\"odinger operators.

For a datum $u$ for which $T$ is holomorphic in the upper half-space, the
analysis is similar to the defocusing case. If instead $T$ is merely
meromorphic, then the scattering data involves not only the function
$R$ on the real line, but also at least the singular part of the
Laurent series of $T$ at the poles.  However, this still does not fully
describe the problem, as by the results of Zhou \cite{MR1008796},  $T$ may have poles in
the upper half space accumulating at the real axis even for Schwartz
functions $u$.

There is, however, one redeeming feature: All such poles are localized
in a strip near the real axis if $u \in L^2$, and more generally in a
polynomial neighbourhood of the real line
\[
\{z:0 \leq \im z \lesssim_{\|u\|_{H^s}}
(1+|\real z|)^{-2s}\}
\]
  if $u \in H^s$ with $-1/2 < s < 0$.
In the limiting case $s = -1/2$, smallness of $u$ in $l^2 DU^2$
guarantees the localization of the poles in   
\[
\{0 \leq \im z \ll
(1+|\real z|)
\}.
\]
  This is what allows us to still construct the fractional Sobolev
conservation laws even in the focusing case.

\subsection{The transmission coefficient in the upper 
half-plane and conservation laws} 

Our construction of fractional Sobolev conserved quantities relies essentially on the 
fact that the transmission coefficient $T$ is preserved along both the NLS and mKdV 
flows. In principle this gives us immediate access to infinitely many conservation
laws, but the question is whether one can relate (some of) them nicely to the standard scale 
of Sobolev spaces.  In particular, for any function  $\eta : \R\to \R$  the expression
\[
- \frac1\pi \int_{\R}  \eta(\xi) \ln |T(-\xi/2)|\,   d\xi 
\]
is formally conserved. It is useful to consider at first special 
functions $\eta$.   In the defocusing case, for instance, trace
formulas (which basically involve a change of  contour of integration) show that the (real) conserved quantities, which we may  take  as definition of $H_k$ in the defocusing case
\begin{equation}  
H_k=-\frac{(-1)^k}\pi  \int_{\R}  \xi^k  \ln |T(-\xi/2)| \, d\xi = -\frac1\pi \int_{\R} \xi^k \real \ln  T(\xi/2)\, d\xi \label{energy}   
\end{equation} 
can be explicitly expressed in terms of $u$ and its derivatives if $k$ is an nonnegative integer. 

More precisely, if $u$ is a Schwartz function then $\ln |T|$ is a Schwartz
function on the real line, and has a  Taylor expansion at $\infty$ 
\begin{equation}\label{expand} 
\ln T(z) \approx  -i   \sum_{j=0}^{\infty}  H_{j}(2z)^{-j-1}
\end{equation}
and the coefficients can be recursively expressed in terms of $u$ and its derivatives, see \cite{MR905674}.  This provides easy access to the
conserved energies \eqref{eq:energies}. \red{We refer to \cite{MR905674} for this expansion. It will also be a byproduct of our constructions below. The $H_k$ are the Hamiltonians of the KdV hierarchy. The leading order term of $H_{2k}$ is $\int |\partial_x^k u|^2 dx$.} 

  In the focusing case the expansion \eqref{expand} is used with a sign twist 
\begin{equation}\label{expand-f} 
\ln T(z) \approx  i   \sum_{j=0}^{\infty}  H_{j}(2z)^{-j-1}
\end{equation}
to define the conserved energies $H_{j}$.   However, $\ln T$ may have poles in the upper half plane, and the right hand side  in the
formula \eqref{energy} above has to be modified to account for the residues at the
poles. Precisely, if the poles of $T$ in the upper half plane are located at $z_j$ with multiplicities $m_j$
then the counterpart of the relation \eqref{energy} is
\begin{equation}  
H_k=  \frac1\pi \int_{\R}  \xi^k \real  \ln T(\xi/2)d\xi +   \sum_j  \frac{1}{k+1} m_j \im (2z_j)^{k+1}.  
\label{energy-f}   
\end{equation} 
This is clear if $T$ has finitely many poles away from the real line, but can also be justified in general by 
interpreting the trace of $-\ln|T|$ on the real line as a nonnegative measure.
Given the above discussion, a natural candidate for a fractional Sobolev conservation law 
would be obtained by choosing any (real) function $\eta$ so that 
\[
\eta(\xi) \approx (1 + \xi^2)^s
\]
However, there are two issues with such a general choice. First,
it is quite difficult to get precise estimates for $\log |T|$ on the
real line without assuming any integrability condition on
$u$. Secondly, in the focusing case such a choice would still miss the
poles of the transmission coefficient.

To remedy both of these issues, it is natural to use much more precise real weights which 
have a holomorphic extension at least in a strip around the real line. Our choice will be 
to use the weights 
\[
\eta_s (\xi) = (1 + \xi^2)^s, \qquad s > -\frac12
\]
which we extend as holomorphic functions to the subdomain $D = U \setminus i[1,\infty)$ 
of the upper half-space $U$.  Thus in the defocusing case we take
\begin{equation}\label{def}
E_s (u) =  -  \frac1{\pi}  \int_\R   (1 + \xi^2)^s \ln|T(\xi/2)|   d\xi, 
\end{equation} 
where for simplicity we changed the sign in the argument of $T$, and 
rewrite it as the real part of a contour integral
\[
E_s (u) =  -\frac1\pi \real \int_\R   (1 + \xi^2)^s \ln T(\xi/2) \,   d\xi 
\] 
and then switch the integration contour to the double half-line $\gamma$
$ [i,i\infty)$. The curve $\gamma$ goes from $i \infty -0$ to $ i-0$, then to $i+0$ and to $i \infty +0$. We extend $(1+\xi^2)^s$ as $(1+z^2)^s$ to the upper half plane minus the half line from $i$ to $ i \infty$. For holomorphic functions $g$  with enough decay we obtain 
\begin{equation} \label{contour} \int_{\gamma} (1+z^2)^s g(z) dz = 2\sin(\pi s) \int_1^\infty  (\tau^2-1)^s  g(i \tau ) d\tau,  \end{equation} 
a formula which we will use repeatedly.  For small $s$ this gives directly 
\begin{equation}\label{def+}
 E_s(u) =  \frac{2\sin(\pi s)}{\pi}   \bigintsss_1^\infty (\tau^2-1)^s   
\real  \ln T(i\tau/2)   dt, \qquad -\frac12 < s < 0.
\end{equation}  
For larger $s$ the above integral will diverge, and to remedy that we
need to remove the appropriate number of terms in the asymptotic
expansion of $\ln T$ at infinity. Precisely, in the range
\[
 s < N+1,  \qquad  u \in H^{\max\{N,s\}}   
\]
we obtain
\begin{equation}\label{def++} 
 E_s(u) =  \frac{2\sin(\pi s)}\pi   \bigintsss_1^\infty (\tau^2-1)^s   \left(\real  \ln T(i\tau/2)
+   \sum_{j=0}^N (-1)^j H_{2j}  \  \tau^{-2j-1} \right) d\tau  + \sum_{j=0}^N \binom{s}{j}  H_{2j}, 
\end{equation}     
where all terms turn out to be well-defined, and which is independent
of the choice of $N$ since for $j> s$ we can undo the change of the
contour in the integral (compare with \eqref{contour} with $g(z)=z^{-2j-1}$ and $ig(it) = (-1)^j t^{-2j-1} $ :
\begin{equation} \label{changeofcontour}  
-\frac{2\sin(\pi s)}{\pi}  (-1)^j   \int_1^\infty (t^2-1)^s t^{-2j-1} dt = 
  \frac{i} \pi   \int_{-\infty+i0}^{\infty+i0}  (1+z^2)^s z^{-2j-1} dz =   \binom{s}{j}. 
\end{equation}                              
For the last equality  we used the Taylor expansion 
\[ (1+ z^2)^s z^{-2j-1} =  \sum_{k=0}^\infty \binom{s}{k} z^{2k-2j-1}. \] 
For definiteness we will almost always choose $  N \le s < N+1$. 
Thus we set

\begin{definition}[Energies in the defocusing case]\label{def-en}
Let $s > -\frac12$  and $u \in H^s$. Then

a) For $-\frac12 < s < 0$, the energy $E_s$ is defined by \eqref{def+}.

b) For $N \geq 0$ and $  N \le s < N+1$, the  energy $E_s$ is defined by \eqref{def++}.
\end{definition}

If $u$ is Schwartz  then the two definitions \eqref{def}, respectively \eqref{def+}, \eqref{def++} are
equivalent. However, the expressions \eqref{def+}, \eqref{def++} are
much more robust, and, as we shall see, are defined directly as 
convergent integrals for $u \in H^s$. 

Consider now the focusing case. The above discussion still applies provided that there are 
no poles for $T$ in the upper half-plane. However, if there are poles then 
the definitions \eqref{def}, respectively \eqref{def+}, \eqref{def++} are no longer equivalent.
Instead we will use  \eqref{def+}, \eqref{def++} directly to define the energies:

\begin{definition}[Energies in the focusing case] \label{def-en-f}
Let $s > -\frac12$  and $u \in H^s$. Then

a) For $-\frac12 < s < 0$, the energy $E_s$ is defined by \eqref{def+}, but with $\ln T$ replaced by $-\ln T$. 

b) For $N \geq 0$ and $  N \le s < N+1$, the  energy $E_s$ is defined by \eqref{def++},
but with $\ln T$ replaced by $-\ln T$. 
\end{definition}

Here we allow for $T$ to have (finitely many) poles on the half-line $i[1/2,\infty)$,  
We also note the  role played by the smallness condition  for $u$ in $l^2 DU^2$,
which is present in Theorem~\ref{energies}. This guarantees that $T$ has a convergent 
multilinear expansion on the  half-line $i[1/2,\infty)$, and in particular  has no poles 
there.

We now summarize the above discussion concerning the relation between 
\eqref{def}, respectively \eqref{def+}, \eqref{def++}.
In the upper half-space we define the function
\begin{equation}\label{Xi-def}
 \Xi_s(z) = \im \int_0^z (1+\zeta^2)^{s} d\zeta, 
\end{equation}
which does not depend on the path of integration. Then we have the following relations:
 
\begin{proposition}[Trace formulas]
\label{prop:trace}  
Let  $N > [s] $ and $ u \in \mathcal{S}$. In the defocusing case  
\begin{equation}\label{defd+} 
\begin{split}  
E_s = & \ \frac1\pi \int_{-\infty}^\infty (1+\xi^2)^{s} (-\real \ln T(\xi/2)) d\xi \\
    = &\  \frac{2\sin(\pi s)}\pi  \int_1^\infty(\tau^2-1)^s \Big[ \real  \ln T(i\tau/2)  + 
\sum_{j=0}^N (-1)^j  H_{2j}  \tau^{-2j-1}\Big] d\tau 
+ \sum_{j=0}^N \binom{s}{j}  H_{2j} 
\end{split} 
\end{equation} 
and in the focusing case 
\begin{equation}\label{deff+} 
\begin{split}  
E_s = & \ \frac1\pi \int_{-\infty}^\infty (1+\xi^2)^{s} \real \ln T(\xi/2) d\xi+   2 \sum_k m_k \red{\Xi_s}(2z_k)  \\
    = & -\frac{2\sin(\pi s)}\pi  \int_1^\infty (\tau^2 -1)^s\Big[\real  \ln T(i\tau/2) - \sum_{j=0}^N (-1)^j  H_{2j} \tau^{-2j-1}\Big] d\tau 
+ \sum_{j=0}^N \binom{s}{j}  H_{2j} 
\end{split} 
\end{equation} 
where the $k$ sum runs  over all the poles $z_k$ of $T$ with multiplicity $m_j$. \end{proposition}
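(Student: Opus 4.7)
The plan is a contour deformation in the upper half-plane, using three ingredients: (i) for Schwartz $u$ in the defocusing case, $\ln T(z/2)$ is holomorphic in the upper half-plane with Schwartz boundary values on the real line, while in the focusing case it is meromorphic with logarithmic singularities at the points $2z_k$; (ii) the uniform asymptotic expansion $\ln T(z/2) \sim -i\sum_{j\geq 0} H_j z^{-j-1}$ of \eqref{expand}; and (iii) the contour-integral formula \eqref{contour} together with its consequence \eqref{changeofcontour} relating $\int_\gamma (1+z^2)^s z^{-2k-1}\,dz$ to $\binom{s}{k}$. Throughout I fix an integer $N \geq [s]$.

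For the defocusing case, I would split $\ln T(z/2) = \Phi_N(z) + R_N(z)$ where $\Phi_N(z) = -i\sum_{j=0}^{2N+1} H_j z^{-j-1}$ is the asymptotic polynomial and $R_N(z) = O(|z|^{-(2N+3)})$ uniformly in the closed upper half-plane. Applying Cauchy's theorem on the large upper half-disk slit along $i[1,R]$, with a small upper-semicircle detour at $z = 0$ to avoid the pole of $\Phi_N$, gives a closed-contour identity. Letting $R\to\infty$: the large semicircle vanishes on the $R_N$ piece by decay, and contributes explicitly computable terms on the $\Phi_N$ piece (for the low-order monomials with $j\leq 2s$); these boundary contributions combine with the $z=0$ detour via \eqref{changeofcontour} to produce the sum $\sum_{k=0}^N\binom{s}{k}H_{2k}$, with odd-index momenta $H_{2k+1}$ dropping out by the parity of $(i\tau)^{-j-1}$. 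The slit contour becomes $\gamma$, and \eqref{contour} yields $2\sin(\pi s)\int_1^\infty(\tau^2-1)^s\bigl[\real\ln T(i\tau/2) + \sum_{k=0}^N(-1)^k H_{2k}\tau^{-2k-1}\bigr]\,d\tau$ upon taking real parts, where the bracketed correction comes from $\real\Phi_N(i\tau)$. After dividing by $-\pi$, this reassembles into \eqref{defd+}.

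For the focusing case, $\ln T(z/2)$ has logarithmic singularities at the poles $2z_k$ of $T(z/2)$. To treat these I would use the primitive $V(z) := \int_0^z(1+\zeta^2)^s\,d\zeta$, holomorphic on $\C\setminus i[1,\infty)$ with $\Xi_s = \im V$, and integrate by parts so that the integrand becomes $V(z)\,\partial_z\ln T(z/2)$, now carrying simple poles of residue $-m_k$ at $2z_k$. The contour deformation then picks up residues summing to $-2\pi i\sum_k m_k V(2z_k)$; multiplying by $1/\pi$ and taking real parts yields exactly the extra term $2\sum_k m_k\Xi_s(2z_k)$ of \eqref{deff+}. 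The sign twist in \eqref{expand-f} is absorbed into the sign convention of Definition \ref{def-en-f}, and the rest of the derivation parallels the defocusing argument.

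The main obstacle is the bookkeeping of signs and boundary terms: one must track the large-semicircle contribution for each low-order asymptotic monomial $z^{-j-1}$ with $j\leq 2s$ (where decay at infinity fails), the small-semicircle detour at $z = 0$ where $\Phi_N$ has poles, and the parity checks on the imaginary axis that eliminate odd-index contributions. Once all these pieces are paired correctly, they collapse into the symmetric sum $\sum_{k=0}^N\binom{s}{k}H_{2k}$ via \eqref{changeofcontour}, plus the residue sum $2\sum_k m_k\Xi_s(2z_k)$ in the focusing case, yielding both halves of the proposition.
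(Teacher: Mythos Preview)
Your approach is the direct contour-integral route that the paper itself sketches heuristically just before Definition~\ref{def-en}, but the paper's actual proof (end of Section~\ref{s:t2}) takes a different path: it works with the real-valued nonnegative (super)harmonic function $G(z)=\mp\ln|T(z/2)|$, invokes the Poisson/Riesz representation of Lemma~4.1, and then applies the abstract trace identity of Lemma~\ref{l:apriori-s}. That lemma reduces the computation to the case where the boundary measure $\mu$ (and, in the focusing case, the interior measure $\nu$) is a single Dirac mass, and for Dirac masses the identity is verified by a short residue calculation. The advantage of the paper's packaging is modularity: the contour manipulation is done once for the explicit Poisson-type kernels $\tau/(\tau^2+\xi^2)$ and $\Xi_{\ge N}$, rather than for $\ln T$ itself, and this is what lets the argument later extend cleanly to non-Schwartz $u$ in Theorem~\ref{t:main+}.

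Your direct route does work for Schwartz $u$, but one step needs more care than you indicate. You write that for the low-order monomials $(1+z^2)^s z^{-j-1}$ with $j\le 2s$ the large semicircle ``contributes explicitly computable terms.'' In fact the semicircle integral \emph{diverges} as $R\to\infty$ for those $j$; what actually happens is that this divergence cancels against the divergent part of the slit integral over $\gamma_R$, and the finite remainder is what produces $\binom{s}{k}H_{2k}$. The cleanest way to see this is not via \eqref{changeofcontour} (which is stated only for $j>s$) but by observing that on the real line the correction $i\sum_j H_j\xi^{-j-1}$ is purely imaginary, so after taking real parts the only surviving contribution of $\Phi_N$ comes from the residue of $(1+z^2)^s z^{-2k-1}$ at $z=0$, which is exactly $\binom{s}{k}$ from the Taylor coefficient of $(1+z^2)^s$. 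Once you organize the argument this way the bookkeeping collapses and both displays follow; but as written, your description of the semicircle piece is not quite right.
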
 

\red{The function  $\ln T(z)$ is a meromorphic function on the  upper half plane and the previous pages provide a heuristic proof.  In Section \ref{s:t2} we provide a rigorous proof considering $\ln |T(z)|$ as a (super) harmonic function on the upper half plane using also the bounds proven in Sections \ref{s:t2j} and \ref{s:tt2j-exp}. }

For the focusing case we remark on the case when there are infinitely
many poles for $T$ in the upper half-space.  The second expression
above is always a convergent integral, whereas in the first expression
we have a nonnegative integral, plus a sum where all but finitely many
terms are positive. This simultaneously allows us to interpret the
trace of $\ln |T|$ on the real line as a nonnegative measure, and to guarantee
the convergence in the $k$ summation.
We also remark on the contribution of the poles which are on the imaginary axis.
Precisely the function $\Xi_s$  is real analytic away from $z=i$. Thus the 
only nonsmooth dependence on $u$ in $E_s$ via the poles comes from
the poles which are  at  $i/2$. 
Now we are ready to state a more complete version of our main result in Theorem~\ref{energies}:

\begin{theorem}\label{t:main+} 
  For each $s > -\frac12$ and both for the focusing 
  and defocusing case the energy functionals $E_s$  are globally defined
\[
E_s :  H^s    \to \R
\]
with the following properties:
\begin{enumerate}
\item $E_s$ is conserved along the NLS and mKdV flow.

\item For all $u \in H^s$ the trace at the real line  of\footnote{The choice of signs
    $\mp$ corresponds to the defocusing/focusing case} $\mp \log |T|$
  exists as a positive measure, and the trace formulas \eqref{defd+}
  and \eqref{deff+} hold with absolute convergence in all sums and
  integrals.

\item  If $\Vert u \Vert_{l^2_1DU^2}\le 1 $ then   
\begin{equation} \label{est:quad} 
\left| E_s(u) - \|u\|_{H^s}^2 \right| \lesssim  \Vert u \Vert_{l^2_1DU^2}^2 
\Vert u \Vert_{H^s}^2. 
\end{equation} 
\item The  map 
\[ 
H^{\sigma} \times  (-\frac12,\sigma] \ni (u,s) \to  E_s(u)  
\]
is  analytic in $u\in H^\sigma$ in the defocusing case. In the focusing case 
it is continuous and it is analytic provided $\frac{i}2$ is not an eigenvalue. It is also continuous in $s$, and analytic in $s$ for $s < \sigma$.
\end{enumerate}
\end{theorem}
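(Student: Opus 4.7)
The overall plan is to take the integral representations \eqref{def+} and \eqref{def++} as the primary definition of $E_s$, deduce all four assertions from a good multilinear expansion of $\ln T(z)$ on the contour $\gamma = i[1,\infty)$, and then push everything back to the real line by contour deformation, picking up residues at any poles in the focusing case. Concretely, I assume that the preceding sections \S\ref{s:t2j} and \S\ref{s:tt2j-exp} supply: (i) a convergent expansion $\ln T(z) = \sum_{k\geq 1} \mathcal{T}_{2k}(u;z)$ for $z$ on $\gamma$ provided $\|u\|_{l^2 DU^2}$ is small, with the estimate $|\mathcal{T}_{2k}(u;i\tau/2)| \lesssim \tau^{-1}\|u\|_{l^2 DU^2}^{2k-2}\|u\|_{H^s}^2$ in the relevant range, and (ii) identification of the asymptotic coefficients at $\tau=\infty$ with the classical Hamiltonians $H_{2j}$ of \eqref{expand}, \eqref{expand-f}.

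For the quartic bound (3), I first extract $\mathcal{T}_2$ from the expansion. A direct computation of the iterated scattering integral at degree two gives $\mathcal{T}_2(u;i\tau/2) = -\int |\hat u(\xi)|^2/(\tau^2 + \xi^2)\,d\xi$ (up to constants), so that
\[
\tfrac{2\sin(\pi s)}{\pi}\int_1^\infty (\tau^2-1)^s \operatorname{Re}\mathcal{T}_2(u;i\tau/2)\,d\tau + \text{subtractions} \;=\; \|u\|_{H^s}^2,
\]
by the classical beta-integral $\int_1^\infty (\tau^2-1)^s (\tau^2+\xi^2)^{-1}d\tau \cdot \tfrac{2\sin(\pi s)}{\pi} = (1+\xi^2)^s$. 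The remaining contributions $\mathcal{T}_{2k}$ with $k\geq 2$ are controlled by (i) above, giving the required $\|u\|_{l^2 DU^2}^{2k-2}\|u\|_{H^s}^2$ bound. Summing the geometric series in $\|u\|_{l^2 DU^2}$ yields \eqref{est:quad}, and simultaneously shows $u\mapsto E_s(u)$ is analytic as a convergent power series in the small-data regime of part (4). The extension to large $H^s$ data proceeds either by scaling (as in Corollary \ref{c:large-data}) or by using the real-line formulation of \eqref{defd+}, \eqref{deff+} directly; in the focusing case this requires interpreting $-\log|T|$ as a positive measure, which I do by approximating $u$ by Schwartz data and passing to the limit using the subadditivity of the weighted integral.

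For conservation (1), I use the Lax pair structure of \S\ref{inverse}: at each fixed $z$ in the upper half plane, $T(z,t) = T(z,0)$ under either flow, as one reads off the $t$-asymptotics of the Jost solutions from \eqref{laxpairnls} and \eqref{laxpairmkdv}. The locations $z_k$ and multiplicities $m_k$ of poles are preserved too (they are eigenvalues of the $t$-invariant spectrum of $\mathcal{L}$). Since both ingredients in \eqref{def+}, \eqref{def++} depend only on $T(\cdot/2)$ on the contour and on the pole data, $E_s$ is conserved, first for Schwartz data where everything is classical, then for general $H^s$ data by the density and the uniform bound from (3).

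For part (2), the trace formula, I deform the real-line contour to $\gamma$: the integrand $(1+z^2)^s \ln T(z/2)$ has cuts along $i[1,\infty)$ accounted for by \eqref{contour} and poles of $\ln T$ in the upper half plane contributing the residue sum $2\sum_k m_k\, \Xi_s(2z_k)$. For the subtracted terms, the identity \eqref{changeofcontour} is used to convert $\binom{s}{j}H_{2j}$ back into a contour integral, yielding cancellation with the asymptotic subtraction inside the $\tau$-integral; this also exhibits the $N$-independence. Absolute convergence in the focusing case, including when $T$ has infinitely many poles accumulating on the real axis, follows from the localization of poles in a polynomial neighbourhood of $\R$ combined with the fact that $\Xi_s$ vanishes at the real line to sufficient order. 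Finally, analyticity in $s$ away from $\{\frac{i}{2}\}$-type singularities is transparent from the contour formulation (the factor $\sin(\pi s)$ cancels the apparent poles at negative half-integers), and continuity at $s=\sigma$ follows by dominated convergence. The main obstacle throughout will be the focusing case: ensuring that the multilinear expansion of $\ln T$ is actually convergent on $i[1,\infty)$ (so that no poles fall on the contour) is precisely what the smallness in $l^2 DU^2$ buys, and verifying this convergence uniformly in $\tau\geq 1$ with the sharp quadratic bound is where most of the work lies.
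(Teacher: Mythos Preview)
Your overall strategy — define $E_s$ by the $\tau$-integral, identify the quadratic piece via $\tilde T_2$, bound the rest multilinearly, then recover the real-line formula by contour deformation — is exactly the paper's. But there is a real gap in your treatment of the higher multilinear terms.

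Your black-box assumption (i), namely $|\mathcal T_{2k}(u;i\tau/2)|\lesssim \tau^{-1}\|u\|_{l^2DU^2}^{2k-2}\|u\|_{H^s}^2$, is too weak: $\int_1^\infty(\tau^2-1)^s\tau^{-1}\,d\tau$ already diverges for $s\geq 0$. The paper actually proves $\tau^{-2s-1}$ decay (Proposition~\ref{t2j-s}), but only in the range $j>2s+1$; the connected structure of $\tilde T_{2j}$ (Theorem~\ref{primitive}) then pushes this to $j>s+1$ (Proposition~\ref{t2j-sprime}). For the finitely many remaining terms $2\leq j\leq s+1$, which appear whenever $s\geq 1$, one must subtract the first several terms of the $z^{-1}$ asymptotic expansion of each individual $\tilde T_{2j}$ and bound the remainder (Propositions~\ref{p:t2j-exp}, \ref{p:t2j-exps}); these subtractions are exactly the degree-$2j$ contributions to the $H_{2l}$ appearing in \eqref{def++}. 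You never address this case split, nor the separate issue that the integrated bounds in \eqref{tt2j-err-int} blow up at half-integer $s$ and one must exploit that only $\real\tilde T_{2j}$ enters.

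A second, more subtle, gap is in your trace-formula argument. Your contour deformation from $\R$ to $\gamma$ presupposes that $\ln T(\xi/2)$ is defined pointwise on the real line, but for general $u\in H^s$ with $s<0$ this fails (one needs $u\in L^1$). The paper circumvents this by working entirely in the open upper half-plane: it applies the positive superharmonic function machinery of Section~\ref{s:t2} (Lemmas~\ref{l:apriori-n}, \ref{l:apriori-s}) to $G=\mp\log|T|$, which only requires $G$ to be bounded on $i[1,\infty)$ — a consequence of the multilinear bounds — and then \emph{obtains} the trace on $\R$ as a positive measure as output, rather than assuming it. The extension to large $\|u\|_{l^2DU^2}$ is done by choosing $\tau_0$ large enough that $\|u\|_{l^2_{\tau_0}DU^2}\ll 1$ and splitting the $\tau$-integral at $\tau_0$, not by the spatial scaling of Corollary~\ref{c:large-data}.
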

The proof of the above theorem is  completed in Section~\ref{s:proof}.
We remark now on several direct consequences of our trace formulas:
\begin{itemize}
\item The energies $E_s$ are nonnegative in the defocusing case, 
and in the focusing case if $ s\le 0$. 
\item In the focusing case the eigenvalues are localized in a strip
\[ 
(1+|\real z_j^2|)^{s}\im z_j \lesssim_{\|u\|_{H^s}} 1   
\]
Furthermore, the following sum is bounded:
\begin{equation} \label{e:sumbound} 
\sum  m_j \im z_j    (1+|\real z_j|^2)^s  \lesssim_{\|u\|_{H^s}} 1
\end{equation} 
This is an immediate consequence of the trace formulas if $ s\le 0$
since then $\im z^s >0$ in the upper half plane. If $s\ge 0$ the trace
formula for $s=0$ implies that the imaginary part of the eigenvalues
is bounded. Hence there are at most finitely many eigenvalues not
giving a positive contribution to the sum if $\im z_j (1+|\real z_j|^2)^s $.
 Their contribution is bounded by the $L^2$ norm and we
arrive at the bound \eqref{e:sumbound}.
\end{itemize}

One can view the small data part of our result as a stability
statement in $H^s$ for the zero solution of the NLS or mKdV
equations. In the focusing case another interesting class of solutions
are the pure soliton/breather solutions, where $T$ has a finite number
of poles in the upper half-space and $|T| = 1$ on the real line.  As a
direct byproduct of our results, one can also obtain  the $H^s$
stability of these solutions. We consider this in detail in a subsequent paper.

\subsection{Estimates for the transmission coefficient}

Given the definition of the conserved energies in the previous
subsection, the proof of our main result hinges on obtaining precise
estimates for the transmission coefficient. The steps in our analysis are as follows:
\medskip

I. Our analysis begins in Section~\ref{formal}, where we compute a
formal homogeneous expansion of inverse of the transmission coefficient $T$,
namely
\[
T^{-1}(z) = 1 + \sum_{j=1}^\infty T_{2j}(z)
\]
where $T_{2j}(z)$ are multilinear integral forms, homogeneous of degree
$2j$ in $u,\bar u$. There is a similar though less explicit expansion for
$\ln T$,
\[
-\ln T(z) = \sum_{j=1}^\infty \tilde T_{2j}(z)
\]
We have $\tilde T_2 = T_{2}$, while $\tilde T_{2j}$ are still multilinear integral forms of degree  $2j$ in $u,\bar u$.

Nevertheless, it turns out that the $\tilde T_{2j}$ are much more localized 
than $T_{2j}$, which is the key to better decay and better algebraic properties. This observation is an interesting result in itself. Its proof is purely
algebraic, and is based on a Hopf algebra structure underlying the
multilinear integral calculus. This is explained in detail in
Appendix~\ref{a:hopf}. The relevant structure of the integrals $\tilde T_{2j}$ 
is encoded in algebraic statement of Theorem \ref{lnT}.
 \bigskip

II. The first term $T_2(z)$ in the formal expansions is studied in
Section~\ref{s:t2}. This is a quadratic form in $u$, and its
contribution to the conserved norms is exactly $\|u\|_{H^s}^2$.
To prove this, we estimate $T_2(z)$ in the full upper half-space.
\bigskip

III. The remaining terms $T_{2j}$ in the expansion of $T$ are only
needed on the imaginary half-line $[i,i\infty)$. They are estimated in
Section~\ref{s:t2j} in terms of the $H^s$ norm of $u$ and the $DU^2$
norm of $u$. Here the $DU^2$ norm arises as a convenient proxy for the
scale invariant $\dot H^{-\frac12}$ norm of $u$. Appendix  \ref{a:uv} summarizes 
and proves some useful and some new properties of the $U^2$ and $V^2$
spaces. The bounds for $T_{2j}$ suffice in order to establish the convergence
of the formal series for $T$, and also they easily carry over to
$\tilde T_{2j}$. In particular they suffice in order to bound the
corresponding contributions to the energy for large enough $j$ (namely
$j > 2s+1$). Thus the proof of the theorem reduces to estimating
finitely many $\tilde T_{2j}$ terms.

\bigskip

IV. The main bound for $\tilde T_{2j}$ is proved in
Section~\ref{s:tt2j-exp}, taking advantage of the better decay properties
due to the better structure of $\tilde T_{2j}$. This suffices for $j > s+1$.
For smaller $j$ (which  only occurs  if  $ s \ge  1$) we also need to consider the
expansion of $\tilde T_{2j}(z)$ in powers of $z^{-1}$ as $z \to
i\infty$, and estimate the errors.  This is done in Section
\ref{s:tt2j-exp} and used in Section \ref{s:proof} to complete the proof of our main result. 

\bigskip 

V. The Hopf algebra structure allows to explicitly calculate  $\tilde T_{2j}$( Proposition \ref{a:log}). We do these calculations for $\tilde T_4$ and $\tilde T_6$ in Section \ref{s:further}.  

\bigskip 

VI. There are close connections to the KdV
equation. The first relies on the observation that, if $u$ is a real
function and if $\psi$ satisfies the first equation in
\eqref{laxpairnls} then $\phi= \psi_1+\psi_2$ satisfies with $v=
u_x+u^2$
\[  - \phi_{xx} + v \phi = z^2 \phi \] 
which is the eigenvalue  equation of the standard Lax operator for the
KdV equation. As a consequence the transmission
coefficient for the Lax operator for mKdV with real potential $u$
coincides with the transmission coefficient for the Lax operator for
$v=u_x+u^2$ for KdV.

\red{We rewrite the Lax pair as a system and }  use a Lax-pair similar to the one for the NLS, which allows
to reuse most of the constructions developed for the NLS case. \red{ The Lax equation is
 \[ \psi_x = \left( \begin{matrix} -iz &  1 \\ u & iz \end{matrix}\right)\psi. \] }

One difficulty  in this case
is that
\[ \tilde T_2(z)  =   \frac{1}{2z}  \int u  dx  \]  
which can never be bounded by an $H^s$ norm of $u$. However we are able to
prove that 
\[ \tilde T(i\tau )  -\tilde T_2(i\tau) \]   
is well-defined on a small ball in $H^{-1}$, and that  $\tilde T_{4}(z) $ 
plays the same role as  $T_2(z)$ for NLS.  A similar though technically different and interesting 
removal procedure for  the linear term is implemented in \cite{KVZ}, which in contrast to our approach also applies to the periodic KdV equation.

\newsection{The transmission coefficient in the upper
  half-plane}\label{formal} As seen above, the transmission
coefficient in the upper half plane plays a key role in our
construction of the conserved energies.  Here we consider the
multilinear expansion of the transmission coefficient in the upper
half-plane.  This section is only concerned with the algebraic aspects
of this expansion, setting aside for now the questions of bounds and
convergence of the formal series.

For the defocusing problem, $-\ln T$ extends to a holomorphic function
on the upper half plane with nonnegative  real part. For the focusing
problem there may be simple poles corresponding to eigenvalues. To cover both cases, we consider a more general system
of the form
\begin{equation}\label{scatter-re}
\left\{ 
\begin{array}{l}
\dfrac{d\psi_1}{dx} = -i z \psi_1 + u \psi_2 
\\[5mm]
\dfrac{d\psi_2}{dx} =  i z \psi_2 +  \overline v \psi_1
\end{array}
\right.
\end{equation}
and construct the Jost solutions recursively in the upper half
plane. The scattering problems for the defocusing, respectively the
focusing case are obtained by taking $v = \pm u$.  This plays
no role in any of the estimates in the following sections, but for
definiteness we consider the defocusing case.

\begin{lemma} \label{T2exp} 
  There is a formal homogeneous expansion of $T^{-1}$ in terms of  $u,v$
\[  
T^{-1} (z) =1+ \sum_{j=1}^\infty T_{2j}(z)  
\]
with 
\begin{equation} \label{T2j}  
T_{2j}(z) = \int\limits_{x_1<y_1 <x_2< y_2< \dots <  x_j<y_{j}} 
 \prod_{l=1}^j  e^{2iz(y_{l}-x_l)}u(y_{l}) \overline{v(x_l)} dx_1dy_1 \dots d x_{j}dy_j. 
\end{equation}
\end{lemma}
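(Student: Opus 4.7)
The plan is to diagonalize the free transport in \eqref{scatter-re}, rewrite the Jost problem as a pair of Volterra integral equations, and extract the formal Neumann series for $T^{-1}(z)$ by iteration.

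First I would set $\phi_1(x,z) = e^{izx}\psi_1$ and $\phi_2(x,z) = e^{-izx}\psi_2$. This cancels the $\mp iz\psi_j$ diagonal terms in \eqref{scatter-re} and produces
\[
\phi_{1,x} = u(x)\,e^{2izx}\phi_2, \qquad \phi_{2,x} = \overline{v(x)}\,e^{-2izx}\phi_1.
\]
The left Jost asymptotic $\psi_l \to (e^{-izx},0)^T$ as $x \to -\infty$ becomes $(\phi_1,\phi_2) \to (1,0)$, while the definition of the transmission coefficient gives $T^{-1}(z) = \lim_{x \to +\infty} \phi_1(x,z)$. Imposing the left boundary condition rewrites the system as
\[
\phi_1(x) = 1 + \int_{-\infty}^x u(y)\,e^{2izy}\phi_2(y)\,dy, \qquad \phi_2(x) = \int_{-\infty}^x \overline{v(y)}\,e^{-2izy}\phi_1(y)\,dy.
\]

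Next I would iterate Picard-style from $(\phi_1^{(0)}, \phi_2^{(0)}) = (1,0)$, alternately substituting the $\phi_2$-equation into the $\phi_1$-equation and vice versa. A direct induction shows that the degree-$2j$ homogeneous contribution to $\phi_1(x)$ is
\[
\int_{x_1<y_1<x_2<y_2<\dots<x_j<y_j<x} \prod_{l=1}^j e^{2iz(y_l-x_l)}\,u(y_l)\,\overline{v(x_l)}\,dx_1\,dy_1\cdots dx_j\,dy_j,
\]
because each substitution of the $\phi_2$-equation into the $\phi_1$-equation adjoins a fresh innermost pair $(x_l,y_l)$ with $x_l<y_l$ lying below the previous integration domain, while the exponentials combine pairwise into $e^{2iz(y_l-x_l)}$ since each $u$-factor contributes $e^{2izy}$ and each $\overline{v}$-factor contributes $e^{-2izx}$.

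Letting $x\to+\infty$ and collecting homogeneities identifies the $2j$-piece with $T_{2j}(z)$ as in \eqref{T2j}, yielding the claimed formal series $T^{-1}(z) = 1 + \sum_{j\ge 1} T_{2j}(z)$. Since the lemma asserts only a formal expansion, no convergence or quantitative control enters here; these issues are deferred to the estimates on the kernels \eqref{T2j} in the later sections. The only step requiring attention is the inductive bookkeeping for the Picard iterates, but once the alternation forced by the substitutions $\psi_1 = e^{-izx}\phi_1$, $\psi_2 = e^{izx}\phi_2$ is recorded, the ordering $x_1<y_1<\dots<x_j<y_j$ and the alternation of $u$- and $\overline{v}$-factors follow automatically, so this is really a matter of notation rather than a genuine obstacle.
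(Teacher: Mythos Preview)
Your proof is correct and follows essentially the same approach as the paper's: Picard iteration of the Jost system \eqref{scatter-re} starting from $(e^{-izx},0)^T$. The only cosmetic difference is that you first conjugate out the free transport via $\phi_j = e^{\pm izx}\psi_j$, whereas the paper tracks the exponentials directly inside the iterated integrals; both arrive at \eqref{T2j} after the same alternating substitutions. One minor wording issue: your phrase ``adjoins a fresh innermost pair $(x_l,y_l)$ \ldots lying below the previous integration domain'' is a bit misleading, since each Picard step in fact adds the new variables as the \emph{outermost} (rightmost) ones in the chain $x_1<y_1<\cdots<x_j<y_j<x$; your displayed formula for the degree-$2j$ contribution is nonetheless correct, so this is only a matter of phrasing.
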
 
We remark that, at least as long as $u,v \in L^2$,  each term $T_{2j}$ is pointwise defined
for $\im z >0$.

\begin{proof} 
We solve \eqref{scatter-re} iteratively. 
We first compute the quadratic approximation to \red{$T^{-1}(z)-1$}. Suppose we use $\psi_l$ and begin the iteration with $\psi_{l}^0 = \left(\begin{matrix} e^{-izx} \\ 0 \end{matrix} \right) $. 
The first iteration gives 
\[
\psi_{l,2}^1(x) = - \int_{-\infty}^x  e^{-izx_1} \overline{v(x_1)}  e^{iz(x-x_1)} dx_1   
\]
which inserted into the first equation yields
\[
\begin{split}
\psi_{l,1}^2 (x) =  & \ \red{e^{-izx}} + \int_{-\infty}^x e^{-i z(x-y_1)} u(y_1) 
\int_{-\infty}^{y_1} e^{-izx_1} \overline{v(x_1)}  e^{iz(y_1-x_1)} dx_1 dy_1   
\\
= & \ e^{-izx} \left(1+   \int_{x_1 < y_1 < x}  u(y_1) \overline{v(x_1)} e^{2iz(y_1-x_1)}     dx_1 dy_1                 \right). 
\end{split}
\]
This shows that the quadratic approximation to $T^{-1}-1$ is given by 
\begin{equation} \label{T2} 
T_2(z) =  \int_{x_1<y_1 }  u(y_1) \overline{v(x_1)}  e^{2iz(y_1-x_1)}     dx_1 dy_1 
\end{equation} 
We iterate the procedure and arrive at \eqref{T2j}. 
\end{proof} 

For our modified energies we need to work not with $T$ directly, but rather 
$\ln T$. Since the log is analytic near $1$, the formal series for $T$ will 
yield a formal series for $\ln T$:

\begin{lemma} 
  There is a formal homogeneous expansion of $\ln T$ in terms of  $u,v$
\[  
-\ln T = \sum_{j=1}^\infty \tilde T_{2j}  
\] 
where each term $  \tilde T_{2j}(z) $ is a linear combination of expressions 
of the form 
\begin{equation} \label{tT2j}  
 \int\limits_{\Sigma} 
 \prod_{l=1}^j  e^{2iz(y_{l}-x_l)}u(y_{l}) \overline{v(x_{l})} dx_1dy_1\dots d x_{j}  dy_j. 
\end{equation}
where $\Sigma$ can be any domain which can be represented 
as a linear ordering of $x_l$ and $y_l$ which obeys the constraint $x_l < y_l$ 
for all $l$.
\end{lemma}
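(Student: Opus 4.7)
The plan is to derive the expansion of $-\ln T$ from the one for $T^{-1}$ in Lemma \ref{T2exp} by formally applying the Taylor series of $\log(1+\cdot)$, and then to reorganize the resulting products of simplex integrals using the shuffle decomposition of products of simplices. Writing
\[
-\ln T \;=\; \ln\!\bigl(1+(T^{-1}-1)\bigr) \;=\; \sum_{k\geq 1} \frac{(-1)^{k+1}}{k}\Bigl(\sum_{j\geq 1}T_{2j}\Bigr)^{k}
\]
as a formal series and collecting terms of total degree $2j$ in $(u,v)$ yields
\[
\tilde T_{2j} \;=\; \sum_{k=1}^{j} \frac{(-1)^{k+1}}{k} \sum_{\substack{j_1+\cdots+j_k=j \\ j_i\geq 1}} T_{2j_1}\cdots T_{2j_k},
\]
which in particular gives $\tilde T_2 = T_2$ as asserted.

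The next step is to show that each product $T_{2j_1}\cdots T_{2j_k}$ is itself a linear combination of integrals of the form \eqref{tT2j}. Each factor $T_{2j_i}$ is an integral over the simplex
\[
\Delta_i \;=\; \{x_1^{(i)} < y_1^{(i)} < \cdots < x_{j_i}^{(i)} < y_{j_i}^{(i)}\}
\]
in its own set of $2j_i$ variables, with integrand $\prod_{p=1}^{j_i} e^{2iz(y_p^{(i)}-x_p^{(i)})}u(y_p^{(i)})\overline{v(x_p^{(i)})}$. The product $T_{2j_1}\cdots T_{2j_k}$ is therefore an integral over $\Delta_1\times\cdots\times\Delta_k\subset\mathbb{R}^{2j}$ with $j=\sum j_i$. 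Since the coordinate blocks are disjoint, the Cartesian product of simplices decomposes, up to a null set, as the disjoint union of the open regions indexed by all \emph{shuffles} of the blocks, i.e.\ by all linear orderings of the $2j$ variables that restrict to the prescribed internal order on each $\Delta_i$. Relabeling the $j$ pairs $(x_p^{(i)},y_p^{(i)})$ as $(x_l,y_l)$ for $l=1,\dots,j$, the integrand takes the form $\prod_l e^{2iz(y_l-x_l)}u(y_l)\overline{v(x_l)}$, and the inherited constraints become precisely $x_l<y_l$ for $l=1,\ldots,j$. Each shuffle thus contributes a single admissible $\Sigma$, and combining with the coefficients $(-1)^{k+1}/k$ from the logarithm expansion assembles the required linear combination.

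The main obstacle is essentially combinatorial rather than analytic, since the statement is purely algebraic and convergence questions are deferred to the later sections. One must justify carefully that the shuffle decomposition of $\Delta_1\times\cdots\times\Delta_k$ into linearly ordered regions is valid up to measure zero, which is the standard fact underlying iterated-integral calculi. One should also check that the relabeling in the final step does not conflate the two types of variables; this is automatic because within each block the pattern $x_1^{(i)}<y_1^{(i)}<x_2^{(i)}<y_2^{(i)}<\cdots$ is preserved, so each $x$-variable retains its role and each $y$-variable retains its own. Beyond these points, the proof is bookkeeping, and no canonical choice of relabeling is needed since only the span of integrals of the form \eqref{tT2j} is claimed.
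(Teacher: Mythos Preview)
Your proof is correct and follows essentially the same approach as the paper. The paper's own proof is a one-sentence sketch observing that each $\tilde T_{2j}$ is a polynomial in the $T_{2l}$'s (via the Taylor series for $\log$) and that products of such iterated integrals decompose into integrals of the form \eqref{tT2j}; you have simply written this out in full, making the shuffle decomposition of the product of simplices explicit.
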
 

The proof of the lemma is straightforward, we simply observe that 
each term $\tilde T_{2j}$ is a homogeneous polynomial of appropriate homogeneity 
in $T_{2l}$ with $l \leq j$, and that such products can be expressed 
as iterated integrals as above.  The  first terms have the form
\[ 
\tilde T_2=    T_2, \qquad \tilde T_4 = T_4(z) - \frac12 T_2(z)^2 ,
\qquad \tilde T_6 = T_6(z) - T_2(z) T_4(z) +\frac13 T_3(z)^3   
\] 

For the purpose of studying the convergence of this series, such
considerations are sufficient. However, if we want at the same time to
capture also the expansion of $\ln T$ in powers of $z^{-1}$ as $z \to
i\infty$, then this is no longer enough. The reason is quite simple,
namely that, even if $u$ and $v$ are Schwartz functions, the terms
$T_{2n}$ in the expansion of $T$ only have $z^{-n}$ decay, whereas for
the expansion of $\ln  T$ the corresponding term has as leading term a
constant times $z^{-2n+1}$, at least near the imaginary axis for
Sobolev data.  
Thus our goal is now twofold. On one hand we want to
understand the decay rates at infinity for each of these integrals,
and on the other hand we want to see which of them are present in the
expansion of $\ln T$.

It is useful to have a more graphical representation for these integrals.
\red{A short reflection shows that we can represent these integrals by words in two letters $X$ and $Y$, since the indices are irrelevant. Moreover only words with an equal number of $X$'s and $Y$'s occur, and, if we decompose a word at any point into a left and a right part, there are at least as many $X$'s on the left as $Y$'s. Then there is exactly one way to connect each $X$  with a $Y$ by nonintersecting arcs. Accordingly } 
we identify 
the integrals with symbols like \<XXYXYY>. For example
\[ 
\<XXYXYY> = \int_{x_1<x_2<y_1<x_3<y_2<y_3}  e^{ 2iz (y_1+y_2+y_3-x_1-x_2-x_3)}   \prod u(y_j) \overline{v(x_j)} dx_j dy_j. 
\]

It is not difficult to see that the decay rate as $z \to i \infty$ of
these integrals depends on how connected they are. Precisely, if $z =
i\tau$ then, due to the exponential factor, the bulk of the integral
comes from the region where $|x_j-y_j| \lesssim \tau^{-1}$, and the
integral can essentially be estimated by the volume of this region.
If  such an integral \red{over $2n$ varaiables} has $k$ connected components then this volume 
is comparable to $\tau^{-2n+k}$. Thus the desired  decay rate 
$z^{-2n+1}$ is also the best possible decay rate, and is only achieved
for integrals with fully connected symbols. \red{See Appendix \ref{a:hopf} for more details and precise definitions}. 

In the sequel we  call an integral \eqref{tT2j}
connected if for every nontrivial decomposition of the sequence there are more 
$x$ to the left than $y$, or, equivalently, if in the graphical representation 
there is an arc from the first $x$'s to the last $y$'. We denote it by symbols like \<XXYY>.  We arrive at one of the main results of this section,
namely Theorem \ref{lnT} which we repeat here.

\begin{theorem}\label{primitive}
  The terms $\tilde T_{2j}$ of the homogeneous expansion of $\ln T$ in
  $u,v$ are formal linear combinations of connected  integrals.
\end{theorem}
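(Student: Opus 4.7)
The plan is to deduce Theorem~\ref{primitive} from a Hopf-algebra structure on the space of formal iterated integrals. Let $\mathcal{A}$ denote the graded vector space spanned by the \emph{valid} words $w$ in the alphabet $\{X,Y\}$, meaning words with equal numbers of $X$'s and $Y$'s in which every proper prefix contains at least as many $X$'s as $Y$'s. Each such word is identified with the multilinear integral $I_w(z)$ from \eqref{tT2j}. A valid word is \emph{connected} iff its only balanced prefixes are $\emptyset$ and $w$ itself, and every valid word admits a unique factorization into connected factors under concatenation.

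First I would verify that pointwise multiplication of these integrals realizes the shuffle product on $\mathcal{A}$: partitioning the product domain for $I_u(z)\, I_v(z)$ by the relative ordering of the combined set of variables yields
\[
I_u(z)\,I_v(z) \;=\; \sum_{w} c(w;u,v)\, I_w(z),
\]
where $c(w;u,v)$ is the number of interleavings of $u$ and $v$ yielding $w$. Any shuffle of valid words is valid, so this turns $\mathcal{A}$ into a commutative algebra and $w \mapsto I_w$ into an algebra homomorphism. I would then equip $\mathcal{A}$ with the restricted deconcatenation coproduct
\[
\Delta w \;=\; \sum_{\substack{w = uv\\ u,v \text{ valid}}} u \otimes v
\]
and check that this produces a connected graded bialgebra, hence a Hopf algebra, whose primitive elements (those with $\Delta p = p \otimes 1 + 1 \otimes p$) are precisely the linear combinations of connected words.

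The algebraic core of the proof is the identification of the formal series
\[
F \;=\; \sum_{j \ge 0} T_{2j} \;=\; 1 + (XY) + (XY)^2 + \cdots
\]
as a group-like element, $\Delta F = F \otimes F$. This reduces to the elementary observation that the only valid factorizations of $(XY)^j$ are $(XY)^a \cdot (XY)^{j-a}$ for $0 \le a \le j$. Since $F = T^{-1}$ in $\mathcal{A}$, applying the algebra morphism $\Delta$ to the formal identity $\log F = -\log T$, and using that $F \otimes 1$ and $1 \otimes F$ commute in $\mathcal{A} \otimes \mathcal{A}$, one obtains $\Delta(\log F) = \log F \otimes 1 + 1 \otimes \log F$. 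Thus $-\log T$ is primitive, which by the characterization above is exactly the content of the theorem.

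The main technical obstacle I anticipate is the careful bookkeeping in the shuffle identity: the integrand in \eqref{tT2j} is symmetric under simultaneous permutation of the pairs $(x_l,y_l)$, so when one rewrites a product of iterated integrals via interleavings of index sets one must keep track of factorials and relabelings with some care. Once this is pinned down, the rest of the argument is standard graded Hopf-algebraic formalism, and the detailed implementation belongs in Appendix~\ref{a:hopf}.
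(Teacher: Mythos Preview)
Your proposal is correct and follows essentially the same Hopf-algebra route as the paper's Appendix~\ref{a:hopf}: identify the iterated integrals with valid words, equip them with the shuffle product and the restricted deconcatenation coproduct, verify that $T^{-1}$ is group-like, and conclude that $-\ln T$ is primitive, hence a combination of connected words.

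The one genuine difference is in the passage from ``group-like'' to ``primitive logarithm.'' The paper proves the full statement $G = \exp P$ (Theorem~\ref{logarithm}) by an induction on the grading: one shows $\exp P \subset G$ by direct computation, and then for the converse inclusion peels off primitive corrections degree by degree. Your argument is more direct: since $\Delta$ is an algebra homomorphism it commutes with the formal logarithm, and since $F\otimes 1$ and $1\otimes F$ commute in $\mathcal A\otimes\mathcal A$ one gets $\Delta(\log F)=\log(F\otimes F)=\log F\otimes 1+1\otimes\log F$ in one line. This bypasses the induction entirely and is the standard textbook proof that group-like and primitive elements correspond under $\exp/\log$ in a graded connected bialgebra over a field of characteristic zero. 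The paper's inductive argument buys nothing extra here, so your streamlining is a genuine simplification. The bookkeeping concern you raise about the shuffle identity is real but minor; once the words are taken as the primary objects (rather than the integrals with their pair-symmetry), the multiplicities are those of the ordinary shuffle and no factorials intervene.
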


The first terms in the defocusing case are (see Proposition \ref{a:log} )  
\[ -\ln T = \<XY> - 2\<XXYY> + 12 \<XXXYYY>+4 \<XXYXYY> + \dots \]  

There is even a polyhomogeneous formal expansion, with respect to
powers of $z$ and $u$ resp. $\bar u $ which we will explore below.
The proof of this theorem is nontrivial, and requires the
introduction of an additional Hopf algebra type structure. For this
reason, we relegate this part of the proof to Section \ref{a:hopf} of
the appendix.
The classical  conserved quantities are the coefficients of the
expansion of $ \ln T$ in powers of $z^{-1}$ at infinity, which are
called energies.  The above theorem has the consequence that these
energies are given as one dimensional integrals, and not as
polynomials of one dimensional integrals.  This is of course
well-known \red{(see Faddeev and Takhtajan \cite{MR905674})} and typically proven by very different techniques.

\newsection{Positive harmonic functions in the the upper half-space}
\label{s:t2}

A significant step in our analysis is to transfer information about
the transmission coefficient from the positive imaginary axis to the
real line. Given enough a-priori information on $\log |T|$, this is
simply a matter of applying the appropriate form of the  residue theorem. 
However, in our setting such a-priori information is not freely available,
and instead we want to obtain it as a conclusion of our results (namely 
in our trace formulas).  This section is devoted to considering such issues
in an uniform fashion.
We will phrase our results in terms of a nonnegative superharmonic function $G$
in the upper half-space. One should think of $G$ as $\mp \log |T|$, which is harmonic 
in the defocusing case but may have singularities at the poles $z_k$ of $T$ in the focusing case,
\[
-\Delta \log |T| = 2\pi \sum m_k \delta_{z_k} \geq 0
\] 
where $m_k$ denote the corresponding multiplicities. We begin with a first result which provides an integral representation for such functions. 

\begin{lemma}
a) Let $G$ be a nonnegative harmonic function in the upper half-space, which is bounded on the 
positive imaginary axis.  Then it has a trace on the real line, which is a locally finite nonnegative measure $\mu$.
Furthermore, we can represent $G$ via the   Poisson kernel as 
\begin{equation}\label{G-rep}
G(z) =  \frac{1}{\pi} \int_\R \frac{\im z}{|z-\xi|^2} d \mu(\xi)
\end{equation}
b) Similarly, suppose $G$ is a nonnegative superharmonic function in the
upper half-space, which is bounded on the positive imaginary
axis. Then it has a trace on the real line, which is a Radon  measure $\mu$, and $\nu = -\Delta G$ is a Radon  measure in the upper half-plane so that 
\begin{equation}\label{G-rep+}
G(z) =  \frac{1}{\pi} \int_\R \frac{\im z }{|z-\xi|^2} d \mu(\xi) + \frac{1}{2\pi} 
\int_{H} \log \left|\frac{z-\bar z_0}{z-z_0} \right| d\nu(z_0)
\end{equation}
\end{lemma}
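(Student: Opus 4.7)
The plan is to establish part (a) via the classical Herglotz--Nevanlinna representation for nonnegative harmonic functions on the upper half-plane, and then to reduce part (b) to part (a) using the Riesz decomposition of superharmonic functions. The main technical input beyond these two structural theorems is to extract enough integrability of the Riesz mass $\nu = -\Delta G$ from the boundedness of $G$ on the positive imaginary axis.

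For part (a), I would invoke the Herglotz--Nevanlinna theorem: any nonnegative harmonic function on $H$ admits a unique representation
\[
G(z) = c\, \im z + \frac{1}{\pi}\int_{\R} \frac{\im z}{|z-\xi|^{2}}\, d\mu(\xi),
\]
with $c \geq 0$ and $\mu$ a nonnegative Borel measure on $\R$ satisfying $\int (1+\xi^2)^{-1} d\mu < \infty$. A clean route is to pull back via the Cayley transform to the disk and apply the standard Herglotz representation there. The hypothesis that $G$ remains bounded on the positive imaginary axis forces $c=0$, since $G(i\tau) \geq c\tau$. Local finiteness of $\mu$ is then automatic, as $(1+\xi^{2})^{-1}$ is bounded away from zero on every compact subset of $\R$; and the identification of $\mu$ as the boundary trace of $G$ follows from the weak-$*$ convergence $G(\cdot + i\epsilon)\, d\xi \rightharpoonup d\mu$ as $\epsilon \to 0^+$, via the approximate-identity property of the Poisson kernel.

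For part (b), since $G$ is nonnegative and superharmonic on $H$, positivity of $-\Delta G$ as a distribution ensures that $\nu := -\Delta G$ is a nonnegative Radon measure on the upper half-plane. I would then invoke the Riesz decomposition on $H$, which yields
\[
G(z) = G_h(z) + \frac{1}{2\pi}\int_{H} \log\left|\frac{z - \bar z_0}{z - z_0}\right| d\nu(z_0),
\]
where $G_h$ is the greatest harmonic minorant of $G$ and the kernel is precisely the Dirichlet Green's function for $-\Delta$ on $H$. The Green's potential is nonnegative in $H$, so $G_h$ is itself nonnegative; it is also bounded on the positive imaginary axis, since the potential there is dominated by $G$. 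Part (a) then applies to $G_h$ and produces the boundary measure $\mu$, completing the representation.

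The main obstacle is to carry out the Riesz decomposition in the \emph{unbounded} setting and justify convergence of the Green's potential. The strategy is to pair $G$ with the Green's function of an exhausting family of half-disks $B_R \subset H$ centered at the origin via Green's identity, evaluate at the base point $z=i$, and pass to $R \to \infty$ by monotone convergence. This converts the single finite value $G(i)$ into the a priori estimate
\[
\int_{H} \log\left|\frac{\bar z_0 - i}{z_0 - i}\right| d\nu(z_0) \lesssim G(i) < \infty,
\]
whose integrand decays like $\im z_0 /(1+|z_0|^2)$ at infinity. Once this bound is in hand the Green's potential is pointwise finite a.e., Fubini applies, and the decomposition is justified on all of $H$; the rest of the argument is then a routine application of part (a) to the harmonic residual.
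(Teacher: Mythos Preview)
Your proposal is correct and follows the same overall architecture as the paper: for (a) obtain the Poisson representation and use boundedness on the imaginary axis to kill the linear term; for (b) peel off the Green potential of $\nu$ and apply (a) to the harmonic remainder. The implementations differ in detail. For (a) you invoke Herglotz--Nevanlinna as a black box, whereas the paper argues directly: it forms the truncated Poisson integrals $G_R$, shows $G\ge G_R$ by the maximum principle, lets $R\to\infty$ to get $G\ge G_\infty$, and then shows the nonnegative harmonic difference $H=G-G_\infty$ vanishes by Harnack (boundedness on the imaginary axis propagates to the whole half-plane) plus odd reflection and Liouville. For (b) the paper also exhausts, but over compact $K\subset H$: it sets $G_K=\frac{1}{2\pi}\int_K\log\bigl|\frac{z-\bar z_0}{z-z_0}\bigr|\,d\nu$, observes $G-G_K$ is superharmonic with nonnegative boundary data and decay at infinity, hence $G_K\le G$ by the minimum principle; the sup over $K$ gives a well-defined potential $G_\infty\le G$ with $-\Delta G_\infty=\nu$, and (a) is applied to $G-G_\infty$. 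Your half-disk Green's identity route reaches the same a~priori bound $\int_H\log\bigl|\frac{i-\bar z_0}{i-z_0}\bigr|\,d\nu\le 2\pi\,G(i)$ by a slightly different mechanism.

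One small slip: your sentence ``the Green's potential is nonnegative in $H$, so $G_h$ is itself nonnegative'' has the implication backwards. Nonnegativity of the potential gives $G_h\le G$, not $G_h\ge 0$. The nonnegativity of $G_h$ follows instead from the fact you already stated, that $G_h$ is the greatest harmonic minorant of $G$, together with $0$ being a harmonic minorant of $G\ge 0$.
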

\begin{proof}
a) The trace\red{s} of nonnegative harmonic functions are measures, see 
\cite{MR1805196}.   For large $R$ denote
\[
G_R(z) =  \frac{1}{\pi} \int_{-R}^R \frac{\im z }{|z-\xi|^2} d \mu
\]
and let $G_\infty$ be the expression on the right in \eqref{G-rep}.
Then $G_R$ is a harmonic function in the upper half-space which decays at infinity.
Hence $\limsup_{z\to \infty} G(z)  - G_R(z) \ge 0 $ in the upper half-space. 
 It is also nonnegative on the real axis,
therefore by the maximum principle \red{applied to the harmonic function $G-G_R$} we get $G \geq G_R$.  Letting $R \to \infty$, by Fatou's lemma we get
\[
\lim_{R \to \infty} G_R = G_\infty \leq G
\]
Now consider the difference $H = G-G_\infty$, which is nonnegative, harmonic, vanishes on the real line
and is bounded on the purely imaginary axis.  Applying the Harnack inequality  (see \cite{MR688919}) we conclude that 
$H$ is bounded in the upper half-plane. Since it vanishes on the real line, the odd extension of $H$ is a global bounded harmonic function, and thus constant.
Hence we get $H=0$.
 
b) The negative Laplacian of a superharmonic function is a measure. We denote this measure by $\nu$. For $K \subset \{ z : \im z >0\} $ compact we define 
\[ G_K = \frac1{2\pi} \int_K  \ln \Big| \frac{z-\bar z_0}{z-z_0}\Big|  d\nu(z_0). \]
By the maximum principle $G_K \le G$. Similarly,  if $K_0 \subset K_1$ then 
$G_{K_0} \le G_{K_1}$ and 
\[ G_\infty = \sup_{K} G_K \le G \]   
and it satisfies 
\[ -\Delta G_\infty = \nu. \] 
Now we apply part a) to the nonnegative harmonic function $G-G_\infty$. 
\end{proof}

Consider a superharmonic function $G$ as above and $j \geq 0$. 
Then we define the  expressions
\begin{equation} \label{formula} 
G_{2j} = \frac{1}{\pi} \int_\R \xi^{2j} d\mu + \frac{1}{2\pi}\int_{H} \frac{1}{2j+1} \im z^{2j+1}  d\nu
\end{equation} 
which can formally be  interpreted as  the coefficients of the formal expansion of $G$
at infinity,
\[
 G(i \tau) \approx \sum_{j=0}^\infty (-1)^j \tau^{-2j-1} G_{2j}
\]
We make this correspondence rigorous in the following:

\begin{lemma}\label{l:apriori-n}
Assume that the support of $\nu$ is contained in a the strip 
 $ 0 < \im z  < c$ 
 and that on the imaginary axis the function $G$ admits the finite expansion
\[
 G(i \tau) = \sum_{j=0}^{N-1}(-1)^j \tau^{-2j-1}  G_{2j} + O(\tau^{-2N-1})
\]
for some real constants $ G_{2j}$. Then the measures $
(1+\xi^2)^{N}\mu$, $\im z (1+ |z|^2)^N \nu$ are finite. Vice versa, 
if the measures $(1+\xi^2)^{N}\mu$ and $\im z (1+ |z|^2)^N \nu$ are finite and $G_{2j}$, $j\le N$ are defined by \eqref{formula} then  we have \begin{equation}\label{improved}  
G(i \tau) = \sum_{j=0}^N (-1)^j \tau^{-2j-1} G_{2j} + o(\tau^{-2N-1}). 
\end{equation} 
\end{lemma}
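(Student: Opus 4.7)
The plan is to use the integral representation $G(z) = P(z) + L(z)$ from part (b) of the previous lemma, where $P$ is the Poisson integral of $\mu$ and $L$ the Green potential of $\nu$, both nonnegative in the upper half-plane. The main computational inputs are the Taylor expansions
\[
\frac{\tau}{\tau^2+\xi^2} = \sum_{j=0}^{N-1}(-1)^j\xi^{2j}\tau^{-2j-1} + \frac{(-1)^N\xi^{2N}}{\tau^{2N+1}(1+\xi^2/\tau^2)}
\]
and, for $|z_0|<\tau$,
\[
\log\Bigl|\frac{i\tau-\bar z_0}{i\tau-z_0}\Bigr| = \sum_{j=0}^{N-1}\frac{2(-1)^j\im z_0^{2j+1}}{(2j+1)\tau^{2j+1}}+R_N(\tau,z_0),
\]
supplemented for $|z_0|\geq\tau$ by the direct bound $\log|(i\tau-\bar z_0)/(i\tau-z_0)| \leq 2\tau\im z_0/|i\tau-z_0|^2$. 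Upon termwise integration against $\mu$ and $\nu$, the Taylor coefficients reproduce the $G_{2j}$ of \eqref{formula}.

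For the converse direction, assuming finite moments, I substitute these expansions into $P(i\tau)$ and $L(i\tau)$, exchange sum and integral, and bound the remainders via monotone/dominated convergence. For $P$, the integrand $\xi^{2N}/(1+\xi^2/\tau^2)\nearrow \xi^{2N}$ is dominated by $(1+\xi^2)^N\in L^1(d\mu)$. For $L$, I split at $|z_0|=\tau/2$: in the inner region the series bound $\tau^{2N+1}|R_N|\lesssim \im z_0|z_0|^{2N}$ gives the majorant $\im z_0(1+|z_0|^2)^N\in L^1(d\nu)$, while in the outer region the direct log bound together with the strip hypothesis supplies the same majorant. Summing yields \eqref{improved}.

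For the forward direction I induct on $N$. The base case $N=0$ uses $G(i\tau)\leq C/\tau$ and $P,L\geq 0$: monotone convergence on $\tau P(\tau)=\frac{1}{\pi}\int\frac{\tau^2}{\tau^2+\xi^2}d\mu\leq C$ gives $\mu(\R)<\infty$, and Fatou applied to $\tau L(\tau)\leq C$ with pointwise limit $\tau\log|(i\tau-\bar z_0)/(i\tau-z_0)|\to 2\im z_0$ gives $\int\im z_0\,d\nu<\infty$. The hypothesis for $N$ subsumes that for $N-1$ (by absorbing the $(N-1)$-th term into the error), so by induction the $(N-1)$-th moments are finite. Setting $A(\tau)=\int\xi^{2N}/(1+\xi^2/\tau^2)\,d\mu\geq 0$ (finite for each $\tau$ by the inductive bound), the hypothesis rearranges as
\[
\frac{A(\tau)}{\pi}+\frac{(-1)^N\tau^{2N+1}}{2\pi}\int R_N(\tau,z_0)\,d\nu = S(\tau),\qquad |S(\tau)|\leq C;
\]
since $A(\tau)\nearrow\int\xi^{2N}d\mu$, it suffices to bound $(-1)^N\tau^{2N+1}\int R_N\,d\nu$ from below uniformly in $\tau$.

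This lower bound is the main obstacle, because the pointwise limit $(-1)^N\tau^{2N+1}R_N\to 2\im z_0^{2N+1}/(2N+1)$ is not sign-definite, and the candidate dominating function $\im z_0(1+|z_0|^2)^N$ is not $\nu$-integrable a priori. The strip hypothesis $0<\im z<c$ saves the argument: the binomial identity gives $\im z_0^{2N+1}=(2N+1)\im z_0(\real z_0)^{2N}+\im z_0\cdot(\text{lower-order polynomial in $\real z_0$ bounded via $\im z_0<c$})$, which is nonnegative once $|\real z_0|\geq A_0(N,c)$. I split $\nu$ into the bulk $\{|\real z_0|\leq A_0\}$, where $|\im z_0^{2N+1}|\lesssim_{N,c,A_0}\im z_0$ is $\nu$-integrable by the base case, and the nonnegative tail, and apply DCT on the bulk together with a piecewise Fatou argument on the tail (internally split at $|z_0|=\tau/2$, with the direct log bound used in the far tail). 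This produces the required uniform lower bound; consequently $\int\xi^{2N}d\mu<\infty$ and $\int\im z_0^{2N+1}d\nu<\infty$, and the strip hypothesis converts the latter into $\int\im z(1+|z|^2)^N d\nu<\infty$, closing the induction.
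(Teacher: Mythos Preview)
Your proposal is correct and follows essentially the same strategy as the paper: induction on $N$, write the remainder $G(i\tau)-\sum_{j<N}(-1)^j G_{2j}\tau^{-2j-1}$ via the integral representation, then exploit that the integrand of $\tau^{2N+1}(-1)^N\,(\text{remainder})$ is nonnegative outside a compact set (thanks to the strip hypothesis) together with monotonicity/Fatou to control the limit.

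The main difference is presentational. The paper derives a closed contour-integral formula for the $\nu$-remainder,
\[
\Xi_{\geq N}(\tau,z)=(-1)^N\im\int_0^z\frac{\zeta^{2N}}{\tau^{2N-1}(\tau^2+\zeta^2)}\,d\zeta,
\]
from which positivity of $(-1)^N\tau^{2N+1}\Xi_{\geq N}$ outside a compact subset of the strip and its monotone convergence to $\im z^{2N+1}/(2N+1)$ are read off directly; the proof is then two lines. Your version works instead with the Taylor remainder $R_N$ and the elementary log bound, which forces the explicit bulk/tail and near/far splittings with separate DCT and Fatou arguments. Both approaches are valid; the contour formula buys a cleaner one-shot positivity and monotonicity statement, while your route is more hands-on but requires no auxiliary identity.
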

\begin{proof}
We argue by induction on $N$. For $N=0$ we use the representation \eqref{G-rep+} to compute 
\[
\lim_{\tau  \to \infty} \tau G(i \tau) =
\lim_{\tau\to \infty} \Big[   \frac{1}{\pi} \int_\R \frac{1}{1+|\xi|^2/\tau^2} d \mu(\xi) + \frac{1}{2\pi} 
\int_{H}\tau  \log \left|\frac{1+i\bar z_0/\tau}{1+iz_0/\tau} \right| d\nu(z_0)\Big]
= 
G_0
\]
where the integrand is positive and increasing in $\tau$ so the limit always exists.
For the induction step, we assume that the result holds for $N-1$ and prove it for $N$.
To achieve this we consider the difference 
\begin{equation} \label{GN} 
\begin{split} 
G_{\geq N}(i\tau) = & G(i\tau) -   \sum_{j=0}^{N-1} (-1)^j \tau^{-2j-1} G_{2j} 
 =  \frac1\pi \int_{\R}  \Big[  \frac{\tau}{\tau^2+\xi^2} -  \sum_{j=0}^{N-1} 
(-1)^j \frac{\xi^{2j}}{\tau^{2j+1}} \Big] d\mu \\
& +  \frac1{2\pi} \int_H  \Big[  \log \frac{|i\tau-\bar z|}{|i\tau-z|} - \sum_{j=0}^{N-1}(-1)^j  \frac1{2j+1} \frac{\im z^{2j+1}}{\tau^{2j+1}}  \Big] d\nu(z)  
\\ = & 
 \frac{1}{\pi} \int_\R \frac{(-1)^N \xi^{2N}}{\tau^{2N-1} (\tau^2+\xi^2)} d\mu(\xi)  
+ \frac{1}{2\pi} \int_H \Xi_{\geq N}(\tau, z) d\nu(z) 
\end{split} 
\end{equation}  
where, for $i\tau \ne z$, using  $|it -\bar z|= |i\tau +z|$, 
\begin{equation} \label{XiN}
\begin{split} 
\Xi_{\geq N}(\tau, z) = & \, \real \Big[ \log (i\tau +z)-\log (i\tau -z) +i \sum_{j=0}^{N-1} (-1)^j \frac1{2j+1} \frac{ z^{2j+1}}{\tau^{2j+1}} \Big] 
\\ = & \, \real  \int_0^z  \frac{d}{d\zeta}  \Big[ \log (i\tau+\zeta) -\log(i\tau-\zeta)  +i \sum_{j=0}^{N-1} (-1)^j \frac1{2j+1} \frac{ \zeta^{2j+1}}{\tau^{2j+1}}\Big]d\zeta  
\\ = &\, \real  \int_0^z    \Big[\frac12\big(  \frac1{\zeta+i\tau}-\frac1{\zeta-i\tau}\big)    +i\sum_{j=0}^{N-1} (-1)^j  \frac{ \zeta^{2j}}{\tau^{2j+1}}\Big]d\zeta  
\\ = &\, \im  \int_0^z    \frac{\tau} {\tau^2+\zeta^2}     -\sum_{j=0}^{N-1} (-1)^j  \frac{ \zeta^{2j}}{\tau^{2j+1}}d\zeta  
\\ = &\,
(-1)^N\im  \int_0^z   \frac{\zeta^{2N}}{\tau^{2N-1} (\tau^2+\zeta^2)} d\zeta.  
\end{split}
\end{equation} 
Then $\Xi_{\ge N }$ 
is independent of the contour of integration. 
The integrand is nonnegative outside a compact set, so using again monotonicity we compute the limit 
\[
\lim_{\tau \to \infty} \tau^{2N+1} (-1)^N G_{\geq N}(i\tau) = G_{2N}
\]
which may be either finite or $+\infty$. By the induction hypothesis it  is finite and the conclusion of the lemma follows.
\end{proof}

Next we turn our attention to noninteger $s$, for which we define the energy type quantities as in \eqref{def++}\red{. For $G$ as above and $ N\le s < N+1$ we define} 
\[
E_s(G) =  \red{\frac{2 \sin (\pi s)}\pi}   \int_1^\infty (\tau^2 -1)^s\Big[-  G(i\tau)  +
\sum_{j=0}^N (-1)^j  G_{2j} \tau^{-2j-1}\Big] d\tau 
+  \sum_{j=0}^N \binom{s}{j}  G_{2j} 
\]
and we recall that the integrand is $-G_{\ge N}$ of \eqref{GN}   and the correction is chosen so $E_s(G)$ does not change when we increase $N$ by \eqref{changeofcontour}. 
For these we have the following:

\begin{lemma}\label{l:apriori-s}
Let $G$ be a superharmonic function in the upper half-space, with the associated measures $\mu$, $\nu$ 
as above, and $N \leq s < N+1$ with the following properties:

\begin{enumerate}
\item   The quantities  $G_{2j}$, $0\le j \le  N$ are finite.
\item  The support of $\nu$ is contained in the region $\im z \lesssim  (1+\real z)^{-2s}$ for $s \leq 0$, respectively 
$\im z \lesssim  1$ for $s \geq 0$.
\end{enumerate}

Then the following trace formula holds with $\Xi_s$ defined in  \eqref{Xi-def}:
\begin{equation}\label{trace-G}
E_s(G) =\red{\frac1\pi} \int_{\R} (1+\xi^2)^s d\mu + \red{\frac{1}{\pi}} \int_H \Xi_s d\nu  
\end{equation}
in the sense that the equality holds whenever the one side is finite. If one of the sides is finite then the integrand of $E_s(G)$ is bounded by $C \tau^{-2s-1} $. 
\end{lemma}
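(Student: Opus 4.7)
The plan is to use the integral representation \eqref{G-rep+} to split $G = G^\mu + G^\nu$ into its Poisson (measure $\mu$) and Green's function (measure $\nu$) contributions, and correspondingly split $E_s(G) = E_s^\mu + E_s^\nu$ and $G_{2j} = G_{2j}^\mu + G_{2j}^\nu$. For each piece the idea is simple: rewrite the integrand of $E_s(G)$ as $-G_{\geq N+1}(i\tau)$, which by \eqref{GN} (with $N$ replaced by $N+1$) is the sum of explicit non-negative tail-integrals against $\mu$ and $\nu$; then swap the $\tau$-integration with the $\mu$- and $\nu$-integrations via Fubini; and finally verify pointwise identities for the resulting inner $\tau$-integrals.

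For the $\mu$-part, after swapping, the key identity to establish is
\[
\frac{2\sin(\pi s)(-1)^N}{\pi}\int_1^\infty (\tau^2-1)^s \frac{\xi^{2N+2}}{\tau^{2N+1}(\tau^2+\xi^2)}d\tau + \sum_{j=0}^N \binom{s}{j}\xi^{2j} = (1+\xi^2)^s, \qquad \xi \in \R.
\]
To prove this, first expand $\frac{1}{\tau^2+\xi^2} = \tau^{-2}\sum_{k\geq 0}(-1)^k(\xi/\tau)^{2k}$, which converges uniformly for $\tau\geq 1$ and $|\xi|<1$. Integrating term-by-term and invoking \eqref{changeofcontour} (which applies since each exponent $2(N+1+k)+1$ corresponds to an index $j=N+1+k>s$) converts each term into the binomial coefficient $\binom{s}{N+1+k}\xi^{2(N+1+k)}$. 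Combined with the finite sum $\sum_{j=0}^N\binom{s}{j}\xi^{2j}$ this reconstructs the full Taylor series of $(1+\xi^2)^s$. The identity then extends from $|\xi|<1$ to all real $\xi$ by analytic continuation, since both sides are real-analytic functions of $\xi^2$.

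For the $\nu$-part the argument is parallel, but one additional step is needed. Using \eqref{XiN} (at level $N+1$) to rewrite $\Xi_{\geq N+1}(\tau,z)$ as $(-1)^{N+1}\im\int_0^z\frac{\zeta^{2N+2}}{\tau^{2N+1}(\tau^2+\zeta^2)}d\zeta$, one swaps the $\tau$- and $\zeta$-integrations (justified by absolute convergence on the support of $\nu$, which lies in the prescribed strip) and applies the pointwise identity of the previous paragraph with $\zeta\in\C$ in place of $\xi$; this is valid by analytic continuation into the domain where $1+\zeta^2$ avoids the branch cut, which covers the relevant contour from $0$ to $z$. The inner expression collapses to $\im\int_0^z (1+\zeta^2)^s d\zeta - \sum_{j=0}^N\binom{s}{j}\frac{\im z^{2j+1}}{2j+1}$; adding back the $G_{2j}^\nu$ corrections then yields exactly $\Xi_s(z)$ as required.

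The main obstacle is making the Fubini exchanges fully rigorous and establishing the asserted pointwise bound on the integrand. If the right-hand side of \eqref{trace-G} is finite, then $\int(1+\xi^2)^s d\mu$ and $\int\Xi_s d\nu$ are finite, and since the inner integrands $\frac{\xi^{2N+2}}{\tau^{2N+1}(\tau^2+\xi^2)}$ and (a suitably arranged version of) $\Xi_{\geq N+1}$ are non-negative on the relevant regions, Fubini-Tonelli directly applies. In the reverse direction, assuming $E_s(G)$ is finite, the same non-negativity combined with monotone convergence (letting the integration range over $\tau$ saturate at $\infty$) yields finiteness of the right-hand side moments. Finally, the pointwise bound $|G_{\geq N+1}(i\tau)|\lesssim \tau^{-2s-1}$ on the integrand follows from the two tail-integral representations: for the $\mu$-contribution one uses the boundedness of $(1+\xi^2)^s d\mu$ together with the elementary interpolation $\frac{\xi^{2N+2}}{\tau^{2N+1}(\tau^2+\xi^2)}\lesssim \tau^{-2s-1}(1+\xi^2)^s$, valid since $N\leq s<N+1$; an analogous bound for $\Xi_{\geq N+1}$ on the strip support of $\nu$ handles the Green's contribution.
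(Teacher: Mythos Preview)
Your proposal is correct and follows essentially the same architecture as the paper: reduce \eqref{trace-G} to a pointwise identity for Dirac measures $\mu=\delta_\xi$ and $\nu=\delta_z$, verify that identity, and then pass to general measures via Tonelli/Fatou using the nonnegativity of the tail integrand $-G_{\geq N+1}$.

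The only real difference is in how the pointwise identity is checked. The paper undoes the contour shift from \eqref{def} to \eqref{def+}: it moves the $\tau$-integral over $i[1,\infty)$ back to $\R+i0$ and reads off the residues at $\pm\xi$ (respectively a full residue at $z$), which immediately yields $(1+\xi^2)^s$ (respectively $(1+z^2)^s$). You instead expand $\frac{1}{\tau^2+\xi^2}$ as a geometric series, integrate term by term via \eqref{changeofcontour}, and analytically continue in $\xi$. Both work; yours hides the contour manipulation inside the already-established identity \eqref{changeofcontour}, while the paper's is a one-line residue computation. For the $\nu$-piece the paper avoids your swap of $\tau$- and $\zeta$-integrations altogether by differentiating both sides in $z$ (both vanish at $z=0$), reducing at once to the scalar identity; this is slightly cleaner than your route, where one must choose the $\zeta$-contour from $0$ to $z$ to avoid the ray $i[1,\infty)$ (where $\tau^2+\zeta^2$ can vanish) before invoking analytic continuation.
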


\begin{proof}
We use the representation of $G$ in \eqref{G-rep+} and expansions of the integrands 
to rewrite   $E_s(G)$ by introducing summands to the integrand  to get better decay as in \eqref{GN}. 
We verify the claimed identity first for Dirac measures $\mu = \delta_\xi$:
\[
\frac{2 \sin (\pi s)}{\pi} \int_{1}^\infty (\tau^2-1)^s \frac{(-1)^{N+1}\xi^{2N+2}}{(\tau^2+\xi^2) \tau^{2N+1}} d\tau 
+  \sum_{j=0}^N \binom{s}{j}    \xi^{2j} = \frac{1}{\pi} (1+\xi^2)^s
\]
which is nothing but (the real part of) Cauchy's formula as  for \eqref{def+} applied to integral. We move this contour of integration to the real line, 
undo the splitting of \eqref{XiN} and evaluate the residua. 
\[  
\frac1\pi \im  \int_{-\infty+i0}^{\infty+i0}  (1+\eta^2)^s \Big[ \frac{\eta}{\xi^2-\eta^2} - \sum_{j=0}^{N} \frac{\xi^{2j}}{\eta^{2j+1}}\Big]  d\eta  
= \frac{1}{\pi} (1+\xi^2)^s - \sum_{j=0}^N \binom{s}{j} \xi^{2j}. 
\] 

The same argument applies for the case when $\nu= \delta_z$ is a Dirac mass. Then we need to verify the relation (see \eqref{XiN} for the definition of $\Xi_{\ge N+1} $ and \eqref{Xi-def} for the definition of $\Xi_s$) 

\[
\frac{2 \sin (\pi s)}{\pi} \int_{1}^\infty (\tau^2-1)^s \Xi_{\geq N+1}(z) d\tau +   \sum_{j=0}^N \binom{s}{j}\frac{ z^{2j+1}}{2j+1}  =\frac{1}{ \pi} 
\Xi_s(z)
\]
Both sides vanish at $z = 0$, so it suffices to verify that their derivatives with respect to $z$  are equal, i.e. that
\[
\frac{2 \sin (\pi s)}{\pi} \int_{1}^\infty (\tau^2-1)^s \frac{\xi^{2N+2}}{(\tau^2+z^2) \tau^{2N+1}} d\tau 
+  \sum_{j=0}^N \binom{s}{j}    z^{2j} = \frac{1}{\pi} (1+z^2)^s
\]
This is the same residue theorem computation as above, except that at
$z$ we now get a full residue rather than the prior half residue.

 We first establish the desired bound \eqref{trace-G} if both measures
 $\mu$ and $\nu$ are compactly supported. Then all integrals in
 \eqref{trace-G} are absolutely convergent, and it suffices to
 consider the case when both $\mu$ and $\nu$ are Dirac masses.
 Hence, to conclude the proof it suffices to assume that both measures are supported outside a compact set.
Then we can use the localization of $\nu$ to conclude that the contribution of $\nu$ to both the 
left hand side and the right hand side of \eqref{trace-G} are nonnegative. 
By  
Fatou's lemma  the two expressions are either both finite and equal, or both infinite.
\end{proof}

The leading term in both $T^{-1}-1$ and $\ln T$ away from the real axis is
$T_2(z)$. It is by far the simplest and most important to analyze, and provides 
the quadratic term in all our conserved energies. All the techniques for its 
study will remain relevant for the higher order terms.
In the focusing case $v = - u$ so only the sign of $T_2$ changes. 
There is however no sign change in $\tilde T_2$ 
since we also replace $-\ln |T|$ by $\ln |T|$. 

In both cases we study 
\[
 T_2(z) =  \int_{x<y} e^{2iz(y-x)} u(y) \overline{u(x)} dx dy. 
\] 
We choose an approach which generalizes to the higher  order terms. 
For definiteness we work with the unitary Fourier transform 
\[ \hat u(\xi) = (2\pi)^{-\frac12} \int u(x) e^{-ix\xi } dx, \]
allow $v\ne \pm u$   
and use Fourier inversion to calculate 
\begin{equation}  \label{T2-d} 
\begin{split} 
T_2(z) = & \frac1{2\pi}  \int_{x<y} e^{2iz(y-x) - i (x\xi-y\eta)} \hat u(\eta) \overline{\hat v(\xi)} d\xi d\eta  dx dy 
   \\  = & -\frac1{2\pi} \int_{\R^2} \int_{\R} \frac1{2iz+i\xi}    e^{iy(\eta-\xi) } \hat u(\eta)\overline{\hat v(\xi)}dy d\eta d\xi   
\\   = & i  \int_{\R}  \frac1{2z+\xi} \hat u(\xi) \overline{\hat v(\xi)}  d\xi 
\\   =  &  i   \sum_{j = 0}^N (-1)^j \int \xi^j \hat u(\xi) \overline{\hat v(\xi)}  d\xi  (2z)^{-j-1}
+ i (-2z)^{-N} \int   \frac{\xi^{N+1} }{(2z+\xi)}     \hat u(\xi) \overline{\hat v(\xi)}  d\xi. 
\end{split} 
\end{equation} 
 We focus on $v=u$ and denote $f(\xi)= |\hat u(\xi)|^2$. Then 
\[
 T_2(z) = i  \int_{\R}  \frac1{2z+\xi} f(\xi) d\xi. 
\] 
and we apply Lemma \ref{l:apriori-n} and \ref{l:apriori-s}.

\begin{proposition} \label{p:T2}
a) For $z$ in  the upper half-plane we have 
\begin{equation} 
\label{T2pointwise0} 
| \real  T_2(z)| \le   \int_{\R}   \frac{2\im z}{|2\im z|^2 +|2\real z +\xi|^2} |\hat u(\xi)|^2 d \xi. 
\end{equation}   
Further,  let
\begin{equation}\label{Hj2} 
H_{j,2} =   (-1)^j \int_{\R}  \xi^{j} |\hat u(\xi)|^2 d\xi = 
    \left\{ \begin{array}{ll} \displaystyle \int_{\R}  |u^{(k)}|^2 dx & \text{ if } j=2k  \\[2mm]
                                 \displaystyle  \im   \int_{\R}   u^{(k+1)} \overline{ u^{(k)}}\, dx & 
\text{ if } j=2k+1. \end{array} \right. 
 \end{equation} 
then for $N\ge 0$ 
\begin{equation} 
\label{T2pointwise} 
 \Big|\real\Big( T_2(z/2) -i     \sum_{j=0}^{N-1} H_{j,2} z^{-1-j} \Big) \Big| 
\le  |z|^{-N} \int_{\R} |\xi|^{N} \frac{\im z}{|\im z|^2 + |\real z+\xi|^2} |\hat u(\xi)|^2 d \xi 
\end{equation}                  
                    
b) 
 If $s\notin \N$, $s>-\frac12$  then 
\begin{equation} \label{T2imaginary} 
\Vert u \Vert_{H^s}^2 =  \frac{2\sin \pi s}\pi  \bigintsss_1^\infty \red{-}  (\tau^2-1)^s     \Big[ \real T_2\Big(i\tau/2\Big) -  \sum_{j=0}^{[s]} H_{2j,2} (-1)^{j}  \tau^{-2j-1} \Big]  d\tau
+   \sum_{j=0}^{[s]} \binom{s}{j} H_{2j,2} .                                     \end{equation} 
\end{proposition}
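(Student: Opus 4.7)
The first step is to exploit the closed-form expression already obtained in \eqref{T2-d}: specializing to $v=u$ gives
\[
T_2(z)=i\int_{\R}\frac{|\hat u(\xi)|^2}{2z+\xi}\,d\xi.
\]
For \eqref{T2pointwise0}, I simply take the real part of the integrand; rationalizing $1/(2z+\xi)$ yields
\[
\real T_2(z)=\int_{\R}\frac{2\im z}{(2\real z+\xi)^2+(2\im z)^2}|\hat u(\xi)|^2\,d\xi,
\]
which is already an equality (and nonnegative), so \eqref{T2pointwise0} follows. For \eqref{T2pointwise}, I would apply the partial geometric series identity $\tfrac{1}{z+\xi}=\sum_{j=0}^{N-1}\tfrac{(-\xi)^j}{z^{j+1}}+\tfrac{(-\xi)^N}{z^N(z+\xi)}$ under the integral, producing the explicit remainder
\[
T_2(z/2)-i\sum_{j=0}^{N-1}H_{j,2}z^{-j-1}=i(-1)^N z^{-N}\int_{\R}\frac{\xi^N}{z+\xi}|\hat u(\xi)|^2\,d\xi,
\]
whose real part is estimated using $\real\bigl(i/(z+\xi)\bigr)=\im z/|z+\xi|^2$ to obtain the claimed Poisson-type bound with weight $|z|^{-N}|\xi|^N$.

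For part (b) the plan is to set $G(z):=\real T_2(z/2)$ and fit it into the harmonic-function framework of Lemmas \ref{l:apriori-n}--\ref{l:apriori-s}. The formula above exhibits $G$ as the Poisson integral of the measure $d\mu(\xi)=\pi|\hat u(-\xi)|^2\,d\xi$, so $G$ is a nonnegative harmonic function on the upper half-plane with boundary measure $\mu$ and vanishing singular part $\nu\equiv 0$. Substituting $z=i\tau$ into the formal expansion $T_2(z/2)\sim i\sum_j H_{j,2}z^{-j-1}$ and taking real parts, the odd-index terms drop out because $\real(i\cdot i^{-j-1})=0$ for odd $j$, leaving
\[
G(i\tau)= \sum_{k=0}^{[s]}(-1)^k H_{2k,2}\,\tau^{-2k-1}+o(\tau^{-2[s]-1}),
\]
where the remainder decay is a direct consequence of part (a) evaluated on the imaginary axis. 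Hence the moments $G_{2k}$ in the sense of Lemma \ref{l:apriori-n} coincide with $H_{2k,2}=\int\xi^{2k}|\hat u|^2\,d\xi$, which are finite for $u\in H^s$ and $k\le[s]$.

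The conclusion now follows from Lemma \ref{l:apriori-s}. Taking $N=[s]$, the hypotheses are satisfied (finite moments $G_{2k}$ for $k\le N$; trivial support condition since $\nu\equiv 0$), so the trace formula \eqref{trace-G} yields
\[
E_s(G)=\frac{1}{\pi}\int_{\R}(1+\xi^2)^s\,d\mu(\xi)=\int_{\R}(1+\xi^2)^s|\hat u(\xi)|^2\,d\xi=\|u\|_{H^s}^2.
\]
Unpacking the definition of $E_s(G)$ from the statement preceding Lemma \ref{l:apriori-s}, with $G$ replaced by $\real T_2(z/2)$ and $G_{2k}$ replaced by $H_{2k,2}$, reproduces exactly the right-hand side of \eqref{T2imaginary}.

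The main technical obstacle is the Poisson-type remainder estimate \eqref{T2pointwise} for general (non-imaginary) $z$: the naive bound $|\real R_N|\le|R_N|$ only produces $1/|z+\xi|$ rather than the sharper Poisson factor $\im z/|z+\xi|^2$, so one must exploit a cancellation between $\real(iz^{-N})$ and the phase of $1/(z+\xi)$ to recover the claimed decay. On the imaginary axis, however, this reduces to the elementary identity
\[
\frac{\tau}{\tau^2+\xi^2}-\sum_{k=0}^{M-1}(-1)^k\xi^{2k}\tau^{-2k-1}=\frac{(-1)^M\xi^{2M}}{\tau^{2M-1}(\tau^2+\xi^2)},
\]
which supplies the precise decay needed to identify the moments $G_{2k}$ and apply Lemma \ref{l:apriori-s} as above; the remaining steps are essentially a matter of pattern-matching the concrete expression for $T_2$ against the abstract harmonic-function setup.
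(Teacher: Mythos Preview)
Your approach is essentially identical to the paper's: derive \eqref{T2pointwise0} and \eqref{T2pointwise} directly from the Fourier representation \eqref{T2-d}, then recognize $G(z)=\real T_2(z/2)$ as the Poisson integral of $d\mu(\xi)=\pi|\hat u(-\xi)|^2\,d\xi$ and invoke the trace formula of Lemma~\ref{l:apriori-s} to obtain \eqref{T2imaginary}. The paper's proof is a two-line sketch saying exactly this, and you have filled in the details correctly.

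The subtlety you flag at the end is genuine and worth a remark: for general $z$ off the imaginary axis (and in fact even on the imaginary axis when $N$ is odd), the factor $z^{-N}$ mixes the real and imaginary parts of $i/(z+\xi)$, so the naive pointwise bound on the integrand does not immediately yield the Poisson weight $\im z/|z+\xi|^2$ claimed in \eqref{T2pointwise}. The paper does not address this either, simply declaring it ``immediate''. However, as you correctly observe, part~(b) only requires the imaginary axis and only the even-order moments survive in the real part there; for even $N$ on $z=i\tau$ the remainder is exactly
\[
\real\Big(T_2(i\tau/2)-i\sum_{j=0}^{N-1}H_{j,2}(i\tau)^{-j-1}\Big)=(-1)^{N/2}\tau^{-N}\int\frac{\xi^{N}\,\tau}{\tau^2+\xi^2}\,|\hat u(\xi)|^2\,d\xi,
\]
which is the identity you wrote and which gives \eqref{T2pointwise} with equality. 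This is all that is needed to feed into Lemma~\ref{l:apriori-s}, so your argument for part~(b) is complete.
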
                  

\begin{proof} Only the bound \eqref{T2pointwise} and its special case 
 \eqref{T2pointwise0} remain to be proven.  They are immediate consequences of \eqref{T2-d}. Then \eqref{trace-G} applied to the right hand side of 
\eqref{T2imaginary} has the left hand side 
\[      \int  (1+\xi^2)^s   |\hat u(-\xi)|^2 d\xi.  \]  
\end{proof}

\red{The trace formulas in Proposition \ref{prop:trace} follow in the same way.  In Sections \ref{s:t2j} and \ref{s:tt2j-exp} we prove that the right hand sides of the equalities in Proposition \ref{prop:trace} are bounded for $u \in H^s$ and then the formulas follow from Lemma \ref{l:apriori-s}. }

\newsection{Bounding the iterative  integrals $T_{2j}$} 
\label{s:t2j}
Here we consider the question of estimating the integrals $T_{2j}$,
with the goal of establishing the convergence of the formal series for
$T$. Since our energies are expressed in terms of the values of $T(z)$
on the positive imaginary axis, we will focus on this case, but also
comment on the region of validity of the expansion in the upper
half-plane.

The natural setting here is to work at scaling. Naively one might
start with  $u$ and $v$ in the space $\dot H^{-\frac12}$. However,
this does not seem to work so well, so instead we use  its cousins 
$DU^2$ and $DV^2$. Appendix \ref{a:uv} provides a self contained introduction 
for these spaces, including some new results. 

We list the properties which we need in this section. We use a suggestive formal notation 
which will be made precise (and proven) in Appendix \ref{a:uv}.  For simplicity in the 
discussion below we consider spaces of functions defined on $\R$, but similar properties
apply in any bounded or unbounded interval $(a,b) \subset \R$.
\begin{enumerate} 
\item  
$U^2$ and $V^2$  have the same scaling properties as $BV$ and 
$L^\infty$ and satisfy  
\begin{equation}  \label{uv:em}
U^2 \subset V^2 \subset \mathcal{R}  
.\end{equation} 
where $\mathcal{R}$ is the set of ruled functions, .i.e. functions with limits 
from the left and the right everywhere including one sided limits at the 
endpoints \red{$\pm \infty$}. Ruled functions are bounded. Further, we have the embedding
relation with the homogeneous Besov spaces $\dot B^{-\frac12}_{2,q}$,   
(see Corollary \ref{appBemb})
 \begin{equation} \label{appbemb} 
L^1 +  \dot B^{-\frac12}_{2,1} \subset  DU^2 \subset DV^2 \subset \dot B^{-\frac12}_{2,\infty} \cap L^\infty 
 \end{equation}
\red{ Moreover $U^p\subset V^p$ and
 \[ \Vert u \Vert_{V^p} \le 2^{\frac1p} \Vert u \Vert_{U^p}. \]
 If $ f \in L^1$ then
 \begin{equation}\label{eq:conv}  \Vert f* v \Vert_{V^p} \le \Vert f \Vert_{L^1} \Vert v \Vert_{V^p}, \quad \Vert f*u \Vert_{U^p} \le \Vert f \Vert_{L^1} \Vert u \Vert_{U^p}. \end{equation}
 } 
 \item 
$DU^2$ and $DV^2$ are the spaces of distributional derivatives of $U^2$ and $V^2$ 
functions.  In particular  $u \in U^2$ if and only if $\lim_{t \to -\infty} u(t) =0 $ and $u' \in DU^2 $, see definition \ref{B16}. 
Then 
\[ \Vert u \Vert_{U^2} = \Vert u' \Vert_{DU^2} \] 
which is less than infinity if and only if $u \in U^2$. 
Moreover, if $v$  is left continuous and $\lim_{t\to \infty} v(t) = 0 $ then  
\[ \Vert v \Vert_{V^2} = \Vert v' \Vert_{DV^2}. \]  
 \item The bilinear estimates 
\begin{equation} \label{uv:bilinear} 
\begin{split}   \Vert v u \Vert_{DU^2} \le &\ 2 \Vert v \Vert_{V^2} \Vert u \Vert_{DU^2} \\  \Vert vu \Vert_{DV^2} \le & \  \Vert v \Vert_{DV^2} \Vert u \Vert_{U^2} 
\end{split} \end{equation}  
 hold (see Definition \ref{stieltjes} and the discussion thereafter). 
 \end{enumerate} 

To iteratively solve the system \eqref{scatter-re} we define the one step operator
\[
  \phi \to  L\phi(t) =  \int_{x<y<t}  e^{iz(y-x)} u(x) \overline{v(y)} \phi(x) dy dx. 
\]

Our first bound for $L$ is as follows:

\begin{lemma}\label{piece}  Let $\im z> 0$. Then the  operator $L$
satisfies 
\[  
\Vert L \Vert_{V^2\to U^2} \le 8 \Vert e^{-i\real z x} v \Vert_{DU^2} 
 \Vert e^{-i\real z x} u \Vert_{DU^2}. 
\]    
\end{lemma}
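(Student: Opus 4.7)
The plan is to factor out the oscillation in $z$ first, writing $\tau = \im z > 0$, $\tilde u(x) = e^{-i\real z\, x} u(x)$ and $\tilde v(y) = e^{-i\real z\, y} v(y)$, so that $e^{iz(y-x)} u(x)\overline{v(y)} = e^{-\tau(y-x)} \tilde u(x)\overline{\tilde v(y)}$ and the factors $\|\tilde u\|_{DU^2}, \|\tilde v\|_{DU^2}$ that appear on the right of the target bound become natural. Peeling the iterated integral apart once, I can write $L\phi$ as the $-\infty$ antiderivative of $\overline{\tilde v}\cdot h$, where
\[
h(y) := \int_{-\infty}^y e^{-\tau(y-x)} \tilde u(x) \phi(x)\, dx,
\]
so that $\|L\phi\|_{U^2} = \|(L\phi)'\|_{DU^2} = \|\overline{\tilde v}\, h\|_{DU^2}$ is what I need to control.

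From there the proof is a three-step cascade using the $U^2/V^2$ calculus of Appendix~\ref{a:uv}. First, the bilinear product bound \eqref{uv:bilinear} gives $\|\tilde u\phi\|_{DU^2} \le 2\|\phi\|_{V^2}\|\tilde u\|_{DU^2}$. Second, I would let $U \in U^2$ be the $-\infty$ antiderivative of $\tilde u \phi$ (so $\|U\|_{U^2} = \|\tilde u \phi\|_{DU^2}$) and carry out Stieltjes integration by parts inside $h$:
\[
h(y) = \int_{-\infty}^y e^{-\tau(y-x)}\, dU(x) = U(y) - \tau\, (F*U)(y), \qquad F(t) := e^{-\tau t}\mathbf 1_{t>0}.
\]
Since $\|F\|_{L^1} = 1/\tau$, the convolution bound \eqref{eq:conv} yields $\|h\|_{U^2} \le 2\|U\|_{U^2}$: the factor of $\tau$ produced by differentiating $e^{-\tau(y-x)}$ cancels exactly against the factor $\tau^{-1}$ in $\|F\|_{L^1}$, so the estimate is uniform in $\tau$ as it must be by scaling. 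Third, one more application of \eqref{uv:bilinear} together with the embedding $\|h\|_{V^2} \lesssim \|h\|_{U^2}$ gives $\|\overline{\tilde v} h\|_{DU^2} \lesssim \|\tilde v\|_{DU^2}\|h\|_{U^2}$, and chaining the three bounds produces the desired trilinear estimate $\|L\phi\|_{U^2} \lesssim \|\tilde u\|_{DU^2}\|\tilde v\|_{DU^2}\|\phi\|_{V^2}$. The explicit constant $8$ will be recovered by tracking sharp constants through these three estimates.

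The main obstacle will be legitimizing the Stieltjes manipulations in a setting where the inputs have very low regularity: $\tilde u \in DU^2$ is only a distribution, $\phi \in V^2$ is merely a bounded ruled function, the product $\tilde u \phi$ is defined only via the Stieltjes product of Definition~\ref{stieltjes}, and $dU$ is just a $U^2$-Stieltjes measure. The integration by parts step, the identification of $h$ with the convolution $F * (\tilde u \phi)$, and the verification that $h$ genuinely lies in $U^2$ (in particular $h(-\infty) = 0$ so that $L\phi(-\infty) = 0$) all require invoking the product rules and convolution theory from Appendix~\ref{a:uv} carefully. Once those tools are granted the three-step cascade above gives the lemma essentially by inspection.
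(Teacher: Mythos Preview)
Your proposal is correct and follows essentially the same route as the paper: reduce to $\tau>0$ by absorbing the oscillation into $u,v$, write $\|L\phi\|_{U^2}=\|\overline{\tilde v}\,h\|_{DU^2}$, bound $h$ in $U^2$ via the convolution kernel $F(t)=e^{-\tau t}\mathbf 1_{t>0}$, and apply the bilinear product bound \eqref{uv:bilinear} twice. The only cosmetic difference is that the paper handles the middle step by writing $\|F*w\|_{U^2}=\|(F*w)'\|_{DU^2}=\|F'*w\|_{DU^2}\le\|w\|_{DU^2}$ directly, whereas you pass through the antiderivative $U$ and integrate by parts to get $h=U-\tau F*U$; these are the same computation read two ways, and your version makes the cancellation of $\tau$ against $\|F\|_{L^1}=\tau^{-1}$ more transparent.
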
 
 
\begin{proof} 
It suffices to consider $z = i $, by rescaling and including the oscillatory factor $e^{-i\real z x} $ into $u$. Then, applying the second and third property above several times, with $\eta= \chi_{t<0} e^{t}$  
\[ 
\begin{split} 
\Vert L \phi \Vert_{U^2} = &\  \Vert (L\phi)' \Vert_{DU^2} \\
   = & \  \left\Vert \overline{v(y)} \int_{-\infty}^y  e^{-(y-x)} u(x) \phi(x) dx 
\right\Vert_{DU^2}    \\
\le & \ 2  \Vert v \Vert_{DU^2} \Vert \eta * ( u \phi) \Vert_{V^2} \\
\le & \  4 \Vert v \Vert_{DU^2} \Vert \eta * ( u \phi) \Vert_{U^2} \\
\le & \ 4\Vert v \Vert_{DU^2} \Vert u \phi \Vert_{DU^2} \\
\le &\  8 \Vert v \Vert_{DU^2} \Vert u \Vert_{DU^2} \Vert \phi \Vert_{V^2}. 
\end{split} 
\] 
In addition we have used 
\[ 
\Vert \eta* w \Vert_{U^2} 
=  \Vert (\eta* w)' \Vert_{DU^2} = \| \eta' * w\|_{DU^2} \leq \|w\|_{DU^2} 
\]  
\end{proof} 

This bound is quite sharp on the real line, but as we move $z$ into the upper half-space
we can do better than this.   This is done using some more localized versions of the  $DU^2$ spaces. 
\begin{definition} \label{l2du2}
Given a frequency scale $\sigma > 0$ we define 
\[ \Vert u \Vert_{l^{p}_\sigma DU^2} = \left\Vert \Vert \chi_{[k/\sigma
  ,(k+1)/\sigma]} u\Vert_{DU^2} \right\Vert_{l^{p}_k} \] 
and similarly for $U^2$,
\[ \Vert v \Vert_{l^{p}_\sigma U^2} = \left\Vert \Vert
  \chi_{[k/\sigma, (k+1)/\sigma]} u\Vert_{U^2}
\right\Vert_{l^{p}_k}. \] 
where, for an interval $I$, $\chi_{I}$
is a smooth cutoff associated to the interval $I$, and the above
functions $\chi_{[k/\sigma ,(k+1)/\sigma]}$ form a partition of unity.
\end{definition} 

These spaces are translation invariant, and the norm of a translated
function is at most a fixed constant time the norm of the original
function. Connecting the two spaces we have the following
\begin{lemma}
We have
\begin{equation}\label{lp-udu}
\| u\|_{l^p_\sigma U^2} \lesssim \| \partial u\|_{l^p_\sigma DU^2} + \sigma \| u\|_{l^p_{\sigma} DU^2}
\end{equation}
\end{lemma}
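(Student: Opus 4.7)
The plan is to reduce the $l^p_\sigma U^2$ norm to localized $DU^2$ norms via the identity $\|v\|_{U^2} = \|v'\|_{DU^2}$ from property (2), applied to each piece $\chi_k u$ of the partition of unity. Since $\chi_k$ is compactly supported, $\chi_k u$ vanishes at $-\infty$, so property (2) applies and the Leibniz rule yields
\[
\|\chi_k u\|_{U^2} = \|(\chi_k u)'\|_{DU^2} \leq \|\chi_k u'\|_{DU^2} + \|\chi_k' u\|_{DU^2}.
\]
The first term on the right is exactly a local piece contributing to $\|\partial u\|_{l^p_\sigma DU^2}$, so after taking $l^p_k$ it accounts for the first term in the claimed inequality.

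For the second term I would use the bilinear estimate \eqref{uv:bilinear} with $\chi_k'$ in the $V^2$ slot. A scaling argument gives $\chi_k'(x) = \sigma\, \varphi(\sigma x - k)$ for a fixed smooth profile $\varphi$, so $\|\chi_k'\|_{V^2} \lesssim \|\chi_k'\|_{BV} \lesssim \sigma$. I then choose a slightly enlarged cutoff $\tilde\chi_k$ equal to $1$ on $\operatorname{supp}\chi_k$ and write $\chi_k' u = \chi_k'\,(\tilde\chi_k u)$, so that
\[
\|\chi_k' u\|_{DU^2} \leq 2\|\chi_k'\|_{V^2} \|\tilde\chi_k u\|_{DU^2} \lesssim \sigma \|\tilde\chi_k u\|_{DU^2}.
\]
A second application of \eqref{uv:bilinear} to $\tilde\chi_k u = \sum_{j \sim k} \tilde\chi_k \chi_j u$, combined with $\|\tilde\chi_k\|_{V^2} \lesssim 1$, yields
\[
\|\tilde\chi_k u\|_{DU^2} \lesssim \sum_{j \sim k} \|\chi_j u\|_{DU^2},
\]
with only finitely many indices $j$ contributing.

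Taking $l^p_k$ norms of both sides and using the bounded-overlap Minkowski inequality to absorb the $\sum_{j\sim k}$, I obtain
\[
\|u\|_{l^p_\sigma U^2} \lesssim \|\partial u\|_{l^p_\sigma DU^2} + \sigma \|u\|_{l^p_\sigma DU^2},
\]
which is \eqref{lp-udu}. There is no serious obstacle here: the argument is a routine Leibniz rule plus the algebra-type bilinear bound \eqref{uv:bilinear}, and the only point that requires care is the scaling computation $\|\chi_k'\|_{V^2} \lesssim \sigma$, which follows from the embedding $BV \hookrightarrow V^2$ recorded in Appendix \ref{a:uv} together with the explicit scaling of the cutoff.
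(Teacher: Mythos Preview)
Your proof is correct and follows essentially the same route as the paper: apply $\|v\|_{U^2}=\|v'\|_{DU^2}$ to $v=\chi_k u$, use the Leibniz rule, and extract the factor $\sigma$ from $\chi_k'$ before passing to $l^p$. The paper's version is simply terser, writing the second bound as $\|\partial_x\chi_k\, u\|_{DU^2}\lesssim \sigma\|\tilde\chi_k u\|_{DU^2}$ without spelling out the bilinear estimate or the $V^2$ scaling of $\chi_k'$ that you made explicit.
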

\begin{proof}
Since $\|u\|_{U^2} = \|  \partial u \|_{DU^2}$,
we can write
\[
\begin{split}
\| \chi_{[k/\sigma, (k+1)/\sigma]} u\|_{U^2} \lesssim & \ 
 \| \chi_{[k/\sigma, (k+1)/\sigma]} \partial u\|_{DU^2} +
 \| \partial_x  \chi_{[k/\sigma, (k+1)/\sigma]} u\|_{DU^2}
\\ \lesssim & \ 
 \| \chi_{[k/\sigma, (k+1)/\sigma]} \partial u\|_{DU^2} +
 \sigma \|  \tilde \chi_{[k/\sigma, (k+1)/\sigma]} u\|_{DU^2}
\end{split}
\]
where $\tilde \chi$ is a cutoff selecting a slightly larger set.
Then the conclusion follows after $l^p$ summation.
\end{proof}

In the case $p=2$ we have an alternative  simple characterization of these spaces:
 \begin{lemma}\label{characterization} 
The spaces $l^2_{\sigma} DU^2$ can be characterized as follows
\begin{equation} 
l^2_\sigma  DU^2 =  DU^2 + \sigma^{\frac12} L^2. 
\end{equation} 
\end{lemma}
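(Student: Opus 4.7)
The plan is to verify the two continuous inclusions in the claimed identity separately.

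For the inclusion $DU^2 + \sigma^{\frac12} L^2 \subset l^2_\sigma DU^2$, I check each summand. Given $f \in L^2$, the embedding $L^1 \subset DU^2$ from \eqref{appbemb} combined with Cauchy-Schwarz on an interval of length $\sigma^{-1}$ gives
\[
\|\chi_{I_k} f\|_{DU^2}\lesssim \|\chi_{I_k} f\|_{L^1}\lesssim \sigma^{-\frac12}\|\chi_{I_k} f\|_{L^2},
\]
where $I_k = [k/\sigma, (k+1)/\sigma]$, and square-summing over $k$ using the bounded overlap of the partition of unity yields $\|f\|_{l^2_\sigma DU^2}\lesssim \sigma^{-\frac12}\|f\|_{L^2}$. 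Given $u \in DU^2$, the bilinear bound \eqref{uv:bilinear} first delivers the uniform estimate $\|\chi_{I_k} u\|_{DU^2}\lesssim \|\chi_{I_k}\|_{V^2}\|u\|_{DU^2}\lesssim \|u\|_{DU^2}$. Square-summability is then obtained by writing the primitive $U\in U^2$ in an atomic decomposition (step functions with $\ell^2$-summable jumps) and tracking each atom's contribution to the local pieces $\chi_{I_k} u$, using that the jump measure of an atom partitions naturally across the cells $I_k$.

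For the reverse inclusion $l^2_\sigma DU^2 \subset DU^2 + \sigma^{\frac12} L^2$, I split $u$ via a Littlewood--Paley decomposition at frequency scale $\sigma$,
\[
u = P_{>\sigma} u + P_{\le \sigma} u,
\]
and assign the high-frequency part to $DU^2$ and the low-frequency part to $\sigma^{\frac12} L^2$. For $P_{>\sigma} u$, I write $u=\sum_k \chi_{I_k} u$, apply the convolution estimate \eqref{eq:conv} to each piece (the kernel of $P_{>\sigma}$ has uniformly bounded $L^1$-mass), and exploit that $P_{>\sigma}(\chi_{I_k} u)$ is concentrated at spatial scale $\sigma^{-1}$ near $I_k$; an almost-orthogonality argument then upgrades the $\ell^2_k$ summability of the local $DU^2$ norms into a global $DU^2$ bound. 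For $P_{\le \sigma} u$, the Besov embedding $DU^2\subset \dot B^{-\frac12}_{2,\infty}$ from \eqref{appbemb}, applied to a single frequency shell of size $\sigma$, gives $\|P_{\le\sigma}(\chi_{I_k} u)\|_{L^2}\lesssim \sigma^{\frac12}\|\chi_{I_k} u\|_{DU^2}$, and a parallel orthogonality argument based on the near-disjoint supports of the smooth kernels $P_{\le\sigma}\chi_{I_k}$ yields $\|P_{\le\sigma} u\|_{L^2}\lesssim \sigma^{\frac12}\|u\|_{l^2_\sigma DU^2}$.

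The main obstacle will be the almost-orthogonality analysis at the threshold scale, since the spatial cells $I_k$ and the frequency projector $P_{>\sigma}$ live at the matched scale $\sigma$, so their tails overlap. These interactions must be controlled using the rapid (Schwartz) decay of the Littlewood--Paley kernels together with the convolution and bilinear bounds \eqref{eq:conv} and \eqref{uv:bilinear}, since $DU^2$ is $L^1$-like in its $V^2$-duality and therefore does not admit a direct Plancherel-type orthogonality.
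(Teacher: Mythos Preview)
Your forward inclusion is fine and matches the paper's argument in spirit. The reverse inclusion, however, takes a genuinely different route from the paper, and while your plan is viable it is considerably less direct.

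The paper does not use a Littlewood--Paley split $u=P_{>\sigma}u+P_{\le\sigma}u$. Instead it performs a purely spatial decomposition: for $u\in l^2_\sigma DU^2$ it sets $a$ to be the step function equal on each interval $I_k$ to the average $a_k=\int_{I_k}u$, and writes $u=(u-a)+a$. The piece $a$ lands in $L^2$ immediately, since $|a_k|\le\|\chi_{I_k}\|_{V^2}\|\chi_{I_k}u\|_{DU^2}$. The piece $u-a$ has \emph{mean zero on every $I_k$}, so its primitive vanishes at the partition points; this is exactly what allows the local $DU^2$ pieces to be glued into a global $DU^2$ element with the $\ell^2$ bound, via duality and the elementary fact that the local $V^2$ seminorms of a test function satisfy $\sum_k\|v\|_{V^2(I_k)}^2\le\|v\|_{V^2}^2$.

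Your frequency decomposition produces pieces $P_{>\sigma}(\chi_{I_k}u)$ that do have mean zero (the multiplier vanishes at $\xi=0$) but are only Schwartz-localized rather than compactly supported, so the gluing requires an additional tail-truncation and remainder estimate on top of the disjoint-support argument. This is the ``almost-orthogonality at the threshold scale'' you flag, and it can be carried out, but it is work that the paper's averaging trick avoids entirely. In short: the key structural point is the mean-zero property on cells, and subtracting cell averages delivers it in one line, whereas the frequency cut delivers it only up to tails.
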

We postpone the proof for the end of Appendix \ref{a:uv}. As a corollary we have::

\begin{cor}\label{DU2embedding} The following embeddings hold
\begin{equation} 
 B^{-\frac12}_{2,1} \subset l^2_1 DU^2 \subset B^{-\frac12}_{2,\infty}.
\end{equation} 
\end{cor}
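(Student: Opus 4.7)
The plan is to derive both embeddings directly from Lemma \ref{characterization}, which identifies $l^2_1 DU^2 = DU^2 + L^2$, combined with the already-established chain \eqref{appbemb} relating $DU^2$ and $DV^2$ to homogeneous Besov spaces. The argument is essentially a Littlewood--Paley splitting into low and high frequencies.

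First, for the embedding $B^{-\frac12}_{2,1} \subset l^2_1 DU^2$, I would take $u \in B^{-\frac12}_{2,1}$ and perform a frequency decomposition $u = u_{\text{low}} + u_{\text{high}}$, where $u_{\text{low}}$ is supported at frequencies $|\xi| \lesssim 1$ and $u_{\text{high}} = \sum_{k \geq 1} P_k u$ is the sum of the dyadic pieces at frequencies $|\xi| \sim 2^k$. The low-frequency piece is controlled in $L^2$ using Bernstein's inequality and the fact that the $B^{-\frac12}_{2,1}$ norm controls the $L^2$ norm of the low-frequency block. The high-frequency piece lies in the homogeneous Besov space $\dot B^{-\frac12}_{2,1}$ (since its frequency support is bounded away from zero), and by \eqref{appbemb} we have $\dot B^{-\frac12}_{2,1} \subset DU^2$. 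Therefore $u \in L^2 + DU^2 = l^2_1 DU^2$ by Lemma \ref{characterization}.

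For the reverse embedding $l^2_1 DU^2 \subset B^{-\frac12}_{2,\infty}$, it suffices by Lemma \ref{characterization} to show that both $L^2$ and $DU^2$ embed into $B^{-\frac12}_{2,\infty}$. The embedding $L^2 \subset B^{-\frac12}_{2,\infty}$ is immediate: for each dyadic projector $P_k$ we have $2^{-k/2} \|P_k u\|_{L^2} \lesssim \|u\|_{L^2}$, uniformly in $k \geq 0$. For $DU^2 \subset B^{-\frac12}_{2,\infty}$, the chain \eqref{appbemb} already gives $DU^2 \subset \dot B^{-\frac12}_{2,\infty} \cap L^\infty$; the high-frequency blocks are controlled by the homogeneous Besov norm, while the low-frequency block ($k = 0$) is controlled by the $L^\infty$ (hence $L^2$ on a bounded frequency set, via Bernstein) bound, and both feed into the inhomogeneous Besov norm.

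There is no serious obstacle here — the content is really packaged into Lemma \ref{characterization} and into the homogeneous Besov bounds in \eqref{appbemb}. The only point requiring mild care is the bookkeeping between the inhomogeneous space $B^{-\frac12}_{2,q}$ and the homogeneous $\dot B^{-\frac12}_{2,q}$, which is handled cleanly by treating the $|\xi| \lesssim 1$ block separately and noting that on this block the $L^2$ and $B^{-\frac12}_{2,q}$ norms are comparable.
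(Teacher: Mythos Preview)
Your approach is the same as the paper's: invoke Lemma \ref{characterization} to identify $l^2_1 DU^2 = DU^2 + L^2$, then reduce to the homogeneous Besov embeddings from Corollary \ref{appBemb}. The first embedding and the $L^2$ half of the second are fine.

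There is one concrete error in your treatment of the low-frequency block for $DU^2 \subset B^{-\frac12}_{2,\infty}$. You write that the $k=0$ block is controlled by the $L^\infty$ bound, ``hence $L^2$ on a bounded frequency set, via Bernstein.'' Bernstein's inequality goes the other way: for a function with Fourier support in $|\xi|\lesssim 1$ one has $\|f\|_{L^\infty}\lesssim \|f\|_{L^2}$, not the reverse (take $\hat f$ concentrated near the origin to see the failure). Moreover the inclusion $DU^2\subset L^\infty$ is itself false, since $\delta_0$ is the derivative of a $U^2$ atom; the ``$\cap L^\infty$'' in \eqref{appbemb} appears to be a misprint and should not be relied on.

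The fix is immediate and uses only the homogeneous embedding you already have. From $DU^2\subset \dot B^{-\frac12}_{2,\infty}$ one gets $\|\tilde P_k u\|_{L^2}\lesssim 2^{k/2}\|u\|_{DU^2}$ for every $k\in\Z$, where $\tilde P_k$ are the homogeneous dyadic blocks. Summing over $k\le 0$ gives
\[
\|P_0 u\|_{L^2}^2 \lesssim \sum_{k\le 0}\|\tilde P_k u\|_{L^2}^2 \lesssim \sum_{k\le 0} 2^{k}\,\|u\|_{DU^2}^2 \lesssim \|u\|_{DU^2}^2,
\]
which is exactly the missing low-frequency bound. With this correction your argument is complete and matches the paper's.
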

\begin{proof} This is an immediate consequence of Lemma \ref{characterization} 
and the analogous homogeneous result in Corollary  \ref{appBemb}.  
\end{proof} 

On the other hand, for $p > 2$ we will use the following embeddings:

\begin{lemma}\label{lpem} 
For $p \geq 2$ we have the bounds
\begin{equation}\label{emb}
\Vert u \Vert_{l^p_\tau DU^2} \lesssim  \tau^{\frac1p -1} \Vert u \Vert_{\dot H^{\frac12-\frac1p}}, 
\end{equation}\label{scale1} 
Also if $0 \le \tau_1 \le \tau_2$ then
\begin{equation} 
\label{refine}
\Vert u \Vert_{l^p_{\tau_2} DU^2} \lesssim \Vert u \Vert_{l^p_{\tau_1} DU^2} \lesssim (\frac{\tau_2}{\tau_1})^{1-\frac1p} \Vert u \Vert_{l^p_{\red{\tau_2}}DU^2}.
\end{equation}  
\end{lemma}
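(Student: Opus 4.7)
The plan is to handle \eqref{emb} by reducing to a local $L^1 \hookrightarrow DU^2$ estimate combined with Sobolev embedding, and to handle \eqref{refine} by exploiting how scale-$\tau$ partitions refine one another.

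For \eqref{emb}, I would start from the embedding $L^1 \subset DU^2$ recorded in \eqref{appbemb}. Combined with Cauchy--Schwarz on each interval $I_k$ of length $1/\tau$, this gives
\[
\|\chi_{I_k} u\|_{DU^2} \lesssim \|\chi_{I_k} u\|_{L^1} \lesssim \tau^{-1/2} \|u\|_{L^2(I_k)}.
\]
For $2 \leq p < \infty$, Hölder on $I_k$ yields $\|u\|_{L^2(I_k)} \leq \tau^{1/p-1/2} \|u\|_{L^p(I_k)}$. Summing in $\ell^p$ (the $\chi_{I_k}$ have essentially disjoint supports) produces $\|u\|_{l^p_\tau DU^2} \lesssim \tau^{1/p-1} \|u\|_{L^p}$, and the one-dimensional Sobolev embedding $\dot H^{1/2-1/p} \hookrightarrow L^p$ completes the bound. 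The endpoint $p=\infty$ is delicate because $\dot H^{1/2} \not\hookrightarrow L^\infty$ in one dimension, and would be handled by working modulo constants (to which $DU^2$ is insensitive since it depends only on the primitive of $u$) together with a BMO-type substitute.

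For the right inequality of \eqref{refine}, each interval $I^{(\tau_1)}_k$ at scale $\tau_1$ decomposes into $\sim \tau_2/\tau_1$ subintervals $I^{(\tau_2)}_{k'}$ at scale $\tau_2$. Writing $\chi^{(\tau_1)}_k u = \sum_{k' \in J_k} \chi^{(\tau_2)}_{k'} u$ with a compatible partition of unity, the triangle inequality in $DU^2$ together with discrete Hölder gives
\[
\|\chi^{(\tau_1)}_k u\|_{DU^2}^p \lesssim |J_k|^{p-1} \sum_{k' \in J_k} \|\chi^{(\tau_2)}_{k'} u\|_{DU^2}^p,
\]
and summing over $k$ produces the stated factor $(\tau_2/\tau_1)^{1-1/p}$.

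For the subtler left inequality I would split by $p$. At $p=2$ it is immediate from Lemma \ref{characterization}: any decomposition $u = u_1 + u_2$ nearly realising the infimum at scale $\tau_1$ also yields $\|u_1\|_{DU^2} + \tau_2^{-1/2} \|u_2\|_{L^2} \leq \|u_1\|_{DU^2} + \tau_1^{-1/2} \|u_2\|_{L^2}$ because $\tau_2 \geq \tau_1$, whence $\|u\|_{l^2_{\tau_2} DU^2} \leq \|u\|_{l^2_{\tau_1} DU^2}$. At $p=\infty$, using $\chi^{(\tau_2)}_{k'} u = \chi^{(\tau_2)}_{k'} \chi^{(\tau_1)}_k u$ with $I^{(\tau_2)}_{k'} \subset I^{(\tau_1)}_k$, the bilinear estimate \eqref{uv:bilinear} together with $\|\chi^{(\tau_2)}_{k'}\|_{V^2} = O(1)$ gives $\|\chi^{(\tau_2)}_{k'} u\|_{DU^2} \lesssim \|\chi^{(\tau_1)}_k u\|_{DU^2}$, and taking the supremum produces $\|u\|_{l^\infty_{\tau_2} DU^2} \lesssim \|u\|_{l^\infty_{\tau_1} DU^2}$. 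The intermediate range $2 < p < \infty$ then follows by real interpolation of the uniformly bounded identity maps $l^p_{\tau_1} DU^2 \to l^p_{\tau_2} DU^2$. The main obstacle is precisely this interpolation: a naive subdivision argument loses a factor $(\tau_2/\tau_1)^{1/p}$, so one genuinely needs the $p=2$ characterization and cannot estimate each $\chi^{(\tau_2)}_{k'} u$ independently.
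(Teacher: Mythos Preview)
Your proof is correct. For \eqref{emb} you follow exactly the paper's route: $L^1\hookrightarrow DU^2$ on each interval, H\"older to pass to $L^p$, then the Sobolev embedding $\dot H^{1/2-1/p}\hookrightarrow L^p$. (Your remark about $p=\infty$ is a valid caveat that the paper does not address either; the bound is only used for finite $p$.)

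For \eqref{refine} your argument differs from the paper's. For the right inequality you use the direct triangle-inequality-plus-H\"older count over the $\sim\tau_2/\tau_1$ subintervals, which is more elementary than the paper's approach; the paper instead reduces to $u$ supported in a single $\tau_1$-interval and invokes Lemma~\ref{characterization} together with the finite-sequence embedding $l^2\hookrightarrow l^p$. For the left inequality both arguments start from the $p=2$ case via Lemma~\ref{characterization}, but then diverge: you interpolate between the $p=2$ and $p=\infty$ endpoints, whereas the paper again localizes to a single $\tau_1$-interval and uses that there $\|u\|_{l^p_{\tau_2}DU^2}\le\|u\|_{l^2_{\tau_2}DU^2}\lesssim\|u\|_{l^2_{\tau_1}DU^2}\sim\|u\|_{l^p_{\tau_1}DU^2}$, after which one sums in $l^p$ over the $\tau_1$-blocks. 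Your interpolation step is legitimate since the spaces $l^p_\tau DU^2$ are retracts of the sequence spaces $l^p(\Z;DU^2)$ via the partition of unity, for which real interpolation is standard; the paper's localization argument buys you the result without invoking interpolation theory, at the cost of a slightly less transparent reduction.
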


\begin{proof}
The  inequality \eqref{emb} follows via the Sobolev embedding 
\[ \Vert u \Vert_{l^p_\tau DU^2} \lesssim \tau^{\frac1p-1} \Vert u
\Vert_{l^p_\tau L^p} = \tau^{\frac1p-1} \Vert u \Vert_{L^p} \lesssim
\tau^{\frac1p-1} \Vert u \Vert_{\dot H^{\frac12-\frac1p} }. \] 
The second inequality and the third inequality are  a consequences of Lemma \ref{characterization}: It suffices to verify the last inequality for $u$ supported on an interval on length $\tau_1^{-1}$. Then 
\[ \Vert u \Vert_{DU^2+L^2} \lesssim  (\tau_2/\tau_1)^{\frac12}  \Vert_{l^2_{\tau_2}DU^2}  \le (\tau_2/\tau_1)^{1-\frac1p} \Vert u \Vert_{l^p_{\tau_2} DU^2} \] 
where we used the support assumption for the last inequality. 
\end{proof}

We also need to understand the effect of phase shifts:

\begin{lemma}\label{DU2shift} Suppose that $|\xi| \lesssim \sigma$. Then 
\begin{equation} 
\| e^{ix\xi} u\|_{l^2_\sigma DU^2} \lesssim \| u\|_{l^2_\sigma DU^2}, \qquad \| e^{ix\xi} u\|_{l^2_\sigma U^2} \lesssim 
\| u\|_{l^2_\sigma U^2}.
\end{equation} 
\end{lemma}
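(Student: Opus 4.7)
The plan is to reduce both estimates to the bilinear estimate \eqref{uv:bilinear} via a localization argument on the partition-of-unity intervals. For each fixed $k$, abbreviate $\chi_k = \chi_{[k/\sigma,(k+1)/\sigma]}$ and introduce a fattened cutoff $\tilde\chi_k$ supported in $[(k-1)/\sigma,(k+2)/\sigma]$ satisfying $\tilde\chi_k\chi_k = \chi_k$. The key factorization is
\[
\chi_k(x) e^{ix\xi} u(x) = \bigl(\chi_k(x) e^{ix\xi}\bigr)\bigl(\tilde\chi_k(x) u(x)\bigr),
\]
which moves the modulation onto a compactly supported multiplier that can be estimated cleanly.

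The crux of the argument is the uniform bound $\|\chi_k e^{ix\xi}\|_{V^2} \lesssim 1$ whenever $|\xi| \lesssim \sigma$. This will be derived from the general embedding $L^\infty \cap BV \hookrightarrow V^2$ (in the quantitative form $\|v\|_{V^2}^2 \lesssim \|v\|_\infty \|v\|_{BV}$): on its support of length $O(1/\sigma)$ the function has $L^\infty$ norm $O(1)$, while the derivative $(\chi_k' + i\xi\chi_k)e^{ix\xi}$ has $L^\infty$ norm $O(\sigma)$, producing total variation $O(1)$. Applying \eqref{uv:bilinear} then gives $\|\chi_k e^{ix\xi} u\|_{DU^2} \lesssim \|\tilde\chi_k u\|_{DU^2}$. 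To replace the fattened cutoff by the original partition, the plan is to decompose $\tilde\chi_k u = \sum_{|j-k|\leq 2} \tilde\chi_k \chi_j u$ and apply \eqref{uv:bilinear} a second time, now using $\|\tilde\chi_k\|_{V^2} \lesssim 1$; squaring in $k$ and summing, the bounded overlap of the $\tilde\chi_k$ converts the resulting sum into $\|u\|_{l^2_\sigma DU^2}^2$, yielding the first bound.

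For the $U^2$ estimate the strategy is to follow the same template after differentiating. Since $\chi_k e^{ix\xi} u$ is compactly supported, the identity $\|w\|_{U^2} = \|w'\|_{DU^2}$ gives
\[
\|\chi_k e^{ix\xi} u\|_{U^2} = \|\chi_k e^{ix\xi} u' + (\chi_k' + i\xi\chi_k) e^{ix\xi} u\|_{DU^2}.
\]
The first summand is controlled by the $DU^2$ argument applied to $u'$. For the second, the same variational bookkeeping yields $\|(\chi_k' + i\xi \chi_k) e^{ix\xi}\|_{V^2} \lesssim \sigma$, so \eqref{uv:bilinear} contributes $\sigma \|\tilde\chi_k u\|_{DU^2}$; after summing in $k$, the bound \eqref{lp-udu} matches this factor of $\sigma$ against the $l^2_\sigma U^2$ norm of $u$, closing the estimate.

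The only real obstacle is the bookkeeping for passing between the original partition $\chi_k$ and the fattened cutoffs $\tilde\chi_k$ without incurring a logarithmic loss; this is routine given the finite overlap of the $\tilde\chi_k$ and the second application of the bilinear estimate. Notably, the hypothesis $|\xi| \lesssim \sigma$ enters only through the two pointwise bounds $\|\chi_k e^{ix\xi}\|_{V^2} \lesssim 1$ and $\|(\chi_k' + i\xi\chi_k) e^{ix\xi}\|_{V^2} \lesssim \sigma$, so the constants depend only on the implicit ratio $|\xi|/\sigma$.
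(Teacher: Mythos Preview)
Your $DU^2$ argument is correct and is precisely a fleshed-out version of what the paper's one-line proof is asserting: the paper simply says that $e^{ix\xi}$ is ``uniformly smooth and bounded'' on each support interval and that the result ``immediately follows.'' Your factorization $\chi_k e^{ix\xi}u = (\chi_k e^{ix\xi})(\tilde\chi_k u)$ together with the $V^2$ bound on $\chi_k e^{ix\xi}$ and the bilinear estimate \eqref{uv:bilinear} is exactly the mechanism behind that sentence.

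There is, however, a genuine slip in your $U^2$ argument. After differentiating and summing in $k$ you arrive at a bound of the form
\[
\|e^{ix\xi}u\|_{l^2_\sigma U^2} \lesssim \|\partial u\|_{l^2_\sigma DU^2} + \sigma\,\|u\|_{l^2_\sigma DU^2},
\]
and you then invoke \eqref{lp-udu} to close. But \eqref{lp-udu} reads
\[
\|u\|_{l^p_\sigma U^2} \lesssim \|\partial u\|_{l^p_\sigma DU^2} + \sigma\,\|u\|_{l^p_\sigma DU^2},
\]
which is the \emph{wrong direction}: you need the converse inequality. The converse does hold (for instance, $\|\chi_k\partial u\|_{DU^2} \le \|\chi_k u\|_{U^2} + \|(\partial\chi_k)\tilde\chi_k u\|_{DU^2}$ together with $\|\chi_k u\|_{DU^2}\le \|\chi_k u\|_{L^1}\lesssim \sigma^{-1}\|\chi_k u\|_{U^2}$ gives it), but it is not what \eqref{lp-udu} says.

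A cleaner route, and closer in spirit to the paper's brevity, is to avoid differentiating altogether: the appendix contains the $U^2$ algebra estimate $\|uw\|_{U^2}\le 2\|u\|_{U^2}\|w\|_{U^2}$, and since $\chi_k e^{ix\xi}$ is smooth, compactly supported, with $\|(\chi_k e^{ix\xi})'\|_{L^1}\lesssim 1$, one has $\|\chi_k e^{ix\xi}\|_{U^2}\lesssim 1$ directly. Then $\|\chi_k e^{ix\xi}u\|_{U^2}\lesssim \|\tilde\chi_k u\|_{U^2}$ and the same finite-overlap bookkeeping finishes the $U^2$ estimate in one step.
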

\begin{proof}
Since $|\xi| \lesssim \sigma$, it follows that the function $e^{ix\xi}$ is uniformly smooth and bounded on the supports
of the cutoff functions $\chi_{[k/\sigma,(k+1)/\sigma]}$ in Definition~\ref{l2du2}. The desired conclusion immediately follows.
\end{proof}

As a consequence we have

\begin{cor}\label{DU2osc} 
Let $\sigma > 0$ and $\xi \in \R$. Then the following estimate holds 
\[  \Vert e^{i\xi x} u \Vert_{l^2_\sigma DU^2} \lesssim  \left(\frac{1+ \sigma+|\xi|}{\sigma}\right)^\frac12    \Vert u \Vert_{l^2_1DU^2}. \] 
\end{cor}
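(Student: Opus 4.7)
The plan is a three-step argument combining Lemma \ref{DU2shift} with the scale refinement estimate \eqref{refine} of Lemma \ref{lpem}. Lemma \ref{DU2shift} removes the phase $e^{i\xi x}$ at scale $\tau$ only provided $|\xi| \lesssim \tau$, whereas in our statement the given scale $\sigma$ need not dominate $|\xi|$. The idea is therefore to first raise the scale from $\sigma$ to a sufficiently large intermediate value $\tau$, apply the shift lemma at that scale, and finally lower the scale back to $1$; the loss in each passage between scales is supplied by \eqref{refine}.

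Concretely, set $\tau := \max(1,\sigma,|\xi|)$, so that $\tau \ge \sigma$, $\tau \ge |\xi|$, $\tau \ge 1$, and $\tau \lesssim 1+\sigma+|\xi|$. Applying the second inequality of \eqref{refine} with $p=2$, $\tau_1 = \sigma$, $\tau_2 = \tau$ to the function $e^{i\xi x}u$ gives
\[
\| e^{i\xi x} u\|_{l^2_\sigma DU^2} \lesssim (\tau/\sigma)^{1/2}\, \| e^{i\xi x} u\|_{l^2_\tau DU^2}.
\]
Since $|\xi| \le \tau$, Lemma \ref{DU2shift} applied at scale $\tau$ yields
\[
\| e^{i\xi x} u\|_{l^2_\tau DU^2} \lesssim \| u\|_{l^2_\tau DU^2}.
\]
Since $\tau \ge 1$, the first inequality of \eqref{refine} applied with $\tau_1=1$, $\tau_2=\tau$ finally gives $\| u\|_{l^2_\tau DU^2} \lesssim \| u\|_{l^2_1 DU^2}$. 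Multiplying these three bounds and using $(\tau/\sigma)^{1/2} \lesssim ((1+\sigma+|\xi|)/\sigma)^{1/2}$ delivers the claim.

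There is no genuine obstacle; the proof is bookkeeping. The only point requiring mild care is the choice $\tau \ge 1$: if one instead took $\tau = \max(\sigma,|\xi|)$, then in the regime $\sigma,|\xi|\ll 1$ the monotonicity step $\|u\|_{l^2_\tau DU^2}\lesssim\|u\|_{l^2_1 DU^2}$ would fail and one would have to invoke the opposite inequality in \eqref{refine}, incurring an extra factor. Including the $1$ in the maximum is precisely what produces the ``$+1$'' inside the bracket on the right-hand side of the stated inequality.
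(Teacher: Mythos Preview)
Your proof is correct and follows essentially the same approach as the paper: raise the scale via \eqref{refine}, apply Lemma~\ref{DU2shift} at a scale dominating $|\xi|$, then lower the scale back to~$1$. The only difference is organizational: the paper splits into the cases $|\xi|\le\sigma$ and $|\xi|>\sigma$ (with a further subcase depending on whether the intermediate scale exceeds~$1$), whereas your single choice $\tau=\max(1,\sigma,|\xi|)$ handles all cases at once --- a slightly cleaner packaging of the same argument.
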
 
\begin{proof} 
If  $|\xi|\le \sigma$ then we by the previous Lemma we have
\[
  \Vert e^{i\xi x} u \Vert_{l^2_\sigma DU^2} \lesssim  \Vert u \Vert_{l^2_\sigma DU^2} \lesssim
\max\{ 1, \sigma^{-\frac12}\} \| u\|_{l^2_1 DU^2}.
\]
Else we use $|\xi|$ as an intermediate threshold,
\[
\begin{split}
 \Vert e^{i\xi x} u \Vert_{l^2_\sigma DU^2} \lesssim & \    (|\xi| / \sigma)^\frac12   
\Vert e^{i\xi x} u \Vert_{l^2_{|\xi|} DU^2} \lesssim 
 (|\xi| / \sigma)^\frac12     \Vert u \Vert_{l^2_{|\xi|} DU^2}\\  \lesssim & \ 
  (|\xi| / \sigma)^\frac12 \max\{ 1, |\xi|^{-\frac12}\}
\| u\|_{l^2_1 DU^2}. 
\end{split}
\]
\end{proof}

Using the above spatially localized  norms we can prove a stronger form of  Lemma \ref{piece}:
\begin{lemma}\label{piece2}  Let $\im z>0$. Then the  operator $L$
satisfies 
\[  
\Vert L \Vert_{U^2\to U^2} \lesssim \Vert e^{-i\real z x} u \Vert_{l^2_{\im z}  DU^2} 
 \Vert e^{-i\real z x} v \Vert_{l^2_{\im z} DU^2}. 
\]    
\end{lemma}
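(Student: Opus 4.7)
By absorbing the phase $e^{-i\real z x}$ into $u,v$ and then rescaling $\im z$ to $1$, we reduce to $z=i$, where the integrand of $L$ becomes $e^{-(y-x)}u(x)\overline{v(y)}\phi(x)$ on $\{x<y<t\}$. Since $U^2\subset V^2$, it suffices to prove
\[
\|L\phi\|_{U^2}\lesssim \|u\|_{l^2_1 DU^2}\|v\|_{l^2_1 DU^2}\|\phi\|_{V^2}.
\]

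Take a partition of unity $\{\psi_m\}$ subordinate to $\{[m,m+1]\}$, set $u_m=\psi_m u$ and $v_n=\psi_n v$, and split $L\phi=\sum_{n\ge m}L_{m,n}\phi$. Each piece $L_{m,n}\phi(t)$ vanishes for $t\le n$, equals a constant $c_{m,n}$ for $t\ge n+1$, and transitions on $[n,n+1]$. For $r:=n-m\ge 2$ the supports of $u_m$ and $v_n$ are disjoint, and the integrand factorizes as $(e^{x}u_m(x)\phi(x))(e^{-y}\overline{v_n(y)})$. Extracting the scalings $e^{x}=e^{m}e^{x-m}$ and $e^{-y}=e^{-n}e^{-(y-n)}$ on the respective supports and applying the duality bound $|\int u_m g\,dx|\lesssim \|u_m\|_{DU^2}\|g\|_{V^2}$ (a consequence of \eqref{uv:bilinear}, since $\int u_m g$ is the terminal value of the $U^2$ primitive of $u_m g$), we obtain
\[
|c_{m,n}|\lesssim e^{-r}\|u_m\|_{DU^2}\|v_n\|_{DU^2}\|\phi\|_{V^2}.
\]
Localizing the proof of Lemma~\ref{piece} to the support of $v_n$ — where the convolution $K*(u_m\phi)$ with $K(s)=e^{-s}\chi_{s>0}$ reduces to the pure exponential $e^{-t}\int e^{x}u_m\phi\,dx$, of size $O(e^{-r}\|u_m\|_{DU^2}\|\phi\|_{V^2})$ — promotes this to the full $U^2$ estimate
\[
\|L_{m,m+r}\phi\|_{U^2}\lesssim e^{-r}\|u_m\|_{DU^2}\|v_{m+r}\|_{DU^2}\|\phi\|_{V^2},\qquad r\ge 2.
\]
For $r\in\{0,1\}$ (the only two values of $r$ for which the supports overlap) Lemma~\ref{piece} directly gives the same bound without the $e^{-r}$ factor.

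For each fixed $r$, decompose
\[
L_{m,m+r}\phi=c_{m,m+r}\,\chi_{[m+r+1,\infty)}+F_{m,r},
\]
where $F_{m,r}$ is supported in $[m+r,m+r+1]$ and vanishes at both endpoints. The step terms are $U^2$-atoms located at the disjoint positions $m+r+1$, so by the atomic characterization of $U^2$ their sum has $U^2$ norm-squared equal to $\sum_m|c_{m,m+r}|^2$. The transients $F_{m,r}$ lie on disjoint intervals with matching boundary values, so $\|\sum_m F_{m,r}\|_{U^2}^2\lesssim \sum_m\|F_{m,r}\|_{U^2}^2$. Both contributions are controlled by $e^{-2r}\sum_m \|u_m\|_{DU^2}^2\|v_{m+r}\|_{DU^2}^2\|\phi\|_{V^2}^2\le e^{-2r}\|u\|_{l^2_1 DU^2}^2\|v\|_{l^2_1 DU^2}^2\|\phi\|_{V^2}^2$, and summing the geometric series in $r\ge 0$ completes the proof. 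The main technical point is this last summation: because each $L_{m,m+r}\phi$ is only eventually constant rather than compactly supported, one must first split off the step contribution in order to apply the atomic characterization of $U^2$ and recover the $l^2$-summation in $m$ (rather than $l^1$) that matches the $l^2_1 DU^2$ norms on the right-hand side.
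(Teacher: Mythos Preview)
Your proof is correct, but it takes a more laborious route than the paper's. The paper simply repeats the three-line chain from Lemma~\ref{piece} with the $l^2$-localized norms in place of the global ones:
\[
\Vert L\psi\Vert_{U^2}\lesssim \Vert v\Vert_{l^2 DU^2}\,\Vert \eta*(u\psi)\Vert_{l^2 U^2}\lesssim \Vert v\Vert_{l^2 DU^2}\,\Vert u\psi\Vert_{l^2 DU^2}\lesssim \Vert v\Vert_{l^2 DU^2}\,\Vert u\Vert_{l^2 DU^2}\,\Vert\psi\Vert_{U^2}.
\]
Each step is the interval-by-interval version of the corresponding step in Lemma~\ref{piece}; the only new ingredients are that (a) the step-plus-transient decomposition you wrote out is what justifies assembling the local $DU^2$ pieces of $(L\psi)'$ into a global $U^2$ bound with $l^2$ summation, and (b) the exponential decay of $\eta$ gives the off-diagonal decay needed for $\Vert\eta*f\Vert_{l^2 U^2}\lesssim\Vert f\Vert_{l^2 DU^2}$ via Young's inequality.

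Your approach differs in that you localize \emph{both} $u$ and $v$ simultaneously and track the off-diagonal decay $e^{-r}$ between the $(m,n)$ blocks by hand. This is essentially unpacking steps (a) and (b) above into a single double sum. It works, but it duplicates effort: the paper's use of the convolution $\eta*(\cdot)$ as an intermediate object packages the off-diagonal decay once, so one never needs to split $u$ explicitly. Your step-plus-transient splitting for the $m$-sum at fixed $r$ is exactly the mechanism behind the paper's first inequality, so you have correctly identified the genuine content; you have just applied it after a finer decomposition than necessary.
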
 

\begin{proof} As above is suffices to prove this for $z=i$. We repeat the 
previous argument, but with some obvious changes:
\[
\Vert L \psi \Vert_{U^2}
\lesssim  \Vert v \Vert_{l^2 DU^2} \Vert \eta * ( u \psi) \Vert_{l^2 U^2} 
\lesssim \Vert v \Vert_{l^2 DU^2} \Vert u \psi \Vert_{l^2 DU^2} 
\lesssim \Vert v \Vert_{l^2 DU^2} \Vert u \Vert_{l^2 DU^2} \Vert \psi \Vert_{U^2}. 
\]
\end{proof}  
 
The main estimate of this section follows:
\begin{proposition}\label{t2j}
  The iterated integrals $T_{2j}(z)$ and $ \tilde T_{2j}$ satisfy
  the following bounds in the upper half-plane:
\begin{equation}\label{eq:t2j}
 |T_{2j}(z)|  + |\tilde T_{2j}(z)|  \le \left( c \Vert e^{i\real z x} u \Vert_{l^2_{\im z} DU^2} \right)^{j}  
\left( c \Vert e^{-i\real z x} v \Vert_{l^2_{\im z} DU^2} \right)^{j}  
\end{equation}
\end{proposition}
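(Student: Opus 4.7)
The approach is to express $T_{2j}(z)$ as a $j$-fold iteration of an integral operator in a single variable and invoke the operator bound from Lemma~\ref{piece2}. Define
\[
(L\phi)(t) = \int_{x<y<t} e^{2iz(y-x)} u(y) \overline{v(x)} \phi(x) \, dx \, dy,
\]
and set $\phi^{(0)} \equiv 1$, $\phi^{(j)} = L\phi^{(j-1)}$. Unfolding the recursion shows that $\phi^{(j)}(t)$ is precisely the iterated integral in \eqref{T2j} restricted to $y_j < t$, so $T_{2j}(z) = \lim_{t\to +\infty} \phi^{(j)}(t)$. Since $U^2$ functions are ruled and uniformly bounded, this limit exists and is controlled by $\|\phi^{(j)}\|_{U^2}$.

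Lemma~\ref{piece2}, after substituting $z \mapsto 2z$ to absorb the factor of $2$ in the exponent and accounting for the swap of $u$ and $v$ relative to the positions in the lemma's operator, yields
\[
\|L\|_{U^2 \to U^2} \lesssim \|e^{i\Re z x} u\|_{l^2_{\Im z} DU^2}\,\|e^{-i\Re z x} v\|_{l^2_{\Im z} DU^2},
\]
where the frequency mismatch between the phases naturally produced by the iteration and those in the statement is reconciled using the shift bound of Lemma~\ref{DU2shift} together with the scale comparison in Lemma~\ref{lpem}. The initial application $L\mathbf{1}$ lies in $U^2$ by direct inspection: its derivative $u(t) e^{2izt} \int_{-\infty}^t e^{-2izx}\overline{v(x)}\,dx$ can be estimated in $DU^2$ using the bilinear estimate~\eqref{uv:bilinear}, producing the same quadratic bound. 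Iterating $j-1$ more times and using $U^2 \hookrightarrow L^\infty$ yields the required bound on $|T_{2j}(z)|$.

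For $\tilde T_{2j}(z)$, the formal expansion of $\log T$ presents each $\tilde T_{2j}$ as a linear combination with bounded coefficients of integrals of the form \eqref{tT2j}, indexed by the linear orderings of $\{x_l, y_l\}_{l=1}^j$ satisfying $x_l < y_l$. Each such ordering can be realized by a suitable iteration of operators analogous to $L$ (with the nesting dictating the specific operator but with identical operator norm), and the total number of such orderings is the Catalan number $C_j \leq 4^j$, harmlessly absorbed into the constant $c^j$. The main obstacle is the bookkeeping of the phase factors: the natural iteration produces norms at scale $2\Im z$ with phase $e^{-2i\Re z x}$, while the statement asks for scale $\Im z$ and phases $e^{\pm i\Re z x}$; reconciling these requires systematic use of the scale-change and frequency-shift properties of $l^2_\sigma DU^2$ established in Lemmas~\ref{DU2shift} and~\ref{lpem} and Corollary~\ref{DU2osc}.
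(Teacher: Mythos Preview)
Your treatment of $T_{2j}$ is fine and matches the paper: both iterate Lemma~\ref{piece2} on the constant function $1$ and read off the limit. The phase/scale bookkeeping you worry about is a harmless constant (the operator $L$ as written in the paper has $e^{iz(y-x)}$ rather than $e^{2iz(y-x)}$, and the roles of $u,v$ are swapped; this is absorbed by Lemma~\ref{DU2shift} and a factor in $c$).

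Your argument for $\tilde T_{2j}$ has a genuine gap. The assertion that $\tilde T_{2j}$ is a linear combination of the integrals \eqref{tT2j} ``with bounded coefficients'' is false: the coefficients grow super-exponentially. From Proposition~\ref{a:log} one reads off that the coefficient of the fully nested integral $\<XX\cdots XYY\cdots Y>$ in $-\ln T$ equals $(j-1)!\,j!$ (the sequence $2,12,144,2880,\dots$), so applying the triangle inequality to the ordering-by-ordering expansion yields a bound of order $(j!)^2\, C^j$, which is useless. The Catalan count of orderings is irrelevant once the coefficients themselves are factorial.

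The paper sidesteps this entirely by never expanding $\tilde T_{2j}$ into iterated integrals. Instead it works at the level of the formal series: writing $\sum_j \zeta^j \tilde T_{2j} = \log\bigl(1+\sum_j \zeta^j T_{2j}\bigr)$ and introducing the partial order $g\preceq h$ on power series (coefficientwise domination in absolute value), one has $\log(1+w)\preceq \frac{w}{1-w}=:f(w)$ and, from the $T_{2j}$ bound already proved, $\sum_j \zeta^j T_{2j}\preceq f(C\zeta)$. Composing gives $\sum_j \zeta^j \tilde T_{2j}\preceq f\circ f(C\zeta)=\frac{C\zeta}{1-2C\zeta}$, hence $|\tilde T_{2j}|\le 2^{j-1}C^j$. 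This exploits the multiplicative structure of the logarithm and the already-established bound $|T_{2j_1}\cdots T_{2j_k}|\le C^{j_1+\cdots+j_k}$ \emph{before} any shuffle expansion, so the factorial multinomial coefficients never appear. Your route can be repaired by making exactly this move (bound products of the $T_{2j_i}$ directly and sum over compositions of $j$, which number $2^{j-1}$), but not by estimating the individual ordered integrals and summing their coefficients.
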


\begin{proof} [Proof of Proposition~\ref{t2j}]
We begin with the bound for $T_{2j}$. The first component of the solution to 
\eqref{scatter-re} is defined by 
\[
\psi_1 (t) = \red{e^{-iz t}} \sum_{j = 0}^\infty (L^j 1)( t)
\]
provided that this series converges uniformly. Thus, the transmission coefficient $T$ 
is given by 
\[
T^{-1} (z) = \sum_{j=0}^\infty \lim_{t \to \infty} (L^j(1))(t).
\]
Functions in $U^2$ have left and right limits everywhere which together with 
Lemma \ref{piece2} completes the proof of the $T_{2j}$ part of \eqref{eq:t2j}.
To switch to $\Tilde T_{2j}$ we begin with the relation
\[
\sum_{j = 1}^\infty \tilde T_{2j} = \log \left( 1+ \sum_{j = 1}^\infty T_{2j} \right)
\]
Recalling that both $T_{2j}$ and  $\tilde T_{2j}$ are homogeneous multilinear forms
of degree $2j$, replacing $u$ by $zu$ and $v=\bar u$ by $z v$  we must also have
the formal series relation
\[
\sum_{j = 1}^\infty \zeta^{j} \tilde T_{2j} = \log \left( 1+ \sum_{j = 1}^\infty \zeta^{j} T_{2j} \right)
\]

We can use \eqref{eq:t2j} to bound the size of the coefficients for
the series on the right. To achieve that we introduce a partial order
``$\preceq$'' on holomorphic functions near zero, where $g \preceq h$
means that the absolute value of every coefficient of the Taylor
series of $g$ at zero is bounded by the corresponding coefficient of
the Taylor series of $h$ at zero. This order is easily seen to be
compatible with the addition, multiplication and composition of
holomorphic functions.

In particular we note that
\[ 
\ln(1+\zeta) \preceq \frac{\zeta}{1-\zeta}=:f(\zeta)  
\] 
Bounding the coefficients in the
Taylor series for the logarithm by $1$ and $T_{2j}$ as in
\eqref{eq:t2j},  it follows that the series on the left is dominated by
\[
\sum_{j = 1}^\infty \zeta^{j} \tilde T_{2j} \preceq f \circ f ( C \zeta).
\]
and
\[
C = c^2 \Vert e^{i\real z x} u \Vert_{l^2_{\im z} DU^2}
\Vert e^{-i\real z x} v \Vert_{l^2_{\im z} DU^2}.
\]
Computing the Taylor series for the function on the right 
we obtain
\[
\sum_{j = 1}^\infty \zeta^{j} \tilde T_{2j} \preceq \sum_{j =1}^\infty 2^{j-1}C^j \zeta^j
\]
which yields the inequality
\[
 |\tilde T_{2j}| \leq 2^{j-1}C^j 
\]
thus proving the $\tilde T_{2j}$ part of \eqref{eq:t2j} with $c$ replaced by $2c$.

\end{proof}

As an immediate corollary to Proposition \ref{t2j} we have: 
\begin{cor}\label{c:sum}
Assume that 
\begin{equation}
\|(u,v)\|_{l^2_1 DU^2} \ll 1. 
\end{equation}
Then the  formal series for $T^{-1}$ and for $\ln T$ converge uniformly on the
 half-line $[i,i\infty)$, and more generally in the region  $\{\im z \geq 1+|\real z|\}$.
\end{cor}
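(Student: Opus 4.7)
The plan is to combine the pointwise bounds from Proposition~\ref{t2j} with the oscillatory transfer estimate of Corollary~\ref{DU2osc}, and then observe that in the region $\{\im z \geq 1 + |\real z|\}$ the transfer constant is bounded, so that Proposition~\ref{t2j} reduces to a geometric bound in terms of the untwisted $l^2_1 DU^2$ norm.

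More precisely, I would first apply Proposition~\ref{t2j} to write
\[
|T_{2j}(z)| + |\tilde T_{2j}(z)| \leq \bigl( c \Vert e^{i\real z \cdot x} u \Vert_{l^2_{\im z} DU^2} \bigr)^{j} \bigl( c \Vert e^{-i\real z \cdot x} v \Vert_{l^2_{\im z} DU^2} \bigr)^{j}.
\]
Next, I would set $\sigma = \im z$ and $\xi = \pm \real z$ in Corollary~\ref{DU2osc} to obtain
\[
\Vert e^{\pm i\real z \cdot x} u \Vert_{l^2_{\im z} DU^2} \lesssim \left( \frac{1 + \im z + |\real z|}{\im z} \right)^{1/2} \Vert u \Vert_{l^2_1 DU^2},
\]
and the same for $v$. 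In the region $\im z \geq 1 + |\real z|$ one has $\im z \geq 1$ and $|\real z| \leq \im z$, so the quotient inside the parentheses is $O(1)$; thus both twisted norms are bounded by a universal constant multiple of $\Vert u \Vert_{l^2_1 DU^2}$ and $\Vert v \Vert_{l^2_1 DU^2}$ respectively, uniformly in $z$ throughout this region (in particular on $[i, i\infty)$).

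Substituting back into the Proposition~\ref{t2j} bound gives
\[
|T_{2j}(z)| + |\tilde T_{2j}(z)| \leq \bigl( C \Vert (u,v) \Vert_{l^2_1 DU^2}^2 \bigr)^{j}
\]
for a fixed constant $C$, uniformly in the region $\{\im z \geq 1 + |\real z|\}$. Under the smallness hypothesis $\Vert (u,v) \Vert_{l^2_1 DU^2} \ll 1$ the common ratio $C \Vert (u,v) \Vert_{l^2_1 DU^2}^2$ is strictly less than one, so both series $\sum_{j \geq 1} T_{2j}(z)$ and $\sum_{j \geq 1} \tilde T_{2j}(z)$ are dominated termwise by a convergent geometric series independent of $z$, yielding absolute and uniform convergence throughout the region.

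There is really no single hard step: the analytic heart of the argument has already been carried out in Lemma~\ref{piece2}, Proposition~\ref{t2j} and Corollary~\ref{DU2osc}. The only point that needs a moment of care is checking that the prefactor in Corollary~\ref{DU2osc} is bounded on the chosen region, which is exactly why the region $\{\im z \geq 1 + |\real z|\}$ is singled out (on a narrower cone near the real axis the prefactor would blow up and the smallness would have to be rescaled accordingly).
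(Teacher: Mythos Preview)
Your proposal is correct and follows exactly the approach the paper takes: the paper simply remarks that the latter statement follows from Corollary~\ref{DU2osc}, which yields smallness for $z = \xi + i\tau$ in the region $\{\im z \geq 1 + |\real z|\}$, and then Proposition~\ref{t2j} gives the geometric bound. Your writeup is a faithful unpacking of that one-line justification.
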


The latter statement follows from Corollary~\ref{DU2osc}, which yields  smallness
for $z = \xi+i\tau$ in the above region.

Furthermore, we note that the function $T^{-1}$ is in effect well defined in the entire 
upper half-plane:

\begin{cor}\label{c:sum+}
Assume that  $(u,v) \in {l^2_1 DU^2}$.
 In both the defocusing and the focusing case the map 
$T^{-1}$ is well defined as a  holomorphic function in the whole upper half plane. 
\end{cor}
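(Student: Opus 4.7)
The plan is to construct $T^{-1}(z)$ for each $z \in \H$ via the Jost solution of \eqref{scatter-re}, using a partition-and-patch approach in place of the (possibly divergent) multilinear series. Fix $z_0 \in \H$ and a small neighborhood $V \subset \H$ of $z_0$. First I produce a finite partition $-\infty = a_0 < a_1 < \cdots < a_N = +\infty$ of $\R$ such that the restriction $L_k$ of the one-step operator $L$ to each subinterval $I_k = [a_k, a_{k+1}]$ satisfies $\Vert L_k \Vert_{U^2(I_k) \to U^2(I_k)} < 1/2$ uniformly for $z \in V$.

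To build such a partition I would use Lemma \ref{characterization} to decompose $u = u^{(1)} + u^{(2)}$ with $u^{(1)} \in DU^2$ and $u^{(2)} \in L^2$, and similarly for $v$. Then I approximate $u^{(1)}$ by a smooth compactly supported function modulo a small $DU^2$ remainder, and truncate $u^{(2)}$ to a bounded compactly supported bulk plus a small $L^2$ tail. The globally small remainders are absorbed using Lemma \ref{piece2}, while on the bounded support of the bulk the $DU^2$ norm on short intervals decays with the interval length; a sufficiently fine subdivision therefore makes each $\Vert L_k \Vert$ small via Lemma \ref{piece2}. The outer tails $I_0$ and $I_{N-1}$ are chosen so that the bulk parts vanish there. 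Since the operator norms depend continuously on $z$, after possibly shrinking $V$ the bound $\Vert L_k(z)\Vert < 1/2$ holds uniformly on $V$.

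Once the partition is fixed, on each $I_k$ the Neumann series $\sum_{j \ge 0} L_k^j \psi_0^{(k)}$ converges in $U^2(I_k)$ to the unique solution of \eqref{scatter-re} on $I_k$ with initial value $\psi(a_k)$; this yields a transfer map $M_k(z): \C^2 \to \C^2$. The composition $M(z) = M_{N-1}(z) \cdots M_0(z)$ propagates the left asymptotic data $(1,0)^T$ across $\R$, giving the Jost solution $\psi_l(z, \cdot)$, and $T^{-1}(z)$ is defined as the coefficient of $e^{-izx}$ in the asymptotic behavior of its first component at $+\infty$. Uniqueness of ODE solutions shows the construction is independent of the chosen partition, and on the intersection of $V$ with the convergence cone of Corollary \ref{c:sum} the value coincides with $1 + \sum_j T_{2j}(z)$ by Proposition \ref{t2j} and dominated convergence. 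Holomorphy in $z$ follows because each $L_k^j \psi_0^{(k)}$ is holomorphic in $z$ (via the $e^{\pm 2izx}$ kernels together with dominated convergence for differentiation under the integral), the Neumann series converges uniformly on $V$, and $M(z)$ is a finite product of holomorphic matrix-valued functions.

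The main obstacle is the partition step, where one must simultaneously control the norms of $L_k$ across a finite collection of intervals without any smallness hypothesis on $u$ and $v$. The resolution rests on two ingredients: the decomposition $l^2_1 DU^2 = DU^2 + L^2$ from Lemma \ref{characterization}, which separates a smooth bounded piece from a globally small remainder, and the observation that for smooth compactly supported functions the $DU^2$ norm on intervals of length $\ell$ is controlled by $\ell^{1/2} \Vert u \Vert_{L^\infty}$ (and similarly for $v$), which allows a fine enough subdivision of the compact support to make the bound in Lemma \ref{piece2} as small as desired on each piece.
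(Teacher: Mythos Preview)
Your overall strategy---partition the line into intervals on which the one-step operator $L$ contracts, solve by Neumann series on each piece, and compose the resulting transfer maps---is exactly the paper's. The paper is more economical: it takes just three intervals $(-\infty,x_0]\cup(x_0,x_1)\cup[x_1,\infty)$, using the $l^2$ summability in the $l^2_\tau DU^2$ norm (via Corollary~\ref{DU2osc}) to choose $x_0,x_1$ so that the tails carry small norm, and on the bounded middle interval it simply invokes ``solving the ODE'' without further comment.

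There is, however, a genuine gap in your execution of the partition on the bulk. You propose to approximate the $DU^2$ component $u^{(1)}$ by a smooth compactly supported function modulo a small $DU^2$ remainder, but smooth functions are \emph{not} norm-dense in $DU^2$. Concretely, $\delta_0\in DU^2$ (its $U^2$ primitive is the Heaviside function, a single atom), yet for any $\phi\in C^\infty_0$ the primitive of $\delta_0-\phi$ retains a unit jump at $0$, forcing $\Vert\delta_0-\phi\Vert_{DU^2}\gtrsim 1$; the appendix confirms that $C^\infty_0$ is only weak* dense in $U^p$. Hence your key device---``the $DU^2$ norm on intervals of length $\ell$ is controlled by $\ell^{1/2}\Vert u\Vert_{L^\infty}$'' applied to a smooth bulk---does not reach the general $DU^2$ part of $u$. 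One repair is to replace smooth approximation by the atomic structure of $U^2$: truncate the primitive $U^{(1)}=\sum_j\lambda_j a_j$ to a finite step function plus a tail of small $U^2$ norm, partition at the finitely many jump points, and handle those jumps as explicit transfer matrices via the Stieltjes integral of Definition~\ref{stieltjes}; between consecutive jump points only the small $DU^2$ remainder and the $L^2$ piece remain, and there your short-interval argument goes through.
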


\begin{proof} 
Consider the ODE \eqref{scatter-re} with  $z = \xi + i \tau$ and $\tau > 0$.
By Corollary~\ref{DU2osc} we have
  \[ 
\Vert e^{ix\xi} u \Vert_{l^2_\tau DU^2} \lesssim \left(\frac{1+|\xi| + \tau}{\tau}\right)^\frac12 \Vert u
  \Vert_{l^2 DU^2}.  
\] 
Here we cannot use the argument in Proposition~\ref{t2j} because we lack smallness. 
To remedy this we partition the real line into three intervals 
\[
\R = (- \infty,x_0] \cup (x_0,x_1) \cup [x_1,\infty)
\] 
so that on the first and the last interval we have smallness for $\Vert e^{ix\xi} u \Vert_{l^2_\tau DU^2} $.
In particular we can apply the argument in Proposition~\ref{t2j} in order to solve 
 \eqref{scatter-re}  up to $x_0$. From $x_0$ we continue by solving the ODE
  up to  $x_1$, and from there on we repeat the previous
  argument.  We obtain a global solution $(\psi_1,\psi_2)$ for  \eqref{scatter-re} with $\psi_1 \in U^2$ 
and $\psi_2 \in DU^2$, and the inverse transmission coefficient $T^{-1}(z)$ is obtained as the limit of 
$\psi_1$ at infinity. Holomorphy in $z$ is obvious.
\end{proof} 

We note that $T^{-1}$ can have no zeroes in the region of applicability of Corollary~\ref{c:sum}.
However, in the focusing case $T^{-1}$ may have zeroes in the upper half-plane.
If $T^{-1}(z)=0$ then $z$ is an eigenvalue of the Lax operator.  Eigenvalues are isolated since
$T^{-1}(z)$ is analytic.  The geometric multiplicity is always one, which is a
consequence of the structure of the Jost solutions.   The order of the zero of $T^{-1}$ is known to be the algebraic
multiplicity of the eigenvalue. This can be seen using the B\"acklund transform, and is beyond our goals in here.

The next step is to assume in addition that $u,v \in H^s$, in which
case we expect that there is additional decay for $T_{2j}$. Precisely,
we have

\begin{proposition}\label{t2j-s}
Assume that $u,v \in H^s$. Then for $s$ in the range 
\begin{equation}
-\frac12 < s \le  \frac{j-1}2
\end{equation}
we have the pointwise bounds
\begin{equation}\label{t2j-s-inf}
|T_{2j}(i\tau)| + |\tilde T_{2j}(i\tau)| \leq c\left( 1+ \frac{1}{2s+1}\right) 
\tau^{-2s-1} \| (u,v) \|_{H^s}^2(c \|(u,v)\|_{l^2_1 DU^2})^{2j-2},
\end{equation}
as well as  the integrated bound  for $j>1$
\begin{equation}\label{t2j-s-int}
\int_1^\infty \!\! \tau^{2s} (|T_{2j}(i\tau/2)| + |\tilde T_{2j}(i\tau/2)|) d\tau 
 \, \leq\, \left(\! 1\! +\! \frac{1}{j-1-2s}\! +\! \frac{1}{(2s+1)^2}\! \right)  \| (u,v) \|_{H^s}^2(c \|(u,v)\|_{l^2_1 DU^2})^{2j-2}.
\end{equation}
\end{proposition}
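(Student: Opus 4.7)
The strategy is to refine the basic bound of Proposition~\ref{t2j} on the imaginary axis by treating two of the $2j$ factors (one $u$, one $\bar v$) specially via $H^s$ regularity, in order to extract the $\tau^{-2s-1}$ decay, while controlling the remaining $2j-2$ factors via the embedding $\|u\|_{l^2_\tau DU^2} \lesssim \|u\|_{l^2_1 DU^2}$ from \eqref{refine}.

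For the pointwise bound \eqref{t2j-s-inf}, I would split off the first pair $(x_1,y_1)$ and write
\[
T_{2j}(i\tau) = \int_{x < y} u(y) \bar v(x) e^{-2\tau(y-x)} F(y;u,v)\, dx\, dy,
\]
where $F(y)$ is the $(j-1)$-fold iterated integral over $\{y < x_2 < y_2 < \dots < x_j < y_j\}$. Lemma~\ref{piece2} applied on the subinterval $(y,\infty)$ together with \eqref{refine} gives the uniform bound
\[
\|F\|_{L^\infty} \le \bigl(c\|u\|_{l^2_1 DU^2}\|v\|_{l^2_1 DU^2}\bigr)^{j-1},
\]
producing the desired $(j-1)$-th power factor. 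The remaining outer integral is a $T_2$-type bilinear form in $uF$ and $\bar v$; by Plancherel it equals a frequency integral analogous to \eqref{T2-d}. For $|s| < \tfrac12$ the multiplier estimate $\|uF\|_{H^s} \lesssim \|F\|_{L^\infty}\|u\|_{H^s}$ is valid, and applying the Cauchy-Schwarz argument of Proposition~\ref{p:T2} produces the $\tau^{-2s-1}\|u\|_{H^s}\|v\|_{H^s}$ factor, where the constant $1+1/(2s+1)$ comes from the explicit computation $\sup_\xi (1+\xi^2)^{-s}/|2i\tau+\xi| \lesssim (1+1/(2s+1))\tau^{-2s-1}$. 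For larger $s$ approaching $(j-1)/2$ the multiplier estimate fails, and one must instead exploit the smoothness of $F$ inherited from the iterated integral structure (transferring derivatives by integration by parts, or using a Littlewood-Paley decomposition in $u,v$), which is precisely what the restriction $s \leq (j-1)/2$ accommodates. The $\tilde T_{2j}$ bound then follows by the same domination argument as in Proposition~\ref{t2j}, using $\log(1+\zeta) \preceq \zeta/(1-\zeta)$ to transfer the pointwise estimate from the $T_{2l}$ ($l \le j$) to $\tilde T_{2j}$, with combinatorial factors absorbed into $c$.

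For the integrated bound \eqref{t2j-s-int}, I would apply Fubini to bring the $\tau$-integration inside the frequency representation from the previous step, reducing to a one-dimensional computation of the form $\int_1^\infty \tau^{2s}/|2i\tau+\xi|\, d\tau$. The extra decay needed for convergence at infinity is supplied by the inner factors: Lemma~\ref{lpem} with $p=2$ yields $\|u\|_{l^2_\tau DU^2} \lesssim \tau^{-1/2}\|u\|_{L^2}$ per factor, contributing an additional $\tau^{-(j-1)}$. The resulting $\tau$-integral is bounded by $(1+|\xi|)^{2s}$ times a constant of order $1/(j-1-2s)$, and combining with the two Cauchy-Schwarz weights (which produce the $1/(2s+1)^2$ factor) gives the full $\|u\|_{H^s}\|v\|_{H^s}\|(u,v)\|_{l^2_1 DU^2}^{2j-2}$ bound.

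The main obstacle is the $y$-dependence of $F$ in the outer $T_2$-type integral, which prevents a direct application of the scalar Fourier identity for $T_2$. For $|s| < \tfrac12$ this is absorbed cleanly by the $L^\infty$-multiplier estimate, but for larger $s$ (up to the natural threshold $(j-1)/2$) one must more carefully exploit the regularity of $F$, which is the technically involved part of the argument.
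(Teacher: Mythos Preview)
Your approach diverges from the paper's and has a genuine gap.

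The paper does not split off a distinguished pair of physical variables. Instead it takes a Littlewood--Paley decomposition of \emph{all} $2j$ factors, expands $T_{2j}(i\tau)$ multilinearly, and in each term identifies the two highest dyadic frequencies $\lambda_1 \geq \lambda_2$ (whichever two factors carry them). Those two are estimated in $H^s$ via $\|u_\lambda\|_{l^2_\tau DU^2} \lesssim \lambda^{-s}\tau^{-1/2}\|u\|_{H^s}$ for $\lambda \leq \tau$ and $\lesssim \lambda^{-s-1/2}\|u\|_{H^s}$ for $\lambda \geq \tau$; the remaining $2j-2$ factors, all at frequency $\leq \lambda_2$, are estimated in $l^2_1 DU^2$. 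This produces a bound of the form \eqref{fix-lambda} with an explicit weight $C(\tau,\lambda_1,\lambda_2)$ that is a pure power in each of the three regimes $\lambda_2 \leq \lambda_1 \leq \tau$, $\lambda_2 \leq \tau \leq \lambda_1$, $\tau \leq \lambda_2 \leq \lambda_1$. Summing the geometric series (Schur/Cauchy--Schwarz) yields exactly the constants $1+\frac{1}{2s+1}$, $\frac{1}{j-1-2s}$, $\frac{1}{(2s+1)^2}$. The restriction $s \leq \frac{j-1}{2}$ is what makes the exponent $-2s+j-1$ in the first regime nonnegative.

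Your proposal fails at the step you yourself flag. The assertion that $\|uF\|_{H^s} \lesssim \|F\|_{L^\infty}\|u\|_{H^s}$ for $|s| < \tfrac12$ is false: $L^\infty$ functions are not pointwise multipliers on $H^s$ for any $s \neq 0$. Even using that $F \in U^2$, the best embedding (Corollary~\ref{appBemb}) places $F$ only in $\dot B^{1/2}_{2,\infty}$, which is not enough for a lossless product estimate back into $H^s$. For $s$ approaching $(j-1)/2$ you offer no argument at all. The paper's frequency decomposition sidesteps the multiplier issue entirely, because no product $uF$ is ever formed; the two $H^s$-controlled factors are just two dyadic pieces $u_{\lambda_1}, u_{\lambda_2}$ sitting somewhere among the $2j$ slots.

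Your integrated-bound sketch also conflates norms. Invoking $\|u\|_{l^2_\tau DU^2} \lesssim \tau^{-1/2}\|u\|_{L^2}$ on the inner $2j-2$ factors would leave $\|(u,v)\|_{L^2}^{2j-2}$ on the right, which is strictly larger than the claimed $\|(u,v)\|_{l^2_1 DU^2}^{2j-2}$ and so does not prove \eqref{t2j-s-int}. In the paper the low-frequency factors contribute $(\lambda_2/\tau)^{j-1}$ only when $\lambda_2 \leq \tau$, and this gain is balanced against the $H^s$ loss on the high-frequency pair inside the dyadic sum, not traded for an $L^2$ norm.
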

Here $c$ is a large universal constant, and the above bounds are
uniform in $j$ and $s$.

\begin{proof}[Proof of Proposition~\ref{t2j-s}]
  The starting point for this proof is the bound \eqref{eq:t2j}, so it
  makes no difference whether we work with $T_{2j}$ or $\tilde
  T_{2j}$; we choose the former.  Greek letters $\lambda$ and $\mu$
  are powers of $2$. Sums $\sum_\lambda \dots $ are understood as
  $\sum_{j=0,\lambda=2^ j}^{\infty}\dots $.  We define $u_\lambda$ by
  the Fourier multiplication by a characteristic function $ \hat u_1 =
  \chi_{|\xi|<1} \hat u$, $\hat u_{\lambda} = \chi_{\lambda \le |\xi|<
    2\lambda}\hat u$ , $\hat u_{<\lambda} = \chi_{|\xi|<\lambda} \hat
  u$ and by an abuse of notation we write $(u,v)_\lambda =
  (u_\lambda,v_\lambda)$. Then $ u = \sum_\lambda u_\lambda$ .  We
  expand the functions in $\tilde T_{2j}$ and recall that we only
  consider $\tau\ge 1 $.  We separate the two highest dyadic
  frequencies, which we denote by $\lambda_1 \geq \lambda_2 \geq 1$.
  Each such pair occurs $O(j^2)$ times, which is subexponential and
  thus can be neglected. We carry out the summation in the other terms
  up to frequency $\lambda_2$. Using Proposition~\ref{t2j} we obtain
  the bound
\begin{equation}\label{prelimbound}  
 |T_{2j}(i\tau)|\lesssim \sum_{\lambda_1 \geq \lambda_2} \|(u,v)_{\lambda_1} \|_{l^2_\tau DU^2}
 \|(u,v)_{\lambda_2} \|_{l^2_\tau DU^2} \| (u,v)_{\leq \lambda_2} \|_{l^2_\tau DU^2}^{2j-2}.
\end{equation} 
To prove the claim for $T_{2j}$ we will bound the summands on the right hand side and carry out the summation. 
The first two factors we estimate in the $H^s$ norm using Lemma~\ref{characterization}. If $1<\lambda \le \tau$ we estimate 
\[ \Vert u_\lambda \Vert_{l^2_\tau DU^2} \lesssim  \tau^{-1/2}  \Vert u_\lambda \Vert_{L^2} \lesssim \lambda^{-s} \tau^{-\frac12} \Vert u \Vert_{H^s}, \] 
and if $\lambda \ge \tau $ we also use Corollary \ref{appBemb} 
in the form \eqref{appbemb}  
\[ \Vert u_\lambda \Vert_{l^2_\tau DU^2} \lesssim  \Vert u_\lambda \Vert_{DU^2} 
\lesssim \lambda^{-1/2} \Vert  u_\lambda \Vert_{L^2} \le \lambda^{-s-\frac12} 
\Vert u \Vert_{H^s}. \]  
 
The remaining factors on the right hand side of \eqref{prelimbound} we estimate in terms of the $l^2_1 DU^2$ norm,
using \eqref{refine} if $\lambda \ge \tau$,
\[ \Vert u_{<\lambda} \Vert_{l^2_\tau DU^2} \lesssim \Vert u_{<\lambda} \Vert_{l^2_1DU^2}, \]  
and if $\lambda<\tau$ it follows from Lemma \ref{characterization} and \eqref{DU2embedding} that
\[ \Vert u_{<\lambda} \Vert_{l^2_\tau DU^2} \lesssim \tau^{-1/2} \Vert u_{<\lambda} \Vert_{L^2} \lesssim  (\lambda/\tau)^{\frac12} \Vert u_{<\lambda} \Vert_{l^2_1 DU^2}. \]  
 We obtain a bound of the form
\begin{equation}\label{fix-lambda}
 |T_{2j}(i\tau)|\lesssim \tau^{-2s-1} \sum_{\lambda_1 \geq \lambda_2} 
C(\tau,\lambda_1,\lambda_2) \|(u,v)_{\lambda_1} \|_{H^s}
 \|(u,v)_{\lambda_2} \|_{H^s} \| (u,v) \|_{l^2_1 DU^2}^{2j-2}
\end{equation}
where the constant $C(\tau,\lambda_1,\lambda_2) $ depends on the relative 
position of the entries as follows:
\[
C(\tau, \lambda_1,\lambda_2) = \left\{ \begin{array}{lc} \tau^{2s+1} \lambda_1^{-s} \tau^{-\frac12} \lambda_2^{-s}\tau^{-\frac12} (\lambda_2/\tau)^{j-1} = 
    \left(\dfrac{\lambda_1}{\tau}\right)^{-2s+j-1} \left(\dfrac{\lambda_2}{\lambda_1}\right)^{-s+j-1} 
             & \lambda_2 \leq \lambda_1 \leq \tau \\[4mm] 
\tau^{2s+1}\lambda_1^{-s-\frac12}\lambda_2^{-s} \tau^{-\frac12} (\lambda_2/\tau)^{j-1}=     \left(\dfrac{\tau}{\lambda_1}\right)^{s+\frac12} \left(\dfrac{\lambda_2}{\tau}\right)^{-s+j-1} 
              &  \lambda_2\leq \tau \leq \lambda_1 \\[4mm]  
\tau^{2s+1}\lambda_1^{-s-\frac12} \lambda_2^{-s-\frac12} =     \left(\dfrac{\tau}{\lambda_1}\right)^{s+\frac12} \left(\dfrac{\tau}{\lambda_2}\right)^{s+\frac12}  
     & \tau \leq \lambda_2 \leq \lambda_1
\end{array}
\right.
\]
More precisely, by an application of Schur's lemma and the Cauchy-Schwarz inequality
\begin{equation} \label{CS} 
\begin{split} 
\tau^{2s+1}|T_{2j}(i\tau)| & \lesssim \sqrt{A_p B_p} \Vert (u,v) \Vert^2_{H^s}
\Vert (u,v) \Vert_{l^2_1(DU^2)}^{2j-2}  \\
\int_1^{\infty} \tau^{2s} |T_{2j}(i\tau)|d\tau & \lesssim A_i \Vert (u,v) \Vert^2_{H^s} \Vert (u,v) \Vert_{l^2_1(DU^2)}^{2j-2}
\end{split} 
\end{equation} 
with 
\[  A_p = \sup_{\tau,\lambda_1} \sum_{\lambda_2} C(\tau,\lambda_1,\lambda_2) , \quad 
B_p = \sup_{\tau,\lambda_2} \sum_{\lambda_1} C(\tau,\lambda_1,\lambda_2)  \] 
\[  A_i = \max\left\{ \sup_{\lambda_1} \sum_{\tau,\lambda_2} C(\tau,\lambda_1,\lambda_2) ,  \sup_{\lambda_2} \sum_{\tau, \lambda_1} C(\tau,\lambda_1,\lambda_2)  \right\} .  \] 
We break the (implicit) sum in \eqref{CS} up into three sums given by the constraints 
\[ \lambda_2\le \lambda_1 \le \tau, \quad \lambda_2 \le \tau \le \lambda_2, \quad \tau \le \lambda_2\le \lambda_1.  \]
 They are all given by geometric sums. The pointwise bounds are easy
 consequences and we provide more details for the slightly more
 involved integrated bounds of \eqref{t2j-s-int} and bound the  $A_i$
up to a multiplicative constant by 
\begin{equation} 
 1+  \dfrac1{(j-1-s)(j-1-2s)} \quad \text{ for the case } \lambda_1 \le \lambda_2 \le \tau   
\end{equation} 
\begin{equation}  
1+ \dfrac1{(s+\frac12)(j+1-s)} \qquad \text{ for the case } \lambda_1 \le \tau \le \lambda_2 
\end{equation} 
\begin{equation} 
  1+  \dfrac1{(s+\frac12)^2}    \qquad \text{ for the case } \tau \le \lambda_1 \le \lambda_2.
\end{equation}
This completes the proof.  
\end{proof}

We remark that, as a consequence of this last proposition, the proof
of  Theorem \ref{energies} reduces to considering the terms $\tilde T_{2j}$
for $j \leq 2s+1$ since we obtain the following corollary. 

\begin{cor}
If $\| u \|_{l^2 DU^2} \ll 1 $ then we have the bound for $0<s $ and $[2s+1] \le N$
\begin{equation}
\int_1^\infty\!\! \tau^{2s} \Big| \ln T(i\tau/2) + \! \sum_{j = 1}^{N}  \tilde T_{2j}(i\tau/2)\Big| d\tau \le \Big( 1+ \frac1{\red{N}-2s}
+\frac{1}{(2s+1)^2}\Big)  
 \| (u,v) \|_{\dot H^s}^2(c \|(u,v)\|_{l^2 DU^2})^{2[2s+1]}.
\end{equation}
\end{cor}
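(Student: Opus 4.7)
The corollary is essentially a termwise summation of the integrated bound in Proposition~\ref{t2j-s}. First, I would note that the smallness hypothesis $\|u\|_{l^2 DU^2} \ll 1$, combined with Proposition~\ref{t2j} and the fact that $\|u\|_{l^2_{\tau/2} DU^2} \lesssim \|u\|_{l^2_1 DU^2}$ for $\tau \geq 1$ (via Corollary~\ref{DU2osc}), gives uniform convergence of the series $-\ln T = \sum_{j \geq 1} \tilde T_{2j}$ on the half-line $\{i\tau/2 : \tau \geq 1\}$. Consequently,
\[
\ln T(i\tau/2) + \sum_{j=1}^N \tilde T_{2j}(i\tau/2) \; = \; -\sum_{j \geq N+1} \tilde T_{2j}(i\tau/2),
\]
so the task reduces to controlling the tail in $L^1(\tau^{2s}\, d\tau)$.

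Next, I would invoke Proposition~\ref{t2j-s} termwise. The hypothesis $[2s+1] \leq N$ ensures $N > 2s$, so every $j \geq N+1$ satisfies $s \leq (j-1)/2$, and the integrated bound \eqref{t2j-s-int} applies to yield
\[
\int_1^\infty \tau^{2s}\, |\tilde T_{2j}(i\tau/2)|\, d\tau \; \leq \; \left(1 + \frac{1}{j-1-2s} + \frac{1}{(2s+1)^2}\right) \|(u,v)\|_{H^s}^2\, \bigl(c\|(u,v)\|_{l^2_1 DU^2}\bigr)^{2j-2}.
\]
Swapping sum and integral by Tonelli and summing over $j \geq N+1$, the parenthesized prefactor is decreasing in $j$ and therefore dominated by its value at $j = N+1$, which equals precisely $1 + \frac{1}{N-2s} + \frac{1}{(2s+1)^2}$, matching the constant in the statement.

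What remains is the geometric series $\sum_{j \geq N+1}(c\|(u,v)\|_{l^2 DU^2})^{2j-2}$, which sums to at most $2(c\|u\|_{l^2 DU^2})^{2N}$ by the smallness assumption. Since $N \geq [2s+1]$ and $c\|u\|_{l^2 DU^2} < 1$, this is bounded by $(c\|u\|_{l^2 DU^2})^{2[2s+1]}$, after harmlessly absorbing the factor of $2$ into $c$. No substantive obstacle appears in this corollary: all the real analytic work has already been carried out in Proposition~\ref{t2j-s}, and what is left is precisely the bookkeeping above—a geometric summation that exploits both the explicit $j$-dependence of the prefactor and the smallness of $\|u\|_{l^2 DU^2}$.
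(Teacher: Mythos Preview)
Your argument is correct and is exactly the intended one: the corollary is a direct consequence of summing the integrated bound \eqref{t2j-s-int} in Proposition~\ref{t2j-s} over $j \geq N+1$, using the monotonicity of the prefactor in $j$ and the geometric decay from the smallness hypothesis. The paper treats this as immediate and gives no separate proof; your bookkeeping matches what is implicitly claimed.
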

In view of the identity for $T_2$ of the previous section, this bound
suffices for the proof of Theorem \ref{energies} in the range $-\frac12 < s
< \frac12$. Restating our main theorem in this case, we have
\begin{proposition} 
Let $-\frac12  < s < \frac12$, and $u \in H^s$ with 
$
\| u \|_{l^2 DU^2} \ll 1$.  
Then $E_s(u)$ is well defined, and 
\begin{equation} 
\left| E_s(u) - \Vert u \Vert_{H^s}^2 \right| \lesssim  \Vert u \Vert_{H^s}^2  \|u\|_{l^2_1 DU^2}^2  .               
\end{equation} 
\end{proposition}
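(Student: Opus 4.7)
The plan is to use the convergent expansion $-\ln T(i\tau/2)=\sum_{j\ge 1}\tilde T_{2j}(i\tau/2)$, which holds uniformly on $[i,i\infty)$ under the smallness hypothesis by Corollary~\ref{c:sum}. Substituting this into the definition of $E_s$ and isolating the quadratic term $\tilde T_2 = T_2$, I expect its contribution to be exactly $\|u\|_{H^s}^2$ by the trace formula \eqref{T2imaginary} of Proposition~\ref{p:T2}, while the tail $\sum_{j\ge 2}\tilde T_{2j}$ will be controlled using Proposition~\ref{t2j-s}.

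In detail, for $-\tfrac12<s<0$ I would use \eqref{def+} together with the matching form of \eqref{T2imaginary} (both have empty correction sums) to write
\[
E_s(u)-\|u\|_{H^s}^2 \;=\; -\frac{2\sin\pi s}{\pi}\int_1^\infty (\tau^2-1)^s\,\real\!\!\sum_{j\ge 2}\tilde T_{2j}(i\tau/2)\,d\tau.
\]
For $0\le s<\tfrac12$ I use \eqref{def++} with $N=0$; the correction term $\binom{s}{0}H_0$ equals $H_{0,2}=\|u\|_{L^2}^2$ since $H_0$ is purely quadratic, and likewise the $\tau^{-1}$ counterterms match those of \eqref{T2imaginary}, so after the cancellations the same remainder identity holds.

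To bound the remainder I would split the integral at $\tau=2$. On $[2,\infty)$ we have $(\tau^2-1)^s\lesssim \tau^{2s}$, so the integrated estimate \eqref{t2j-s-int} of Proposition~\ref{t2j-s} applies (its range $-\tfrac12<s\le\tfrac{j-1}{2}$ is satisfied since $j\ge 2$ and $s<\tfrac12$), giving a contribution bounded by $C_j\,\|u\|_{H^s}^2(c\|u\|_{l^2_1 DU^2})^{2j-2}$ with $C_j$ uniformly bounded in $j$. On $[1,2]$ the weight $(\tau^2-1)^s$ is integrable because $s>-\tfrac12$, and the pointwise bound \eqref{eq:t2j} of Proposition~\ref{t2j}, combined with the rescaling \eqref{refine} to replace $l^2_{\tau/2}DU^2$ by $l^2_1 DU^2$, yields $|\tilde T_{2j}(i\tau/2)|\lesssim (c\|u\|_{l^2_1DU^2})^{2j}$. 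For $j\ge 2$ this is at most $\|u\|_{l^2_1DU^2}^2\,\|u\|_{H^s}^2\,(c\|u\|_{l^2_1DU^2})^{2j-4}$ after trading one factor of $\|u\|_{l^2_1DU^2}^2$ for $\|u\|_{H^s}^2$ via the Besov embedding $\|u\|_{l^2_1DU^2}\lesssim \|u\|_{H^s}$. Summing the resulting geometric series in $\|u\|_{l^2_1DU^2}$, which converges by smallness, gives the claimed bound.

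The main technical nuisance, rather than a deep obstacle, is precisely this near-$\tau=1$ region for $s<0$, where $(\tau^2-1)^s>\tau^{2s}$ so the integrated estimate of Proposition~\ref{t2j-s} is not directly applicable and one has to revert to the crude pointwise bound of Proposition~\ref{t2j}; the apparent loss of the $H^s$ factor there is recovered only by invoking the embedding $l^2_1DU^2 \hookrightarrow H^s$ once, which is exactly why the argument works in the full range $-\tfrac12<s<\tfrac12$ and no further refinement of $\tilde T_{2j}$ (such as the one developed in Section~\ref{s:tt2j-exp} for larger $s$) is needed here.
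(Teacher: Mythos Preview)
Your proof is correct and follows essentially the same approach as the paper, which simply invokes the quadratic identification from Proposition~\ref{p:T2} for the $j=1$ term and the integrated bound \eqref{t2j-s-int} of Proposition~\ref{t2j-s} for the tail $j\ge 2$. Your treatment is in fact more careful than the paper's, since you make explicit the splitting at $\tau=2$ needed to pass between the weight $(\tau^2-1)^s$ in the definition of $E_s$ and the weight $\tau^{2s}$ in \eqref{t2j-s-int}, a point the paper leaves implicit.
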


 We conclude the section with  a further discussion of the
case $-\frac12 < s < 0$, which was our original goal in this paper.
   First, we note that, as a consequence of \eqref{eq:t2j}, we have the bound
\[ 
|T_{2j}(z)|^{\frac1{j}}  \le c  \frac{\langle \real z \rangle^{-2s}}{\im z}  \Vert u \Vert_{H^s}^2   
\]    
Thus,  the expansion for $T(z)$ converges for 
$ 
\im z \ge (\real z)^{-2s}  c\Vert u \Vert_{H^s}^2 $.             
In particular, if smallness is assumed,
$
c\Vert u \Vert_{H^s}^2 \le 1/2$ 
then the series converges for $\im z \ge (\real z)^{-2s} $, and the 
following estimate and also the corresponding integrated bound hold:
\[ 
|\ln T(z) +T_2(z)| \le  2 \frac{\langle\real z\rangle^{-2s}}{\im z}\Vert u \Vert_{H^s}^2  . 
\]

\newsection{Expansions for  the iterative  integrals $\tilde T_{2j}$} 
\label{s:tt2j-exp}

In view of the bounds of the previous section, it remains to
separately consider the terms $\tilde T_{2j}$ for $j \leq 2s+1$.  To
avoid the degeneracy at $s = -\frac12$ we will harmlessly assume
throughout the section that $s \geq 0$. The implicit constants in the 
present section depend on $j$, in contrast to the previous section.

It is crucial in this section that we will take advantage of the fact, proved
in Theorem~\ref{primitive}, that the iterated integrals in $\tilde T_{2j}$
have fully connected symbols. Our first result here is as follows:

\begin{proposition}\label{t2j-prime}
The iterated integrals  $\tilde T_{2j}$  satisfy the following bounds:
\begin{equation}\label{tt2j}
 |\tilde T_{2j}(z)|\lesssim \Vert e^{i\real z x} u \Vert_{l^{2j}_{\im z} DU^2}^{2j}  
\end{equation}
\end{proposition}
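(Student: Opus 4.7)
The plan is to exploit the connectedness of the iterated integrals guaranteed by Theorem~\ref{primitive} in order to upgrade the $l^2$-type bound of Proposition~\ref{t2j} (which went through Lemma~\ref{piece2}) to an $l^{2j}$-type bound. First I would reduce to $z=i$ by the same rescaling and phase-absorption used in the proofs of Lemmas~\ref{piece} and \ref{piece2}, which trades $e^{i\real z\,x}$ for a modulation of $u$ and rescales the $\im z$-adapted space $l^{2j}_{\im z}DU^2$ to $l^{2j}_1 DU^2$. By Theorem~\ref{primitive}, $\tilde T_{2j}(i)$ is a finite linear combination of connected iterated integrals
\[
I_\sigma(u,v) = \int_{\Sigma_\sigma} e^{-2\sum_l(y_l-x_l)} \prod_{l=1}^j u(y_l)\,\overline{v(x_l)}\,dx_l\,dy_l,
\]
where $\Sigma_\sigma\subset\R^{2j}$ is the region prescribed by a connected word $\sigma$; it suffices to bound each such $I_\sigma$ by $\|u\|_{l^{2j}_1 DU^2}^j\,\|v\|_{l^{2j}_1 DU^2}^j$ and specialize to $v=\pm u$.

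The decisive geometric observation is that for a connected $\sigma$, the exponent in the integrand controls the full diameter of the support. Using the (unique) non-crossing matching of $\sigma$ to label the pairs $(x_l,y_l)$, each term $y_l-x_l$ is nonnegative, and connectedness forces the first letter (which must be $X$) to be matched with the last (which must be $Y$). Thus the outermost arc $y_{\max}-x_{\min}$ appears as one of the terms in the sum, giving
\[
\sum_{l=1}^j(y_l-x_l)\geq y_{\max}-x_{\min},
\]
so that $e^{-2\sum(y_l-x_l)}\leq e^{-2(y_{\max}-x_{\min})}$ and the integrand is effectively confined to regions of unit diameter. This is the qualitative improvement over the non-connected case, where only the sum of pair widths is controlled.

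Next, I would introduce a smooth partition of unity $1=\sum_{k\in\Z}\chi_k$ adapted to unit intervals $[k,k+1]$, decompose $u=\sum_k\chi_k u$, $v=\sum_k\chi_k v$, and expand
\[
I_\sigma(u,v)=\sum_{\vec k,\vec j\in\Z^j}I_\sigma(\chi_{j_1}u,\ldots;\chi_{k_1}v,\ldots).
\]
Each multilinear piece has integrand supported in a product of unit intervals whose union has diameter at least $\mathrm{diam}(\vec k\cup\vec j)$. Combining the exponential decay from the preceding step with Lemma~\ref{piece2} applied at unit scale (where the local $l^2_1 DU^2$ norm reduces to $\|\chi_k u\|_{DU^2}$), each piece is bounded by
\[
\bigl|I_\sigma(\chi_{j_1}u,\ldots;\chi_{k_1}v,\ldots)\bigr|\lesssim e^{-c\,\mathrm{diam}(\vec k\cup\vec j)}\prod_{l=1}^j\|\chi_{j_l}u\|_{DU^2}\|\chi_{k_l}v\|_{DU^2}.
\]
The exponential decay collapses the sum over $(\vec k,\vec j)$ to an effective diagonal $\vec k\approx \vec j\approx (m,\ldots,m)$, after which Hölder's inequality gives
\[
\sum_m\|\chi_m u\|_{DU^2}^j\,\|\chi_m v\|_{DU^2}^j\leq\|u\|_{l^{2j}_1 DU^2}^j\,\|v\|_{l^{2j}_1 DU^2}^j,
\]
as required.

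The main obstacle is the passage in the third paragraph: turning the pointwise exponential localization in the total span into a sharp multilinear bound in terms of local $DU^2$ norms that is uniform over the combinatorially many connected words $\sigma$. Once this step is executed cleanly, using the $V^2\to U^2$ machinery of Lemma~\ref{piece2} adapted to the unit scale, the remaining summation over $(\vec k,\vec j)$ is routine Schur/Hölder bookkeeping.
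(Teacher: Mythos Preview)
Your proposal is correct and follows essentially the same route as the paper's own proof, which is deliberately terse: ``localize spatially on the $(\im z)^{-1}$ scale; the multilinear forms in $\tilde T_{2j}$ have connected symbols, therefore their kernels have exponential off-diagonal decay on the $(\im z)^{-1}$ scale; thus only the $l^{2j}$ summability is needed, in order to account for the diagonal sum.'' Your write-up simply unpacks these three sentences --- the reduction to $z=i$, the outermost-arc observation giving $\sum_l(y_l-x_l)\ge y_{\max}-x_{\min}$, the unit-scale partition of unity, and the H\"older step --- and does so correctly.

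One remark on the ``obstacle'' you flag. Uniformity over the connected words $\sigma$ is a non-issue here: the paper explicitly allows the implicit constants in this section to depend on $j$ (see the opening paragraph of Section~\ref{s:tt2j-exp}), and for fixed $j$ there are only finitely many connected words of length $2j$. For the per-piece bound with the exponential weight, the clean way to execute it is to split $e^{-2\sum(y_l-x_l)}=e^{-\sum(y_l-x_l)}\cdot e^{-\sum(y_l-x_l)}$: the first factor is $\le e^{-(y_{\max}-x_{\min})}\lesssim e^{-c\,\mathrm{diam}(\vec k\cup\vec j)}$ uniformly on the support of the localized integrand, while the second factor is exactly the exponential needed to rerun the $V^2\to U^2$ iteration of Lemma~\ref{piece2} (at half the decay rate, which is harmless) and produce $\prod_l\|\chi_{j_l}u\|_{DU^2}\|\chi_{k_l}v\|_{DU^2}$. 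After that the Schur/H\"older summation you describe is routine.
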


As before, the next step is to assume in addition that $u,v \in H^s$,
in which case we expect that there is additional decay for
$\tilde T_{2j}$. Precisely, we have

\begin{proposition}\label{t2j-sprime}
Assume that $u,v \in H^s$. Then we have the pointwise bounds
\begin{equation}\label{t2j-sprime-inf}
|\tilde T_{2j}(i\tau)| \lesssim \tau^{-2s-1} \| (u,v) \|_{H^s}^2 \|(u,v)\|_{l^2_\tau DU^2}^{2j-2},
\qquad s \leq j-1
\end{equation}
as well as the integrated bound  
\begin{equation}\label{t2j-sprime-int}
\int_1^\infty \tau^{2s} |\tilde T_{2j}(i\tau/2)| ds 
 \lesssim  \left( 1+ \frac{1}{j-1-s}\right)  \| (u,v) \|_{H^s}^2\|(u,v)\|_{l^2_1 DU^2}^{2j-2},
\qquad 0\le s <  j-1.
\end{equation}
\end{proposition}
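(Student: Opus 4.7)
The plan is to follow the blueprint of the proof of Proposition~\ref{t2j-s}, but starting from the sharper input of Proposition~\ref{t2j-prime} ($l^{2j}$ localization in place of $l^2$), which is what widens the range of admissible $s$ from $s\le(j-1)/2$ to $s<j-1$. First I would read Proposition~\ref{t2j-prime} as a symmetric $2j$-linear estimate
\[
|\tilde T_{2j}(i\tau)(v_1,\dots,v_{2j})| \lesssim \prod_{i=1}^{2j} \|v_i\|_{l^{2j}_{\tau} DU^2},
\]
which may be extracted either by polarization or directly from the iterative integral structure used to derive \eqref{tt2j}. Then I would Littlewood--Paley decompose each slot $v_i=u=\sum_\lambda u_\lambda$, use symmetry to order the frequencies as $\lambda_1\geq\lambda_2\geq\cdots\geq\lambda_{2j}$, and absorb the combinatorial factor into the constants.

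The next step is to convert the $l^{2j}_\tau DU^2$ norm of a Littlewood--Paley piece into an $H^s$ or $l^2_1 DU^2$ norm in a frequency-dependent way. For $\lambda\le\tau$, Lemma~\ref{lpem} with Bernstein gives the two alternatives
\[
\|u_\lambda\|_{l^{2j}_\tau DU^2}\lesssim \tau^{\frac1{2j}-1}\lambda^{\frac12-\frac1{2j}-s}\|u_\lambda\|_{H^s},
\qquad
\|u_\lambda\|_{l^{2j}_\tau DU^2}\lesssim \tau^{\frac1{2j}-1}\lambda^{1-\frac1{2j}}\|u_\lambda\|_{l^2_1DU^2},
\]
while for $\lambda\ge\tau$ the embedding $l^2\subset l^{2j}$ together with \eqref{refine} and Lemma~\ref{characterization} gives the uniform bound $\|u_\lambda\|_{l^{2j}_\tau DU^2}\lesssim \lambda^{-1/2-s}\|u_\lambda\|_{H^s}$. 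I would then place the $H^s$ bound on the two top-frequency factors $u_{\lambda_1},u_{\lambda_2}$, and the $l^2_1 DU^2$ bound on the remaining $2j-2$ bulk factors, summing the latter by Cauchy--Schwarz against the almost-orthogonality of the Littlewood--Paley pieces in $l^2_1 DU^2$.

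Assembling the pieces in each of the three regions $\lambda_2\le\lambda_1\le\tau$, $\lambda_2\le\tau\le\lambda_1$, and $\tau\le\lambda_2\le\lambda_1$, I expect the resulting bound to take the form $\tau^{-2s-1}\,C(\tau,\lambda_1,\lambda_2)\,\|u_{\lambda_1}\|_{H^s}\|u_{\lambda_2}\|_{H^s}\|u\|_{l^2_1DU^2}^{2j-2}$, with $C$ a geometric expression in the dyadic ratios. A direct computation in the first region yields
\[
C(\tau,\lambda_1,\lambda_2)=\bigl(\lambda_1/\tau\bigr)^{2(j-1-s)}\bigl(\lambda_2/\lambda_1\bigr)^{2j-\frac52+\frac1{2j}-s},
\]
with analogous (and more favorable) expressions in the other two regions. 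The pointwise bound \eqref{t2j-sprime-inf} and the integrated bound \eqref{t2j-sprime-int} then follow from Schur's lemma applied to the double $(\lambda_1,\lambda_2)$-sum, and, for the integrated version, from an additional dyadic sum in $\tau$ handled as in the proof of Proposition~\ref{t2j-s}.

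The main obstacle is the bookkeeping in the first region, and specifically verifying that the exponent $2(j-1-s)$ appearing on the ratio $\lambda_1/\tau$ is the binding one. This is where the factor of two improvement over Proposition~\ref{t2j-s} is visible: the sharper bulk exponent $1-\frac1{2j}$ (in place of $\frac12$), which is available only because the $l^{2j}$ localization of Proposition~\ref{t2j-prime} is in turn available only for the fully connected integrals $\tilde T_{2j}$ produced by Theorem~\ref{primitive}, exactly doubles the allowed $s$. A secondary technical point is handling the region $\tau\le\lambda_2\le\lambda_1$, where the $l^{2j}$ gain does not operate; there one recovers the same $(s+\frac12)^{-2}$-type Schur constant already present in the analysis of Proposition~\ref{t2j-s}, which is harmless for $s>-\frac12$ and in particular poses no new difficulty here.
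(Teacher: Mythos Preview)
Your proposal is correct and follows essentially the same approach as the paper: start from the $l^{2j}_\tau DU^2$ bound of Proposition~\ref{t2j-prime}, Littlewood--Paley decompose, isolate the two top frequencies, estimate those in $H^s$ and the rest in $l^2_1 DU^2$, and sum via Schur in the three regions; your computed constant $C(\tau,\lambda_1,\lambda_2)$ in the region $\lambda_2\le\lambda_1\le\tau$ matches the paper's exactly. One small cosmetic difference: the paper sums the bulk factors \emph{first} to $u_{\le\lambda_2}$ and then applies a single bound $\|u_{\le\lambda_2}\|_{l^{2j}_\tau DU^2}\lesssim(\lambda_2/\tau)^{1-\frac1{2j}}\|u\|_{l^2_1 DU^2}$, which is cleaner than your proposed ``Cauchy--Schwarz against almost-orthogonality''---in fact what makes the bulk sum close is simply the geometric decay in $(\mu/\tau)^{1-\frac1{2j}}$ together with the uniform Fourier-multiplier bound $\|u_\mu\|_{l^2_1 DU^2}\lesssim\|u\|_{l^2_1 DU^2}$, not any square-function orthogonality.
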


We remark that, as a consequence of this last proposition, the proof of Theorem
\ref{energies} reduces to considering the terms $\tilde T_{2j}$ for $0\le j \leq s+1$:

\begin{cor}\label{cor:6.3} 
We have the bound 
\begin{equation}
\int_1^\infty \tau^{2s} \Big| \ln T(i\tau/2)  +\sum_{j = 1}^{[s]}  \tilde T_{2j}(i\tau/2) \Big| d\tau  \lesssim 
\left(\frac1{(s+\frac12)^2}    +   \frac{1}{[s]+1-s} \right)      \| (u,v) \|_{\dot H^s}^2\|(u,v)\|_{l^2 DU^2}^{2[s]}.
\end{equation}
\end{cor}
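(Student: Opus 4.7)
The plan is to exploit the absolute convergence on the half line $[i,i\infty)$ of the formal expansion $-\ln T=\sum_{j\ge 1}\tilde T_{2j}$, which is guaranteed by Corollary~\ref{c:sum} under the smallness hypothesis. With this in hand the integrand equals $\bigl|\sum_{j\ge[s]+1}\tilde T_{2j}(i\tau/2)\bigr|$, and the triangle inequality reduces the corollary to a term-by-term estimate of $\int_1^\infty \tau^{2s}\,|\tilde T_{2j}(i\tau/2)|\,d\tau$ for $j\ge[s]+1$.

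For each $j\ge[s]+2$ the integrated bound \eqref{t2j-sprime-int} of Proposition~\ref{t2j-sprime} applies directly, since then $0\le s<j-1$. The constants $1+\tfrac{1}{j-1-s}$ are maximized at $j=[s]+2$, producing the factor $\tfrac{1}{[s]+1-s}$ visible on the right hand side; the extra powers $\|(u,v)\|_{l^2_1 DU^2}^{2j-2}$ form a geometric series under the smallness assumption. Summing these contributions yields a bound of the form $\bigl(1+\tfrac{1}{[s]+1-s}\bigr)\|(u,v)\|_{H^s}^2\|(u,v)\|_{l^2 DU^2}^{2[s]+2}$, which under smallness is absorbed into $\tfrac{1}{[s]+1-s}\|(u,v)\|_{H^s}^2\|(u,v)\|_{l^2 DU^2}^{2[s]}$.

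The hard part will be the boundary term $j=[s]+1$: the integrated bound \eqref{t2j-sprime-int} requires $s<j-1=[s]$, which fails as soon as $s\ge[s]$, while the pointwise bound \eqref{t2j-sprime-inf} applied at $s'=[s]$ yields only $\tau^{-2[s]-1}$ decay, which is not integrable against $\tau^{2s}$ when $s>[s]$. To handle this term I plan to mimic and refine the proof of Proposition~\ref{t2j-sprime}: exploit the full connectedness of $\tilde T_{2([s]+1)}$ supplied by Theorem~\ref{primitive}, decompose $u$ into dyadic frequency shells, isolate the two highest-frequency factors and estimate each in $H^s$, and control the remaining $2[s]$ entries in $l^2_1 DU^2$ via Lemma~\ref{characterization}. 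A Schur-type summation over the two top frequencies, together with the $\tau$-integration carried out directly on the connected integral, should produce the factor $\tfrac{1}{(s+\tfrac12)^2}$ on the right hand side, in close parallel with the bookkeeping appearing in the proof of Proposition~\ref{t2j-s}. Combining this with the tail estimate above completes the bound.
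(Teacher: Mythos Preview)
Your handling of $j\ge[s]+2$ via \eqref{t2j-sprime-int} is correct and is exactly what the paper does: the corollary is asserted as a direct consequence of Proposition~\ref{t2j-sprime}, with no separate argument.

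The gap is at $j=[s]+1$. You correctly see that neither \eqref{t2j-sprime-inf} nor \eqref{t2j-sprime-int} applies there, but your proposed remedy---rerunning the Littlewood--Paley decomposition of Proposition~\ref{t2j-sprime}---cannot close. That decomposition \emph{is} the proof of \eqref{t2j-sprime-int}: in the low-frequency regime $\lambda_2\le\lambda_1\le\tau$ the dyadic weight $C(\tau,\lambda_1,\lambda_2)$ carries the exponent $2(j-1-s)$ on $\lambda_1/\tau$, and for $j=[s]+1$ this is $2([s]-s)\le 0$, so both the $\lambda_1$-sum and the $\tau$-integral diverge. This is not slack in the method but a genuine obstruction: by Proposition~\ref{p:t2j-exp} the connected form $\tilde T_{2([s]+1)}(i\tau/2)$ has a generically nonzero leading term of exact order $\tau^{-(2[s]+1)}$ (for $j=2$ it is a multiple of $\int|u|^4\,dx$), and $\int_1^\infty\tau^{2s}\cdot\tau^{-(2[s]+1)}\,d\tau$ diverges whenever $s>[s]$. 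No refinement of the Schur bookkeeping can repair this.

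The resolution is that the corollary as printed carries an off-by-one slip: the sum inside the absolute value should run to $[s]+1$, so that the tail begins at $j\ge[s]+2$, and the exponent on the right should correspondingly be $2[s]+2$. This reading is forced by the remark immediately preceding the corollary (the terms left to treat are $j\le s+1$) and by Section~\ref{s:proof}, where the ``medium $j$'' range is $j>s+1$. The boundary term $j=[s]+1$ is not handled here at all; it belongs to the ``small $j$'' range treated via Propositions~\ref{p:t2j-exp}--\ref{p:t2j-exps}, where one first subtracts the leading coefficients of the $\tau^{-1}$-expansion before the integral becomes convergent.
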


We note that this (and the obvious improvement for $s<1$ since then there is no term $T_4$) suffices for the proof of Theorem \ref{energies} in the range $-\frac12 < s < 1$. 
We continue with  the proof of the above results.

\begin{proof}[Proof of Proposition~\ref{t2j-prime}] The proof is a
  direct consequence of Proposition~\ref{t2j}, taking into account Theorem \ref{primitive}. We localize spatially
  on the $(\im z)^{-1}$ scale. The key fact is that the multilinear
  forms in $\tilde T_{2j}$ have connected symbols, therefore their
  kernels have exponential off-diagonal decay on the $(\im z)^{-1}$
  scale. Thus only the $l^{2j}$ summability is needed, in order to account 
for the diagonal sum. 
\end{proof}

\begin{proof}[Proof of Proposition~\ref{t2j-sprime}]
  In order to apply the same method as in the proof of
  Proposition~\ref{t2j-s} we need to relate the space 
$l^{2j}_{\tau}  DU^2$ in Lemma \ref{t2j-prime} 
to  an $L^2$ based Sobolev space. For this we modify the estimate of the last section. We use the Sobolev type
  embedding \eqref{emb} for $p=2j$ and frequencies less than $\tau$,
  and simply the $DU^2$ norm for larger frequencies:
 If $1\le \lambda < \tau$ 
\[ \Vert u_\lambda \Vert_{l^{2j}_\tau DU^2} \lesssim   \tau^{\frac1{2j}-1} \Vert u_\lambda \Vert_{L^{2j}} \lesssim \left( \frac{\lambda}{\tau}\right)^{1-\frac1{2j} } 
 \lambda^{-s-\frac12} \Vert u \Vert_{H^s} \]      
and 
\[ \Vert u_{\le \lambda} \Vert_{l^{2j}DU^2} \lesssim \tau^{\frac1{2j}-1} \Vert u_{\le\lambda} \Vert_{L^p} \lesssim \tau^{\frac1{2j} -1} \lambda^{\frac12-\frac1{2j} } \Vert u_{<\lambda} \Vert_{L^2} \le  \left( \frac{\lambda}{\tau}\right)^{1-\frac1{2j} } 
\Vert u \Vert_{l^2_1 DU^2 }. \] 
If $\lambda \ge \tau $ we obtain                     
\[ \Vert u_\lambda \Vert_{l^{2j}_\tau DU^2} \le \Vert u_\lambda \Vert_{l^2_\tau DU^2} 
\lesssim  \Vert u_\lambda \Vert_{DU^2} \lesssim \lambda^{-s-\frac12} \Vert u \Vert_{H^s} \] 
and 
\[ \Vert u_{\le\lambda} \Vert_{l^{2j}_\tau  DU^2} \lesssim \Vert u \Vert_{l^2_1 DU^2}.\]

Using this  we obtain again a bound of the form \eqref{fix-lambda}, but with an improved 
factor for all frequencies below $\tau$: 
\begin{equation} \label{Ctaulambda} 
C(\tau, \lambda_1,\lambda_2) = \left\{ \begin{array}{ll}
    \left(\dfrac{\lambda_1}{\tau}\right)^{2(-s+j-1)} \left(\dfrac{\lambda_2}{\lambda_1}\right)^{ -s+2j-\frac52 +\frac1{2j}} 
             & \lambda_2 \leq \lambda_1 \leq \tau \\[4mm]
  \left(\dfrac{\lambda_1}{\tau}\right)^{-(s+\frac12)} \left(\dfrac{\lambda_2}{\tau}\right)^{-s+2j-\frac52 +\frac1{2j}} 
              &  \lambda_2\leq \tau \leq \lambda_1 \\[4mm]
\left(\dfrac{\lambda_1}{\tau}\right)^{-(s+\frac12)} \left(\dfrac{\lambda_2}{\tau}\right)^{-(s+\frac12)}  
     & \tau \leq \lambda_2 \leq \lambda_1
\end{array}
\right.
\end{equation} 
This allows us to conclude the proof in the same manner as in Proposition~\ref{t2j-s}, but with the  improved threshold $s <  j-1$  instead of  $s < (j-1)/2$.
Because of their importance later on we give the bounds for the constants 
$A_i$, again up to a multiplicative constant.  
\begin{equation} 
1+ \dfrac1{(2j-\frac52+\frac1{2j}-s)(j-1-s)}   \qquad \text{ for } \lambda_1 \le \lambda_2 \le \tau
\end{equation} 
\begin{equation} 
  1+  \dfrac{1}{(2j-\frac52+\frac1{2j}-s)(s+\frac12)}  \qquad \text{ for } \lambda_1 \le \tau \le \lambda_2                 
\end{equation} 
\begin{equation} 
1+ \dfrac1{(s+\frac12)^2} \qquad \text{ for } \tau \le \lambda_1 \le \lambda_2.   
\end{equation} 
where  $2j-\frac52 +\frac1{2j}-s$ is 
uniformly bounded from below.
\end{proof}

In order to understand the contributions of $\tilde T_{2j}$ to the
conserved energies $E_s$ in the remaining range $ s \ge j-1$ we need
to take into account the expansion of $\tilde T_{2j}$ at $i \infty$ in
powers of $z^{-1}$ in a similar but more complicated fashion as for
$T_2$.  Precisely, given $\Sigma $ a connected symbol of length $2j$,
we will consider higher order expansions and bounds for the iterated
integral
\[ 
T_{\Sigma}(z) = \int_{\Sigma}  \prod_{l=1}^j  e^{2iz(y_l-x_l)} u(x_l) v(y_l) dx_l dy_l. 
\] 
We will do this on the positive imaginary axis $z = i \tau$, $\tau > 0$, though the 
results can be easily translated to any $z$ in the upper half plane.

For regular enough $u$ (precisely $u \in H^{\frac12-\frac{1}{2j}}$)
the integral $T_{\red{\Sigma}}$ decays like $|z|^{-2j+1}$ on the positive
imaginary axis. Our goal here is to add to this a formal expansion,
with errors which have better decay at infinity. Precisely, we have
the following:

\begin{proposition}\label{p:t2j-exp}
For any connected  symbol $\Sigma$ of degree  $2j$, the integral $T_{\Sigma}(z)$ admits  a formal expansion 
\[
 T_\Sigma(z) \approx  \sum_{l = 0}^\infty   T_{\Sigma}^l (2z)^{-(2j-1+l)}
\]
where $ T_{\Sigma,l} $ are linear combinations of integrals of the form
\[
T_{\Sigma}^l = \sum_{|\alpha|+|\beta| = l}
c_{\alpha \beta} \int \prod_{l= 1}^{j} \partial^{\alpha_l} u_l  \partial^{\beta_l} v_l  dx
\]
so that the errors in the above partial expansion satisfy the bounds
\begin{equation}
\Big|  T_\Sigma(i\tau/2) -  \sum_{l = 0}^k (i\tau)^{-(2j-1+l)}  T_{\Sigma}^l \Big| \lesssim 
 \sum_{\substack{k+1 \leq |\alpha|+|\beta| \leq  2j-1+k \\  \max\{ \alpha_l, \beta_l\} \leq [\frac{k}2]+1}} |\tau|^{-|\alpha| - |\beta|}
\prod_k \| \partial^{\alpha_k} u_k \|_{l^{2j}_{\tau} DU^2}  \| \partial^{\beta_k} v_k \|_{l^{2j}_{\tau} DU^2}.
\end{equation}
for $\tau \ge 1$.
\end{proposition}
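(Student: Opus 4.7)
The starting point is to exploit the connectivity of $\Sigma$ by passing to simplex-like coordinates. Order the $2j$ variables along $\Sigma$ as $w_1<\cdots<w_{2j}$ and set $s_0=w_1$, $s_k=w_{k+1}-w_k\ge 0$ for $k=1,\ldots,2j-1$. A direct count rewrites the phase as $\sum_l(y_l-x_l) = \sum_{k=1}^{2j-1} c_k s_k$, where $c_k$ equals the number of $y$'s minus the number of $x$'s among the positions strictly greater than $k$; the connectedness of $\Sigma$ (as defined after \eqref{tT2j}) is exactly the condition $c_k\ge 1$ for all $k$. At $z=i\tau/2$ this yields a clean exponential damping $e^{-\tau\sum c_k s_k}$ in the increments $s_k$, and integrating out the positive orthant $s_k\ge 0$ against this exponential will produce all factors of $\tau^{-1}$ in the expansion.

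I then generate the formal asymptotic series by Taylor expansion of each factor $u(w_i)$ or $v(w_i)$ about the base point $s_0=w_1$, using $w_i-s_0=s_1+\cdots+s_{i-1}$. Truncating each individual factor at order at most $[k/2]+1$ and multiplying out, every surviving monomial takes the form $\prod_l\partial^{\alpha_l}u(s_0)\,\partial^{\beta_l}v(s_0)$ times a polynomial in the $s_k$. Integrating this polynomial against $e^{-\tau\sum c_k s_k}$ via $\int_0^\infty s^m e^{-\tau c s}ds=m!\,(\tau c)^{-m-1}$ yields a $\Sigma$-dependent combinatorial constant times $\tau^{-(2j-1+|\alpha|+|\beta|)}$. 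The remaining $s_0$-integral produces the local expressions $\int\prod_l\partial^{\alpha_l}u_l\,\partial^{\beta_l}v_l\,dx$; grouping these by total degree defines the coefficients $T_\Sigma^l$ for $l=|\alpha|+|\beta|\le k$.

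The residual pieces left over after the Taylor truncations are controlled as follows. Each residual can be written as an iterated integral over the same connected symbol $\Sigma$, in which the affected factors have been replaced by their higher derivatives $\partial^{\alpha_l}u$ or $\partial^{\beta_l}v$ (the Lagrange integral form of the Taylor remainder is absorbed by a uniform $\theta$-integration over $[0,1]$ that preserves $\Sigma$), and the integrand carries a polynomial factor $\prod s_k^{\rho_k}$ with $\sum\rho_k=|\alpha|+|\beta|\in[k+1,\,2j-1+k]$ and $\max_l\{\alpha_l,\beta_l\}\le[k/2]+1$. Absorbing the polynomial against the exponential contributes the scalar weight $\tau^{-|\alpha|-|\beta|}$, and invoking Proposition~\ref{t2j-prime} on the remaining connected iterated integral (whose factors are now the $\partial^{\alpha_l}u$ and $\partial^{\beta_l}v$) supplies the product of $l^{2j}_\tau DU^2$ norms, giving exactly the claimed estimate.

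The main obstacle is the design of the truncation scheme: it must simultaneously guarantee total Taylor order $\ge k+1$ (to push the residual beyond what is already captured by the partial sum $\sum_{l\le k}$) while keeping every individual factor's derivative order $\le[k/2]+1$. The per-factor restriction is essential, because Proposition~\ref{t2j-prime} is then applied to integrals whose factors all have $l^{2j}_\tau DU^2$ norms finite under the available regularity hypothesis; allowing any single factor to accumulate too many derivatives would consume more regularity than is given. A secondary combinatorial point is to express every residual as a single connected iterated integral rather than a sum of disconnected pieces; this is automatic because $\Sigma$ itself is never altered, only the functions attached to its nodes are differentiated or replaced by Taylor remainders.
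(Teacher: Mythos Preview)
Your route via simplex coordinates and Taylor expansion about the base point $s_0=w_1$ correctly identifies the formal series, but the error analysis has a genuine gap that the paper's method is designed to avoid.

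The problem is in your truncation scheme. If you truncate \emph{each} factor at order $[k/2]+1$ and multiply out, then a residual consisting of a single Lagrange remainder (order $[k/2]+1$) together with all other factors at Taylor degree~$0$ has total order only $[k/2]+1$, which is $<k+1$ for $k\ge 2$. So this term falls outside the range $k+1\le |\alpha|+|\beta|$ you need, yet it is not captured by the partial sum $\sum_{l\le k}$ either. Conversely, pure Taylor monomials with total degree $>k$ collapse to one–dimensional integrals $\int \prod \partial^{\alpha_l}u\,\partial^{\beta_l}v\,dx$, which are \emph{not} connected iterated integrals over $\Sigma$ and cannot be bounded by $\prod\|\cdot\|_{l^{2j}_\tau DU^2}$ via Proposition~\ref{t2j-prime} (the embedding $L^{2j}\hookrightarrow l^{2j}_\tau DU^2$ goes the wrong way). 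Finally, the Lagrange remainder places $\partial^{m}u$ at the intermediate point $s_0+\theta(w_i-s_0)$ rather than at $w_i$; for fixed $\theta$ this is a contraction that does not preserve the ordered simplex $\Sigma$, so the claim that the residual ``can be written as an iterated integral over the same connected symbol $\Sigma$'' is not justified.

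The paper takes a different and more robust route: instead of Taylor expanding the integrand, it performs repeated integration by parts in the outermost variable (the analogue of Watson's lemma), alternating between the left and right endpoints. Each step produces a boundary term (which collapses one dimension, merging two adjacent factors into a pointwise product) and a derivative term (which replaces the outermost factor by its derivative, evaluated at the \emph{same} point). One iterates until either all variables have collapsed (this generates $T_\Sigma^l$) or exactly $k+1$ derivative steps have been taken (this generates the error). Because the derivative is always taken on the current outermost factor and the left/right alternation distributes at most $[(k+2)/2]$ differentiations to each side, the constraint $\max\{\alpha_l,\beta_l\}\le [k/2]+1$ and the lower bound $|\alpha|+|\beta|\ge k+1$ are automatic. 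Crucially, every error term is still an integral over a connected reduced simplex with functions evaluated at the genuine integration variables, so the argument of Proposition~\ref{t2j-prime} applies directly after splitting the endpoint products with the bilinear $DU^2$ estimates and rebalancing the $l^p$ exponents.
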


We remark that the formal series above is easily obtained by taking a Taylor
expansion of the $T_\Sigma $ integrand. To simplify the notation we
expand at the left point $x_1$ up to a certain order and obtain for
multiindices $\alpha$ and $\beta$
\[  
T_{\Sigma}^{\alpha \beta}  
=    \int_{\Sigma}  \frac1{\alpha!\beta!} \prod e^{2iz(y_j-x_j)} \partial^{\alpha_j}  u(x_1) \partial_{\beta_j} \overline{v(x_1)} (x_j-x_1)^{\alpha_j} (y_j-x_1)^{\beta_j}dx_j dy_j \] 
By an application of Fubini 
\[ T_{\Sigma}^{\alpha \beta} =  c_{\alpha\beta}  (2z)^{1-2j-|\alpha|-|\beta|}    \int   \frac1{\alpha!\beta!} \prod \partial^{\alpha_j}  u(x) \partial^{\beta_j} \overline{v(x)} dx\] 
where
 \[ c_{\alpha\beta} 
= \frac1{\alpha!\beta!} \int_{\Sigma,x_1=0} \prod e^{-(x_j-y_j)}  x_j^\alpha y_j^\alpha dx_j dy_j 
\]
However, this is not the way we will proceed in the proof of the proposition.

\begin{proof} [Proof of Proposition~\ref{p:t2j-exp}]
Let  $j>1$.  For this proof we relabel in a monotone fashion the set
\[
\{ x_1, y_1, \cdots x_j, y_j \} = \{ t_1, \cdots, t_{2j}\}
\]
and the functions
\[
\{ (u,v,\cdots,u,v)\} = \{ v_1, \cdots, v_{2n} \}
\]

We first consider the $t_{2j}$ dependent part of the integral $T_{\Sigma}$,
which we rewrite by an integration by parts as
\[
\int_{t_{2j-1}}^\infty e^{-\tau t_{2j}} v(t_{2j}) dt_{2j} 
=  - \frac{1}{\tau} e^{-\tau t_{2j}-1} v(t_{2j-1})  - \frac{1}{\tau} \int_{t_{2j-1}}^\infty e^{-\tau t_{2j}} v'(t_{2j}) d t_{2j}. 
\]
We do the same at the left endpoint $t_1$.

To obtain the expansion of $T_\Sigma$ up to degree $2j-1+k$ we
repeatedly apply this computation. At each step we get
a factor of $\dfrac{1}{\tau}$ and two terms,

\begin{enumerate} 
\item The boundary term which leaves us with an integral with a dimension lowered by $1$, and two functions evaluated at the same point.
\item A term where we differentiate one of the functions. 
\end{enumerate} 
We repeat this algorithm until either of the following stopping criteria is fulfilled:

\begin{enumerate} 
\item  If step (i) is applied $2j-1$ times; then all integration variables are equal. These 
terms give  exactly the coefficients $T_\Sigma^l$, $l \leq k$, in the expansion of $T_\Sigma$ up to  degree  $k$.
\item  If step (ii) is applied $k+1$ times. These terms are those where
the integration variables are not all equal; they are part of the
error term which we need to estimate.
\end{enumerate}

We remark on the critical role of the assumption that $\Sigma$ is
connected. This is what guarantees that at each step we are still
integrating a decaying exponential. In order to best balance
derivatives, we alternately apply the above steps at the left and at
the right. Starting on the right, this implies that we do it $\alpha^-
=[\frac{k+2}{2}]$ times on the left, respectively $\alpha^+= k+1 -
[\frac{k+2}{2}]$ times on the right.  To describe a general error term
that we need to estimate, we denote by $j^-$, respectively $j^+$ the
number of times the option (i) is taken on the left, respectively on
the right. These must satisfy
\[
0 \leq j^- \leq \alpha^-, \qquad  0 \leq j^+ \leq \alpha^+, \qquad j^-+j^+ \leq 2j-2
\] 

Our restricted domain of integration is 
$
\tilde\Sigma = \{   t_1 = \cdots =  t_{j^-+1} < \cdots < t_{2j - j^+} = \cdots = t_{2j} \},
$
and the phase  
$
\phi = \tau \sum_{i = 1}^j x_i - y_i
$
rests unchanged but is now restricted to $\tilde \Sigma$. 
 Thus, the corresponding terms in  the error are  linear combinations of integrals of the form
\[
I = \frac{1}{\tau^{2j-1+l}} \int_{\tilde \Sigma} e^\phi \prod_{i = 1}^{2j} \partial^{\alpha_i} v_i(t_i) d t_{j^-+1} \cdots dt_{2j-j^+}
\]
where the differentiation indices $\alpha_i$ must satisfy the constraints 
\[
\sum_{i = 1}^{1+j^-} \alpha_i = [(k+1)/2], \qquad  \qquad \sum_{i = 2j-j^+}^{2j} \alpha_i = [(k+2)/2].
\]
 The exponential $e^\phi$ decays fast 
in the maximum of the distances of the points. 
Denoting the first and the last  function by 
\[
v^- = \prod_{i = 1}^{1+j^-} \partial^{\alpha_i} v_i, \qquad v^+ =  \prod_{i = 2j-j^+}^{2j} \partial^{\alpha_i} v_i,
\]
we can view $I$ as a connected integral form applied to the functions
\[
v^{-}, v_{2+j^-}, \cdots, v_{2j-j^+ -1}, v^+ 
\]

Then the same argument as in the proof of Proposition~\ref{t2j}
yields the bound
\begin{equation}
|I| \lesssim \frac{1}{\tau^{2j-1+l}} \|v^{-}\|_{l^{\frac{2j}{j^-+1}}_\tau DU^2} \|v^{+}\|_{l^{\frac{2j}{j^++1}}_\tau DU^2}
\prod_{i = 2+j^-}^{2j-j^+ -1} \|v_i\|_{l^{2j}_\tau DU^2}
\end{equation}
where we have appropriately rebalanced the $l^p$ indices. We recall the  definition  Definition \ref{l2du2}, the bilinear estimate \eqref{uv:bilinear}
  \[ \Vert  uv \Vert_{l^p_\tau DU^2} 
= \Vert\, \Vert \chi_{[k/\tau,(k+1)/\tau)} (uv) \Vert_{DU^2} \Vert_{l^p} 
\lesssim \Vert\,  \Vert \chi_{[k/\tau,(k+1)/\tau)} u \Vert_{V^2} \Vert_{l^q}  
\Vert  \Vert \chi_{[k/\tau,(k+1) /\tau)} v \Vert_{DU^2} \Vert_{l^r} \]     
whenever $2\le p$, $\frac1p= \frac1q+\frac1r$ by H\"older's inequality. 
We apply the embedding \eqref{uv:em} and Lemma \ref{lp-udu} to bound 
\[   \Vert  u \Vert_{l^q_{\tau} V^2} \lesssim 
\Vert u' \Vert_{l^q DU^2} + \tau \Vert  u \Vert_{l^q DU^2}. \] 
After reordering the $\alpha_j$ so that $\alpha_1$ is the largest among
$\alpha_j$, $j\le j^-+1$ we recursively apply this estimate: 
\[
\| v^{-}\|_{l^{\frac{2j}{j^-+1}}_\tau DU^2} \lesssim \|\partial^{\alpha_1} v_1\|_{l^{2j} DU^2}  \prod_{i = 2}^{j^--1} 
( \|  \partial^{\alpha_i+1} v_i\|_{l^{2j} DU^2} + \tau^{-1} \|  \partial^{\alpha_i} v_i\|_{l^{2j} DU^2}).
\]
We argue  similarly for $v^+$. Summing up the results,
the total number of derivatives is at most $\alpha^+ + \alpha^- = 2j-1+k$,
and the largest is at most $[(k+2)/2]$. Thus the conclusion of the Proposition 
\ref{p:t2j-exp} follows.\end{proof}

Our last task is to convert the above bound into an estimate for $H^s$ functions. We relate the indices 
$s$ and $k$ by the relation
\begin{equation}\label{s-range}
j-1+\frac{k}2 \leq  s \leq j-1+\frac{k+1}2
\end{equation}
Then we have

\begin{proposition}\label{p:t2j-exps}
Let $s$ be as in \eqref{s-range}. Then the error estimates in the above expansion satisfy the pointwise bounds
\begin{equation}\label{tt2j-err}
\left|  T_\Sigma(i\tau/2) -  \sum_{l = 0}^k   T_\Sigma^l (i\tau)^{-(2j-1+l)} \right| \lesssim 
 \tau^{-2s-1} \| (u,v) \|_{\dot H^s}^2\|(u,v)\|_{l^2_1 DU^2}^{2j-2},
\end{equation}
as well as the integrated bound  
\begin{equation}\label{tt2j-err-int}
\int_1^\infty \tau^{2s}\left|  T_\Sigma(i\tau/2) -  \sum_{l = 0}^k   T_\Sigma^l (i\tau)^{-(2j-1+l)}\right|   d\tau 
 \lesssim\frac{1}{|\sin (2\pi s)|}  \| (u,v) \|_{\dot H^s}^2\|(u,v)\|_{l^2_1 DU^2}^{2j-2}.
\end{equation}
Moreover the  error estimates for $\tilde T_{2j}$ satisfy the integrated bound
\begin{equation}\label{tt2j-err-int+}
 \int_1^\infty \tau^{2s}\left|  \real \Big(\tilde T_{2j}(i\tau/2) +  \sum_{l = 0}^k (-1)^{j+l}  i^{l-1}   \tau^{-(2j-1+l)}  T_{2j}^l\Big) \right|   d\tau  
 \lesssim \frac{1}{|\sin (\pi s)|} \| (u,v) \|_{\dot H^s}^2\|(u,v)\|_{l^2_1 DU^2}^{2j-2}.
\end{equation}
\end{proposition}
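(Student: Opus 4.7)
The strategy is to start from the pointwise error bound in Proposition~\ref{p:t2j-exp} and to carry out a dyadic frequency decomposition argument patterned on the proof of Proposition~\ref{t2j-sprime}. For each multi-index pair $(\alpha,\beta)$ appearing in the error sum, with $l = |\alpha|+|\beta| \in [k+1, 2j-1+k]$ and $\max\{\alpha_i, \beta_i\} \leq [k/2]+1$, the goal is to bound
$$\tau^{-l} \prod_i \|\partial^{\alpha_i} u_i\|_{l^{2j}_\tau DU^2} \|\partial^{\beta_i} v_i\|_{l^{2j}_\tau DU^2} \lesssim \tau^{-2s-1} \|(u,v)\|_{H^s}^2 \|(u,v)\|_{l^2_1 DU^2}^{2j-2}.$$
The assumption $s \geq j-1+k/2$ combined with the constraint $\max\{\alpha_i,\beta_i\} \leq [k/2]+1$ ensures that, for $j \geq 2$, the $H^s$ norm can absorb any single derivative placed on one of the factors (the case $j=1$ is excluded since it was already handled in Section~\ref{s:t2}).

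After a Littlewood--Paley decomposition and selection of the two highest frequencies $\lambda_1 \geq \lambda_2$ that will carry the $H^s$ norm, I use the same types of bounds as in the proof of Proposition~\ref{t2j-sprime}:
$$\|\partial^\alpha u_\lambda\|_{l^{2j}_\tau DU^2} \lesssim \lambda^{|\alpha|-s-\frac12}\|u\|_{H^s}\cdot\begin{cases} (\lambda/\tau)^{1-\frac{1}{2j}} & \lambda \leq \tau, \\ 1 & \lambda > \tau,\end{cases}$$
and analogous $l^2_1 DU^2$ bounds for the remaining $2j-2$ factors. A Schur-type geometric summation over dyadic scales, in complete analogy with \eqref{Ctaulambda}, then yields the pointwise estimate \eqref{tt2j-err}, with constants uniformly bounded across the admissible $(\alpha,\beta)$ because $\max\{\alpha_i,\beta_i\} \leq [k/2]+1 \leq s$.

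For the integrated bound \eqref{tt2j-err-int}, integrating $\tau^{2s}$ against the pointwise bound reduces, after the dyadic summation, to expressions of the form $\int_1^\infty \tau^{2s-l}\,d\tau$. These diverge precisely when $l = 2s+1$, which coincides with the endpoints of the range \eqref{s-range}: $l=k+1$ at $s=j-1+k/2$ and $l = 2j-1+k$ at $s = j-1+(k+1)/2$. Each endpoint contributes a constant of order $|2s+1-l|^{-1}$, and both are uniformly dominated by $1/|\sin(2\pi s)|$, which is the precise quantity controlling both half-integer singularities simultaneously.

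For the improved bound \eqref{tt2j-err-int+} on $\tilde T_{2j}$, I exploit the fact that $\real\,(i\tau)^{-(2j-1+l)}$ vanishes unless $2j-1+l$ is even, so that only half of the expansion terms $(-1)^{j+l}i^{l-1}\tau^{-(2j-1+l)} T_{2j}^l$ actually contribute to the real part; the signs $(-1)^{j+l}i^{l-1}$ in the statement are exactly what emerges from this real-part computation. Consequently the effective spacing between retained powers of $\tau^{-1}$ is doubled, and one of the two potentially divergent endpoints is removed, replacing $|\sin(2\pi s)|^{-1}$ by $|\sin(\pi s)|^{-1}$. The principal obstacle throughout is the precise tracking of the $1/|\sin|$ dependence: this requires performing the Schur summation uniformly in the derivative distribution $(\alpha,\beta)$ and uniformly over the admissible values of $l$, while correctly locating the critical thresholds where $\tau$-integrability breaks down at both ends of the $s$-range \eqref{s-range}.
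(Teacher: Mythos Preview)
Your treatment of \eqref{tt2j-err} and \eqref{tt2j-err-int} is essentially the paper's own argument: start from Proposition~\ref{p:t2j-exp}, perform a Littlewood--Paley decomposition, place the two $H^s$ norms on the highest-frequency factors, and carry out the Schur-type geometric summation with the modified constants $C(\tau,\lambda_1,\lambda_2,\alpha_1,\alpha_2)$ obtained by multiplying \eqref{Ctaulambda} by $(\lambda_1/\tau)^{\alpha_1}(\lambda_2/\tau)^{\alpha_2}$. That part is fine.

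Your argument for \eqref{tt2j-err-int+}, however, has a genuine gap. You assert that taking the real part kills every other term in the expansion, so that ``the effective spacing between retained powers of $\tau^{-1}$ is doubled, and one of the two potentially divergent endpoints is removed.'' The parity observation about the expansion coefficients is correct in spirit (it reflects the reality of the momenta $H_{2m+1}$), but it does not by itself give an improved bound on the \emph{remainder}. Concretely, even granting that $\real(\text{partial sum to }k)=\real(\text{partial sum to }k\pm 1)$, you only obtain $|\real E_k|\le \min(|E_k|,|E_{k\pm1}|)$. But the bound for $E_k$ from \eqref{tt2j-err-int} is valid on the range $[j-1+\tfrac{k}{2},\,j-1+\tfrac{k+1}{2}]$, and the bound for $E_{k+1}$ on the adjacent range; at the common endpoint $s_0=j-1+\tfrac{k+1}{2}$ \emph{both} constants blow up (one from the $\lambda_1,\lambda_2\le\tau$ regime, the other from the $\tau\le\lambda_2\le\lambda_1$ regime). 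So taking the minimum does not remove the singularity.

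The missing idea, which the paper supplies, is a $\tau$-dependent frequency splitting $u=u_{<\tau}+u_{>\tau}$. One shows separately that (i) $\mathbb{T}^{2k}(u)-\mathbb{T}^{2k}(u_{>\tau})$ and $\mathbb{T}^{2k+1}(u)-\mathbb{T}^{2k+1}(u_{<\tau})$ are uniformly bounded for $|s-s_0|\le\tfrac18$, because in each difference at least one factor is forced into the ``good'' frequency regime; (ii) $\mathbb{T}^{2k}(u_{>\tau})$ and $\mathbb{T}^{2k+1}(u_{<\tau})$ are uniformly bounded, by the same mechanism; and (iii) the remaining pieces $\mathbb{T}^{2k}(u_{<\tau})$ and $\mathbb{T}^{2k+1}(u_{<\tau})$ have the \emph{same} relevant (real or imaginary) part, because their difference is a single term built from a real conserved quantity. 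The triangle inequality then closes the argument uniformly across $s_0$. Without this frequency splitting, your parity heuristic does not yield \eqref{tt2j-err-int+}.
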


The first two estimates transfer directly to $\tilde T_{2j}$, which is a linear
combination of primitive integrals of length $2j$.  It completes the estimates
 of Theorem \ref{energies} in all cases, except for $s$ near half
integers where we lack uniformity in the estimate. There  we
need to further take advantage of the fact that the conserved momenta
 $H_{2j+1}$ are real.

\begin{proof}
For the pointwise bound we need to show that 
\[
I= \tau^{-|\alpha|} \prod \| \partial^{\alpha_l} u \|_{l^{2j}_{\tau} DU^2}
\lesssim  \tau^{-2s+1}   \| u\|_{H^{s}}^2 \|u\|_{l^2 DU^2}^{2j-2}
\]
if  $s$ is in the range
\[
 j-1+\frac{k}2 \leq  s \leq j-1+\frac{k+1}2
\]
and 
\[  k+1 \le \sum  \alpha_j \le 2j-1+k,\qquad \max \alpha_j \le \left[\frac{k}2\right] +1. \]
As for Proposition \ref{t2j-sprime} we take a Littlewood-Paley decomposition and expand. Again we  order the terms so that the first two terms are of highest frequency. The worst case is when all derivatives fall on the first two terms, 
more precisely we may replace the condition above by  
\begin{equation} \label{ks}  \alpha_1 \le \left[ \frac{k}2\right],  k+1 \le \alpha_1+\alpha_2 \le 2j-1+k, \qquad \alpha_2 \le 2j-1+\left[ \frac{k+1}2 \right]  \end{equation}
and $\alpha_j=0$ for $j \ge 3$. We assume
\eqref{ks} in the following.  Repeating the argument of Proposition
\ref{t2j-sprime} - but adjusting for the derivatives - we have to
multiply the constants in \eqref{Ctaulambda} by $ \left(
\frac{\lambda_1}{\tau} \right)^{\alpha_1}\left( \frac{\lambda_2}{\tau}
\right)^{\alpha_2} $,
\begin{equation} 
 C(\tau, \lambda_1,\lambda_2,\alpha_1,\alpha_2) = \left\{ \begin{array}{ll}
    \left(\dfrac{\lambda_1}{\tau}\right)^{2(-s+j-1)+\alpha_1+\alpha_2} \left(\dfrac{\lambda_2}{\lambda_1}\right)^{ -s+2j-\frac52 +\frac1{2j}+\alpha_2} 
             & \lambda_2 \leq \lambda_1 \leq \tau \\[4mm] 
  \left(\dfrac{\lambda_1}{\tau}\right)^{-(s+\frac12)+\alpha_1} \left(\dfrac{\lambda_2}{\tau}\right)^{-s+2j-\frac52 +\frac1{2j}+\alpha_2} 
              &  \lambda_2\leq \tau \leq \lambda_1 \\[4mm]  
\left(\dfrac{\lambda_1}{\tau}\right)^{-(s+\frac12)+\alpha_1} \left(\dfrac{\lambda_2}{\tau}\right)^{-(s+\frac12)+\alpha_2}  
     & \tau \leq \lambda_2 \leq \lambda_1.
\end{array}
\right.
\end{equation} 
As above this implies the pointwise bound 
\[  \tau^{2s+1-|\alpha|}\prod \Vert \partial^{\alpha_l}u \Vert_{l^{2j}_\tau DU^2}   \le  \sqrt{A_p^{\alpha_1,\alpha_2}B_p^{\alpha_1,\alpha_2} }   \Vert u \Vert_{H^s}^2 \Vert u \Vert_{l^2_1 DU^2}^{2j-2} \] 
and the integrated bound 
\[ \int_1^\infty  \tau^{2s+1-|\alpha|}\prod \Vert \partial^{\alpha_l}u \Vert_{l^{2j}_\tau DU^2}d\tau  \le   A_i^{\alpha_1,\alpha_2}   \Vert u \Vert_{H^s}^2 \Vert u \Vert_{l^2_1 DU^2}^{2j-2} \]
where the constants are determined by geometric sums. Again we give the result 
for the integrated bounds $A_i$ up to multiplicative constants. 
\begin{equation} 1+  \dfrac1{(2(j-1-s) + \alpha_1+\alpha_2)(2j +\alpha_2-s -\frac52+\frac1{2j})} \le 1 + \frac1{2(j-1+\frac{k+1}2-s)(j-1+ \frac{k+1}2-s+\frac14 )}
\end{equation}  
in the range $\lambda_2 \le \lambda_1 \le \tau $,
\begin{equation} \label{eq:ktauk}1  + \frac1{(s+\frac12-\alpha_1)(2j+\alpha_2-s -\frac52+\frac1{2j})}
\le 1+ \dfrac1{(2(j-1+\frac{k+1}2)-s)(j-1+ \frac{k+1}2-s+\frac14 )} \end{equation} 
in the range $\lambda_1 \le \tau \le \lambda_2$ and finally  
 in the range $\tau \le \lambda_2 \le \lambda_1 $ 
\begin{equation}\label{eq:taukk}   1+  \frac1{(s+\frac12-\frac{k}2)(2s-2j-k)} \end{equation}  
We arrive at \eqref{tt2j-err} and \eqref{tt2j-err-int} and  
 note that estimate \eqref{tt2j-err-int+} follows directly from  \eqref{tt2j-err-int} unless $s$ is close 
to a half-integer.
 It remains to prove \eqref{tt2j-err-int+} near half integers. 
Hence we choose $s_0 = j+k-\frac12$  
 and consider $s$ close to $s_0$. To shorten the notation we define 
\[ \mathbb{T}_{2j}^{N}(i\tau/2)(u) = \tilde T_{2j}(i\tau/2)+ (-1)^j \sum_{l=0}^{N}  H_{2j,l}
 \tau^{-1-2j-l}. \] 
The crucial observation is that most partial estimates above extend to a larger range of $s$. 
The claim will follow from   the inequalities
\begin{equation} \label{eq:k}
\int_1^\infty (\tau^2-1)^s \left| \mathbb{T}_{2j}^{2k}(i\tau/2)(u) -\mathbb{T}_{2j}^{2k}(i\tau/2)(u_{>\tau}))\right|
\lesssim \Vert u \Vert_{H^s}^2 \Vert u \Vert_{l^2_1 DU^2}^{2j-2} 
\end{equation} 
and 
\begin{equation} \label{eq:k+1}
\int_1^\infty (\tau^2-1)^s \left|\mathbb{T}_{2j}^{2k+1}(i\tau/2)(u) -\mathbb{T}_{2j}^{2k+1}(i\tau/2)(u_{<\tau}))\right|
\lesssim \Vert u \Vert_{H^s}^2 \Vert u \Vert_{l^2_1 DU^2}^{2j-2} 
\end{equation} 
for $|s_0- s| \le \frac18$, and the similar estimates \eqref{eq:low} and \eqref{eq:high} below. 
Each term in the  Littlewood-Paley expansion of the factors in 
$ \mathbb{T}_{2j}^{2k+1} (u) - \mathbb{T}_{2j}^{2k+1}(u_{<\tau})$ 
contains at least one factor  with frequency $\ge \tau$ and hence $\lambda_1 \ge \tau$. Thus we only need to consider the cases $\lambda_2 \le \tau \le \lambda_1$
and $\tau\le \lambda_2\le \lambda_1$. In these cases the estimates above extend to $|s-s_0|\le \frac18$, see \eqref{eq:ktauk} and \eqref{eq:taukk}.  

Similarly each term of the Littlewood-Paley expansion of 
$ \mathbb{T}_{2j}^{2k} (u) - \mathbb{T}^{2k}(u_{>\tau})$
contains at least one factor with  frequency below $\tau$. If  $\lambda_2 \le \tau $ then we are in the middle regime which is bounded for $|s-s_0|\le \frac18$.
If $\lambda_2 \ge \tau$  we order the expansion so that $\lambda_3 \le \tau$, which we estimate by 
\[ \Vert u_{\lambda_3} \Vert_{l^{2j}_\tau DU^2} \le c \left( \frac{\lambda_3}\tau\right)^{\frac12-\frac1{2j}}    \Vert_{l^2_1 DU^2}.    \] 
Then 
\[ \int_1^\infty  \sum_{\lambda_3 \le \tau \le \lambda_2\le \lambda_1}  
\tau^{2s}  \prod_{j=1}^3 \Vert u_{\lambda_j} \Vert_{l^{2j}_\tau Du^2}
\Vert u_{\le \lambda_2} \Vert_{l^{2j}_\tau DU^2}^{2j-3}   d\tau 
\lesssim          \Vert u \Vert_{H^s}^2 \Vert u \Vert_{l^2_1 DU^2}^{2j-2}. \]  
The same argument shows that 
\begin{equation} \label{eq:high} \int_1^\infty (\tau^2-1)^s \left| \mathbb{T}^{2k} (u_{>\tau}) \right|d\tau
\lesssim \Vert u \Vert_{H^s}^2 \Vert u \Vert_{l^2_1 DU^2}^{2j-2} 
\end{equation}   
and                           
\begin{equation}\label{eq:low}  \int_1^\infty (\tau^2-1)^s \left| \mathbb{T}^{2k+1} (u_{<\tau}) \right|d\tau 
\lesssim \Vert u \Vert_{H^s}^2 \Vert u \Vert_{l^2_1 DU^2}^{2j-2}. \end{equation}  
Since $ \im \mathbb{T}^{2k} (u_{<\tau}) = \im \mathbb{T}^{2k+1} (u_{<\tau})$
we obtain the uniform estimate below $s_0$ by the triangle inequality.
For $ s_0\le s \le s_0 +\frac18 $ 
$ \im \mathbb{T}^{k} (u_{>\tau}) = \im \mathbb{T}^{k+1} (u_{>\tau})$  
and again the uniform estimate follows by an application of the triangle inequality.   \end{proof}

\newsection{Proof of the main theorems and variants}
\label{s:proof}
In this section we combine the bounds in the previous sections in
order to prove our main results in Theorem~\ref{energies} and Theorem~\ref{t:main+}. 
We also discuss some further developments of our ideas and results.

\subsection{Proof of Theorem \ref{energies}}
This is done in two steps. First we show that the energies $E_s$ defined by the right hand side of 
formulas \eqref{def+}, \eqref{def++} are smooth as functions of $u \in
H^s$, and satisfy the bounds in part (2) of the theorem. Secondly, we
show that they are conserved along the flow.

For this we begin with the multilinear expansion for $\ln T$, namely 
\[
-\ln T = \sum_{j = 1}^\infty \tilde T_{2j}
\]
which, by Corollary~\ref{c:sum}, converges on the half-line $i[1/2,\infty)$ provided that 
\[
\| u\|_{l^2 DU^2} \ll 1.  
\]
Correspondingly, this yields a multilinear expansion for the energies $E_s$,
\[
E_s =  \sum_{j = 1}^\infty E_{s,2j}
\]
We will estimate separately each of  the terms  in the series, in several steps:

\medskip

\subsubsection{The leading term $j=1$} 
As proved in Proposition~\ref{p:T2}, the first term in this expansion is exactly the 
$H^s$ norm,
\[
E_{s,2} = \| u\|_{H^s}^2
\]
It remains to consider the rest of the terms in the series, and show that we
can bound them as error terms.

\subsubsection{Large $j$, $j > 2s+1$}
Here we discuss the  tail of the series, for which we can directly use
 Proposition~\ref{t2j-s}; this contains bounds for $\tilde T_{2j}$ which can be 
obtained directly from similar $T_{2j}$ bounds.
Precisely,  Proposition~\ref{t2j-s};   shows that for large enough $j$ we 
have a favorable bound for $E_{s,2j}$, namely 
\[
|E_{s,2j}| \leq \| u\|_{H^s}^2 (c\| u\|_{l^2 DU^2})^{2j-2}, \qquad j > 2s+1
\]
 This suffices if $s < \frac12$. For larger $s$, however,
we need stronger bounds if $j$ is small. We remind the reader that 
the issue here is that the bounds for $\tilde T_{2j}$ in Proposition~\ref{t2j-s}
are derived from similar bounds for $T_{2j}$; on the other hand,
for smaller $s$ we expect such bounds to hold for 
$\tilde T_{2j}$ but not for $T_{2j}$. Thus, we need to take advantage of the fact that 
$\tilde T_{2j}$ has a better structure than $T_{2j}$ (namely the fact that, unlike $T_{2j}$,
$\tilde T_{2j}$ contains only connected integrals).

\subsubsection{Medium $j$, $j >  s+1$} Here we use the bounds for $\tilde T_{2j}$ in 
 Proposition~\ref{t2j-sprime}, which yield 
\[
|E_{s,2j}| \leq \| u\|_{H^s}^2 (c\| u\|_{l^2 DU^2})^{2j-2}, \qquad j > s+1
\]
We remark that, due to the previous step, for fixed $s$ we need this bound only 
for a finite set of $j$'s, therefore the uniformity of the bounds with respect to $j$ 
is no longer an issue.

\subsubsection{Small  $j$, $j \leq s+1$} In this range we face the additional difficulty 
that in the $2j$ component of the formula \eqref{def++} we need to also take into account
the energy corrections $T_{2j}^l$. This is addressed in Section~\ref{s:tt2j-exp}.
At least for $s$ away from $\Z/2$, we can directly use the bound \eqref{tt2j-err-int}
to conclude that we have the bound 
\begin{equation}\label{all-s}
|E_{s,2j}| \lesssim  \| u\|_{H^s}^2 \| u\|_{l^2 DU^2}^{2j-2}, \qquad j \geq 2
\end{equation}
The only remaining issue is to show that this bound is uniform as well for $s$ 
near integers and half-integers. These two situations are somewhat different, 
and are considered separately.

\subsubsection{$s$ near integers} To set the notations, we fix an
integer $s_0 \geq 1$ and consider $s$ near $s_0$.  On one hand, in the
bound \eqref{tt2j-err-int} we have $(s-s_0)^{-1}$ growth. On the other
hand, in the integral \eqref{def++} we have an additional $\sin (\pi
s)$ factor. These two expressions cancel and we obtain again the bound
\eqref{all-s}, uniformly for $s$ near $s_0$.

\subsubsection{$s$ near half- integers} Here we fix $s_0 \in \N+\frac12$ and again consider 
$s$ near $s_0$. The integral bound for $\tilde T_{2j}(z)$ fails to be uniform as $s$ approaches $s_0$.
Nevertheless, we are saved by the fact that only $\real   \tilde T_{2j}(z)$ is needed. But this satisfies
a favorable bound by Proposition~\ref{p:t2j-exps}.

\subsubsection{Dependence on $s$}
If $u \in H^{s_0}$ with $s_0 > -\frac12$ then, with $N= [s_0]$,  analyticity with respect to 
$s \in (-\frac12,s_0)$ is a consequence of the
integral formula 
\begin{equation} \label{energieformula}
 E_s(u) =    \frac{2\sin(\pi s)}\pi   \bigintsss_1^\infty (\tau^2-1)^s   \Big( \real  \ln T(i\tau/2)+  
\sum_{j=0}^{N} (-1)^j H_{2j}  \  \tau^{-2j-1} \Big) d\tau   +  \sum_{j=0}^N \binom{s}{j}  H_{2j}, 
\end{equation} 
It  allows for an extension of $E_s$  in  a complex neighborhood of $(-\frac12,s_0)$ 
to a holomorphic function. 

Continuity at $s_0$ from the left follows from the 
above analyticity property and the uniform Lipschitz bounds for $E_s(u)$, $s \leq s_0$, 
in terms of $u \in H^{s_0}$,   simply by approximating  $u$ with more regular functions.

Finally, we remark that if  $s$ is an integer then the coefficient of the integral vanishes and 
the sum gives the desired identification \eqref{identification}.

\subsubsection{The conservation of $E_s$}
Here we show that $E_s$ is conserved along the NLS and mKdV flow. It
suffices to do this for Schwartz initial datum, in which case $T$ is
constant along the flow. Solutions with Schwartz initial datum remain
Schwartz functions, and for those the formal analysis of the Jost
solutions becomes rigorous.

\subsection{Proof of Theorem \ref{t:main+}}

Here the goal is to extend the considerations in the previous proof to initial datum $u$ which is no longer
small in $l^2 DU^2$.  We will do this in two steps. First we show that the energies given by 
\eqref{def+}, \eqref{def++} are well defined for all $u$ in $H^s$, and have the appropriate regularity 
properties. Then we will prove that the trace formulas hold.

\subsubsection{ The energies $E_s$ for large data.}
Given $u \in H^s$, we can choose some large $\tau_0$ depending on $\|u\|_{H^s}$ so that 
\[
\| u\|_{l^2_{\tau_0} DU^2} \leq \delta 
\]
Rescaling the bounds for $\ln |T|$ in the proof of Theorem~\ref{energies}, it follows that the 
formal series for $\ln |T|$ converges on $i[\tau_0,\infty)$ with favorable bounds. 
This shows that the part  of the integrals \eqref{def+}, \eqref{def++} for $t \in [\tau_0,\infty)$ 
converges and satisfies the desired $H^s$ bounds. 

It remains to consider the part of the integrals in the interval $i[1,\tau_0]$. In the defocusing case 
we know that in this interval $\ln |T|$ depends analytically on $u \in l^2 DU^2$, which is more than enough
to close the argument.

However, in the focusing case we only know that $T^{-1}$ depends analytically on $u \in l^2 DU^2$.
Thus if $T$ has poles in the interval $i[1,\tau_0]$ then $\ln |T|$ has logarithmic singularities there.
This still suffices for the convergence of the integrals  \eqref{def+}, \eqref{def++} for $t \in [\tau_0,\infty)$,
but we need an additional argument to establish the regularity properties of the integral with respect to $u$.
Precisely, we need to account for the poles of $T$ which cross the interval $i[1,\tau_0]$. Changing the contour 
of integration to one without poles we get the sum of residues 
\[
\sum_k m_k \Xi(2z_k)
\]
over the poles near the line $i[1,\tau_0]$. 
Here $\Xi$ is as defined in \eqref{Xi-def}.
Since $\Xi$ is real analytic outside $z = i$, it follows that the 
contributions of the poles away from $i/2$ is analytic in $u$. At $z = i$ the function $\Xi$ is of class
$C^{s+1}$, so continuity  of $E_s$ follows for $s>-\frac12$.

\subsubsection{The trace formula for small data}
Here  we show that the middle terms of \eqref{defd+} and \eqref{deff+} are defined and coincide with the 
right hand side.  We first consider small datum $\| u\|_{l^2 DU^2} \ll 1$. 

\medskip

a) $s < 0$. In the defocusing case  $- \ln |T|$ is a nonnegative harmonic function in the upper half-space,
so the conclusion \eqref{defd+} follows directly from Lemma~\ref{l:apriori-s}. In the focusing case 
 $\ln |T|$ is a nonnegative superharmonic function in the upper half-space. Further,  denoting  the poles 
of $T$ in the upper half-space by $z_k$ and their corresponding multiplicities by $m_k$, we have 
\[
- \Delta \ln |T(z/2)| = \sum_k m_k \delta_{2z_k}
\]
By Lemma~\ref{piece2} and Lemma \ref{lpem}, these are located in $\{ 0
<  \im z \ll (1+ |\real z|)^s \}$.  Hence the trace formula
\eqref{deff+} is again a consequence of Lemma~\ref{l:apriori-s}.

\medskip

b) $s = N \geq 0$, integer. In the defocusing case we can directly apply  Lemma~\ref{l:apriori-n}. In the focusing 
case the condition $u \in L^2$ guarantees that all poles of $T$ are contained in a strip along the real axis.
Then we can apply  Lemma~\ref{l:apriori-n}.

\medskip

c) $N < s < N+1$, noninteger. Then we apply first step (b) above for $s=N$. This places us in a context where 
we can use Lemma~\ref{l:apriori-s}.

\subsubsection{The trace formula for large data}

Here we take arbitrary $u \in H^s$, and we first claim that we still have 
the bounds
\begin{equation}\label{trace-reg}
\int_\R (1+\xi^2)^s d \mu + \int_{U} \im z (1+|z|^2)^s d\nu  < \infty
\end{equation}
To see that we first observe that for large enough $\tau$ we have
\[
\| u\|_{l^2_{\tau_0}  DU^2} \ll 1
\]
Hence the rescaled function 
\[
u_\tau(x) = \tau_0 u(\tau_0 x) 
\]
has small $l^2  DU^2$ norm, and then we can apply the small data trace regularity.
This proves our claim.

Next we consider the trace formulas for $u$. In view of the trace regularity property
\eqref{trace-reg}, this follows directly as in Lemma~\ref{l:apriori-s}.

\red{To prove part 3 of the theorem we observe that the square of the $H^s$ norm is the quadratic part of $E_s(u)$. Estimate \eqref{est:quad}  follows from  Corollary \ref{cor:6.3}  and inequality \eqref{t2j-sprime-int} in  Proposition \ref{t2j-sprime}.}

\subsubsection{ The global regularity of the energies $E_s$}
In the defocusing case, for arbitrary $u \in H^s$ we consider a large frequency scale 
$\tau$ as above. For $\tau \gg \tau_0$ we can take advantage of the rescaling to conclude 
summability of the asymptotic expansion for $\log |T|$ with good $H^s$ bounds, and analyticity follows.
For $1 \leq \tau \lesssim \tau_0$ we instead use directly the analyticity of $\log |T|$
as a function of $u \in l^2 DU^2$.

In the focusing case we argue in the same manner provided that there are no poles for $T$ in $i[1/2,2\tau_0)$.
Further, the trace formulas show that the poles away from $i$ are in effect harmless, as the function $\Xi_s$ is analytic there. Only poles of $T$ at $i/2$ affect the regularity of $E_s$ as a function of $u$ since we may move the contour of integration away from the endpoint without changing $E_s$.

\bigskip

\subsection{More conserved quantities}

We denote 
\[  \C_+ = \Big\{ z \in \C, \im z > 0 , z \notin i[1,\infty)  \Big\}. \] 

\begin{definition} 
For $ s > -\frac12$ we define the class of weight functions
$\mathcal{E}_{s}$ 
which are holomorphic in $\C_+$,  continuous up to the boundary 
(with possible different limits from the left and the right), real 
on the real axis and which satisfy 
\[  |\mu(z)| \le c (1+|z|^2)^{s}. \]
\end{definition} 
 
By the Schwarz reflection principle they are holomorphic functions in a domain 
symmetric around the real axis. 
  We define $\Xi$ by 
\[ \frac{d}{dz} \Xi = \mu, \quad \Xi(0)=0 \] 
for $\mu \in \mathcal{E}_s$. Let 
\[ [\mu] (\tau) =   \mu(i\tau +0) - \mu(i\tau-0) \] 
for $t >1$. If $\mu$ is even on $\R$ then $\mu(-\bar z) = \overline{\mu(z)} $ 
and if $\mu$ is odd then $\mu(-\bar z) = -\overline{\mu(z)}$. Thus 
$[\mu] $ is purely imaginary if $\mu $ is even and real if $\mu$ is odd 
on $\R$.

\begin{theorem}\label{generaleta}  There exists $\delta>0$ so that the following is true: 
If $s>-\frac12$, $ s\notin \Z/2$,  
$\xi \in \mathcal{E}_s $ then there is a unique 
conserved quantity $E_{\mu}$ on 
\[ \{ u \in H^s, \Vert u \Vert_{l^2_1 DU^2} \le \delta \} \]
 which satisfies 
\[ \left| E_{\mu}(u) - \int \mu(\xi)|\hat u(\xi)|^2 d\xi \right| 
\le c \Vert u \Vert_{H^s}^2 \Vert u \Vert_{l^2_1 DU^2}^2 \]
In particular the quadratic part is given by the Fourier multiplier $\mu$.
Moreover in the defocusing case $E_\mu$ is defined on whole of $H^s$ and 
\begin{equation} \label{Exid} 
\begin{split} 
E_\mu(u) = &\, -\frac1\pi \int \mu(\xi) \ln |T(-\xi/2)| d\xi  
\\   
= &\, \frac1\pi \real \int_1^\infty \overline{[\mu](\tau)}   \Big( \ln T(i\tau/2) 
+   \sum_{j=0}^{N-1}  i^j   H_{j}  \tau^{-j-1}\Big)   dt 
+ \sum_{j=0}^{N-1}   \frac1{j!}\overline{ \mu^{(j)}(0)}  H_{j} 
\end{split} 
\end{equation} 
and in the focusing case 
\begin{equation} \label{Exidf} 
\begin{split} 
E_\xi(u) = & \,\frac1\pi \int \mu(\xi) \ln |T(-\xi/2)| d\xi  
+ \sum_k m_k \im   \Xi(z_k) 
\\   
= &\, - \real \int_1^\infty \overline{[\mu](\tau)}   \Big( \ln(T(i\tau/2)) -  \sum_{j=0}^{N-1}  i^j  H_{j} \tau^{-j-1}\Big)  d\tau 
+ \sum_{j=0}^{N-1}    \frac1{j!} \overline{\mu^{(j)}(0)} H_{j} 
\end{split} 
\end{equation} 
where $N = [2s]$, and the sum runs over the eigenvalues $z_k/2$ with $\im z_k<0$, with  algebraic multiplicities $m_j$. 
It is continuous   in $\mu$ and uniformly analytic in $u$.
If $u \in H^{s_0}$ for some $ s_0 >s$ then the map 
\[ \mathcal{E}_s \ni \mu \to  E_{\mu}(u) \]
is analytic in $\mu$.  
The quartic term is given by 
\[ \frac1{2 (2\pi)^{\frac32}}   \int_{\xi_1+\xi_2= \eta_1+\eta_2}  \frac{\mu(\xi_1) + \mu(\xi_2) -\mu(\eta_1) -\mu(\eta_2)}{(\xi_1-\eta_1)(\xi_1-\eta_2)} \hat u(\xi_1) \hat u (\xi_1) \overline{\hat u(\eta_1) \hat  u(\eta_2)}. \]   
\end{theorem}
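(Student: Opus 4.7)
The strategy is to adapt the construction of $E_s$ carried out in Sections \ref{s:t2}--\ref{s:tt2j-exp} to a general weight $\mu\in\mathcal{E}_s$. The key observation is that all multilinear estimates established there for $T_{2j}$ and $\tilde T_{2j}$ use the weight $(1+\xi^2)^s$ only through the pointwise bound $|\mu(z)|\le c(1+|z|^2)^s$ along the cut $i[1,\infty)$, and so they carry over with no structural change. Accordingly, I would define $E_\mu$ directly by the formula \eqref{Exid} (respectively \eqref{Exidf} in the focusing case): the Taylor coefficients $\tfrac{1}{j!}\overline{\mu^{(j)}(0)}$ replace $\binom{s}{j}$, and the jump $\overline{[\mu](\tau)}$ across the cut replaces $2\sin(\pi s)(\tau^2-1)^s$ in the contour integral. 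The assumption $s\notin \Z/2$ ensures that one does not need the half-integer cancellation rescue of Proposition \ref{p:t2j-exps}.

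The plan is then to substitute the formal expansion $-\ln T=\sum_{j\ge 1}\tilde T_{2j}$ into this formula and verify convergence term by term. For $j=1$, using $\tilde T_2=T_2$ and the Fourier representation of Proposition \ref{p:T2}, the contour can be collapsed back to the real line by the residue theorem, producing exactly $\int\mu(\xi)|\hat u(\xi)|^2\,d\xi$, with the truncation-at-infinity correction cancelling against the Taylor polynomial counterterm. For $j\ge 2$ the uniform bound $|[\mu](\tau)|\le 2(1+\tau^2)^s$, combined with Proposition \ref{t2j-sprime} when $j>s+1$ and Proposition \ref{p:t2j-exps} when $2\le j\le s+1$, yields
\[
 |E_{\mu,2j}(u)| \lesssim \|u\|_{H^s}^2 (c\|u\|_{l^2 DU^2})^{2j-2}
\]
with a constant depending on $\mu$ only through its $\mathcal{E}_s$ seminorm. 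Summing in $j$ gives convergence and the quartic error bound. Conservation along the NLS and mKdV flows is inherited from the conservation of $T$, first on Schwartz data where the formal calculus is rigorous and then extended by density as in the proof of Theorem \ref{t:main+}. Uniqueness follows since on Schwartz data any conserved functional with the prescribed quadratic part and the stated quartic remainder must coincide with the functional of $T$ built above, by the density of Schwartz functions in $H^s$.

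The quartic term is obtained by substituting $\tilde T_4=T_4-\tfrac12 T_2^2$, expanding both summands via their Fourier representations, and collapsing the contour by residues: the denominator $(\xi_1-\eta_1)(\xi_1-\eta_2)$ arises from partial fractions applied to the product of the two simple poles of $T_2^2$, the numerator $\mu(\xi_1)+\mu(\xi_2)-\mu(\eta_1)-\mu(\eta_2)$ records the residues after the contour shift, and the constraint $\xi_1+\xi_2=\eta_1+\eta_2$ encodes spatial momentum conservation. The main obstacle I anticipate is the careful bookkeeping in this quartic computation and, more generally, tracking that all constants depend on $\mu$ only through its $\mathcal{E}_s$ seminorm, so that the claimed continuity in $\mu$ and analyticity in $\mu$ for more regular $u$ follow automatically. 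In the focusing case the pole contributions $\sum_k m_k\im\Xi(z_k)$ require a separate argument: since $\Xi$ is analytic on $\C_+\setminus\{i\}$ but only of H\"older regularity $s+1$ at $z=i$, one obtains only continuity (rather than analyticity) in $u$ at eigenvalues sitting at $i/2$, exactly as in Theorem \ref{t:main+}.
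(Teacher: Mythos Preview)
Your proposal is correct and follows essentially the same route as the paper: define $E_\mu$ by the contour-integral formula \eqref{Exid}/\eqref{Exidf}, feed in the multilinear expansion of $\ln T$, and bound each $\tilde T_{2j}$ contribution using $|[\mu](\tau)|\lesssim (1+\tau^2)^s$ together with the integrated estimates of Propositions~\ref{t2j-sprime} and~\ref{p:t2j-exps}. The paper's proof is terser because it simply invokes the already-established trace formula of Theorem~\ref{t:main+} (whose proof is precisely the multilinear argument you reproduce) to conclude that the real-line and contour expressions agree and are finite; it then reads off analyticity in $u$ from the contour side and analyticity in $\mu$ from the real-line side, and points to Proposition~\ref{prop:quartic} for the quartic term.

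Two minor remarks. First, your description of the quartic computation via $\tilde T_4=T_4-\tfrac12 T_2^2$ is fine in principle, but the paper's Section~\ref{s:further} actually works directly with the connected form $\tilde T_4=-2\<XXYY>$, computes its Fourier kernel \eqref{Kz}, and then shifts the $z$-contour to the real line by residues; this is cleaner than assembling it from $T_4$ and $T_2^2$ separately. Second, the word ``unique'' in the theorem statement should be read as ``we define a specific functional $E_\mu$ by the formula,'' not as a uniqueness assertion among all conserved quantities with the stated quadratic part and error bound; your density argument does not establish the latter (one could a priori add small higher-order conserved corrections), and the paper does not claim it.
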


\begin{proof} Here we deform the contour to $i (-\infty, -1]$ and use the relation $\mu(\bar z ) = \overline{\mu(z)} $. 

In the defocusing case the middle term of \eqref{Exid}
  is well-defined for $ u\in H^s(\R)$ if $s>-\frac12 $ by trace
  formula of Theorem \ref{t:main+}. In the focusing case smallness
  ensures that there is no eigenvalue on $-i[1/2,\infty)$ and hence the
  evaluation of $\Xi$ is uniquely defined. Again the middle integral is
  well defined by \eqref{e:sumbound} after Theorem \ref{t:main+}.  If
  $s$ is not a half integer, and if we consider either the defocusing
  case or the focusing case under the smallness assumption then the
  integrals on the right hand side of \eqref{Exid} and \eqref{Exidf}
  are well defined. The arguments of Section \ref{s:t2} combined with dominated 
convergence imply  the second equality in  \eqref{Exid} and \eqref{Exidf}.
Analyticity with respect to $\mu$ is obvious for the term in the middle, 
whereas analyticity with respect to $u$ is obvious for the term on the right hand side of \eqref{Exid} resp. \eqref{Exidf}.  The identification of the quadratic term follows as for $E_s$ in Section \ref{s:t2}. The quartic term is identified as in Proposition \ref{prop:quartic}.  
\end{proof}

\subsection{Frequency envelopes}

One consequence of our main theorem is that if the initial datum is
small in $H^s$ then the solution remains in $H^s$ at all times, with a
comparable uniform bound\footnote{We also recall that by rescaling a
  similar result holds for large data, but there the uniform bound is
  no longer universal, instead it will depend also on the data size.}.
A natural follow-up question is whether the $H^s$ energy can migrate 
arbitrarily between frequencies. To provide some insight into this
it is convenient to use weighted norms which are closely related to frequency envelopes. 
Precisely,  we  consider sequences $\{a_k\}_{k \geq 0}$ for which
 the following properties hold:

\begin{enumerate} 
\item  The following bound from above holds: $a_k \lesssim 1$. 
\item  The sequence $a_k$ is slowly varying, $a_k / a_j \leq 2^{c|j-k|} $
Here $c$ is a sufficiently small constant, whose choice may depend on the problem at hand.
\end{enumerate}

For such sequences  we define the weighted norm 
\[ 
\Vert u \Vert_{H^a}^2 = \sum a_j^2 2^{2js} \Vert u_{2^j} \Vert_{L^2}^2. 
\]  
Then one way of saying that the $H^s$ energy of a solution $u$ does not travel much between  
frequencies is if not only the $H^s$ norm stays uniformly bounded, but  for
any sequence $\{a_k\}$ as above, $\Vert u(t) \Vert_{H^a}$ is uniformly bounded 
in terms of $\Vert u(0) \Vert_{H^a}$.  This is not exactly the standard frequency envelope 
formulation, but the two are equivalent.

A small variation of the proof of Theorem \ref{generaleta} shows indeed that this is the case:

\begin{theorem} 
For all $s>0$ there exists $\varepsilon>0$ so that the following is true. 
Let $(a_j)$ be a sequence satisfying 
\begin{enumerate} 
\item  $ 2^{(\varepsilon -\frac12) j} \le a_j  \le     2^s $ 
\item If $j_1< j_2<j_3$ then 
\[   \frac{ \ln a_{j_2}-\ln a_{j_1} }{j_2-j_1}  \le \frac{\ln a_{j_3}-\ln a_{j_2}}{j_3-j_2}  + \varepsilon .\] 
\end{enumerate} 
Then there exists  an even $\mu \in \mathcal{E}_s$ so that 
 \[  \sum_{j=1}^\infty  a_j \Vert u_{2^j} \Vert^2_{L^2} \approx  \int  \mu(\xi)  |\hat u|^2 d\xi   \] 
and 
\begin{equation} \label{quarticerror}   |E_\mu(u) - \int \mu(\xi)  |\hat u(\xi)|^2 | \le c \Vert u \Vert_{H^a}^2 \Vert u \Vert^2_{l^2_1 DU^2} \end{equation}  
provided $\Vert u \Vert_{l^2_1 DU^2}\le \delta$. 
In particular if  $s > -\frac12$, and $u_0$ small in $l^2_1 DU^2$ then 
\[ \Vert u(t) \Vert_{H^a} \lesssim  \Vert u_0 \Vert_{H^a}. \] 
\end{theorem}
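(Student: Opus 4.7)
The plan is to reduce this theorem to an application of Theorem~\ref{generaleta} by manufacturing, from the envelope sequence $(a_j)$, an even weight $\mu \in \mathcal{E}_s$ whose values at dyadic frequencies are comparable to $a_j$.

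For the construction of $\mu$, write $b_j = \log_2 a_j$; hypothesis~(1) gives a linear lower bound with slope $\varepsilon - \tfrac12$ and an upper bound of order $s$, while hypothesis~(2) is the log-convexity of $(a_j)$ up to additive error $\varepsilon$. I would first absorb the $\varepsilon$ nonconvexity into a small multiplicative change to replace $(b_j)$ by a genuinely convex sequence, then extend it to a smooth convex function $\phi$ on $\R$ with slopes in $[-\tfrac12, s+O(\varepsilon)]$. Using the integral representation
\[
\phi(x) = \alpha + \beta x + \int_\R (x-t)_+ \, d\lambda(t)
\]
with $d\lambda \ge 0$, I would replace the kernel $(x-t)_+$ by a holomorphic branch such as $\tfrac12 \bigl((x-t) + \sqrt{(x-t)^2 + c}\bigr)$ on the slit domain, producing a holomorphic function $\Phi$ that equals $\phi$ on $\R$. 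Then $\mu(\xi) := 2^{\Phi(\log_2(\xi^2+1))}$ is holomorphic on $\C_+$ (the branches of $\log_2(\xi^2+1)$ and of the square root combine to yield exactly the domain $\C_+ = \C \setminus i[1,\infty)$), real on $\R$, even, and satisfies $|\mu(z)| \le c(1+|z|^2)^s$; so $\mu \in \mathcal{E}_s$. Moreover $\mu(\xi) \asymp a_j$ for $|\xi| \sim 2^j$, whence $\sum_j a_j \|u_{2^j}\|_{L^2}^2 \asymp \int \mu(\xi) |\hat u(\xi)|^2 d\xi$ by Plancherel.

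The main obstacle is to upgrade the estimate of Theorem~\ref{generaleta} into the envelope-sensitive quartic bound~\eqref{quarticerror}. The quartic part of $E_\mu$ is the multilinear form with symbol
\[
M(\xi_1,\xi_2,\eta_1,\eta_2) = \frac{\mu(\xi_1)+\mu(\xi_2)-\mu(\eta_1)-\mu(\eta_2)}{(\xi_1-\eta_1)(\xi_1-\eta_2)}
\]
on the resonance surface $\xi_1+\xi_2 = \eta_1+\eta_2$. Using the real-analyticity and controlled logarithmic derivatives of $\mu$ inherited from the convex construction, the numerator is a second-order divided difference and one derives the symbol bound
\[
|M(\xi_1,\xi_2,\eta_1,\eta_2)| \lesssim \frac{\mu(\xi_{(1)}) + \mu(\xi_{(2)})}{1 + \xi_{(1)}^2},
\]
where $\xi_{(1)},\xi_{(2)}$ denote the two largest frequencies among the four inputs. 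Here log-convexity of $(a_j)$ is decisive: it forces $\mu$ at lower frequencies to be dominated by its values at the two highest frequencies, so the envelope weight is always carried by the top two legs of the quartic. A Littlewood--Paley decomposition, organized so that the two highest dyadic pieces carry the envelope while the remaining two are estimated in $l^2_1 DU^2$ via the bilinear $U^2$--$DU^2$ estimates of Section~\ref{s:t2j}, then produces exactly~\eqref{quarticerror} for the quartic term.

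For $j \ge 3$, the terms $E_{\mu,2j}$ are treated exactly as in the proofs of Proposition~\ref{t2j-sprime} and Proposition~\ref{p:t2j-exps}; those arguments already separate the two highest dyadic frequencies, so weighting those two pieces with the envelope and placing the remaining $2j-2$ legs in $l^2_1 DU^2$ yields
\[
|E_{\mu,2j}(u)| \lesssim \|u\|_{H^a}^2 \bigl(c \|u\|_{l^2_1 DU^2}\bigr)^{2j-2}.
\]
Summing over $j \ge 2$ under $\|u\|_{l^2_1 DU^2} \le \delta$ gives~\eqref{quarticerror}. Since $E_\mu$ is conserved along the NLS and mKdV flows by Theorem~\ref{generaleta}, and the error in~\eqref{quarticerror} is $O(\delta^2)$ smaller than the leading quadratic part $\int \mu |\hat u|^2$, we conclude
\[
\|u(t)\|_{H^a}^2 \lesssim E_\mu(u(t)) = E_\mu(u_0) \lesssim \|u_0\|_{H^a}^2,
\]
as claimed.
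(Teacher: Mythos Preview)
Your approach is genuinely different from the paper's and, while the overall strategy is reasonable, it has a real gap.

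\textbf{What the paper does instead.} The paper does not build a single weight $\mu$ from the full envelope $(a_j)$. Instead it observes that the log-convexity hypothesis~(2) allows one to write the $H^a$ norm as a superposition of norms associated to simple \emph{two-slope} envelopes
\[
a_j = \begin{cases} 2^{s_0 j} & j \le j_0,\\ 2^{s_1 j + (s_0-s_1)j_0} & j \ge j_0,\end{cases}
\qquad -\tfrac12 < s_1 < s_0 < s_1+\varepsilon,
\]
and for each such envelope takes the explicit weight
\[
\mu_{s_0,s_1,j_0}(\xi) = (1+\xi^2)^{s_0}\bigl(1+(2^{-j_0}\xi)^2\bigr)^{s_1-s_0},
\]
which is manifestly in $\mathcal{E}_{s_0}$. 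The quartic bound~\eqref{quarticerror} is then proved by re-running the estimates of Propositions~\ref{t2j-sprime} and~\ref{p:t2j-exps} with this explicit $\mu$, via a case analysis on where $s_0,s_1$ sit relative to integers and half-integers. The superposition step is the key simplification: it replaces your elaborate holomorphic construction by a two-parameter family of elementary weights.

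\textbf{Where your argument has a gap.} Two places. First, your construction of $\mu$ is not adequately justified: you need $\mu(z) = 2^{\Phi(\log_2(z^2+1))}$ to be holomorphic on $\C_+$ and satisfy $|\mu(z)| \le c(1+|z|^2)^s$ there, but $\Phi$ is itself an integral of holomorphic square roots, and the composition with $\log_2(z^2+1)$ and subsequent exponentiation can have growth off the real axis that you have not controlled. Second, and more seriously, for $s \ge 1$ the energy $E_\mu$ is defined in \eqref{Exid} with correction terms $\sum H_j \tau^{-j-1}$ subtracted inside the contour integral, and the higher-order bounds of Proposition~\ref{p:t2j-exps} are proved \emph{after} subtracting these corrections. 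Your reference to treating $E_{\mu,2j}$ ``exactly as in'' those propositions glosses over how the corrections interact with a general envelope weight. The paper's proof shows this is not automatic: when the two slopes straddle an integer, $s_1 < N \le s_0$, one must subtract an additional frequency-localized correction $E_{2N}(u_{<2^{-j_0}})\tau^{-2N-1}$ (case~(4) in the paper's proof). Your direct-construction route gives no mechanism for identifying or controlling such corrections, and without them the $\tau$-integral defining $E_{\mu,2j}$ need not converge with the required bound.
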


\begin{proof} The first part implies the second one. By a superposition argument it suffices to prove the theorem only for a sequence 
\[  a_j = \left\{\begin{array}{ll}   2^{s_0 j} \qquad & \text{ for } j \le j_0\\
                              2^{s_1 j + (s_0-s_1)j_0}\quad  & \text{ for } j \ge j_0.      \end{array} \right.
\] 
with $-\frac12 < s_1 < s_0 < s_1+\varepsilon $.  Then  
\[
\| u\|_{H^s_{a}} \approx  \| u_{<2^{j_0}} \|_{H^{s_0}} +    2^{(s_0-s_1)j_0}   \| u_{> 2^{j_0}} \|_{H^{s_1}}
\]
We choose 
\[
\mu_{s_0,s_1,j_0} (\xi) =   (1+ \xi^2)^{s_0} (1+(2^{-j_0}\xi)^2)^{s_1-s_0}
\]
so that the quadratic part of the energy $E_\mu$ is given by this
Fourier multiplier by Theorem \ref{generaleta} and we have to prove the bound
\eqref{quarticerror}. This follows by an adaptation of the proof of Theorem \ref{energies} resp. an application of various estimates proven there. 

\begin{enumerate} 
\item If $-\frac12 <s_1<s_0 < 1$ then the estimate follows as in the proof of Proposition \ref{t2j-sprime}. The geometric series allow this range of exponents.
\item If $ j \le  s_1 < s_0< j+1/2$ or
$j+1/2 < s_1 <s_2 < j+1$ 
then the bound follows similarly from the integrated bounds \eqref{tt2j-err-int} resp. their  proof. This again deteriorates as we approach $j+\frac12$ but not at 
the other endpoints.  
\item  If $2s_0$ and $2s_1$ are close to $2j+1$ then we want to argue as for \eqref{tt2j-err-int+}. We check  that for $\tau\ge 1 $ 
 \[
[\mu]( \tau) \sim  \left\{ \begin{array}{ll} 
 -\dfrac{2 \sin \pi s_0}\pi (\tau^2-1)^{s_0} (1- \tau^2 2^{-2j_0})^{s_1-s_0} & \qquad \text{ for } 1\le \tau \le 2^{j_0} \\
 -\dfrac{2\sin \pi s_1 }{\pi} ( \tau^2-1)^{s_0}(\tau^2 2^{-2j_0} -1)^{s_1-s_0} & \qquad \text{ for } 
2^{j_0} \le \tau 
\end{array} \right.             
\] 
Again a direct adaption of the proof gives the result. 
\item If $s_1<  N \le s_0 $  we need a more careful correction. 
In this case we try to bound 
\[  \real  \int_1^\infty  [\mu](\tau) \Big( \ln T(i\tau /2)   +  
 \sum_{j=0}^{N-1} (-1)^j E_{2l}(u)  \tau^{-2j-1} +  (-1)^N E_{2N}(u_{<2^{-j_0}})  \tau^{-2N-1}
dt  +  \dots \]     
This follows in the same fashion as part of the proof of \eqref{tt2j-err-int+}.
\end{enumerate} 
\end{proof} 

We also remark on an interesting but more straightforward  case of  the above result. If we 
confine ourselves to negative Sobolev norms, then the above theorem holds
for any frequency envelope satisfying
\[
a_k \geq  a_{k+1} \geq 2^{\delta-\frac12} a_k
\]
The simplification here is that no new energies are needed; instead it suffices to work with 
$E_{-\frac12 +\delta}$ and its rescaled versions.

\subsection{Generalized momentae}

As noted earlier in the paper, the conserved quantities $E_s$ in our
main result in Theorem~\ref{energies} are positive definite in the focusing case, and in general the quadratic part is positive definite. They  can
be viewed as inhomogeneous extensions of the  even conserved
Hamiltonians $H_{2k}$.  In particular they are not connected at all with
the odd conserved energies, i.e. the generalized momenta.

A natural question would be whether one can similarly define 
a continuous family of momenta. This is indeed the case, as
one can replicate as follows the construction of the conserved
energies $E_s$. Starting from the observation that, at least in the 
defocusing case,  the odd 
Hamiltonians can be expressed in the form
\[
H_{2k+1} =\frac1\pi  \int_{\R} \xi^{2k+1} \ln |T(-\xi/2)| d \xi
\]
it is natural to seek to define the generalized momenta as 
\[
P_s =  \frac1\pi \int_{\R} \xi (1+\xi^2)^{s-\frac12} \ln |T(-\xi/2)| d\xi
= -\frac1\pi \int \xi (1+\xi^2)^{s-\frac12} \ln |T(\xi/2)| 
, \qquad s > -\frac12,
\]
so that if $s$ is a half-integer we recover the (linear combinations of) odd Hamiltonians, i.e. the 
momenta.

To render this definition useful for non-decaying data, as well as in the focusing case,
we switch this as in the definition of the energies $E_s$ to a contour integral over the 
double half-line $i[1,\infty)$. For small $s$ this is done directly,  
\begin{equation}
P_s =  \frac{2\cos(\pi s)}{\pi}    \int_{1}^\infty  \tau(\tau^2-1)^{s-\frac12}
\im  \ln T(i\tau/2) \ dt, \qquad -\frac12 < s < \frac12  
\end{equation} 
For larger $s$ we need to remove a number of terms in the formal expansion of $T(it)$
 in order for the above integral to converge. Precisely, for $s$ in the range
\[
 N+\frac12 \leq s < N+\frac32 , \qquad N \geq 0
\]
we set in the defocusing case
\begin{equation}
P_s = \frac{2\cos(\pi s)}{\pi}    \int_{1}^\infty\! \tau(\tau^2-1)^{s-\frac12}\left(\!\im  \ln T(i\tau/2)  + \!\sum_{j = 0}^N (-1)^j  
H_{2j+1} \tau^{-2j-2} \! \right) d\tau -   \sum_{j = 0}^N \binom{s-\frac12}{j} H_{2j+1} ,
\end{equation}  
and in the focusing case
\begin{equation}
P_s = \frac{2\cos(\pi s)}{\pi}    \int_{1}^\infty\! \tau(\tau^2-1)^{s-\frac12}\left(\!-\im  \ln T(i\tau/2)  + \!\sum_{j = 0}^N (-1)^j  
H_{2j+1} \tau^{-2j-2} \! \right) d\tau -   \sum_{j = 0}^N \binom{s-\frac12}{j} H_{2j+1} .
\end{equation}

We obtain the following result, which for $ s \in (-\frac12 , \infty) \backslash (\Z/2)$  is
a special case of Theorem \ref{generaleta}:

\begin{theorem}\label{t:momentae} 
For each $s > -\frac12$ and $ \delta>0$  and both for the focusing and defocusing case 
 the functional 
\[
P_s : \{ u \in H^s |  \Vert u \Vert_{H^{-\frac12+\delta}} \ll  1 \}   \to \R^+
\]
is conserved along the NLS and mKdV flow.  Further, we have 
\begin{equation}
\left| P_s(u) - \langle u, i \partial_x u \rangle_{H^{s-\frac12}}^2 \right| \lesssim  \Vert u \Vert_{l^2_1 DU^2}^2 
\Vert u \Vert_{H^s}^2. 
\end{equation}
\end{theorem}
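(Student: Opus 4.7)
My strategy is to realize $P_s$ as the generalized energy $E_{\mu_s}$ of Theorem \ref{generaleta} applied to the odd weight $\mu_s(\xi) = \xi(1+\xi^2)^{s-\frac12}$. One verifies that $\mu_s \in \mathcal{E}_s$ (it is holomorphic off the branch cuts $\pm i[1,\infty)$, real on $\R$, and satisfies $|\mu_s(z)| \lesssim (1+|z|^2)^s$), and that its Plancherel pairing against $|\hat u|^2$ reproduces, up to a sign, the quadratic momentum $\langle u, i\partial_x u\rangle_{H^{s-\frac12}}$. Since $\mu_s$ is odd on $\R$, the jump $[\mu_s]$ across $i[1,\infty)$ is real rather than purely imaginary, producing the $\cos(\pi s)$ prefactor (in place of the $\sin(\pi s)$ factor appearing for $E_s$) in the contour representation, and thereby matching the definitions of $P_s$ stated in the text.

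For $s \in (-\frac12,\infty) \setminus (\Z/2)$, the conclusion follows immediately from Theorem \ref{generaleta}, which provides the quadratic approximation, analyticity in $u$, and the required uniform bound. The smallness hypothesis $\|u\|_{H^{-\frac12+\delta}} \ll 1$ implies $\|u\|_{l^2_1 DU^2} \ll 1$ via Corollary \ref{DU2embedding} and the Besov embedding $H^{-\frac12+\delta} \subset B^{-\frac12}_{2,1}$, which guarantees convergence of the multilinear expansion of $\ln T$ on the contour $i[1,\infty)$. Conservation is then obtained first for Schwartz initial data from the invariance of $T$ along both flows, and extended to small $H^{-\frac12+\delta}$ data by density using the uniform quadratic bound.

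The main obstacle is to extend the construction across the exceptional values $s \in \Z/2$ by a limiting argument, in parallel with the corresponding step in the proof of Theorem \ref{t:main+}. At half-integers $s = k+\frac12$, where $\cos(\pi s) = 0$, the prefactor cancels the $|\sin(2\pi s)|^{-1} = (2|\sin(\pi s)\cos(\pi s)|)^{-1}$ divergence of the general integrated bound from Proposition \ref{p:t2j-exps}, and the limit recovers the linear combination $-\sum_{j=0}^k \binom{k}{j} H_{2j+1}$ of classical momenta. At integers $s = k$, the prefactor $\cos(\pi s) = \pm 1$ is harmless, but the general bound is not uniform, and one must use an improved bound in the spirit of \eqref{tt2j-err-int+} applied to $\im \tilde T_{2j}$ rather than $\real \tilde T_{2j}$: since the odd Hamiltonians are real and occur in the expansion of $\im \ln T(i\tau/2)$, only the imaginary part of $\tilde T_{2j}$ contributes to $P_s$, and its error carries the milder singularity $|\cos(\pi s)|^{-1}$, which is bounded at integers. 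This refined estimate for $\im \tilde T_{2j}$ is the heart of the argument; it is proved in strict parallel with \eqref{tt2j-err-int+}, but with the roles of real and imaginary parts, and of integers and half-integers, interchanged throughout.
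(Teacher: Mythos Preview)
Your proposal is correct and follows essentially the same approach as the paper: for $s\notin\Z/2$ the result is read off from Theorem~\ref{generaleta} with the odd weight $\mu_s(\xi)=\xi(1+\xi^2)^{s-\frac12}$, and for $s\in\Z/2$ one repeats the endpoint argument of Theorem~\ref{t:main+} with the roles of integers and half-integers interchanged (the $\cos(\pi s)$ prefactor and $\im\tilde T_{2j}$ replacing $\sin(\pi s)$ and $\real\tilde T_{2j}$). The paper's own proof is a one-line remark to exactly this effect; your write-up simply makes the mechanism explicit.
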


The case $s \in \Z/2$ follows as for Theorem \ref{t:main+}, but with 
the role of integers and half integers reversed.

\newsection{The terms of homogeneity $4$ and $6$ in $u$}
\label{s:further}
In this section we will derive a simple expression for $T_4$, $T_6$, 
$E_{s,4}$ and $E_{k,6}$ for integers $k$.

\subsection{The quartic term in $-\ln T(z)$ and $E_s(u)$} 

Here we discuss in more detail the expression $\tilde T_4(z)$, as well
as the corresponding quartic term in a homogeneous expression
$E_s(u)$.  By the same techniques one can obtain similar results for
$\tilde T_{2j}(z)$.
We begin by computing the symbol for $\tilde T_{4}(z)$ viewed as a
translation invariant quadrilinear form.  We recall that $\tilde
T_4(z)$ is given by (using Fourier inversion in the second step)
\[
\begin{split} 
\tilde T_4(z) = & \ - 2 \int_{x_1<x_2 < y_1 < y_2} e^{2iz(y_1+y_2 - x_1-x_2)}u(y_1) u(y_2) \overline{ u(x_1) u(x_2)}  
dx dy
\\ = & \ -\frac1{2 \pi^2}\int_{\{x_1<x_2<y_1<y_2\} \times \R^4} e^{2iz(y_1+y_2-x_1-x_2)- i (x_1\xi_1+x_2\xi_2 - y_1 \eta_1 -y_2 \eta_2)}\overline{ \hat u(\xi_1) \hat u(\xi_2)} 
 \hat u(\eta_1) \hat u(\eta_2) 
\\ & \  \qquad \times dx_1 dx_2\, dy_1dy_2\,  d\xi_1 d\xi_2\,  d\eta_1 d\eta_2. 
\end{split}   
\]
To compute the kernel we first symmetrize with respect to the  $x$ variables 
and integrate exponentials successively:
\[
\begin{split}  
K_z(\xi,\eta ) = & \ -\frac1{(2\pi)^2} \int_{x_1,x_2 <y_1<y_2} 
e^{2iz(y_1+y_2-x_1-x_2)- i (x_1\xi_1+x_2\xi_2 - y_1 \eta_1 -y_2 \eta_2)} dx_1dx_2 dy_1 dy_2 
\\= & \, \frac{i}{(2\pi)^2 } \frac{1}{2z+\xi_1}\frac{1}{2z+\xi_2}\frac1{2z+\eta_1}   
 \int  e^{- i y_1(\xi_1+\xi_2-\eta_1 -\eta_2))} dy_1               
\\ =& \,   \frac{i}{2\pi  } \frac1{2z+\xi_1} \frac1{2z+\xi_2} \frac{1}{2z+\eta_2}  \delta_{\xi_1+\xi_2-\eta_1-\eta_2} .
\end{split} 
\] 
Replacing the above kernel with the symmetrization with respect to $\eta$ we obtain 
\begin{equation}\label{Kz}   
\tilde T_4(z) = \frac{i}{4\pi} 
\int_{\R^3} 
\frac{4z+\xi_1+\xi_1}{(2z+\xi_1)(2z+\xi_2)(2z+\eta_1)(2z+\eta_2)}  
\overline{\hat u(\eta_1+\eta_2-\xi_1) \hat u(\xi_1)} \hat u(\eta_1) \hat u(\eta_2)  d\xi_1 d\eta_1d\eta_2.
\end{equation} 
Of course we can do the same calculation with the roles of $\xi$ and
$\eta$ interchanged. This allows to replace the product of the Fourier
transforms by the real part of the Fourier transforms.  By an abuse of
notation we write the last integral as
$\int_{\xi_1+\xi_2=\eta_1+\eta_2} $.  Undoing the symmetrization we
arrive at
\begin{equation}\label{t4sa}   \tilde T_4(z) = \frac{i}{2\pi} 
\int_{\xi_1+\xi_2=\eta_1+\eta_2} 
\frac{1}{(2z+\xi_1)(2z+\eta_1)(2z+\eta_2)}\real \{ \overline{\hat u(\xi_1) \hat u(\xi_2)} \hat u(\eta_1) \hat u(\eta_2) \} 
\end{equation}  
From \eqref{Kz}  we deduce the Laurent expansion of the quartic term at infinity.
\begin{lemma}\label{lHj4} 
Suppose that $ u $ is a Schwartz function. Then we obtain the asymptotic series
\begin{equation}  \tilde T_4(z) \sim i   \sum_{j=2}^\infty H_{j,4}(2z)^{-j-1}  
\end{equation} 
where 
\begin{equation} \label{Hj4} 
H_{j,4}=- \real  \left[ i^j    \sum_{\alpha_1+\alpha_2+\alpha_3=j-2} (-1)^{\alpha_1}    \int  u^{(\alpha_2)}  u^{(\alpha_3)}  \overline{ u^{(\alpha_1)}  u } dx\right]. \end{equation}   
\end{lemma}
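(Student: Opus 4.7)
\medskip

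The starting point is the explicit quadrilinear representation \eqref{Kz} for $\tilde T_4(z)$, and the plan is to extract the asymptotic expansion at $z=i\infty$ by expanding the rational factor in the integrand as a formal geometric series in $(2z)^{-1}$, and then translating the resulting momentum monomials back to derivative expressions on the physical side via Parseval.

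\medskip

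First I would use the algebraic identity $4z+\xi_1+\xi_2 = (2z+\xi_1)+(2z+\xi_2)$ to split the kernel in \eqref{Kz} as a sum of two rational functions each carrying only three factors in the denominator, namely
\[
\frac{1}{(2z+\xi_2)(2z+\eta_1)(2z+\eta_2)} + \frac{1}{(2z+\xi_1)(2z+\eta_1)(2z+\eta_2)}.
\]
Expanding each $(2z+w)^{-1}= \sum_{k\ge 0}(-w)^{k}(2z)^{-k-1}$ and reading off the coefficient of $(2z)^{-j-1}$ (with $j\ge 2$ since each denominator contributes at least one power), I obtain for $\tilde T_4(z)$ a formal series whose $j$-th coefficient is a sum over $a+b+c = j-2$ of the Fourier integrals
\[
\int_{\xi_1+\xi_2=\eta_1+\eta_2} (-\xi_1)^a(-\eta_1)^b(-\eta_2)^c\,
\overline{\hat u(\xi_1)\hat u(\xi_2)}\hat u(\eta_1)\hat u(\eta_2)\,d\xi_1\,d\eta_1\,d\eta_2,
\]
after noting that the integrand is symmetric in $\xi_1 \leftrightarrow \xi_2$ so that the two pieces produced by the splitting combine and simply double the $\xi_1$-monomial.

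\medskip

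Second, I would convert these Fourier-side integrals back to physical-side integrals using the identity
\[
\int \eta_1^{\alpha_2}\eta_2^{\alpha_3}\xi_1^{\alpha_1}\hat u(\eta_1)\hat u(\eta_2)\overline{\hat u(\xi_1)\hat u(\xi_2)} = 2\pi\, i^{-(\alpha_2+\alpha_3-\alpha_1)} \int u^{(\alpha_2)}u^{(\alpha_3)}\,\overline{u^{(\alpha_1)}u}\,dx,
\]
which is simply the four-linear Plancherel formula combined with $(i\xi)^{k}\hat u = \widehat{u^{(k)}}$. Substituting $\alpha_1=a,\alpha_2=b,\alpha_3=c$ with $a+b+c = j-2$, the factor $i^{a-b-c}$ simplifies to $-(-1)^{a}(-i)^j$ on the constraint set, and after combining with the overall $i/(2\pi)$ prefactor and the $(-1)^{a+b+c}=(-1)^j$ from the geometric expansion, the coefficient of $(2z)^{-j-1}$ becomes
\[
-i^{j+1}\sum_{a+b+c=j-2}(-1)^{a}\int u^{(b)}u^{(c)}\,\overline{u^{(a)}u}\,dx.
\]

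\medskip

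Third, writing this as $i\,H_{j,4}$ gives
\[
H_{j,4} = -i^{j}\sum_{\alpha_1+\alpha_2+\alpha_3=j-2}(-1)^{\alpha_1}\int u^{(\alpha_2)}u^{(\alpha_3)}\,\overline{u^{(\alpha_1)}u}\,dx,
\]
which is already the right-hand side of \eqref{Hj4} before taking the real part. The real part in the statement of the lemma is then justified either by the same computation applied to the $\eta$-symmetrized form \eqref{t4sa}, which yields the complex conjugate expression, or by appealing to the fact that the sum $\sum_j H_{j,4}(2z)^{-j-1}$ must give a real contribution to $-\log T$ on the imaginary axis (its even-$j$ part being $E_{j,4}$, its odd-$j$ part an odd Hamiltonian contribution), so only the real part of the $a,b,c$-sum survives.

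\medskip

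The main obstacle is essentially bookkeeping: keeping track of the powers of $i$, the signs from the geometric expansions, and the symmetrizations in $\xi$ and $\eta$. The analytic content is minimal since for Schwartz $u$ all integrals are absolutely convergent and the geometric expansion has uniformly controllable remainders, so that the formal expansion is indeed an asymptotic expansion at infinity.
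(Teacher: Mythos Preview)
Your proof is correct and follows essentially the same route as the paper: expand the three-factor rational kernel as a geometric series in $(2z)^{-1}$, read off the coefficient of $(2z)^{-j-1}$, and convert the resulting Fourier monomials to physical-space integrals via the quadrilinear Plancherel identity. The paper starts directly from \eqref{t4sa}, which already carries the $\real$, while you start one step earlier from \eqref{Kz} and reduce to the three-factor form via the splitting $4z+\xi_1+\xi_2=(2z+\xi_1)+(2z+\xi_2)$; this is exactly how \eqref{t4sa} is obtained from \eqref{Kz} in the paper, so the difference is cosmetic.

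One small point of language: \eqref{t4sa} is the \emph{un}-symmetrized form (with $\real$ inserted), whereas \eqref{Kz} is the $\eta$-symmetrized one, so your reference to ``the $\eta$-symmetrized form \eqref{t4sa}'' is mislabeled. More substantively, your justification of the real part at the end is a bit loose. The cleanest argument is simply that the $H_j$ are real (they are the conserved energies and momenta of the hierarchy, cf.\ \eqref{expand}), hence each homogeneous piece $H_{j,4}$ is real, so your expression $-i^{j}\sum(-1)^{\alpha_1}\int u^{(\alpha_2)}u^{(\alpha_3)}\overline{u^{(\alpha_1)}u}\,dx$ already equals its real part. Alternatively, just start from \eqref{t4sa} as the paper does and carry the $\real$ through.
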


\begin{proof} We expand \eqref{t4sa} in negative powers of $2z$. Then 
\[
\begin{split} 
 H_{j,4}  = &  (2\pi)^{-1}  \real   \sum_{\alpha_1+\alpha_2+\alpha_3=j-2}(-1)^j   \int_{\xi_1+\xi_2=\eta_1+\eta_2}  \xi_1^{\alpha_1} \eta_1^{\alpha_2} 
\eta_2^{\alpha_3} \overline{ \hat u(\xi_1)\hat u (\xi_2)}\hat u(\eta_1)\hat u(\eta_2)\\ = & 
(2\pi)^{-1} \real \Big[  i^{j-2} \sum_{\alpha_1+\alpha_2+\alpha_3=j-2} (-1)^{\alpha_1}    \widehat{u^{(\alpha_2)}}* \widehat{u^{(\alpha_3)}}* \widehat{\overline{u^{(\alpha_1)}}} * \widehat{\bar u}(0) \Big]
\\ = & -(2\pi)^{\frac12} \real \Big[ i^{j-2} \sum_{\alpha_1+\alpha_2+\alpha_3=j-2} (-1)^{\alpha_1} \mathcal{F}(u^{(\alpha_2)}   u^{(\alpha_3)}\overline{ u^{(\alpha_1)} u})(0) \Big]
\\
 = &    \real \Big[  i^{j-2}  \sum_{\alpha_1+\alpha_2+\alpha_3=j-2 }  (-1)^{\alpha_1}   \int  u^{(\alpha_2)}   u^{(\alpha_3)}\overline{ u^{(\alpha_1)} u}  dx \Big] . 
\end{split} 
\]    
\end{proof} 

Next  we turn our attention to the quartic term in the energies $E_s$. 
The 4-linear component of $E_s^4$ is given by the contour integral
\eqref{def+} or \eqref{def++} over the half-line $i[1,\infty)$ with
$\ln T$ replaced by $\tilde T_4$. However we can change the contour of integration back to the real line to obtain the representation that corresponds to \eqref{def} from \eqref{Kz} and 
\[
E_{s,4}  =    \frac1\pi\int_{\xi_1+\xi_2=\eta_1+\eta_2} K_s^4(\xi,\eta)
 \real (\overline{ u(\xi_1)u(\xi_2) }\hat u(\eta_1) \hat u(\eta_2)) d\xi    
\]
where
\[
\begin{split}
K_{s,4}(\xi,\eta) = &   \frac1{2\pi^{2}}    \int_{\R+i0}  (1+z^2)^{s}
 \im   \left( \frac{2z+\xi_1+\xi_2} {(z+\xi_1)  (z+\xi_2)(z+\eta_1)(z+\eta_2)}   \right) dz 
\\ = & (2\pi)^{-1} 
\int_{\R}  (1+\xi^2)^{s}
 \frac{1}{(\xi_1-\eta_1)(\xi_1-\eta_2)}
 \left(\delta_{\xi_1} + \delta_{\xi_2} - \delta_{\eta_1} - \delta_{\eta_2}\right) d\xi.   
\end{split}
\]
Thus we immediately obtain the following:
\begin{proposition} \label{prop:quartic} 
The quartic part of $E_s$ is given by
\[ E_{s,4} =  \frac1{2\pi}   \int_{\xi_1+\xi_2=\eta_1+\eta_2}  
 \frac{(1+\xi_1^2)^{s}+(1+\xi_2^2)^{s} - (1+\eta_1^2)^{s}
- (1+\eta_2^2)^{s}}{(\xi_1-\eta_1)(\xi_1-\eta_2)} \overline{ \hat u (\xi_1) \hat u(\xi_2)}  \hat u(\eta_1) \hat u (\eta_2) d\xi_1 d\eta                \]         \end{proposition}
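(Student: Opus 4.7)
The plan is to substitute the explicit quadrilinear symbol for $\tilde T_4(z)$ from \eqref{Kz} into the contour-integral representation of $E_s$, and then to evaluate the inner $z$-integral by deforming the contour back to the real axis and reading off the residues at the four real poles.

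Since $\tilde T_4$ is the quartic part of $-\ln T$, the quartic component $E_{s,4}$ of $E_s$ is obtained from \eqref{def++} by replacing $\mp\real\ln T$ with $\pm\real\tilde T_4$. Because $\tilde T_4(z)$ decays like $|z|^{-3}$ at infinity uniformly in $u$, the integral converges without any of the asymptotic corrections needed for $\ln T$ in \eqref{def++}. This decay also justifies deforming $\gamma$ back through the upper half-plane to $\R+i0$, reducing $E_{s,4}$ to a single line integral of $(1+z^2)^s\,\tilde T_4(z/2)$ (times constants) over the real axis, which is exactly the expression appearing in the displayed formula for $K_{s,4}$ just before the proposition.

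Next, insert \eqref{Kz} and apply Fubini. The inner $z$-integral then becomes
\[
\int_{\R+i0}(1+z^2)^s \im G(z)\,dz
\]
where $G$ is a rational function of $z$ with four simple real poles at $-\xi_1,-\xi_2,-\eta_1,-\eta_2$. Using the constraint $\xi_1+\xi_2=\eta_1+\eta_2$, a partial fraction expansion $G(z)=\sum_\alpha A_\alpha/(z+\alpha)$ gives residues satisfying
\[
A_{\xi_1}=A_{\xi_2}=-A_{\eta_1}=-A_{\eta_2}=\frac{1}{(\xi_1-\eta_1)(\xi_1-\eta_2)},
\]
which is easiest to verify in the parameterization $\xi_j=a\pm b$, $\eta_j=a\pm c$. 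The Sokhotski--Plemelj identity $\im(z+\alpha+i0)^{-1}=-\pi\delta(z+\alpha)$ then converts the $z$-integral into $-\pi\sum_\alpha A_\alpha(1+\alpha^2)^s$, producing precisely the numerator $(1+\xi_1^2)^s+(1+\xi_2^2)^s-(1+\eta_1^2)^s-(1+\eta_2^2)^s$ of the claimed formula, divided by $(\xi_1-\eta_1)(\xi_1-\eta_2)$.

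Finally, the $\real\{\cdot\}$ appearing in \eqref{t4sa} is equivalent to symmetrizing under $(\xi_1,\xi_2)\leftrightarrow(\eta_1,\eta_2)$. Since the kernel produced above has the same symmetry, this symmetrization accounts for the factor of $2$ that converts the $\frac{1}{\pi}$ prefactor in the definition of $E_s$ into the $\frac{1}{2\pi}$ of the claim, and after symmetrization one may drop the $\real$ and write the integrand simply as $\overline{\hat u(\xi_1)\hat u(\xi_2)}\hat u(\eta_1)\hat u(\eta_2)$. The main obstacle is sign and constant bookkeeping: tracking the $-\pi$ from Sokhotski--Plemelj, the opposite sign convention between the focusing and defocusing cases in Definitions \ref{def-en} and \ref{def-en-f}, and the factor of $2$ from symmetrization. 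Once these combine correctly, the remainder is a straightforward partial-fraction computation.
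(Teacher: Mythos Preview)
Your proposal is correct and follows essentially the same approach as the paper: deform the contour from $i[1,\infty)$ back to $\R+i0$ (using the $|z|^{-3}$ decay of $\tilde T_4$), insert the symmetrized kernel \eqref{Kz}, and evaluate the resulting $z$-integral via the residues at the four real poles using $\im(z+\alpha+i0)^{-1}=-\pi\delta_{-\alpha}$. The paper presents this computation more tersely, writing the delta-function evaluation directly rather than spelling out the partial-fraction residues, but the content is identical.
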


We observe that this coincides with $H_{k,4}$ for $s=k \in
\mathbb{N}$.  Of course, this is no surprise, as this expression is
exactly the quartic I-method energy correction term, see
\cite{MR2995102}, \cite{MR2353092} and \cite{MR2376575}.

\subsection{The term  $\tilde T_{6} (z)$ } 
For completeness we also provide a brief computation for the expansion of the $\tilde T_6$
term, which gives the sixth-linear terms in the conserved energies.

\begin{lemma} \label{lHj6} 
The following identity holds:
\begin{equation}\label{T6} 
\begin{split} 
\tilde T_6(z) = & \frac{i}{(2\pi)^{2}}    \int_{\xi_1+\xi_2+\xi_3= \eta_1+\eta_2+\eta_3} 
 \frac1{2z+\xi_1} \frac1{2z+\xi_2} \frac1{2z+\eta_2} 
\frac1{2z+\eta_3} \left(\frac1{2z+\eta_1} +  \frac1{2z+(\eta_2+\eta_3-\xi_3)} \right) 
\\ & \qquad \times \hat u(\xi_1) \hat u(\xi_2) \hat  u(\xi_3) \overline{\hat u(\eta_1) \hat u(\eta_2) \hat u(\eta_3)} d\xi d\eta 
\sim  i   \sum_{j=4}^\infty H_{j,6} (2z)^{-j-1}     
\end{split} 
\end{equation} 
where 
\begin{equation} \label{T6exp} 
H_{j,6} =  \real \Big[ i^j \sum_{|\alpha|=j-4} 
(-1)^{\alpha_1+\alpha_2} \int   u^{(\alpha_1)}  u^{(\alpha_2)}   u \overline{ 
 u^{(\alpha_3)}   u^{(\alpha_4)}     u^{(\alpha_5)} }
 +   u^{(\alpha_1)}   u^{(\alpha_2)}\bar u  
 \frac{d^{\alpha_3}}{dx^{\alpha_3}}\Big( u   \overline{   u^{(\alpha_4)} u^{(\alpha_5)}} \Big)  
dx\Big] .  
\end{equation} 
\end{lemma}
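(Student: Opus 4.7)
My plan is to follow the structure of the proof of Lemma \ref{lHj4}, adapted to the two connected symbols of length six. The starting point is the Hopf-algebraic description of $\tilde T_6$ provided by Theorem \ref{primitive} and Proposition \ref{a:log}: $\tilde T_6$ is a specific linear combination of the connected iterated integrals $\<XXXYYY>$ and $\<XXYXYY>$, with coefficients that can be read off from the expansion of $-\ln T$ quoted right after Theorem \ref{primitive}.

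First I would derive the closed Fourier representation \eqref{T6}. For each of the two connected integrals I apply Fourier inversion to the six factors $u(y_l)$, $\overline{u(x_l)}$ and then carry out the six spatial integrations in order, from the innermost variable to the outermost. With $\im z>0$, the first five integrations each yield a factor of the form $i/(2z+\sigma)$, where $\sigma$ is a running cumulative sum of Fourier variables determined by the specific ordering of $X$'s and $Y$'s in the symbol; the sixth (outermost) integration produces $2\pi\delta(\xi_1+\xi_2+\xi_3-\eta_1-\eta_2-\eta_3)$. I would then symmetrize the integrands in the free $\xi$- and $\eta$-variables, which is permissible because $\prod_l\hat u(\xi_l)$ and $\prod_l\overline{\hat u(\eta_l)}$ are symmetric. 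After symmetrization and using the momentum constraint to rewrite cumulative sums, the two contributions combine into a common four-denominator prefactor times the two-term bracket in \eqref{T6}. The second summand in that bracket, with denominator $2z+\eta_2+\eta_3-\xi_3$, is precisely the cumulative sum that appears in the intermediate integration for the $\<XXYXYY>$ ordering when the central $X$ is integrated out.

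The Laurent expansion is then obtained by expanding each factor $(2z+\sigma)^{-1}$ as the geometric series $\sum_{\alpha\ge 0}(-\sigma)^\alpha(2z)^{-\alpha-1}$ and collecting the coefficient of $(2z)^{-j-1}$, which forces a multi-index $\alpha$ of total length $j-4$. Each resulting monomial in the Fourier variables can be identified, via Parseval and the standard conventions $\widehat{u^{(\alpha)}}(\xi)=(i\xi)^\alpha\hat u(\xi)$ and its conjugate analogue, with an integral of a product of derivatives of $u$ and $\bar u$. The first bracket term contributes the pure-product integrals in \eqref{T6exp}, while the second bracket term, where the cumulative sum $\eta_2+\eta_3-\xi_3$ is raised to a power $\alpha_3$ before being distributed, contributes by Fubini the integrals in which $d^{\alpha_3}/dx^{\alpha_3}$ acts on $u\,\overline{u^{(\alpha_4)}u^{(\alpha_5)}}$. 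Combining the overall prefactor $i^j$ with the $(-1)^{\alpha_1+\alpha_2}$ that arises from the conjugation convention, and taking real parts (using that $H_{j,6}$ is real by the evenness considerations of Section \ref{inverse}), yields \eqref{T6exp}.

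The main obstacle I anticipate is bookkeeping rather than analysis: correctly pinning down the integer coefficients of $\<XXXYYY>$ and $\<XXYXYY>$ in $\tilde T_6$ from the Hopf-algebra formalism, and then verifying that after symmetrization the two Fourier-side expressions assemble into exactly the compact form \eqref{T6} with a shared prefactor and the two-term bracket. Once that algebraic reorganization is in hand, the Laurent extraction and the Parseval step are direct analogues of the moves already carried out for $\tilde T_4$.
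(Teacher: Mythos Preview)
Your proposal is correct and follows essentially the same approach as the paper: invoke Proposition~\ref{a:log} to write $\tilde T_6 = 12\,\<XXXYYY> + 4\,\<XXYXYY>$, compute each iterated integral on the Fourier side, and then expand in inverse powers of $2z$ to read off $H_{j,6}$. The only procedural difference is that the paper uses the combinatorial factors $12$ and $4$ to symmetrize in physical space \emph{before} Fourier inversion (reducing $x_1<x_2<x_3<y_1<y_2<y_3$ to $x_1,x_2<x_3<y_1,y_2,y_3$, and similarly for the second symbol), which streamlines the successive integrations; you propose to integrate first and symmetrize in the Fourier variables afterwards, which is equivalent but slightly more bookkeeping-heavy.
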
 
 
\begin{proof} 
By Proposition  \ref{a:log} below  we have $\tilde T_6(z) =   12 \<XXXYYY> + 4 \<XXYXYY> $.  
As above we compute (with $v=\bar u$) 
\[
\begin{split} 
12  \<XXXYYY>  = & \  12 \int_{x_1<x_2<x_3<y_1<y_2<y_2} 
\prod_{j=1}^3 e^{2iz(y_j-x_j)} u(y_j) \overline{u(x_j)} dx dy                            
\\ = & \int_{x_1,x_2<x_3 < y_1,y_2,y_3} e^{2iz(y_j-x_j)} u(y_j) \overline{u(x_j)} dx dy 
\\ = & \ \frac{1}{(2\pi)^3} 
        \int   \int_{x_1,x_2<x_3<y_1,y_2,y_2} e^{2iz(y_1+y_2+y_3-x_1-x_2-x_3) - i \sum\limits_{j=1}^3 (\xi_j x_j-\eta_j y_j)}  \prod_{j=1}^3 \hat u(\eta_j) \overline{\hat  u(\xi_j)} dx dy d\xi d\eta 
\\ = & \frac{i}{(2\pi)^3} \int   \int 
\frac{ e^{-ix_3(\xi_1+\xi_2+\xi_3-\eta_1-\eta_2-\eta_3)}  }{(2z+\xi_1)(2z+\xi_2)(2z+\eta_1)(2z+\eta_2)(2z+\eta_3)} dx_3
\prod_{j=1}^3 \hat u(\eta_j) \overline{ u(\xi_j)}  d\xi d\eta
\\   = & \frac{i}{(2\pi)^2}\!\!\!   \int\limits_{\xi_1+\xi_2+\xi_3=\eta_1+\eta_2+\eta_3}\!\!\!\!\!  \frac1{(2z+\xi_1)(2z+\xi_2)(2z+\eta_1)(2z+\eta_2)(2z+\eta_3)}  
\prod_{j=1}^3 \hat u(\eta_j) \overline{ u(\xi_j)} 
 d\xi_1d\xi_2  d\eta .
\end{split} 
\] 
The next calculation is similar so we only point out the
differences. In the first step we only symmetrize with respect to
$\xi_1$ and $\xi_2$, and with respect to $\eta_1$ and $\eta_2$. This
leads to a factor $\frac14$ and an inner integral
\[
\int_{y_1<x_3} e^{2iz(x_3-y_1)+i (x_3(\xi_3-\eta_2-\eta_3)- y_1(\eta_1-\xi_1-\xi_2))}  dy_1dx_3 = \frac{2\pi i}{2z + (\eta_2+\eta_3-\xi_3)} \delta_{\xi_1+\xi_2+\xi_3-\eta_1-\eta_2-\eta_3} .  
 \]                                 
This gives 
\[ 4\<XXYXYY> =\frac{i}{(2\pi)^{2}  } \int_{\R^5}  \frac{ \hat u(\xi_1) \hat u(\xi_2) \hat  u(\xi_3) \overline{\hat u(\eta_1) \hat u(\eta_2) \hat u(\eta_3)} }{(2z-\xi_1)(2z-\xi_2)(2z-\eta_2) 
(2z-\eta_3)(2z-(\eta_2+\eta_3-\xi_3))}d\xi_1d\xi_2 d\eta. 
\] 
The claimed formula for $H_{j,6}$ follows by an expansion in inverse powers of $2z$, representing the integral as an integral over five convolutions on the Fourier side, and Fourier inversion.  
\end{proof} 

\newsection{The KdV equation}
\label{s:kdv} 

The arguments in the proof of our  results for NLS and mKdV carry over easily to  
the KdV equation. In effect the algebraic part of the analysis is virtually identical.
For this reason we outline here the corresponding results for the KdV equation
\begin{equation}
\label{kdv}
u_t + u_{xxx} - 6 uu_x   = 0
\end{equation}
where we consider real solutions on the real line. The scaling for this equation is 
\[
u(x,t) \to \lambda^2 u(\lambda x, \lambda^3 t)
\]
which corresponds to the critical Sobolev space $\dot H^{-\frac32}$.

This is also a completely integrable flow, and admits an infinite number of conservation laws,
of which the first several are as follows: 
\[
\begin{split}
E_0 = & \   \int u^2 dx
\\
E_1 = & \  \int  u_x^2 +  2 u^3 dx
\\
E_2 = & \  \int  u_{xx}^2 + 10 u u_x^2 + 5 u^4 dx
\end{split}
\]
In a Hamiltonian interpretation, these energies generate commuting Hamiltonian flows with the 
Poisson structure  defined by    
\[
\omega(u,v) = \int u v_x dx.
\]
The first of these flows is the group of translations, the second is the KdV, etc. As in the NLS/mKdV case we first state a simplified version of the main result,
which does not require knowledge of the scattering transform. Later we will return 
with a more complete version, see Theorem~\ref{energies-kdv+}, together with 
the appropriate trace formulas \red{in Proposition}~\ref{trace-kdv}. We have

\begin{theorem}\label{energies-kdv} 
  There exists $\delta > 0$ so that for each $s \geq -1 $  there exists an energy functional 
\[
E_s : H^s\cap \{ \| u \|_{H^{-1}} \leq \delta \}    \to \R^+
\]
with the following properties:
\begin{enumerate}
\item $E_s$ is conserved along the KdV flow.
\item  We have 
\begin{equation}\label{quadratic} 
\left| E_s(u) - \|u\|_{H^s}^2 \right| \lesssim  \Vert u \Vert_{H^{-1}} 
 \Vert u \Vert_{H^{\max\{-1,s-\frac14+\varepsilon\}}}^2. 
\end{equation}
\item $E_s$ is continuous in $s$ and $u \in H^\sigma$ for 
$-1\le  s \le \sigma$, analytic in $u$,  and analytic in both variables if in addition $s < \sigma$. 
  \end{enumerate}
\end{theorem}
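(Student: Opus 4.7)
The plan is to import the NLS/mKdV machinery via the first-order Lax system
\[ \psi_x = \begin{pmatrix} -iz & 1 \\ u & iz \end{pmatrix}\psi \]
introduced at the end of Section~\ref{inverse}. Proceeding as in Section~\ref{formal}, I construct Jost solutions at $\pm\infty$, define the transmission coefficient $T(z)$, and obtain formal expansions $T^{-1}-1=\sum_{j\geq 1}T_{2j}(z)$ and $-\ln T = \sum_{j\geq 1}\tilde T_{2j}(z)$. Because only the lower-left entry of the Lax matrix contains $u$, each iteration step contributes a single factor of $u$, so $T_{2j}$ and $\tilde T_{2j}$ are homogeneous of degree $j$ in $u$ rather than $2j$. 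The Hopf-algebra argument underlying Theorem~\ref{primitive} depends only on the iterated-integral combinatorics and so transfers verbatim, identifying each $\tilde T_{2j}$ as a linear combination of connected integrals.

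The definition of $E_s$ is taken to be the same contour integral as in \eqref{def+}, \eqref{def++}, with the KdV Hamiltonians $E_k$ obtained from the expansion of $\ln T$ in powers of $z^{-1}$ at infinity serving as counterterms in place of the $H_{2k}$. Two low-order terms require comment. First, $\tilde T_2(z)=(2z)^{-1}\int u\,dx$ is linear in $u$ and cannot be bounded by any $H^s$ norm; since $u$ is real, however, $\tilde T_2(i\tau/2)=-i\tau^{-1}\int u\,dx$ is purely imaginary, so its real part does not contribute to the integrand and the offending term drops out of $E_s$ automatically. (Equivalently, $\int u\,dx$ is itself conserved by KdV, so subtracting an appropriate multiple is harmless.) Second, $\tilde T_4(z)$ now plays the r\^ole that $T_2(z)$ plays in the NLS analysis: a direct Fourier-side computation in the spirit of Proposition~\ref{p:T2} identifies its contour integral with $\|u\|_{H^s}^2$, giving the quadratic part of $E_s$.

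For $j\geq 3$ I would repeat the analysis of Sections~\ref{s:t2j}--\ref{s:tt2j-exp} with the new one-step operator $L\phi(t) = \int_{-\infty}^t e^{2iz(t-x)} u(x)\phi(x)\,dx$. The analog of Lemma~\ref{piece2} gives $\|L\|_{U^2\to U^2}\lesssim \|e^{-i\real z\cdot}u\|_{l^2_{\im z} DU^2}$, so the smallness $\|u\|_{H^{-1}}\leq\delta$, combined with the embedding $H^{-1}\hookrightarrow l^2_1 DU^2$, suffices to run the iteration and converge the formal series on $i[1,\infty)$; note that $H^{-1}$ sits a half-derivative above the scaling-critical $\dot H^{-3/2}$ and is the natural KdV analog of the $l^2 DU^2$ threshold for NLS. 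The connectedness of $\tilde T_{2j}$ combined with the localized $l^p_\sigma DU^2$ calculus yields pointwise and integrated bounds in the style of Propositions~\ref{t2j-sprime}--\ref{p:t2j-exps}, and summation in $j\geq 3$ produces the cubic error estimate \eqref{quadratic}. Analyticity in $u$ and continuity in $(s,u)$ follow from the contour-integral representation as in Theorem~\ref{energies}, and conservation reduces on Schwartz data to the invariance of $T$ (together with that of $\int u\,dx$) under the KdV flow.

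The main obstacle is the sharp frequency balancing needed to obtain the exponent $s-\tfrac14+\varepsilon$ in \eqref{quadratic}. In contrast to NLS, where the quartic correction absorbs two scaling-level smallness factors and leaves both remaining factors at full $H^s$ regularity, here the leading cubic correction $\tilde T_6$ has only one such factor available, so the two remaining $u$-factors must absorb a small derivative loss relative to $H^s$. Implementing this loss efficiently requires the $l^{2j}_\tau DU^2$ bound of Proposition~\ref{t2j-prime}, together with a careful distribution of derivatives onto the two highest-frequency entries in each piece of $\tilde T_{2j}$, following the proof of Proposition~\ref{t2j-sprime} but with the degree arithmetic rescaled to account for the single power of $u$ contributed per iteration step.
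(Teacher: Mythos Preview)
Your proposal contains a genuine computational error that undermines the handling of the linear term. You write $\tilde T_2(z)=(2z)^{-1}\int u\,dx$, but the correct expression is $\tilde T_2(z)=\dfrac{i}{2z}\int u\,dx$ (see the computation following \eqref{T2j} with the pair $(1,u)$ in place of $(u,\bar v)$). Consequently $\tilde T_2(i\tau/2)=\tau^{-1}\int u\,dx$ is \emph{real}, not purely imaginary, and it does \emph{not} drop out of $\real\ln T(i\tau/2)$. The paper deals with this by an explicit renormalization: it proves (Lemma~\ref{analytic}) that $u\mapsto \exp\bigl(\tfrac{1}{2iz}\int u\bigr)\,T(z)$ extends analytically to $\{\|u\|_{H^{-1}}\ll 1\}$, so that $\ln T(z)-\tilde T_2(z)$ is well defined on $i[1/2,\infty)$ even when $u\notin L^1$. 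This is done by a change of dependent variables in the system \eqref{scatter-kdv} that splits off the low-frequency part of $u$ and removes the $\int u$ contribution by a gauge factor; it is a new KdV-specific argument, not a corollary of the NLS machinery. Correspondingly, the paper's definition \eqref{def++kdv} carries an extra $\tau^2$ weight and subtracts the $E_{-1}$ term for all $s$; it is not literally ``the same contour integral as in \eqref{def+}, \eqref{def++}.''

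There is a second gap in your convergence argument. The embedding $H^{-1}\hookrightarrow l^2_1 DU^2$ that you invoke is false: by Corollary~\ref{DU2embedding} one has $l^2_1 DU^2\subset B^{-1/2}_{2,\infty}$, and $H^{-1}$ does not embed into $B^{-1/2}_{2,\infty}$. More fundamentally, the NLS one-step bound of Lemma~\ref{piece2} is bilinear in $(u,v)$, and for KdV the second factor is $v=1$, which has $\|1\|_{l^2_\tau DU^2}=\infty$. The paper therefore does not run the $l^2_\tau DU^2$ iteration directly. Instead it obtains the baseline bound $|\tilde T_{2j}(i\tau)|\lesssim (\tau^{-1/2}\|u\|_{H^{-1}_\tau})^j$ from the renormalized system in Lemma~\ref{analytic}, and for higher regularity uses an \emph{unbalanced} version of \eqref{tt2j}, namely $|\tilde T_{2j}(i\tau)|\lesssim \|u\|_{l^j_\tau DU^2}^j\,\|1\|_{l^\infty_\tau DU^2}^j$ together with $\|1\|_{l^\infty_\tau DU^2}\lesssim\tau^{-1}$, which exploits connectedness exactly as you suggest but with the correct summability index on the constant factor. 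Interpolation between these two bounds is what produces the $H^{s-\frac14}$ estimate; your identification of the cubic term $\tilde T_6$ as the bottleneck is correct, but the route to the bound is different from what you outline.
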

With a little more work it is possible to choose $\varepsilon=0$ and we will indicate how this is done below. 
As in the case of NLS and mKdV, we establish the energy conservation
result for regular initial data. By the local well-posedness theory,
this extends to all $H^s$ data above the (current) Sobolev local
well-posedness threshold, which is $s \geq -\frac34$. If $s$ is below
this threshold, then the energy conservation property holds for all
data at the threshold.  It is not known whether the KdV equation is
well-posed in the range $-1 \leq s < -\frac34$ ; however, it is known
that local uniformly continuous dependence fails in this range, see
\cite{MR2376575}, and also that the problem is ill-posed in $H^s$ for
$s < -1$, see \cite{MR2830706}.

Again, one consequence of our result is that if the initial datum is in
$H^s$ then the solutions remain bounded in $H^s$ globally in
time. This is immediate if $\|u_0\|_{H^{-1}} \ll 1$, but also follows
by scaling for larger data. This has been proven  by making use of the Miura map 
by \cite{MR3400442} for $s=-1$ and independently by making clever use of Fredholm
determinants  by \cite{KVZ} for $-1\le s \le 1$. For the remainder of this section we outline the proof of this result,
and connect it to the proof of our NLS and KdV results.

\subsection{The scattering transform for KdV}

Here we recall some basic facts about the inverse scattering transform for 
KdV.  The Lax pair for KdV is given by the pair of operators $(\mathcal L,\mathcal P)$ defined 
as follows
\[
\mathcal L = - \partial_x^2 + u, \qquad \mathcal P = -4 \partial_x^3 + 3(u \partial_x + \partial_x u)
\]

The scattering transform associated to KdV is defined via the spectral problem 
for the operator $\mathcal L$, namely 
\begin{equation}
\mathcal L \psi = z^2 \psi
\end{equation}
As before, we use this equation first for  $z$ on the real line, and
then for $z$ in the upper half-space. As $u$ is real, it follows that
the operator $\mathcal L$ is self-adjoint.  Its continuous spectrum is the
positive real line $\R^+$ and it may have isolated (but possibly
infinite) negative eigenvalues, as well as a resonance at frequency
$0$, but no eigenvalues inside the continuous spectrum if $u$ is a
Schwartz potential.

To relate this to the corresponding NLS spectral problem   we also rewrite it as a linear
system for $(\psi_1,\psi_2) = (\psi,\psi_x+iz \psi)$, namely
\begin{equation}\label{scatter-kdv}
\left\{ 
\begin{aligned}
\frac{d\psi_1}{dx} =& -i z \psi_1 +  \psi_2 
\\
\frac{d\psi_2}{dx} = & i z \psi_2 +  u \psi_1
\end{aligned}
\right.
\end{equation}

The scattering data for this problem is
obtained for $z=\xi\in \R$ by considering the relation between the
asymptotics for $\psi$ at $\pm \infty$.  Precisely, one considers the
Jost solutions $\psi_l$ and $\psi_r$ with asymptotics
\begin{eqnarray*} 
\psi_l (\xi,x,t) &= & \left( \begin{array}{c}    e^{-i\xi x}  \cr 
0 \end{array}\right) + o(1) \qquad  \text{ as $x \to -\infty$},
\\  \psi_l (\xi,x,t) & =& \left( \begin{array}{c}   T^{-1}(\xi)  e^{-i\xi x} - i \xi^{-1}R(\xi) T^{-1}(\xi)  e^{i\xi x} 
    \\[1mm] 
R(\xi) T^{-1}(\xi)  e^{i\xi x} \end{array}\right) + o(1) \qquad  \text{ as }x \to \infty,
\end{eqnarray*} 
respectively
\begin{eqnarray*} 
\psi_r (\xi,x,t)&  = & \left( \begin{array}{c}  L(\xi)  T^{-1}(\xi)  e^{-i\xi x} -i \xi^{-1} T^{-1}(\xi)  e^{i\xi x} \\[1mm]
T^{-1}(\xi)  e^{i\xi x} \end{array}\right) + o(1) \ \ \ \text{ as $x \to -\infty$},
\\  \psi_r (\xi,x,t) & =& \left( \begin{array}{c}   -i\xi^{-1} e^{i\xi x}   \cr 
e^{i\xi x} \end{array}\right) + o(1) \ \ \ \text{ as $x \to \infty$}.
\end{eqnarray*}  
These are viewed as initial value problems with datum at $-\infty$,
respectively $+\infty$.  Since the Wronskian $ \det ( \psi_l,\psi_r)$
is constant we can evaluate it at both sides and see that both the
transmission coefficient and the reflection coefficients are the same
on both sides.

Let $\phi(\xi) $ and $\phi(-\xi)$ denote the first component of $\psi_l(\pm \xi ) $. They both satisfy 
\[   -\phi'' + u \phi= \xi^2 \phi, \] 
and hence their Wronskian 
\[ \det \left( \begin{matrix} \phi(\xi)  & \phi'(-\xi)  \\ \phi(\xi) & \phi'(-\xi)  \end{matrix} \right) \] 
is constant. We evaluate it on both sides. At $-\infty$ we  obtain $2i\xi$ 
and at $+\infty$ 
\[ \det \left( 
\begin{matrix} 
a_{+}  e^{-i\xi x} + b_+ e^{i\xi x} 
&  a_- e^{i\xi x} +b_- e^{-i\xi x} 
\\
-i\xi a_+ e^{-i\xi x} + i\xi b_+ e^{i\xi x} &  
i\xi a_- e^{i\xi x} -i\xi b_- e^{-i\xi x} 
\end{matrix} \right) = 2i\xi (a_+ a_- -b_+b_-) 
\]  
and hence
\[   T(\xi) T(-\xi)=1- R(\xi) R(-\xi).  \] 
Since  
\[ \psi_l(-\xi,x,t) = \overline{\psi_l(\xi,x,t)}, \] 
and hence  $T(-\xi) =\overline{T(\xi)} $ and  $R(-\xi)= \overline{R(\xi)}$,
we arrive at 
\begin{equation} |T(\xi)|^2 +|R(\xi)|^2=1.  \end{equation}

More generally for any $z$ in the closed upper half plane there exist the Jost 
solutions 
\[
\begin{split} 
\psi_l (\xi,x,t) = & \left( \begin{array}{c}    e^{-iz x}  \cr 
0 \end{array}\right)+ o(1) e^{\im z x}  \ \ \ \text{ as $x \to -\infty$},
\\ 
 \psi_l (\xi,x,t) = &  \left( \begin{array}{c}   T^{-1}(z)  e^{-iz x}  \cr 
0 \end{array}\right) + o(1)e^{\im zx}  \ \ \ \text{ as $x \to \infty$}.
\end{split} 
\] 
This provides a holomorphic extension of $T^{-1}$ to the upper half
space. Thus for $T$ we obtain a meromorphic extension, with poles only at
those $z$ on the positive imaginary axis for which $z^2$ is an
eigenvalue for $\mathcal L$.  We also note the symmetry
\begin{equation}\label{T-even}
T(-\bar z) = \bar T(z).
\end{equation}
As  $u$ evolves along the \eqref{kdv} flow, the functions $L,R,T$
evolve according to

\[
T_t = 0, \qquad L_t = - 8i \xi^3 L, \qquad R_t = 8i\xi^3 R.
\]
The scattering map for $u$ is given by 
\[
u \to R,
 \]
and the map $u \to R$ conjugates the KdV flow \eqref{kdv} to (the
Fourier transform of)  the linear Airy flow. Reconstructing
$u$ from $R$ requires solving a Riemann-Hilbert problem, see
\cite{MR1263128} for this approach for the KdV
equation.

One  difference  between the case of NLS-mKdV and that of KdV is 
that for the latter we need\footnote{Or some other setting where 
$\int u$ is defined.}   $u\in L^1$ in order to define  $T(\xi)$ in 
the upper half-plane.  This is due to the linear term in $T$, which 
after one iteration is seen to have  the form
\[
T(z,u) = 1 - \frac{1}{2iz} \int_\R u(x) dx  + \text{quadratic and higher terms}
\]
However the linear effects are best understood at the level of $\log T$,
for which not only we have the similar relation 
\[
\log T(z,u) =  - \frac{1}{2iz} \int_\R u(x) dx  + \text{quadratic and higher terms}
\]
but the remainder term in this approximation can be defined  in
terms of only $L^2$ type norms of $u$, and no longer requires
integrability for $u$. This can be viewed as a renormalization of $\log T$,
and is discussed in detail in the next subsection.

For now we note one consequence
of this, namely the bound
\begin{equation} \label{kdv_positivity} 
 -\real (z^2 \log  T(z/2) )  - \im z \int_\R  u(x) \, dx \geq 0   
\end{equation} 
This is easily seen for Schwartz functions $u$, where the above expression
is superharmonic in the upper half-space, nonnegative on the real axis and vanishing at infinity.
Then  by density it holds for all $u \in H^{-1}$.

\subsection{The transmission coefficient in the upper 
half-plane and conservation laws} 

As in the NLS-mKdV case, the  conserved energies are constructed using 
the transmission coefficient $T$ in the upper half-space. Assuming in a 
first approximation that  $u$ is a Schwartz function,
the transmission coefficient $T$ will be a meromorphic function in the upper 
half-space, with poles in a compact subset of the positive imaginary axis.
Further, $\ln |T|$ is a Schwartz function on $\R \setminus \{0\}$, and 
$\ln T$ has a formal series  expansion as $z \to \infty$ in the upper half-space, 
\[ 
\ln T \approx  i   \sum_{j=-1}^{\infty}  E_{j} (2z)^{-2j-3}
\]

where $E_j$ are  the conserved energies for $j \geq 0$,
and
\[
E_{-1} =  \int_\R u \ dx. 
\]
Here in view of the symmetry \eqref{T-even} we have only odd terms in the 
expansion.

In particular if there are no poles for $T$ in the upper half-plane, 
then the $E_j$'s can be expressed by Cauchy's theorem as
\begin{equation}  
E_j= - \frac1\pi \int \xi^{2j+2}  \ln |T(\xi/2)|d\xi \label{energy-j}   
\end{equation} 
We would like to define our conserved energies as 
\[
E_s = - \frac1\pi \int  (1+\xi^2)^{s} \xi^2 \ln |T(\xi/2)|d\xi, \qquad  s \geq -1
\]
except that this formula would have a very restrictive range of validity,
namely for Schwartz $u$ for which $\mathcal L$ has no negative eigenvalues.
To avoid this difficulty, we again switch the contour of integration to the 
positive imaginary line.

Precisely, if  $u$ satisfies the smallness condition $\| u\|_{H^{-1}} \ll 1$,
the transmission coefficient $T$ will be a holomorphic function 
in $H \setminus i[0,1/4]$. Then, as in the NLS-mKdV case, 
we define the energies $E_s$ in the range 
\[
 N \leq s < N+1, \quad N \geq -1
\]
by
\begin{equation}\label{def++kdv} 
 E_s(u) =  - \frac{ 2\sin(\pi s)}\pi    \bigintsss_1^\infty  \tau^2 (\tau^2-1)^s   \Big(\real \ln T(i\tau/2)+
\sum_{j=-1}^N (-1)^j E_j   \tau^{-2j-3} \Big)  d\tau + \sum_{j=0}^N \binom{s}{j} E_j  , 
\end{equation}     
where the last term accounts for the half-residues at zero of the
corrections.  Again, if $u$ is Schwartz and $T$ has no poles in the
upper half-space then this agrees with the previous formula. However,
if $\|u\|_{H^{-1}} \ll 1$ then this last expression is defined as an
absolutely convergent integral for all $u \in H^s$. 

We remark here that the $E_{-1}$ contribution is subtracted for all values of $s$,
which corresponds to the renormalization of $\log T$ previously alluded to.

 Just as in the NLS/mKdV case, here we also have trace formulas
relating the above energies to  the corresponding integrals on the real line:

\begin{proposition}[Trace formula] \label{trace-kdv} 
a)  Let $s > -1$.  Define
\[ 
\Xi_s (t) =   \int_0^t  \zeta^2(1-\zeta^2)^s d\zeta.
\] 
Then for $ u \in \mathcal{S}$ satisfying the smallness condition $\| u\|_{H^{-1}} \ll 1$
 we have
\begin{equation}\label{tracekdv} 
\begin{split}  
E_s = & \  - \frac1\pi  \int_\R   \xi^2 (1+\xi^2)^{s} \real  \ln T(\xi/2) d\xi  +  \sum_j  \Xi_s(2\kappa_j) 
\end{split} 
\end{equation} 
where the sum  runs  over the eigenvalues $i\kappa_j$ of $\mathcal L$ on the positive imaginary axis.

b) In the case $s = -1$ we have,  provided that $-1/4$ is not an eigenvalue of the Lax  operator,  
\begin{equation} \label{tracekdv-} 
\begin{split} 
E_{-1} = & \ - \frac1\pi   \int_\R \frac{\xi^2}{1+\xi^2} \real \ln T(\xi/2)   d\xi 
+  \sum  \Big[ (\ln(1+2\kappa_j)-\ln |1-2\kappa_j|) - 4\kappa_j\Big]   
\\   = & \, \real \ln T(i/2) - \int  u \, dx.   
\end{split}  
\end{equation}
\end{proposition}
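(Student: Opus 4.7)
The plan is to apply the general trace-formula machinery of Section~\ref{s:t2}, in particular Lemma~\ref{l:apriori-s}, to the auxiliary function
\[
G(z) := -\real(z^2 \log T(z/2)) - \im z \cdot E_{-1}
\]
whose non-negativity on the upper half-plane is the content of \eqref{kdv_positivity}. The factor $z^2$ is forced: it suppresses the $\xi=0$ singularity in the boundary trace coming from the linear-in-$u$ leading term $iE_{-1}/(2z)$ of $\log T$, and the subtraction $-\im z\cdot E_{-1}$ cancels the resulting linear growth of $-\real(z^2\log T(z/2))$ at infinity so that $G$ vanishes as $|z|\to\infty$ in the upper half-plane.

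I would first check the hypotheses of Lemma~\ref{l:apriori-s}. The trace of $G$ on the real line is the nonnegative Radon measure $d\mu = -\xi^2\log|T(\xi/2)|\,d\xi$ (nonnegative because $|T|\le 1$ on $\R$). Near each pole $2i\kappa_k$ of $T(z/2)$ of multiplicity $m_k$, the singular expansion $\log T(z/2)\sim -m_k\log(z-2i\kappa_k)$ combined with $z^2|_{z=2i\kappa_k}=-4\kappa_k^2$ produces the interior measure $-\Delta G = 2\pi\sum m_k(2\kappa_k)^2\delta_{2i\kappa_k}$. The asymptotic coefficients $G_{2j}$ follow from $G(i\tau) = \tau^2\real\log T(i\tau/2)-\tau E_{-1}$ together with the expansion $\log T(z)\approx i\sum E_j(2z)^{-2j-3}$: the $j=-1$ contribution to $\tau^2\real\log T(i\tau/2)$ is precisely $E_{-1}\tau$, which cancels the subtraction, and one reads off $G_{2j}=\pm E_j$ for $j\ge 0$. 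Matching $E_s(G)$ to the KdV energy \eqref{def++kdv} is then a routine substitution via $\real\log T(i\tau/2) = G(i\tau)/\tau^2 + E_{-1}/\tau$.

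The main technical step, and the chief obstacle, is showing that the pole contribution produces $\sum_j \Xi_s(2\kappa_j)$ with the KdV-specific potential $\Xi_s(t)=\int_0^t\zeta^2(1-\zeta^2)^s\,d\zeta$, rather than the NLS-type potential $\im\int_0^z(1+\zeta^2)^s\,d\zeta$ that Lemma~\ref{l:apriori-s} delivers directly. I would establish this via contour deformation: starting from $-\frac{1}{\pi}\int_\R\xi^2(1+\xi^2)^s\log T(\xi/2)\,d\xi$ on the real line, pushing the contour to the positive imaginary axis around the branch cut of $(1+\zeta^2)^s$ on $i[1,\infty)$ reproduces the imaginary-axis integral in \eqref{def++kdv}. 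Crossing each logarithmic singularity of $\log T$ at an eigenvalue $2i\kappa_j$ picks up a branch-cut contribution that, after the substitution $\zeta=it$, integrates to $m_j\Xi_s(2\kappa_j)$; the $\zeta^2$ factor in $\Xi_s$ arises naturally from the $\xi^2$ weight on the boundary under this substitution.

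For part (b), the case $s=-1$ is a direct specialization: the integrand $\zeta^2/(1-\zeta^2)$ of $\Xi_{-1}$ has a pole at $\zeta=1$, which the hypothesis that $-1/4$ is not an eigenvalue excludes, and $\Xi_{-1}(2\kappa) = \ln(1+2\kappa) - \ln|1-2\kappa| - 4\kappa$ is an elementary antiderivative. The final identity $E_{-1} = \real\log T(i/2) - \int u\,dx$ follows from evaluating $G$ directly at $z=i$: by definition $G(i) = \real\log T(i/2) - E_{-1}$, while the Poisson-Green representation at $z=i$ reproduces the trace-formula expression in~\eqref{tracekdv-}; equating the two yields the identity.
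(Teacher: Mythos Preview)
Your overall plan coincides with the paper's: use the Section~\ref{s:t2} machinery, applied to the renormalized function $G(z)=-\real(z^2\log T(z/2))-\im z\cdot E_{-1}$, and for (b) evaluate at $z=i$. Your contour-deformation paragraph is in fact the correct core argument.

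There is, however, a genuine gap in the preliminary step. Your claim that $-\Delta G=2\pi\sum_k m_k(2\kappa_k)^2\,\delta_{2i\kappa_k}$ is not right, and this is the actual source of the ``$\Xi_s$ discrepancy'' you flag. Around a pole $z_0=2i\kappa_k$, the monodromy of $-z^2\log T(z/2)$ is $2\pi i m_k z^2$, so the monodromy of $G=\real(\cdots)$ is $-2\pi m_k\im(z^2)=-4\pi m_k xy$. This vanishes on the imaginary axis, so $G$ does patch to a single-valued continuous function on $U\setminus\{z_0\}$; but its $x$-derivative jumps by $-4\pi m_k y$ across the segment $i(0,2\kappa_k)$. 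Hence $-\Delta G$ carries a \emph{negative} line density there in addition to the point mass at $z_0$, $G$ is not superharmonic, and Lemma~\ref{l:apriori-s} does not apply to $G$ as a black box. Plugging your point-mass $\nu$ into the lemma would give $2m_k(2\kappa_k)^2\int_0^{2\kappa_k}(1-t^2)^s\,dt$, which is not $\Xi_s^{\mathrm{KdV}}(2\kappa_k)$; the mismatch is not a conversion issue but a symptom of the wrong $\nu$.

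The clean route is the one you already describe as the ``main technical step'': run the contour deformation directly with the KdV weight $z^2(1+z^2)^s$, or equivalently rerun the residue computation in the \emph{proof} of Lemma~\ref{l:apriori-s} with this weight applied to the genuinely superharmonic function $\log|T(z/2)|$ (for which $-\Delta=2\pi\sum_k m_k\delta_{2i\kappa_k}$). The pole contribution is then $\im\int_0^{2i\kappa_k}\zeta^2(1+\zeta^2)^s\,d\zeta$, which after $\zeta=it$ gives exactly $\int_0^{2\kappa_k}t^2(1-t^2)^s\,dt=\Xi_s(2\kappa_k)$, with no repair needed. So promote your contour paragraph to the primary argument and drop the $-\Delta G$ computation. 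For part~(b), your evaluation $G(i)=\real\ln T(i/2)-E_{-1}$ is fine; equate it with the first line of \eqref{tracekdv-} via the same contour argument specialized to the weight $\xi^2/(1+\xi^2)$, which is what the paper means by moving a Cauchy integral for $G(i)$ to the real line.
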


The proof of the trace formula is similar to the corresponding proof in the NLS/mKdV case,
using Lemmas~\ref{l:apriori-n},\ref{l:apriori-s}, with the notable difference that these are now 
applied to the nonnegative superharmonic function
\begin{equation}\label{G-kdv}
G(z) =  \real \left(- z^2 \ln T(z/2) + iz \int u \ dx\right).
\end{equation}
We note that if $u \not \in L^1$ then in the first integral in \eqref{tracekdv} 
one should replace $\xi^2 \real \ln T(\xi/2) d\xi $ with  the trace of $G$ on the real line,
which is defined as a bounded measure, and omit the $\int u dx $ term on the right hand side. The case $s=-1$ is even simpler: Assuming that $\frac{i}2$ is not an eigenvalue we use the residue theorem to express the term in the bracket of $ G(i)$   by a Cauchy integral which we then move the contour to the real line.  

Now we are ready to state the complete form of our main result for the KdV equation. 

\begin{theorem}\label{energies-kdv+} 
  For each $s >-1 $ and if $s=-1$ and $-1/4$ is not in the spectrum of the Lax operator  $\mathcal L$,
both sides of \eqref{tracekdv} resp.\eqref{tracekdv-}  are well-defined. 
They define  a continuous map 
\[
E_s : H^s  \to \R^+
\]
and 
\[ E_{-1} : H^{-1}\backslash\{ u : -1/4 \text{ is a $\mathcal L$ eigenvalue }\} \to \R^+
\] 
with the following properties:
\begin{enumerate}
\item $E_s$ is conserved along the KdV flow.
\item The trace of the function $G$ defined by \eqref{G-kdv} on the
  real line is defined as a locally finite measure on the real line,
  and the trace formulas in \eqref{tracekdv} and \eqref{tracekdv-}
  hold. 
\item  If $ \Vert u \Vert_{H^{s}}  \le \delta $ then we have 
\begin{equation}
\left| E_s(u) - \|u\|_{H^s}^2 \right| \lesssim  \Vert u \Vert_{H^{-1}} 
 \Vert u \Vert_{H^{\max\{-1,s-\frac18\}}}^2. 
\end{equation}
\item $E_s$ is continuous in $s$ and $u \in H^\sigma$ for 
$-1\le  s \le \sigma$, analytic in $u$ if $-1/4$ is not an eigenvalue
of the Schr\"odinger operator,  and analytic in both variables if in addition $s < \sigma$. 
\item There exists $ \delta >0$ so that all eigenvalues of $\mathcal L$  are above $-\frac18$
if $\Vert u \Vert_{H^s} \le \delta$. 
  \end{enumerate}
\end{theorem}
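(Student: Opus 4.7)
The plan is to replicate the NLS/mKdV argument by adapting the multilinear scheme of Sections~\ref{formal}--\ref{s:tt2j-exp} to the KdV system \eqref{scatter-kdv}, which fits the general scattering framework \eqref{scatter-re} by taking $\overline{v} \equiv 1$. The crucial preliminary step is the renormalization: since the linear (in $u$) term in the iteration of \eqref{scatter-kdv} is exactly $-\frac{1}{2iz}\int u\,dx$, we work not with $\log T$ itself but with
\[
\tilde T_{\mathrm{ren}}(z) = \log T(z) + \frac{1}{2iz}\int u\,dx
\]
which, formally, admits a multilinear expansion $\tilde T_{\mathrm{ren}} = -\sum_{j\ge 2}\tilde T_{2j}$ starting at quadratic order in $u$. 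Here $\tilde T_4$ plays the role that $T_2$ played for NLS: a direct computation analogous to Proposition~\ref{p:T2}, using Fourier inversion on \eqref{scatter-kdv} with the three-factor structure (two $u$'s and one unit), identifies $\tilde T_4(i\tau/2)$ and shows, via Lemmas~\ref{l:apriori-n} and \ref{l:apriori-s} applied on the positive imaginary axis, that the corresponding contribution to $E_s$ is exactly $\|u\|_{H^s}^2$ and that the Laurent expansion at infinity produces the classical energies $E_j$ starting from $E_{-1} = \int u\,dx$.

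Next I would establish the convergence of the multilinear expansion. For this I reprove Proposition~\ref{t2j} in the KdV setting, where the one-step operator $L$ has one factor of $u$ and one factor of $1$; the latter forces the smallness to be controlled by an $H^{-1}$ type norm rather than $l^2 DU^2$, because the critical space is $\dot H^{-3/2}$ and integration against the constant $1$ costs one factor of $(\im z)^{-1}$. Then the analogs of Propositions~\ref{t2j-s}, \ref{t2j-sprime} and \ref{p:t2j-exp} follow: convergence for $\|u\|_{H^{-1}}\ll 1$, pointwise $\tau^{-2s-3}$ bounds on $\tilde T_{2j}(i\tau/2)$, integrated bounds, and asymptotic expansions with errors controlled in $H^s$ by $H^{-1}$ smallness. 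Combining these with the connectedness property from Theorem~\ref{primitive} yields the quadratic approximation bound \eqref{quadratic}; the $\frac{1}{8}$ loss reflects the Schur/Cauchy-Schwarz bookkeeping when splitting frequencies $\lambda_1\ge\lambda_2$ with the reduced summability forced by the constant factor in place of a second $u$.

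For the trace formulas (part~(2)) I would apply Lemmas~\ref{l:apriori-n} and \ref{l:apriori-s} to the superharmonic function $G$ of \eqref{G-kdv}, whose nonnegativity is \eqref{kdv_positivity} and whose trace-measure $\mu$ and Laplacian measure $\nu = 2\pi\sum m_j\delta_{i\kappa_j}$ come respectively from the boundary values of $-\log|T|$ on $\R$ and the negative eigenvalues $-\kappa_j^2$ of the Lax operator. The multilinear bounds of the previous paragraph give exactly the finiteness hypotheses $\int(1+\xi^2)^s d\mu<\infty$ and the polynomial localization of $\nu$ that these lemmas require, first for $\|u\|_{H^{-1}}\ll 1$ and then in general by scaling $u_\lambda(x) = \lambda^2 u(\lambda x)$ to reduce to the small-data regime. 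Conservation along the KdV flow (part~(1)) then reduces to the Schwartz case, where $T(z)$ is constant in time, combined with the continuity in $u$.

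For analyticity and continuity (part~(4)) the argument follows Theorem~\ref{t:main+}: the representation \eqref{def++kdv} together with holomorphic dependence of $T^{-1}$ on $u$ (away from its zeros) yields joint analyticity except where $T$ has poles crossing the endpoint $i/2$ of the contour, which corresponds exactly to $-1/4$ being an eigenvalue; at other poles the residue factor $\Xi_s$ is analytic. Finally, part~(5) follows from the multilinear convergence itself: since $T^{-1}-1$ admits uniform bounds of order $\|u\|_{H^{-1}}$ on the region $\{\im z \ge c\}$ for suitable $c<1/\sqrt{8}$, a smallness assumption on $\|u\|_{H^s}$ forces $T^{-1}$ to be close to $1$ there, ruling out zeros below the threshold corresponding to eigenvalue $-1/8$. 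The main obstacle will be securing the $\frac{1}{8}$ loss with a sharp constant uniformly in $s$ near integers and half-integers, which will require the same cancellation between $\sin(\pi s)$ and the blow-up in the integrated error bounds that was exploited in Section~\ref{s:tt2j-exp}; the endpoint case $s=-1$ with $-1/4$ potentially an eigenvalue needs the additional identity in \eqref{tracekdv-} relating $E_{-1}$ to $\real \log T(i/2)$.
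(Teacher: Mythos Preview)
Your overall architecture is correct and matches the paper closely: renormalize by subtracting the linear term $T_2(z)=\frac{i}{2z}\int u\,dx$, identify $\tilde T_4$ as the quadratic piece giving $\|u\|_{H^s}^2$, use the connected structure of $\tilde T_{2j}$ for higher-order bounds and expansions, apply Lemmas~\ref{l:apriori-n}--\ref{l:apriori-s} to the superharmonic $G$ of~\eqref{G-kdv} for the trace formulas, pass to large data by scaling, and read off parts~(4)--(5) from analyticity of $T^{-1}$ and smallness.

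There is, however, one genuine gap at the convergence step. You propose to ``reprove Proposition~\ref{t2j} in the KdV setting'' with the one-step operator having a factor of $1$ in place of $\bar v$. But Proposition~\ref{t2j} bounds $T_{2j}$, and in the KdV case each $T_{2j}$ for $j\ge 2$ contains contributions from the forbidden quantity $E_{-1}=\int u\,dx$ (for instance $\tilde T_4 = T_4 - \tfrac12 T_2^2$, and neither $T_4$ nor $T_2^2$ is separately defined for $u\in H^{-1}$). So you cannot bound $T_{2j}$ first and transfer to $\tilde T_{2j}$ via the formal series trick as in the NLS proof; the cancellation of the $\int u$ terms must be built in from the start. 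The paper resolves this with Lemma~\ref{analytic}: it performs an explicit change of dependent variables in the ODE~\eqref{scatter-kdv}, splitting $u=U_x+u_{lo}$ into high and low frequency parts and conjugating by $\exp(-\tau^{-1}\int_{-\infty}^x u_{lo})$, to show directly that $e^{-T_2(z)}T(z)$ is analytic in $u\in H^{-1}$ near zero. This yields the base bound $|\tilde T_{2j}(i\tau)|\lesssim (C\tau^{-1/2}\|u\|_{H^{-1}_\tau})^j$ of~\eqref{t2j-first}, which is the low-regularity input you need before the connected-symbol argument (with $v=1$, $\|1\|_{l^\infty_\tau DU^2}\lesssim\tau^{-1}$) takes over for the higher-regularity range. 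Without this ODE-level renormalization your multilinear scheme does not get off the ground for $u\in H^{-1}$.
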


The proof of the theorem follows the same outline as in the NLS/KdV case. The transition from the 
small data case $\|u \|_{H^{-1}} \ll 1$ and the large data case is based again on the trace formulas
in the preceding proposition. Hence for the remainder of this section we discuss the small data 
case, where the bulk of the analysis is devoted to the estimates for the terms in the asymptotic
expansion of $\log T$. 

\subsection{Estimates for the transmission coefficient}

We begin with the  formal homogeneous 
expansion of the transmission coefficient $T$, namely
\[
T(z) = 1 + \sum_{j=1}^\infty T_{2j}(z)
\]
where $T_{2j}(z)$ are multilinear integral forms, homogeneous of degree
$j$ in $u$.  There is a similar though less explicit expansion for
$\ln T$,
\[
-\ln T(z) = \sum_{j=1}^\infty \tilde T_{2j}(z)
\]
We have $\tilde T_4 = -T_{4}$, while $\tilde T_{2j}$ are still multilinear integral forms of degree
$j$ in $u$.

 Here we take advantage of  the similarity between the systems \eqref{scatter-kdv} and \eqref{scatter}.
Precisely, the multilinear forms $T_{2j}$ and $\tilde T_{2j}$ are the same as before, with the 
only difference that these forms apply to the pair of functions $(1,u)$ rather than $(u,\pm \bar u)$.
In particular, we can still take advantage of the improved structure of $\tilde T_{2j}$.
We now discuss the successive terms in the $\ln T$ series.

\bigskip

\subsubsection{ The role of the $T_{2}$ term.} This is linear in $u$, and equal
\[
T_2(z) = \frac{i}{2z}   E_{-1}   =  \frac{i}{2z} \int u(x) dx
\]
We note that such a term did not arise for NLS-mKdV, and it cannot be bounded
by $\|u\|_{H^s}$. Thus the correct strategy is to view this as a renormalization term.
Precisely, while $T_2(z)$ is not generally defined for $u \in H^s$, the expression 
$\ln T(z)  - T_2(z)$ restricted to $i[1/2,\infty)$  will extend smoothly to all $u$ which are small in $H^{-1}$.
Indeed, we have

\begin{lemma}\label{analytic}
The map $u \to \exp(\frac{1}{2iz}\int u dx ) T(z)$ is analytic for $\|u\|_{H^{-1}} \ll 1$ and $z \text{ near }  i[1/2,\infty)$.
\end{lemma}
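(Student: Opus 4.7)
The plan is to cancel the divergent linear-in-$u$ part of $\log T$ at the level of the logarithm, reducing the statement to an absolute-convergence and analyticity question for a series of connected multilinear forms on $H^{-1}$. Specialising the formal expansion $-\log T=\sum_{j\ge 1}\tilde T_{2j}$ of Section~\ref{formal} to the KdV substitution $(u,v)=(1,u)$ in \eqref{tT2j}, each $\tilde T_{2j}(z)$ becomes a finite linear combination of connected iterated integrals of homogeneity $j$ in $u$, and a direct computation gives the linear piece $\tilde T_2(z)=\tfrac{1}{2iz}\!\int u\,dx$. Hence
\[
R(z,u)\;:=\;\log T(z)+\tfrac{1}{2iz}\!\int u\,dx\;=\;-\sum_{j\ge 2}\tilde T_{2j}(z)
\]
has no linear-in-$u$ part, and since $\exp\!\bigl(\tfrac{1}{2iz}\!\int u\bigr)\,T(z)=\exp R(z,u)$ it suffices to show that $R$ is jointly holomorphic in $(u,z)$ for $\|u\|_{H^{-1}}\ll 1$ and $z$ in a complex neighbourhood of $i[1/2,\infty)$.

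The core estimate I would establish is, for every $j\ge 2$ and every $\tau\ge 1$,
\[
|\tilde T_{2j}(i\tau)|\;\le\;C^{\,j}\,\|u\|_{H^{-1}}^{\,j}
\]
with a universal constant $C$, together with the analogous bound on a fixed complex neighbourhood of the ray $i[1/2,\infty)$. The decisive input is Theorem~\ref{primitive}: the symbols appearing in $\tilde T_{2j}$ are all connected, so the exponential phase $\prod e^{-2\tau(y_\ell-x_\ell)}$ at $z=i\tau$ forces every pair $(x_\ell,y_\ell)$ and, by connectedness, all $2j$ variables to cluster in a common window of width $O(\tau^{-1})$. Integrating out the $j$ variables $y_\ell$ produces a factor $\tau^{-j}$ and a symmetric $j$-linear form in $u$ with a diagonally-localised kernel whose Fourier side is supported in $|\xi_\ell|\lesssim\tau$; a Plancherel/duality argument against the weight $(1+\xi^2)^{-1}$ then delivers the $H^{-1}$ bound. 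The prototype is fully explicit: the connectedness of $\tilde T_4$ manifests in the exact cancellation between $T_4(i\tau)$ and $\tfrac12T_2(i\tau)^2$ of the divergent $(\int u)^2$ piece, giving
\[
\tilde T_4(i\tau)\;=\;-\tfrac{1}{8\tau^2}\iint u(x_1)u(x_2)\,e^{-2\tau|x_1-x_2|}\,dx_1\,dx_2,
\]
whose Fourier symbol $\tfrac{1}{\pi\tau(4\tau^2+\xi^2)}\lesssim\tfrac{1}{\tau(1+\xi^2)}$ yields $|\tilde T_4(i\tau)|\lesssim\tau^{-1}\|u\|_{H^{-1}}^2$ by Plancherel. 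The higher-$j$ bounds follow by the same mechanism applied to the $j$-linear kernel produced by the $y_\ell$-elimination.

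Once this is in hand, the series $R(z,u)=-\sum_{j\ge 2}\tilde T_{2j}(z)$ converges absolutely whenever $\|u\|_{H^{-1}}\le\delta$ with $\delta\ll 1$, uniformly in $\tau\ge 1$; each term is a bounded continuous $j$-multilinear functional on $H^{-1}$ and depends holomorphically on $z$ in the open upper half-plane, so the sum is jointly analytic in $(u,z)$ on the stated neighbourhood and exponentiating yields the lemma. The main obstacle is precisely the $H^{-1}$ estimate for $\tilde T_{2j}$: the NLS/mKdV route through Proposition~\ref{t2j-prime} operates in $l^{2j}_\tau DU^2$ and a naive transplant to the KdV substitution would require $1\in DU^2$, which fails. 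The workaround is exactly the explicit $y_\ell$-elimination plus Fourier symbol analysis outlined above, which exploits the harmlessness of the constant $1$ sitting in one of the two AKNS slots; this is the one place where the KdV argument genuinely departs from the NLS/mKdV blueprint.
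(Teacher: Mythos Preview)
Your route is genuinely different from the paper's. The paper does not estimate the $\tilde T_{2j}$ in $H^{-1}$ and then sum; it performs an explicit ODE renormalisation. It writes $u=U_x+u_{lo}$ with $U,\,u_{lo}$ defined by convolution with exponential kernels (so that $\|U\|_{L^2}+\tau\|u_{lo}\|_{L^2}\sim\|u\|_{H^{-1}}$), substitutes $w_1=e^{-\frac1\tau\int u_{lo}}(\psi_1+\psi_2)$, $w_2=e^{-\frac1\tau\int u_{lo}}(\psi_2-U\psi_1)$, and shows that the resulting system can be solved by a contraction in $X=C_c\times l^2_\tau L^\infty$ with constant controlled by $\|u\|_{H^{-1}}$. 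The renormalised transmission coefficient $S(i\tau)=w_1(+\infty)$ is then manifestly analytic in $u$. The multilinear bounds $|\tilde T_{2j}(i\tau)|\le C^j(\tau^{-1/2}\|u\|_{H^{-1}_\tau})^j$ that you try to prove directly are, in the paper, \emph{consequences} of Lemma~\ref{analytic} (see \eqref{t2j-first}), not inputs to it.

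Your proposal has a real gap at the step you flag as ``the main obstacle''. The $j=2$ computation is correct, but the claim that ``the higher-$j$ bounds follow by the same mechanism'' is not substantiated. After integrating out the $y_\ell$'s you are left with a $j$-linear form in $u$ whose kernel does not factor as a product of one-variable weights; for instance already for $\<XXXYYY>$ at $z=i\tau$ the $x$-kernel is $e^{-2\tau(x_3-x_1)-2\tau(x_3-x_2)}\chi_{x_1<x_2<x_3}$, and there is no Plancherel identity that converts a generic connected $j$-linear kernel into a product of $(1+\xi_k^2)^{-1}$ factors. The assertion that the Fourier side of the resulting kernel ``is supported in $|\xi_\ell|\lesssim\tau$'' is also incorrect (there is only decay, not compact support). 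Since $H^{-1}$ is not an algebra, one cannot simply iterate the bilinear case, and you give no alternative mechanism producing a bound with geometric constant $C^j$; without that, the series for $R$ cannot be summed. If you want to push your approach through, you would need a genuinely new argument here --- e.g.\ a recursive estimate on the connected kernels in a weighted Fourier norm, or a reduction to the $l^p_\tau DU^2$ framework that actually sees the $H^{-1}$ scale --- and neither is supplied. As written, the proof is incomplete precisely at its decisive step.
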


\begin{proof} 
We seek to solve the system
\begin{equation}\label{scatter-re-kdv}
\left\{ 
\begin{aligned}
\frac{d\psi_1}{dx} =& -iz \psi_1+ \psi_2 
\\
\frac{d\psi_2}{dx} = & iz   \psi_2 + u \psi_1
\end{aligned}
\right.
\end{equation}
and prove Lemma~\ref{analytic}.
If $u \in L^1 \cap H^{-1}$   then a direct iterative scheme yields the existence of a unique solution
\[
(\psi_1, \psi_2) \in (\dot W^{1,1} \cap \dot W^{1,2}) \times (W^{1,1} \cap L^2)
\]
with the property that 
\[
\lim_{x \to -\infty}(\psi_1,\psi_2) = (1,0), \qquad \lim_{x \to -\infty}(\psi_1,\psi_2) = (T^{-1}(iz),0).
\]
Our goal here is to renormalize $T(i\tau)$, and show that the expression 
\[
S(i\tau) = T(i\tau) e^{-\frac1\tau \int u} 
\]
depends analytically only on $u$ in $H^{-1}$. Setting $z = i\tau $, we will rewrite the equation 
in a more favorable form. First we multiply $\psi_1$ and $\psi_2$ by $e^{\tau x}$. 
By an abuse of notation we keep the notation for $\psi_1$ and $\psi_2$, 
which now satisfy
\begin{equation} 
\begin{split} 
\psi_1' = &\psi_2  \\
\psi_2' = &-2\tau  \psi_2 + u \psi_1. 
\end{split} 
\end{equation} 
Here we start with initial datum $(1,0)$ at $-\infty$, and $\psi_2(\infty) = 0$ while $\psi_1(\infty) = T(i\tau)^{-1} $. To peel off the low regularity part of $\psi_2$ we split $u$ into low and high frequencies and define 
\[ U=  \int_{-\infty}^x e^{-2\tau(x-y)} u(y) dy,\qquad u_{lo} = \tau \int_{-\infty}^x e^{-(x-y)} u(y) dy .\] 
Then $u = U_x + u_{lo}$ and 
\[ \hat u_{lo} (\xi) = \frac{1}{1 + i\xi/\tau} \hat u \] 
so that we have
\begin{equation} \Vert u \Vert_{H^{-1}} \le  \tau \Vert u_{lo} \Vert_{L^2}  + \Vert U \Vert_{L^2}  \le  2 \Vert u \Vert_{H^{-1}} \end{equation}

We now define renormalized variables $(w_1,w_2)$ by
\[
w_2 = e^{ -\frac1\tau \int_{-\infty}^x  u_{lo}} \left( \psi_2 - U  \psi_1\right) , \qquad   w_1 = e^{-\frac1\tau\int_{-\infty}^x u_{lo} } \left( \psi_1 + \psi_2\right)  
\]
and  rewrite the system in terms of $w_1$ and $w_2$ as
\begin{equation}\label{scatter-re4}
\left\{ 
\begin{aligned}
w_1' =& -(u_{lo} + U) w_2 - U^2 (w_1-w_2)   
\\
w_2' = & -2 \tau w_2  -  U w_1    
+ u_{lo} (w_1-2w_2)    -   U^2 (w_1-w_2)
\end{aligned}
\right.
\end{equation}
Here $(w_1,w_2)$ start with the same data 
$(1,0)$ at $-\infty$, but now $w_2(\infty) = 0$ while $w_1(\infty) = S(i\red{\tau})$.
Hence it suffices to show that we can solve the equation 
\eqref{scatter-re4} iteratively provided that $\| u\|_{H^{-1}} \ll 1$. 
Notably, we are no longer assuming that $u \in L^1$.

 Let $ C_c$ be the space of continuous functions with
limits at $\pm \infty$. We define
\begin{equation} 
X= C_c\times l^2_\tau L^\infty  
\end{equation}  
and the linear map  $ L: X \to X  $ where $(v_1,v_2) = L(w_1,w_2)$
if $(v_1,v_2)$ solve the linear system 
\[
\begin{split}
v_1' =& \  -(u_{lo} + U) w_2 + U^2(w_1-w_2) 
\\  v_2'   = & \ -2\tau  v_2 + u_{lo} (w_2-w_1) + U w_1  + U^2(w_1-w_2) 
\end{split}
\]
with zero Cauchy data at $-\infty$. We can now recast the system \eqref{scatter-re4} in the form
\begin{equation}\label{how-to}
(w_1,w_2) = (1,0) + L(w_1,w_2)
\end{equation}
In order to solve this iteratively we need the following

\begin{lemma} We can decompose $L = L_1+L_2$ where 

(i) $L_1$ is linear in $u$ and $(w_1,w_2)$ and satisfies  
\[ \Vert L_1 \Vert_{X\to X} \lesssim  \Vert u \Vert_{H^{-1}}.\] 

(ii) The map $L_2$ is linear in $(w_1,w_2)$ and quadratic in $U$, and  satisfies 
\[ \Vert L_2 \Vert_{X \to X } \le c \Vert U \Vert_{L^2}^2 \] 
\end{lemma}

\begin{proof} 
By the first equation 
\[ \Vert v_2 \Vert_{l^2 L^\infty } \le  \Big(\Vert u_{lo}  \Vert_{L^2}+ \Vert U \Vert_{L^2} + \Vert   U \Vert_{l^2_\tau L^2}^2 \Big)             
\Big(\Vert w_2 \Vert_{L^\infty} +  \Vert w_1 \Vert_{L^\infty} \Big) 
 \] 
and by the second 
\[ \Vert v_1 \Vert_{L^\infty} \le \Vert u_{lo} + U \Vert_{L^2} \Vert w_2 \Vert_{L^2} 
+ \Vert U \Vert_{L^2}^2  \Vert  w_1 \Vert_{L^\infty} + \Vert w_2 \Vert_{L^\infty}. 
\] 
\end{proof} 

Hence, if $\|u\|_{H^{-1}} \ll 1$ then $\|L\|_{X \to X} < 1$, which allows us to solve
the equation \eqref{scatter-re4} by
\[
(w_1,w_2) = \sum_{j = 0}^\infty L^j (1,0)
\]
Finally, we have 
\[ 
(S(i\red{\tau}),0) = \lim_{x \to \infty} L^j (1,0)(x)
\]
and the conclusion of Lemma~\ref{analytic} follows.
\end{proof} 

Lemma \ref{analytic} together with a rescaling implies that there is a constant $C$ so that with 
\[ h(t) = 1+ t^2 e^t \]  
\[  e^{-T_2(i\tau)} T(i\tau)  \preceq h(C \tau^{-1/2} \Vert u \Vert_{H^{-1}_\tau}) .   \]   
where terms with like homogeneity are separately compared, and
\[
 \| u\|_{ H^{-1}_\tau}^2 = \int (\tau^2 +\xi^2)^{-1} |\hat u(\xi)|^2 d\xi 
\] 
Since $\ln ( 1+t) \preceq \dfrac{t}{1-t} $, this gives 
\[ 
\ln (e^{-T_2(i\tau)} T(i\tau)) \preceq  \frac{h(C(\tau^{-1/2})\Vert u  \Vert_{H^{-1}_\tau})}{1- h(C(\tau^{-1/2})\Vert u  \Vert_{H^{-1}_\tau})}.  
\]
Hence there exists $C>0$ so that , for $j \ge 2$,
\begin{equation}
\label{t2j-first}
   |\tilde T_{2j}|  \le  C (\tau^{-1/2} \Vert u \Vert_{H^1_\tau})^j. 
\end{equation}

\bigskip
 
\subsubsection{The $\tilde T_{4}$ term.} This is quadratic in $u$, and plays
exactly the same role played by $T_2$ before. We have
\[
\begin{split} 
 \tilde T_4(z) =&  -2 \int_{x_1<x_2<y_1<y_2} e^{2iz(y_1+y_2-x_1-x_2)}  u(x_1) u(x_2) dx dy  
\\ =& \frac{1}{4 z^2} \int_{x_1<x_2}e^{2iz(x_2-x_1)} u(x_1) u(x_2) dx_1 dx_2  
\end{split} 
\] 
and hence, as for the NLS case, 
\begin{equation} 
E_{s,4} = \| u\|_{H^s}^2
\end{equation} 

\bigskip

\subsubsection{ Low regularity bounds for the 
$\tilde T_{2j}$ term, $j \geq 3$.}
We note that  unlike the case of NLS-mKdV, no such bound holds
for $T_{2j}(i\tau)$, which contains contributions from the forbidden energy $E_{-1}$.
From Lemma~\ref{analytic} we obtain  
\begin{equation}\label{t2j-first+}
 |\tilde T_{2j}(i\tau )| \lesssim \tau^{-\frac{j}2} \Vert u \Vert_{H^{-1}_\tau}^j 
\le 
\tau^{-\frac{3j}2}  \| u\|_{L^2}^j  \lesssim \tau^{-\frac{3j}2} \Vert u \Vert_{H^{\frac{j-2}2}}^2     \Big( c   \Vert u \Vert_{H^{-1}}\Big)^{j-2}.  
\end{equation}
The trivial bound 
\[|\tilde T_{2j}(i\tau )| \lesssim \tau^{-\frac{j}2} (c\Vert u \Vert_{H^{-1}_\tau})^j 
\le 
\tau^{-\frac{j}2}  (c\| u\|_{H^{-1}})^j 
\] 
suffices for the pointwise estimate up to $s=\frac{j}4-\frac32$.
For $-\frac34 \le s \le \frac{3(j-2)}4$ we interpolate the two estimates to arrive at 
\[ |\tilde T_{2j}(i\tau)|\lesssim \tau^{-3-2s} \Vert u \Vert_{H^{s-1/4}}^2 \Vert u \Vert_{H^{-1}}^{j-2}, \]  
with the case $j=3$ being the worst one. 
The benefit  in having such a bound is two-fold:
\begin{enumerate} 
\item  It fully covers all $-1\le s \leq 0$; to account for larger $s$ we will no longer have to deal with $H^{-1}$ norms.
\item  For every $s > 0$ it accounts for all large enough $j$, so in the sequel we only need to obtain bounds 
for fixed $j$.
\end{enumerate}

\bigskip 

\subsubsection{ Higher regularity bounds for the 
$\tilde T_{2j}$ term, $j \geq 3$.}
Here we take advantage of the structure properties of $\tilde T_{2j}$, namely that it contains only connected 
integrals. Because of this we can use an unbalanced version of the bound \eqref{tt2j}, namely 
\begin{equation} 
|\tilde T_{2j}(i \tau)(u,v) | \lesssim \| u\|_{l^j_{\tau} DU^2}^j \|v\|_{l^\infty_{\tau} DU^2}^j .
\end{equation}
We apply this with $v = 1$, which satisfies 
\[
\| 1\|_{l^\infty_{\tau} DU^2} \lesssim \tau^{-1},
\]
to obtain
\begin{equation} 
|\tilde T_{2j}(i \tau) | \lesssim \tau^{-j} \| u\|_{l^j_{\tau} DU^2}^j. 
\end{equation}
Using the Sobolev embedding \eqref{emb}, this allows us to extend the bound 
 \eqref{t2j-first+} to the range $-1 \leq s \leq \frac12 - \frac1j$. In particular 
we have  the estimate
\begin{equation}
|\tilde T_{2j}(i\tau )| \lesssim \tau^{-(2j-1)}  \| u\|^j_{H^{\frac12 -\frac1j}}.
\end{equation}
Then  by interpolation we get the uniform bounds 
\begin{equation}
|\tilde T_{2j}(i\tau )| \lesssim \tau^{-(2j-1)} \| u\|_{H^{\frac{3(j-2)}4}}^2 \| u\|_{H^{-1}_\tau}^{j-2}    
\end{equation}
and
\begin{equation}
|\tilde T_{2j}(i\tau )| \lesssim  \tau^{-3-2s}     \| u\|_{H^{s-\frac14}}^2 \| u\|_{H^{-1}_\tau}^{j-2}    
\end{equation}
for $-1 \le s \le j-2$. In particular, for such $s$ we obtain for all $\varepsilon>0$ 
\begin{equation} \label{middle}  
E_{j,s} \lesssim \Vert u \Vert_{H^{s-\frac14+\varepsilon}}^2 \Vert u \Vert_{H^{-1}}^{j-2}. 
\end{equation} 
Thus it remains to consider the case $s > j-2$.

\bigskip

\subsubsection{ The expansion for the 
$\tilde T_{2j}$ term, $j \geq 3$.}  Here the proof of Proposition~\ref{p:t2j-exp}
applies with minor but important changes to give
\begin{equation}\label{t2j-main}
\left |\tilde T_{2j}(i\tau/2 ) -  \sum_{l=0}^N (-1)^{j+l} T_{2j,l}  \tau^{-2j-2l-1}   \right| \lesssim 
\sum_{2N+1 \leq |\alpha| \leq j -1 + 2N }^{\max \alpha_k \leq N+1}  \tau^{-2j-1-|\alpha|}{\red\prod_k}  \| \partial^{\alpha_k} u\|_{l^j_{\tau} DU^2}.
\end{equation}
This we seek to use in the range $j-2+N \leq s < j-1+N$. Indeed, for $j-2+N + \frac14 \leq s \leq  
j-1+N+\frac14$ the above bound implies that
\begin{equation} 
\Big|\tilde T_{2j}(i\tau/2 ) -  \sum_{l=0}^N (-1)^{j+l} T_{2j,l}  \tau^{-2j-2l-1}   \Big| \lesssim 
\tau^{-3-2s} \Vert u \Vert_{H^{s-\frac14}}^2 \Vert u \Vert_{H^{-1}}^{j-1}. 
\end{equation} 
Due to the room of $\frac14$ we easily obtain the integrated bound 
\[ 
\int_1^\infty \tau^{2s+2} 
\Big|\tilde T_{2j}(i\tau/2) - \sum_{l=0}^N (-1)^{j+l} T_{2j,l} \tau^{-2j-2l-1}\Big| d\tau \le c \Vert u \Vert_{H^{s-\frac14+}}^2 
\Vert u \Vert_{H^{-1}}. 
\] 
One can further refine the argument using a Littlewood-Paley decomposition, 
as in the proof of Proposition~\ref{p:t2j-exp}, to reach the $ H^{s-\frac14}$ bound in the last estimate and in \eqref{middle}.

\subsection{The cubic term in the KdV energies}
For completeness we also provide the Fourier expression for the cubic term in $\ln T(z)$,
namely $\tilde T_6(u)$, as well as the cubic term in our energy functionals $E_s^3$. 
These are given by specializing \eqref{T6} Lemma  \ref{lHj6} and setting formally  $\bar u =1$ there
\begin{equation}\label{tildeT6}  
  \tilde T_6(z/2) = \frac{i}{(2\pi)^{\frac12}} 
 \int_{\xi_1+\xi_2+\xi_3=0}  \frac{1}{z^2(z+\xi_1)(z+\xi_2)}
\left(\frac{1}{z} +\frac{1}{z-\xi_3}\right) \hat u(\xi_1) \hat u(\xi_2) \hat u(\xi_3) d\xi_1d\xi_2, 
\end{equation}
similarly from  \eqref{T6exp} 
\begin{equation}   E_{k,3} =  \sum_{\alpha_1+\alpha_2=2k-2} 
\int u^{(\alpha_1)} u^{(\alpha_2)} u dx+
\sum_{\alpha_1+\alpha_2+2\alpha_3=k-2} (-1)^{k+1+\alpha_3}
\int u^{(\alpha_1)} u^{(\alpha_2)} u^{(\alpha_3)}dx. \end{equation}    
To compute the cubic component $E_{s,3}$ of the energy it is useful to symmetrize \eqref{tildeT6} in $\xi$ using 
\[
\begin{split} 
 \frac13\Big[  (2z-\xi_1)(z+\xi_1)(z-\xi_2)(z-\xi_3)\hspace{-5cm} & \hspace{5cm} +(2z-\xi_2)(z+\xi_2)(z-\xi_3)(z-\xi_1)
\\ & + (2z-\xi_3)(z+\xi_3)(z-\xi_1)(z-\xi_2) \Big]  \\  
= & 2 z^4 - (\xi_1+\xi_2+\xi_3) z^3-\frac13 (\xi_1^2+\xi_2^2 +\xi_3^2)z^2 
\\ & +[\xi_1\xi_2\xi_3 + \frac13 \{\xi_1^2(\xi_2+\xi_3) + \xi_2^2 (\xi_3+\xi_1) + \xi_3^2 (\xi_1+\xi_2)\}  ]  z
\\ &    - \frac13 (\xi_1+\xi_2+\xi_3)\xi_1\xi_2\xi_3 
\end{split} 
\] 
and 
\[ 2 [ \xi_1^2(\xi_2+\xi_3) + \xi_2^2 (\xi_3+\xi_1) + \xi_3^2 (\xi_1+\xi_2)]
= (\xi_1+\xi_2+\xi_3)( (\xi_1+\xi_2+\xi_2)^2 - \xi_1^2-\xi_2^2-\xi_3^2) 
- 6 \xi_1\xi_2\xi_3.\] 
Since we integrate over $\xi_1+\xi_2+\xi_3=0$ we arrive at 
 \begin{equation} 
  \tilde T_6(z) = \frac{i}{(2\pi)^{\frac12}} 
 \int_{\xi_1+\xi_2+\xi_3=0}  \frac{2 z^2- \frac13 (\xi_1^2+\xi_2^2 +\xi_3^2)}{z(z^2-\xi_1^2)(z^2-\xi_2^2)(z^2-\xi_3^2)}
 \hat u(\xi_1) \hat u(\xi_2) \hat u(\xi_3) d\xi_1d\xi_2 
\end{equation}
Then we have  
\[
E_{s,3} = \real \frac{i}{\pi  (2\pi)^{\frac12}} \int_{\R+i0} (1+z^2)^s 
 \int_{\xi_1+\xi_2+\xi_3=0}   \frac{2z^3- z(\xi_1^2+\xi_2^2 +\xi_3^2)/3}{(z^2-\xi_1^2)(z^2-\xi_2^2)(z^2-\xi_3^2)} \hat u(\xi_1) \hat u(\xi_2) \hat u(\xi_3) d\xi_1d\xi_2\,dz .
\]
To express the complex conjugate of the above integral in a similar manner we use the fact that  
$u$ 
is real, which implies that  we have $\hat u(-\xi) = \bar{ \hat u}(\xi_1)$.
Then a straightforward computation yields
\[
E_{s,3}  =- \frac{1}{\pi} \int_{\R+i0}\!\! (1+z^2)^s 
 \int_{\xi_1+\xi_2+\xi_3=0}  \!\!  \im \frac{2z^3- z (\xi_1^2+\xi_2^2 +\xi_3^2)/3}{(z^2-\xi_1^2)(z^2-\xi_2^2)(z^2-\xi_3^2)}\real( \hat u(\xi_1) \hat u(\xi_2) \hat u(\xi_3)) d\xi_1d\xi_2\,  dz
\]
We apply Fubini and move the inner $z$ integral - including the factor $1/\pi$ - to the real axis:
\[
\begin{split}
K(\xi_1,\xi_2,\xi_3)  = &\ \frac1{\pi(2\pi)^\frac12}  
\int_{\R+i0} \im \frac{2z^3- z (\xi_1^2+\xi_2^2 +\xi_3^2)/3}{(z^2-\xi_1^2)(z^2-\xi_2^2)(z^2-\xi_3^2)} dz 
\\
= & \ \frac{2}{3\xi_1 \xi_2 \xi_3}  \left( \xi_1 (\delta_{\xi_1} + \delta_{-\xi_1})
+  \xi_2 (\delta_{\xi_2} + \delta_{-\xi_2})+ \xi_3 (\delta_{\xi_3} + \delta_{-\xi_3})\right)
\end{split}
\]
where we used that 
\[
\begin{split} 
\hspace{-1.9pt} [(6\xi_1^2-(\xi_1^2+\xi_2^2+\xi_2^2)]\xi_1\xi_2\xi_3 - 2 \xi_1(\xi_1^2-\xi_2^2)(\xi_1^2-\xi_3^2) = &\, \xi_1 \Big[  (5 \xi_1^2-\xi_2^2-\xi_3^2)\xi_2\xi_3
-2(\xi_1^2-\xi_2^2)(\xi_1^2-\xi_3^2)\Big]
\\ & \hspace{-5cm}   =   \, \xi_1 (\xi_1+\xi_2+\xi_3) (2\xi_1^2-\xi_2-\xi_3)(\xi_2+\xi_3-\xi_1) 
\end{split} 
\] 
 to compute the coefficient of $\delta_{\xi_1}$, from which we can read of the other coefficients.  Hence we obtain 
\begin{equation}  
E_{s,3} = \frac{2}{3(2\pi)^{\frac12}} \int_{\xi_1+\xi_2+\xi_3=0} \frac{(1+\xi_1^2)^s \xi_1 +(1+\xi_2^2)^s\xi_2 +(1+\xi_3)^s\xi_3}{\xi_1
\xi_2\xi_3 }\hat u(\xi_1) \hat u(\xi_2) \hat u(\xi_3) d\xi_1 d\xi_2  
\end{equation} 
where the latter formula agrees again with the I-method prediction, see 
\cite{MR3058496,MR3292346}. 
We can check the calculation by evaluating $E_{1,3}$ using  
\[ \xi_1^3 +\xi_2^3+\xi_3^3-3\xi_1\xi_2\xi_3= (\xi_1+\xi_2+\xi_3)(\xi_1^2+\xi_2^2+\xi_3^2-\xi_1\xi_2-\xi_2\xi_3-\xi_3\xi_1) \] 
which gives
\[
\begin{split} 
 E_{s,3} = &\,  \frac{2}{(2\pi)^{\frac12} }  \int_{\xi_1+\xi_2+\xi_3=0} \hat u(\xi_1) \hat u(\xi_2) \hat u(\xi_3) d\xi_1 d\xi_2 
\\ = &\,  \frac{2}{(2\pi)^{\frac32} } \hat u * \hat u *\hat u(0) 
\\ = & \,   \frac{2}{(2\pi)^{\frac12} } \widehat{u^3}(0) 
\\ = & 2 \int u^3 dx. 
\end{split} 
\] 
\appendix

\newsection{A Hopf algebra} 
\label{a:hopf}

\renewcommand{\theequation}{\Alph{section}.\arabic{equation}}
\renewcommand{\thesubsection}{{\Alph{section}.\arabic{subsection}}}
\renewcommand{\thetheorem}{\Alph{section}.\arabic{theorem}}

The aim of this section is to frame the algebra of iterated integrals
as a Hopf algebra, and then to take advantage of the Hopf algebra
structures in order to prove Theorem~\ref{primitive}.  Our iterated
integrals are linear combinations of integrals of the form
\[
\int_\Sigma u(x_1) \overline{v(y_1)} \cdots u(y_j) \overline{v(y_j)} dx dy
\]
where $\Sigma$ represents a complete ordering of the variables $x_l$,
$y_l$ with the constraint that
\[
\Sigma \subset \{ x_l < y_l; \ l = 1,...,j\}
\]
Omitting the indices this can be represented as a word with two letters
$X$ and $Y$ like \[  XXYY \qquad \text{ for } \qquad  x_1 < x_2 < y_1<y_2.\]

Here  we will restrict our attention to words which have an equal number of
$X$'s and $Y$'s.  Further, the requirement $x_l <y_l$ leads to the
following additional property of our words: For any splitting of the
word into two words, the left word contains at least as many $X$'s as
$Y$'s. 

To each word in this class we associate a graphical representation by
nonintersecting arcs as follows.  Replace $X$ by $\<X>$ and $Y$ by
$\<Y>$. Then there is exactly one way of connecting each $\<X>$ arc by an 
$\<Y>$ arc so that the arcs do not intersect, for instance
\[ 
XXYY \to \<X>\<X>\<Y>\<Y> \to \<XXYY>. 
\]
Thus by  a slight  abuse  of language we call the words in $H$ nonintersecting, and, by a bigger abuse, all nonintersecting words we call intersection (like $\<X>$, which shows the abuse). 

The shuffle product $\shuffle$ on words in the alphabet $\{X,Y\}$ maps two words 
to the sum of all the words which are obtained by shuffling the two words, i.e. 
forming a word from the letters of the two words while respecting the ordering 
of both words. For example 
\[
 X \shuffle XY = 2XXY + XYX. 
\] 
It defines a ring of formal power series with the words as unknowns and 
the shuffle product defining the commutative multiplication. We denote 
this ring by $H^{\shuffle}$ and refer to Lothaire \cite{MR675953} for a more detailed discussion.

Here we will not work with the full ring $H^{\shuffle}$.  We denote the
subset of $H^{\shuffle}$ containing only formal series of
nonintersecting words by $H$.  The shuffle product of nonintersecting
words can easily seen to be a sum of nonintersecting words.  Hence $H$
endowed with the shuffle product is a subalgebra of $H^{\shuffle}$.

We define the degree of a symbol to be half of the length of the 
nonintersecting word. A symbol of length $j$ is identified with 
an integral over a subset of $\R^{2j}$ containing $j$ factors $u$ and 
$j$ factors $v$. 
The length $2j$  introduces a grading on $H$ which is compatible with the shuffle 
product: Words of length $2j$ have degree $j$.
The elements of degree $>m$ form trivially an ideal $I_m$. 
The quotient $H_m= H/ I_m$ is a finite dimensional algebra.

We identify nonintersecting words with integrals. For two given
Schwarz functions $u$ and $v$, any nonintersecting word defines an
integral as above. For example 
\[ 
\<XXYXYY> = \int_{x_1<x_2<y_1<x_3<y_2<y_3}  \prod u(x_j) \overline{v(y_j)} dx_j dy_j. 
\]
For two nonintersecting words $a$ and $b$ of
length $2n$ and $2m$ the product can written by an application of
Fubini's Theorem as an integral over a domain $U$ in $\R^{2(m+n)}$
given by the restrictions defined by the words $a$ and $b$. We
decompose $U$ into completely ordered sets of variables, neglecting
sets of measure zero.  The simplest nontrivial example is
\[
\begin{split}
 \left( \int_{x_1<y_1} u(x_1) \overline{v(y_1)} dx_1 dy_1 \right)^2 
= & \ 2 \int_{x_1<y_1<x_2<y_2} u(x_1) \overline{v(y_1)} u(x_2)\overline{v(y_2)} dx dy 
\\ & \  +  4 \int_{x_1<x_2<y_1<y_2} u(x_1) \overline{v(y_1)} u(x_2) \overline{v(y_2)} dx dy
\end{split}
\]
At the level of words this becomes  
\[ XY \shuffle XY = 2 XYXY + 4 XXYY \] 
and at the level of symbols 
\begin{equation}\label{relation} 
  \<XY>\shuffle \<XY> = 2 \<XY>\<XY> + 4 \<XXYY> . 
\end{equation}   

Then the product of the integrals is the same as the sum over
the integrals defined by the summands in the shuffle product of the
two words. In short the shuffle product on nonintersecting words is
compatible with the product of integrals. In abstract terms

\begin{lemma} Let $u$ and $v$ be Schwartz functions. The evaluation of
  the integrals is a ring homomorphism from the finite sums in $H$ to
  $\C$.
\end{lemma}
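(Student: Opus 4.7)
The plan is to verify the two ring-homomorphism axioms separately, where additivity is immediate by linearity of integration and the nontrivial content is compatibility of the shuffle product with pointwise multiplication of the associated iterated integrals. Throughout, the Schwartz assumption plays the role of ensuring that each of the iterated integrals attached to a nonintersecting word is absolutely convergent: each factor decays faster than any polynomial, so the integrand is bounded by a product of Schwartz functions, whence the integral over any measurable subset of $\R^{2n}$ is absolutely convergent. This justifies every application of Fubini's theorem below.

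For multiplicativity, take two nonintersecting words $a$ and $b$, of lengths $2n$ and $2m$, defining integrals $I_a$ and $I_b$ over domains $\Sigma_a \subset \R^{2n}$ and $\Sigma_b \subset \R^{2m}$. By Fubini, $I_a \cdot I_b$ equals an integral over $\Sigma_a \times \Sigma_b \subset \R^{2(n+m)}$ of the product of the two integrands, where we use disjoint integration variables. The key observation is that $\Sigma_a \times \Sigma_b$ is, up to a Lebesgue-null set, the disjoint union over all complete orderings of the $2(n+m)$ variables that simultaneously refine the partial orders defining $\Sigma_a$ and $\Sigma_b$. Such compatible total orderings are precisely the shuffles of the two underlying sequences. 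Thus
\[
I_a \cdot I_b = \sum_{c \in a \shuffle b} I_c,
\]
provided each $c$ appearing on the right is itself a legitimate (nonintersecting) word, so that the right-hand side lies inside $H$.

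The remaining point is therefore algebraic: the shuffle of two nonintersecting words is again a sum of nonintersecting words. For this one needs two checks. First, both $a$ and $b$ contain equally many $X$'s as $Y$'s, and this property is clearly preserved by every shuffle, so each term in $a \shuffle b$ has an equal number of $X$'s and $Y$'s. Second, the defining inequality $x_l < y_l$ in the original integrals translates to the condition that every prefix of the word contains at least as many $X$'s as $Y$'s; again this condition is inherited by any shuffle $c$, since at any prefix of $c$ the counts of $X$'s and $Y$'s split as the sum of the counts in the corresponding prefixes of $a$ and $b$, each of which satisfies the prefix inequality. With this combinatorial observation in hand, the identity above expresses exactly $I_{a \shuffle b} = I_a \cdot I_b$, which extends by linearity to finite sums.

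The only real obstacle is administrative bookkeeping in the Fubini decomposition — correctly accounting for multiplicities when two distinct shuffles produce the same underlying word (as in the model computation $XY \shuffle XY = 2\,XYXY + 4\,XXYY$). This is handled transparently by the standard definition of the shuffle product, which sums over all interleavings weighted by $1$, and matches the count of total orderings of $\Sigma_a \times \Sigma_b$ that project to a fixed word $c$. No analytic difficulty arises beyond absolute convergence, so the Schwartz hypothesis is more than adequate.
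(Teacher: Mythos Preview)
Your proof is correct and follows exactly the approach the paper takes: the paper does not give a separate proof of this lemma but rather states it as the abstract summary of the Fubini decomposition explained in the paragraphs immediately preceding it (decompose the product domain $\Sigma_a\times\Sigma_b$ into totally ordered chambers, observe that these are enumerated by shuffles, and use that the shuffle of nonintersecting words is again nonintersecting). Your write-up simply makes the prefix-count argument for preservation of the nonintersecting condition and the multiplicity bookkeeping explicit, which the paper leaves to the reader.
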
   

The product of integrals introduces obvious relations: Linear
combinations of monomials of the same homogeneity are equivalent to
zero, if they vanish for every choice of functions $u,v\in
\mathcal{S}$. We do not know whether this leads to nontrivial relations 
in $H$.

A fundamental subset of the set of symbols is the set of connected symbols. 
We call a symbol connected if there is an arc from the first to the  last 
letter.   For instance
$\<XY>$, $\<XXYY>$, $\<XXYXYY>$ are connected, while $\<XY>\<XY>\ \ $ is not.
By extension we call an integral connected if the symbol is connected.  

Our interest comes from  the transmission coefficient $T$, which 
can be expressed as a power series (see Lemma \ref{T2exp})  
\[
T^{-1}  = 1+ \<XY> + \<XY>\<XY> + \dots
\]
Using the Taylor series for the log and the shuffle product, this also gives
a series for $\ln T$.
An  involved calculation gives

\begin{proposition} \label{a:log} The following formula holds 
\begin{equation} 
\begin{split} 
-\ln T = & \<XY> -2\<XXYY> +12 \<XXXYYY> + 4  \<XXYXYY> 
-144\<XXXXYYYY>-72\<XXXYXYYY> -24\<XXXYYXYY>-24\<XXYXXYYY>-8\<XXYXYXYY>
\\ & + 2880 \<XXXXXYYYYY> +1728\<XXXXYXYYYY>+864\<XXXXYYXYYY>
+864\<XXXYXXYYYY> +432\<XXXYXYXYYY> +288\<XXXXYYYXYY> \\& +288\<XXYXXXYYYY>+144\<XXXYXYYXYY> 
  +144\<XXYXXYXYYY>+48\<XXXYYXYXYY>+48\<XXYXXYYXYY>+48\<XXYXYXXYYY>+16\<XXYXYXYXYY> 
\\ & + \dots 
\end{split} 
\end{equation} 
by which we mean that the difference of the two  sides is  in $I_6$.

\end{proposition}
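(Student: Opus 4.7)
The plan is to derive the expansion by directly applying the Taylor series for $\ln$ to the identity $T^{-1}-1 = \sum_{j\ge 1} T_{2j}$ of Lemma~\ref{T2exp}, where $T_{2j}$ corresponds to the staircase symbol $XYXY\cdots XY$ of length $2j$. Since products of iterated integrals translate into shuffle products of the corresponding symbols (as in the relation $\<XY>\shuffle\<XY>=2\<XYXY>+4\<XXYY>$ worked out in the text), one has the formal identity in $H$
\[
-\ln T \;=\; \sum_{k\geq 1}\frac{(-1)^{k+1}}{k}\sum_{j_1,\ldots,j_k\geq 1} T_{2j_1}\shuffle T_{2j_2}\shuffle\cdots \shuffle T_{2j_k}.
\]
To obtain the stated formula modulo $I_6$ I would keep only compositions with $j_1+\cdots+j_k\leq 5$, giving a finite (though growing) list of shuffle products to evaluate.

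The core of the argument is then a direct position-counting calculation of the relevant shuffle products, organized by total degree. At degree $2$ one has $T_4-\frac12 T_2\shuffle T_2 = \<XYXY>-\frac12(2\<XYXY>+4\<XXYY>) = -2\<XXYY>$. At degree $3$, the direct enumeration yields $\<XY>\shuffle\<XYXY>=3\<XYXYXY>+4\<XXYYXY>+4\<XXYXYY>+4\<XYXXYY>$ and $\<XXYY>\shuffle\<XY>=9\<XXXYYY>+4\<XXYXYY>+\<XXYYXY>+\<XYXXYY>$; combining with $T_6=\<XYXYXY>$ and the logarithm coefficients $1,-1,\tfrac13$ gives
\[
T_6 - T_2\shuffle T_4+\tfrac13 (T_2\shuffle T_2\shuffle T_2) \;=\; 12\<XXXYYY>+4\<XXYXYY>,
\]
matching the claim, with all coefficients of disconnected symbols cancelling precisely. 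The degree $4$ and degree $5$ cases are handled in the same way: enumerate compositions, compute each shuffle, and collect coefficients.

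Theorem~\ref{primitive} plays a dual role. Conceptually it guarantees that the final answer lies in the span of connected symbols, so every disconnected nonintersecting symbol must have vanishing coefficient --- which serves as an indispensable structural check at each degree (for instance, in the degree $3$ step above the coefficients of $\<XYXYXY>$, $\<XXYYXY>$, $\<XYXXYY>$ cancel exactly). The main obstacle is purely computational: at degree $5$ there are $C_5=42$ nonintersecting symbols of which $14$ are connected (the irreducible Dyck paths of length $10$), and one must enumerate shuffles for all compositions of $5$, from $(5)$ down to $(1,1,1,1,1)$. The calculation reproduces the thirteen listed coefficients, with the remaining connected symbol $\<XXXYYXXYYY>$ emerging with coefficient zero --- a cancellation that Theorem~\ref{primitive} does not predict and that falls out only from the explicit shuffle arithmetic. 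No new conceptual ingredients are needed beyond the shuffle relations and Theorem~\ref{primitive}; the proof reduces to a careful finite combinatorial computation, best organized by composition type and cross-checked via the connected/disconnected dichotomy.
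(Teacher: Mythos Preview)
Your proposal is correct and matches the paper's (implicit) approach: the paper introduces the proposition only with the phrase ``An involved calculation gives'' and offers no further argument, so the intended proof is precisely the direct shuffle-product computation you describe, applying the Taylor series of $\ln$ to $T^{-1}=1+\sum T_{2j}$ and collecting terms degree by degree. Your degree~2 and degree~3 verifications are accurate, your use of Theorem~\ref{primitive} as a consistency check on the disconnected coefficients is exactly right, and your observation that the connected symbol $\<XXXYYXXYYY>$ receives coefficient zero (so that only thirteen of the fourteen irreducible degree-$5$ symbols appear) is a genuine and correct remark that goes slightly beyond what the paper records.
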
 

The remarkable property is that the right hand side can be written as a sum 
of multiples of connected symbols.  Indeed, we have
\begin{theorem} \label{lnT} 
$\ln T$ is a formal sum of connected integrals. 
\end{theorem}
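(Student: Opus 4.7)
The plan is to endow $H$ with a graded Hopf algebra structure in which $T^{-1}$ is group-like; then $\log T^{-1} = -\ln T$ is automatically primitive, and I will characterise the primitive subspace as the span of connected symbols. On $H$ I take the shuffle $\shuffle$ as multiplication and introduce the restricted deconcatenation coproduct
\[
\Delta(w) \;=\; \sum_{\substack{w = uv\\ u,v\in H}} u\otimes v,
\]
summing over factorisations of the Dyck word $w$ into two Dyck pieces, equivalently over cuts at interior returns to zero of the corresponding Dyck path (together with the two trivial cuts). Coassociativity is immediate from associativity of concatenation, and the bialgebra compatibility $\Delta(a\shuffle b)=\Delta(a)\shuffle\Delta(b)$ for $a,b\in H$ is inherited from Ree's classical compatibility in the unrestricted shuffle Hopf algebra on $\{X,Y\}^*$, combined with the parity observation that any prefix $w'$ of a Dyck word satisfies $\#X(w')\ge \#Y(w')$, so in the expansion of $\Delta^{\mathrm{full}}(a)\shuffle\Delta^{\mathrm{full}}(b)$ a summand $(a_1\shuffle b_1)\otimes(a_2\shuffle b_2)$ has Dyck first factor only when $a_1$ and $b_1$ are separately Dyck. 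Being graded and connected, $(H,\shuffle,\Delta)$ is then automatically a Hopf algebra.

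To verify that $T^{-1}$ is group-like, note that by Lemma \ref{T2exp} the degree-$2j$ part of $T^{-1}$ is the single word $(XY)^j$, whose Dyck factorisations are exactly the $j+1$ cuts between consecutive $XY$-blocks, giving
\[
\Delta\bigl((XY)^j\bigr) \;=\; \sum_{i=0}^{j}(XY)^{i}\otimes(XY)^{j-i},
\]
and hence $\Delta(T^{-1}) = T^{-1}\otimes T^{-1}$ after summing over $j$. Since the coproduct is an algebra homomorphism and $T^{-1}\otimes 1$ commutes with $1\otimes T^{-1}$ in $H\otimes H$, one has
\[
\Delta(\log T^{-1}) \;=\; \log\bigl((T^{-1}\otimes 1)(1\otimes T^{-1})\bigr) \;=\; \log T^{-1}\otimes 1 + 1\otimes \log T^{-1},
\]
so $-\ln T$ is primitive.

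Finally I identify the primitive subspace. For $P = \sum_w \alpha_w w$, isolating the nontrivial splits in $\Delta(P) = P\otimes 1 + 1\otimes P$ gives $\sum_{u,v\in H\setminus\{\varnothing\}}\alpha_{uv}\,u\otimes v = 0$, forcing $\alpha_w = 0$ whenever the Dyck word $w$ admits a nontrivial Dyck factorisation, i.e.\ has an interior return to zero; these are exactly the non-connected symbols. Conversely every connected symbol is primitive, since its only Dyck cuts are the trivial ones. Hence the primitive subspace equals the span of connected symbols and $-\ln T$ lies in it, proving Theorem \ref{lnT}.

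The main obstacle is the bialgebra compatibility step: the shuffle multiplicities are nontrivial already in the simplest case $\<XY>\shuffle\<XY> = 2\<XY>\<XY> + 4\<XXYY>$ (see \eqref{relation}), and one must track them carefully through the Dyck restriction. The remaining ingredients, namely the group-like verification, passage to the logarithm, and the combinatorial identification of primitives, are then formal.
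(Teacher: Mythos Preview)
Your proof is correct and follows essentially the same Hopf-algebra approach as the paper: the same shuffle product on Dyck words, the same restricted deconcatenation coproduct, the same parity argument for bialgebra compatibility, the same verification that $T^{-1}$ is group-like, and the same characterisation of primitives as connected symbols. The one noteworthy difference is that where the paper proves the general statement $G=\exp P$ by an induction on the grading (Theorem~\ref{logarithm}), you go directly from group-like to primitive via $\Delta(\log g)=\log(\Delta g)=\log\bigl((g\otimes 1)(1\otimes g)\bigr)=\log g\otimes 1+1\otimes\log g$, using that $\Delta$ is an algebra map and that the two tensor factors commute; this is a clean shortcut that avoids the inductive argument, at the cost of not stating the full bijection $\exp:P\to G$.
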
   

To prove this we need more structure, namely a notion of a coproduct and a bialgebra, or 
even  a Hopf algebra.  Let $\tilde H$ be a graded algebra so that $\tilde H_m$ is always 
finite dimensional.
 A coproduct is a map
\[ \Delta: \tilde H  \to \tilde H \otimes \tilde H \] 
which is coassociative and codistributive (see \cite{MR2986663}).

An important standard example here is the shuffle algebra
$H^{\shuffle}$ of words in the alphabet $\{X,Y\}$ as above, see \cite{MR1231799,MR675953} for the facts about the shuffle algebra used below.  The
shuffle product is known and easily seen to be commutative, associative
and distributive. The coproduct $\Delta $ on $H^{\shuffle}$ 
is defined  on words as the sum over  all splittings
\begin{equation}  \Delta a = \sum_{a_1a_2=a}  a_1 \otimes a_2.
\label{coproduct} 
\end{equation}  

The coproduct of an intersecting  symbol is a linear combination of tensor
products of symbols with at least one intersecting  factor. The coproduct
of a nonintersecting  symbol will contain some tensor products of symbols with
nonintersecting  factors, but also some tensor products of symbols with
intersecting factors. In the latter case, the first factor must contain
more $X$'s, and the second more $Y$'s.  Then it is natural to define
the coproduct on $H$ as the nonintersecting  part of the $H^{\shuffle}$
coproduct. Thus the formula \eqref{coproduct} still applies, but with
$a$, $a_1$ and $a_2$ restricted to nonintersecting  factors.
As such, we note that $H$ is \emph{not} a subalgebra of $H^{\shuffle}$.

The shuffle on the tensor product is defined in a natural fashion 
\[ 
 (a\otimes b)\shuffle (c\otimes d)  =  (a\shuffle c)\otimes (b\shuffle d). 
\]
The coproduct of $H^{\shuffle}$ is coassociative: If 
\[  \Delta^{\shuffle}   a = \sum b_i \otimes c_i \] 
then 
\[ \sum (\Delta^{\shuffle} b_i) \otimes c_i = \sum b_i \otimes
(\Delta^{\shuffle} c_i). \] 
This implies the same property for the
coproduct on $H$.  The product and the coproduct of a Hopf algebra
satisfy the crucial compatibility condition
\begin{equation}  
\Delta (a \shuffle b)= \Delta a \shuffle \Delta b, 
\end{equation} 
see Reutenauer \cite{MR1231799}, Proposition 1.9. 
 
This is again inherited by $H$ from $H^{\shuffle}$, but it is less obvious.
The point is that if $a$ and $b$ are both nonintersecting, then nonintersecting  terms in $\Delta a \shuffle \Delta b$ can only arise from nonintersecting  terms 
in $\Delta a$ combined with nonintersecting  terms in $\Delta b$; else, we have too many
$X$'s in the first factor and too many $Y$'s in the second.

Now that we have the appropriate set-up, we return to our goal of
proving Theorem~\ref{lnT}.  We call an element $g \in H$ group-like 
if 
\[ 
 \Delta g = g \otimes g. 
\]
\begin{lemma} The set $G$ of all group-like elements endowed with the shuffle product is a group. 
\end{lemma}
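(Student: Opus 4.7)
My plan is to verify the group axioms one by one, using the compatibility between the coproduct and the shuffle product that the excerpt has already established.

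First, I would observe that the empty word $1$ lies in $G$ since $\Delta 1 = 1 \otimes 1$ by the definition \eqref{coproduct} (only the trivial splitting exists). It is also the identity for shuffle. Associativity of shuffle on $H$ is inherited from $H^{\shuffle}$, so no work is required there.

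Next, I would verify closure. For $g,h \in G$, the compatibility $\Delta(a \shuffle b) = \Delta a \shuffle \Delta b$ (inherited by $H$ from $H^{\shuffle}$, as noted in the preceding paragraph) together with the definition of shuffle on tensor products gives
\[
\Delta(g \shuffle h) = (g \otimes g) \shuffle (h \otimes h) = (g \shuffle h) \otimes (g \shuffle h),
\]
so $g \shuffle h \in G$. This is the easy half.

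The main step is the existence of inverses within $G$, which I expect to be the only delicate point. Decomposing $g = \sum_{n \ge 0} g_n$ by degree, a degree-$0$ comparison in $\Delta g = g \otimes g$ forces $g_0 = 1$; this is what lets us invert recursively. I would define $h = \sum h_n$ by $h_0 = 1$ and
\[
h_n = -\sum_{\substack{a+b=n \\ a < n}} h_a \shuffle g_b, \qquad n \ge 1,
\]
which formally satisfies $h \shuffle g = 1$. Three things must be checked:
\begin{enumerate}
\item[(i)] $h$ lies in $H$ (not merely in $H^{\shuffle}$). By induction on $n$, each $h_n$ is a shuffle combination of nonintersecting elements, and the shuffle of nonintersecting words is nonintersecting, so $h_n \in H$.
\item[(ii)] $h \shuffle g = 1$ holds in $H$ by construction.
\item[(iii)] $h$ is group-like. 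Applying $\Delta$ to $h \shuffle g = 1$ and using compatibility gives $(\Delta h) \shuffle (g \otimes g) = 1 \otimes 1$. Since $(h \otimes h) \shuffle (g \otimes g) = (h \shuffle g) \otimes (h \shuffle g) = 1 \otimes 1$, and shuffle inverses in $H \otimes H$ are unique (the argument of (i)--(ii) applied componentwise), we conclude $\Delta h = h \otimes h$.
\end{enumerate}

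The subtle point is really (i): the standard antipode $S(a_1 \cdots a_n) = (-1)^n a_n \cdots a_1$ on $H^{\shuffle}$ does not preserve nonintersecting words (e.g.\ the reverse of $XXYY$ is $YYXX$), so one cannot simply quote the Hopf algebra antipode of $H^{\shuffle}$. The recursive construction sidesteps this by staying inside $H$. Once (i)--(iii) are in place, $G$ is closed under inverses, completing the proof.
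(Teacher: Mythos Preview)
Your proof is correct and follows essentially the same approach as the paper: closure via the compatibility $\Delta(a \shuffle b) = \Delta a \shuffle \Delta b$, and the inverse constructed recursively from the degree decomposition of $g = 1 + g_1 + g_2 + \cdots$. Your argument is in fact more thorough than the paper's, which sketches the recursive inverse but does not explicitly verify that the inverse is group-like; your step (iii) fills this gap cleanly via uniqueness of inverses in $H \otimes H$.
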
 
\begin{proof} 
 If  $g,h \in G$ then 
\[ \Delta (g\shuffle h)  = \Delta g \shuffle \Delta h = (g\otimes g) \shuffle (h\otimes h) 
= (g\shuffle h) \otimes (g\shuffle h ). \] 
Every element of $G$ starts with $1$. 
The inverse can be determined recursively: 
If 
\[ g= 1+ g_1 + g_2 \dots \] 
then 
\[ 
h= 1-g_1 + g_1\shuffle g_1 -g_2 \dots 
\]   
\end{proof} 
In particular, we note that 
\begin{lemma}\label{TG} 
The expression 
\[
T = 1 + \<XY> + \<XYXY> \  + \<XYXYXY> \ \ \ + \cdots
\]
belongs to $G$. 
\end{lemma}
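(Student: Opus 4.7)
The plan is to verify the defining identity $\Delta T = T\otimes T$ directly from the definition of the coproduct on $H$. Writing $T = \sum_{n\ge 0} (XY)^n$, where $(XY)^n$ denotes the nonintersecting word $XYXY\cdots XY$ of length $2n$, the strategy is to compute $\Delta (XY)^n$ term-by-term and observe that the resulting tensors assemble into a Cauchy product.

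First I would enumerate the admissible splittings of $(XY)^n$. Recall that $\Delta$ on $H$ sends a word $a$ to the sum of all factorisations $a = a_1 a_2$ \emph{both of whose factors are nonintersecting}, i.e.\ have equal numbers of $X$'s and $Y$'s with every prefix containing at least as many $X$'s as $Y$'s. For the word $(XY)^n$, the running excess of $X$'s over $Y$'s equals $0$ exactly at the positions $0, 2, 4, \dots, 2n$, and is strictly positive at all intermediate odd positions. Hence the only cuts producing two nonintersecting words are the cuts between consecutive $XY$ blocks, yielding the splittings $a_1 = (XY)^k$, $a_2 = (XY)^{n-k}$ for $k = 0,1,\dots, n$. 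All other cuts put a stray $X$ at the end of $a_1$ (and a corresponding stray $Y$ at the start of $a_2$), so they are discarded.

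Consequently
\[
\Delta (XY)^n = \sum_{k=0}^n (XY)^k \otimes (XY)^{n-k},
\]
and summing over $n\ge 0$ gives, as a formal Cauchy product,
\[
\Delta T = \sum_{n\ge 0}\sum_{k=0}^n (XY)^k \otimes (XY)^{n-k} = \Bigl(\sum_{k\ge 0}(XY)^k\Bigr)\otimes \Bigl(\sum_{j\ge 0}(XY)^j\Bigr) = T\otimes T.
\]
The identity is to be interpreted degree-by-degree, which is legitimate because each graded piece $H_m$ is finite dimensional and only finitely many $n$ contribute below any fixed degree. I expect no real obstacle here: the one point to be careful about is to confirm that the coproduct on $H$ really is the restriction of the shuffle coproduct to nonintersecting splittings (as stated in the paragraph defining $\Delta$ on $H$), so that the combinatorial count above is complete and no hidden terms have been dropped.
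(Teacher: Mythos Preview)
Your proof is correct and follows essentially the same approach as the paper's own proof: both compute $\Delta$ on each concatenated word $(XY)^n$ and assemble the result as a Cauchy product. Your version is in fact slightly more detailed, since you explicitly justify (via the running $X$-excess) why the only admissible splittings are at the block boundaries, a point the paper leaves implicit.
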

\begin{proof}
Denoting by $\<XY>^{[k]}$ the expression obtained concatenating $k$ $\<XY>$'s, we have
\[
T = \sum_{n=0}^\infty \<XY>^{[n]}
\]
On the other hand, we have 
\[
\Delta T =   \sum_{n=0}^\infty \Delta \<XY>^{[n]}
= \sum_{n=0}^\infty \sum_{k=0}^n  \<XY>^{[k]}\otimes \<XY>^{[n-k]} = T \otimes T
\]
\end{proof}

The linear subspace  of $H$ where we seek to place $\ln T$ can be described 
as follows:
\begin{definition}
The  primitive elements are
\[ 
P = \{ p \in H : \Delta p = 1\otimes p + p \otimes 1 \}. 
\] 
\end{definition}
Indeed, we have

\begin{lemma} 
Primitive elements are formal linear combinations of  connected  symbols.
\end{lemma}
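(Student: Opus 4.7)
My approach is to exploit the unique factorization of nonintersecting words into connected blocks and to read off vanishing coefficients of a primitive element by probing specific components of its reduced coproduct
\[ \bar\Delta p := \Delta p - 1 \otimes p - p \otimes 1. \]
The easy direction (connected $\Rightarrow$ primitive) should fall out automatically, and the real work lies in the converse inclusion.

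First, I would verify that any nonintersecting word $w$ admits a canonical factorization $w = c_1 c_2 \cdots c_k$ into connected blocks: identifying $X$ with $+1$ and $Y$ with $-1$, a nonintersecting word corresponds to a Dyck-type excursion, and the connected blocks are the minimal sub-excursions between successive returns to zero. A splitting $w = a_1 a_2$ yields two nonintersecting factors precisely when the cut occurs at one of the $k+1$ balance points between consecutive blocks. Restricting $\Delta$ to such splittings gives
\[ \bar\Delta w = \sum_{i=1}^{k-1} (c_1 \cdots c_i) \otimes (c_{i+1} \cdots c_k), \]
and in particular $\bar\Delta w = 0$ when $w$ is connected ($k=1$), which already secures one direction.

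For the converse, fix a primitive $p = \sum_w \lambda_w w$. The plan is to pick, for each non-connected word, a tensor $a \otimes b$ on the right-hand side of $\bar\Delta p$ that receives contribution from essentially one word. Concretely, for any connected $a$ and any nonempty nonintersecting $b$, the tensor $a \otimes b$ can appear in $\bar\Delta w$ only if $c_1 \cdots c_i = a$ for some $i$; since $a$ is itself connected and $c_1 \cdots c_i$ factors into its own blocks, necessarily $i=1$ and $c_1 = a$, so $w = ab$ with multiplicity one. The identity $\bar\Delta p = 0$ then forces $\lambda_{ab} = 0$ for every such pair.

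To finish, any non-connected word $w = c_1 c_2 \cdots c_k$ with $k \geq 2$ can be rewritten as $w = a \cdot b$ with $a = c_1$ connected and $b = c_2 \cdots c_k$ nonempty, so the previous step gives $\lambda_w = 0$, leaving $p$ supported on connected symbols. The main subtlety I would need to pin down carefully is the matching between nonintersecting splittings in $H$ and connected-block boundaries — this is exactly where the Dyck-path reformulation of the prefix inequality does the work — but once that identification is clean the argument is direct and combinatorial, with no recourse to induction on degree or to abstract structure theorems such as Milnor--Moore.
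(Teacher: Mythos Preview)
Your proof is correct and in fact more explicit than the paper's. Both arguments share the easy direction (connected $\Rightarrow$ primitive) and both ultimately reduce the converse to the statement that a primitive element supported on non-connected symbols must vanish. The paper, however, dispatches this last step in one line by appealing to ``injectivity of $\Delta$'' after first passing to the lowest-degree homogeneous component via the grading; it does not unpack what this injectivity means or why it holds on the span of non-connected symbols.

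Your route is genuinely different in execution: you bypass the degree reduction entirely and instead use the unique Dyck-path factorization $w=c_1\cdots c_k$ into connected blocks to compute $\bar\Delta w$ explicitly. The key observation --- that for $a$ connected and $b$ nonempty the tensor $a\otimes b$ is hit by $\bar\Delta w$ only when $w=ab$, with multiplicity one --- is precisely the concrete content hiding behind the paper's ``injectivity'' claim. What your approach buys is a direct, coefficient-by-coefficient verification with no induction and no appeal to abstract Hopf-algebra structure theorems; what the paper's approach buys is brevity, at the cost of leaving the combinatorial heart of the matter to the reader. Your identification of nonintersecting splittings with balance points of the Dyck path is exactly the right way to make this rigorous, and once that is in place the argument is complete as written.
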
 

\begin{proof} Let us call linear combinations of connected symbols
  indecomposable. Then trivially, indecomposible elements are
  primitive. Consider now a primitive element $p$, and show that it is
  indecomposable. We argue by contradiction. Let $a$ be the homogeneous
  part of $p$ of lowest grading $m$ which is not a linear combination
  of connected symbols.  Subtracting them, we may assume that $a$ is a
  part of lowest grade in $p$ which does not vanish.  Since $\Delta$
  preserves the grading (we equip the tensor product with the natural
  grading), we must have
\[ 
 \Delta a = 1\otimes a + a \otimes 1.  
\] 
It remains to deduce that $a$ is a linear combination of connected
symbols of the same degree. After subtracting multiples of connected
symbols we reduce the problem to proving that if $a$ is homogeneous,
primitive and a linear combination of non connected symbols than it
has to be $0$. This in turn is a consequence of injectivity of
$\Delta$.
\end{proof} 

Given the last two lemmas, Theorem~\ref{lnT} is a  consequence 
of the following more general result:

\begin{theorem} \label{logarithm} 
The above subgroup $G$ and the subspace $P$ of $H$ are related by
\[ 
G = \exp P
\] 
\end{theorem}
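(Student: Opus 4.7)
The plan is to prove the two inclusions $\exp P \subseteq G$ and $G \subseteq \exp P$ by working with formal exponential and logarithm series inside the graded algebra $H$. Here $\exp$ and $\log$ are defined by their usual power series, with the shuffle product as multiplication. To make sense of these formally, I first note that the grading on $H$ gives finite-dimensional pieces $H/I_m$, that all elements of $G$ have constant term $1$ (tautological from $\Delta g = g\otimes g$ projected to degree zero), and that all primitive elements $p$ satisfy $p \in \ker(\varepsilon)$, i.e.\ have vanishing constant term. Indeed, if the constant part of a primitive $p$ were $c$, then the degree $(0,0)$ part of $\Delta p$ would equal $2c$ on one side and $c$ on the other. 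Hence both $\exp(p)-1$ and $\log(g)=\log(1+(g-1))$ are well-defined in every quotient $H/I_m$ as finite sums.

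For the inclusion $\exp P \subseteq G$, I would start with a primitive $p$ and compute $\Delta \exp(p)$ using the bialgebra compatibility $\Delta(a\shuffle b)=\Delta a\shuffle \Delta b$, which implies $\Delta$ is a ring homomorphism into $H\otimes H$ equipped with the tensor shuffle product. This gives $\Delta \exp(p)=\exp(\Delta p)=\exp(1\otimes p + p\otimes 1)$. The crucial observation is that $1\otimes p$ and $p\otimes 1$ commute in the tensor shuffle algebra, so the exponential factorizes: $\exp(1\otimes p+p\otimes 1)=\exp(1\otimes p)\,\shuffle\,\exp(p\otimes 1)=(1\otimes \exp p)\,\shuffle\,(\exp p\otimes 1)=\exp p\otimes \exp p$, yielding group-likeness.

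The reverse inclusion $G \subseteq \exp P$ is dual: for $g\in G$, set $p=\log g$ and compute $\Delta p$. Using again the ring homomorphism property of $\Delta$, and the fact that $\log$ is defined by the same power series in any commutative setting where it converges, one gets $\Delta \log(g)=\log(\Delta g)=\log(g\otimes g)$. Writing $g\otimes g=(g\otimes 1)\,\shuffle\,(1\otimes g)$ with the two factors commuting in the tensor shuffle algebra, the logarithm distributes: $\log(g\otimes g)=\log(g\otimes 1)+\log(1\otimes g)=(\log g)\otimes 1 + 1\otimes (\log g)$, proving $p\in P$. Combining with the previous paragraph, $g=\exp(\log g)\in \exp P$.

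The routine check is that $\exp$ and $\log$ are inverse to each other on the appropriate subsets, which follows because the formal identities $\log\exp(x)=x$ and $\exp\log(1+y)=1+y$ hold in any commutative ring of formal power series and all sums involved are finite in each graded quotient. The main subtle point—which I view as the principal obstacle—is checking that the compatibility $\Delta(a\shuffle b)=\Delta a\shuffle \Delta b$ truly survives the passage from $H^{\shuffle}$ to the subspace $H$ of nonintersecting words, so that the commuting-factor computation $\exp(1\otimes p)\,\shuffle\,\exp(p\otimes 1)=(1\otimes \exp p)\,\shuffle\,(\exp p\otimes 1)$ remains valid within $H\otimes H$; this was addressed earlier in the excerpt by the observation that nonintersecting terms of the product can only come from nonintersecting terms of each coproduct, so the identity closes up inside $H$.
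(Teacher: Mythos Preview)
Your proof is correct. For the inclusion $\exp P \subseteq G$ your argument is essentially identical to the paper's: the paper writes out $\exp(1\otimes p + p\otimes 1)$ via the binomial expansion rather than invoking the factorization $\exp(a+b)=\exp(a)\shuffle\exp(b)$ for commuting elements, but this is the same computation.

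For the reverse inclusion $G \subseteq \exp P$ you take a genuinely different, and in fact cleaner, route. The paper argues by induction on the grading: assuming $G/I_m = (\exp P)/I_m$, it takes $g\in G$, finds $p\in P$ with $g-\exp p\in I_m$, and then shows that the degree-$(m+1)$ part of $h=\exp(-p)\shuffle g$ is primitive by reading off the degree-$(m+1)$ component of $\Delta h = h\otimes h$. Your argument instead applies the ring homomorphism $\Delta$ directly to the formal series $\log g$, obtaining $\Delta\log g=\log(g\otimes g)=\log\big((g\otimes 1)\shuffle(1\otimes g)\big)=(\log g)\otimes 1 + 1\otimes(\log g)$ in one stroke. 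This is the standard textbook proof in the theory of graded connected bialgebras; it is shorter and avoids the inductive bookkeeping, while the paper's version has the virtue of making the role of the grading fully explicit. Both arguments rely on exactly the same structural input, namely the compatibility $\Delta(a\shuffle b)=\Delta a\shuffle\Delta b$ inside $H$, which as you note was already justified in the preceding discussion.
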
  
\begin{proof} 
 We  first show that the exponential of any primitive $H$ element  
must belong to $G$,
\[
 \exp P \subset G.
\] 
Indeed, for primitive $h \in P$ we can write
\[
\exp(h) = \sum_{n=0}^\infty \frac{h^n}{n!}  
\]
and thus 
\[
\begin{split}
\Delta( \exp(h)) = & \   \sum_{n=0}^\infty \Delta \frac{h^n}{n!} 
\\ = & \     
 \sum_{n=0}^\infty  \frac{ (\Delta h)^n}{n!} 
\\ = &  \ \sum_{n=0}^\infty  \frac{ ( h \otimes 1 + 1 \otimes h  )^n}{n!} 
\\ = & \ \sum_{n=0}^\infty  \sum_{k=0}^n \frac{ h^k \otimes h^{n-k} }{k! (n-k)!} 
\\ = & \  \exp(h) \otimes \exp(h)
\end{split}
\]

We will show by induction that for all $n \in \N$ we have
\[ 
G / I_m = (\exp P)/I_m. 
\]

Since 
\[ 
1= \exp(0) 
\] 
the formula holds in grade  $m=0$. Suppose now that $ 
G / I_m = \exp p/I_m$.
We want to prove that then $
G/I_{m+1} = \exp P/I_{m+1}$.
Clearly 
\[ \exp P/I_{m+1} \subset G/I_{m+1} \] 

Let $g \in G$. By the induction hypothesis, there exists $p \in P$ so that 
\[  
g - \exp p \in I_{m}. 
\] 
Let $h = \exp(-p)\shuffle  g $. It satisfies 
\[ 
h-1  \in I_{m}  
\] 
and, since it is group-like as product of group-like elements, 
\[ 
\Delta h = h\otimes h .
\]
Let $h_{m+1}$ be the part of grade $m+1$ of $h$.
Then  identifying the terms of grade  $m+1$ in the last identity we get
\[ 
\Delta h_{m+1} = 1 \otimes h_{m+1} + h_{m+1} \otimes 1 
\] 
Thus $h_{m+1} \in P$ 
and 
\[ 
h - \exp (1+ h_{m+1}) \in I_{m+1}. 
\] 
\end{proof} 

We are now in a position to complete the proof of Theorem \ref{lnT}.
By Lemma \ref{TG} we have $T \in G$. Theorem \ref{logarithm} ensures
that there is a primitive element $h$ so that $\red{T} = \exp h$. Checking
the formal power series shows that $h$ is unique. Its homogeneous
parts are linear combinations of connected symbols. Thus Theorem
\ref{lnT} follows.

\newsection{The spaces $U^p$  and $V^p$} 
\label{a:uv}

\setcounter{equation}{0}\setcounter{theorem}{0} 

The spaces $U^2$ and $V^2$, respectively  $DU^2$ and $DV^2$, are crucially 
used in this article as substitutes for $\dot H^{\frac12}$, respectively 
the scale invariant space $\dot H^{-\frac12}$. The latter spaces are  plagued by
the failure of Sobolev embeddings and the failure of a good regularity
theory when integration against those functions. Instead, the former spaces 
are close neighbors, and have both the same scaling and good multiplicative 
properties.  

The aim of this section is set up the functional context for the estimates 
in this paper, and in particular to clarify the relation between the pointwise 
definition of $U^p$ and $V^p$, and their interpretation as distributions.
We first recall the definition of the spaces $U^p$ and $V^p$. 

We will consider functions $u, v$ defined on the open interval $(a,b)$ and
set always $u(a) = 0$ and $v(b)=0$, even if $a=-\infty$ and/or $b=\infty$.  All  functions  in this
section are ruled functions i.e. functions which have left and right
limits everywhere play a central role. If $X$ is a suitable function space we denote by $X_{rc}$ resp. $X_{lc}$ 
the subset of right (left) continuous functions with limit $0$ at the left
(right)  endpoint.

Let $-\infty \le a<b \le \infty$. A partition $\tau$ 
is a finite monotone sequence 
\[  a< t_1 < \dots t_N = b. \]
We denote the set of partitions by $\mathcal{T}$. There is the obvious
notion of  refinements of partitions.  Step functions are functions associated to a partition
which are constant on the open intervals between points of the
partition.  We denote by $\step$ the space of step functions, and by 
$\step(\tau)$ the space of step functions associated to a given partition $\tau$.

 \begin{definition} 
a) Let $1<p<\infty$.  We define the space $V^p = V^p(a,b)$ as the space of those functions
in $(a,b)$ for which the following norm is finite:
\[ 
\Vert v \Vert_{V^p} = \sup_{\tau \in \mathcal{T}}  \left( \sum_{j} |v(t_{j+1})-v(t_j)|^p\right)^{\frac1p} 
\] 
where we set $v(b)=0$. 

b) A $U^p$ atom is a step function
\[ 
u = \sum_j  \phi_j \chi_{[t_j,t_{j+1})} (x)   \qquad \text{ if  } \sum |\phi_j|^p \le 1. 
\] 
A $U^p(a,b)$ function is an $l^1$ sum of atoms,
\red{
  \[
  U^p(a,b) = \{ \sum_{j} \lambda_j a_j : (\lambda_j)\in l^1, a_j 
  \}  .
  \]
}
We equip $U^p$ with the obvious norm
\red{
  \[ \Vert u \Vert_{U^p(a,b)}= \inf \{  \sum_j |\lambda_j|: u=\sum_j \lambda_j a_j,
     a_j \text{$U^p$  atom } \} .
  \]
}
\end{definition}

The spaces $U^p$ and $V^p$ are invariant under continuous monotone
coordinate changes and we suppress the interval in the notation unless
we specifically need it. We note that by this definition the $U^p$
functions are right continuous and have limit $0$  at the left end point $a$, whereas $V^p$
functions are just ruled, and vanish by definition at $b$.  The supremum norm
is bounded by the $V^p$ norm (taking the partition $\{t,b\}$) and the
$U^p$ norm (this is checked on atoms). Moreover, if $p< q$
\[ 
U^p \subset U^q, \qquad V^p \subset V^q  
\]
with norm estimates with constant $1$ and $U^p \subset V^p$ with 
\[ 
\Vert u \Vert_{V^p} \le 2^{\frac1p} \Vert u \Vert_{U^p}. 
\]
It is not hard to see but less obvious
that the $V^p$ functions are ruled.

There is a natural pairing between ruled functions $v$ and right continuous 
step functions $u$ with $u(a)=0$ (and the notation $t_0=a$) 
\[  
 B(v,u) = \sum_{j=1}^{N-1}  v(t_j) (u(t_j)-u(t_{j-1}) )  = \sum_{j=1}^{N-1}  u(t_j) (v(t_j)-v(t_{j+1}) ) 
\] 
where the  sum runs over the points  of the partition and $t_0 = a$. The two sums are  equal since $u(a)=0$ and $v(b)=0$.
By an abuse of notation we use the suggestive notation
\[ 
B(v,u) = \int v du. 
\] 
Suppose now that $v$ is a left continuous step function associated 
to a different partition $\sigma= \{s_j\}$. Then a simple algebraic 
computation shows that 
\[
B(v,u) = \sum_{j=1}^{N-1}  v(s_j) (u(s_j)-u(s_{j-1}) )  = \sum_{j=1}^{N-1}  u(s_j) (v(s_j)-v(s_{j+1}) ) 
\]

Let $\dfrac1p+\dfrac1q=1$ and let $u$ be an $U^q$ atom and $v \in V^p$. 
By definition
\[
 |B(v,u)| \le \Vert v \Vert_{V^p}.
\]
It is not surprising but not entirely obvious that  $B$ extends to a bilinear map from $V^p \times U^q \to \R$ with
\begin{equation}\label{Buv}
 |B(v,u)| \le \Vert v \Vert_{V^p} \Vert u \Vert_{U^q},  
\end{equation}
see \cite{MR2526409}.  
Thus $B$ induces a map from $V^p \to (U^q)^*$.
Further, we have 

\begin{proposition}\label{p:up-dual}
 The bilinear form $B$ induces an isometric isomorphism from $V^p \to (U^q)^*$. 
\end{proposition}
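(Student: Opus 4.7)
My plan is to construct the inverse of the canonical bounded linear map $\Phi: V^p \to (U^q)^*$ defined by $\Phi(v)(u) = B(v, u)$. The upper bound $\|\Phi(v)\|_{(U^q)^*} \leq \|v\|_{V^p}$ is immediate from the bilinear estimate \eqref{Buv}, so the task splits naturally into two pieces: the matching lower bound (isometry) and surjectivity. Both pieces will reduce to finite-dimensional $l^p$-$l^q$ duality applied to the sequence of jumps of $v$ along a partition.

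For the isometric lower bound, I would fix $v \in V^p$ and $\varepsilon > 0$, and pick a partition $a < t_1 < \cdots < t_N = b$ for which $\left( \sum_j |v(t_{j+1}) - v(t_j)|^p \right)^{1/p} \geq \|v\|_{V^p} - \varepsilon$. By $l^p$-$l^q$ duality on finite sequences I would choose coefficients $(\phi_j)$ with $\sum_j |\phi_j|^q \leq 1$ realizing (up to $\varepsilon$) the supremum $\sum_j \phi_j (v(t_j) - v(t_{j+1}))$, which equals this same jump $l^p$ norm. The right-continuous step function $u = \sum_j \phi_j \chi_{[t_j, t_{j+1})}$ (with $\phi_0 = 0$ to enforce $u(a) = 0$) is then by definition a $U^q$ atom, so $\|u\|_{U^q} \leq 1$, and using the second representation $B(v,u) = \sum_j \phi_j (v(t_j) - v(t_{j+1}))$ from the discussion of the pairing, we obtain $\|\Phi(v)\|_{(U^q)^*} \geq \|v\|_{V^p} - 2\varepsilon$.

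For surjectivity, given $\ell \in (U^q)^*$ I would define a candidate $v: (a,b] \to \R$ by $v(b) = 0$ and $v(t) = \ell(\chi_{[t,b)})$; each $\chi_{[t,b)}$ is a $U^q$ atom, so $v$ is well defined with pointwise bound $|v(t)| \leq \|\ell\|$. Across a partition we compute $v(t_{j+1}) - v(t_j) = -\ell(\chi_{[t_j, t_{j+1})})$, hence for any coefficients $(\phi_j)$,
\[
\sum_j \phi_j (v(t_j) - v(t_{j+1})) = \ell\Big(\sum_j \phi_j \chi_{[t_j, t_{j+1})}\Big),
\]
which via $l^p$-$l^q$ duality yields both $\|v\|_{V^p} \leq \|\ell\|$ (in particular $v \in V^p$) and the identity $B(v,u) = \ell(u)$ on step functions. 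Because every element of $U^q$ is by definition an absolutely convergent series of step-function atoms, this identity extends to all of $U^q$ by continuity, giving $\Phi(v) = \ell$.

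The main technical obstacle will be reconciling the one-sided conventions: $U^q$ functions are right-continuous with $u(a) = 0$, whereas $V^p$ functions are merely ruled with $v(b) = 0$, and one must verify that both algebraic representations of $B(v,u)$ given in the text remain valid and coincide under these conventions, and that the candidate $v$ built from $\ell$ is genuinely ruled (in particular, has well-defined one-sided limits everywhere, which follows from the pointwise bound applied to monotone sequences together with completeness of $V^p$). Once these conventions are carefully pinned down, each step above is a direct reduction to elementary $l^p$-$l^q$ duality on finite sequences.
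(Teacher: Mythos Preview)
Your proposal is correct and follows essentially the same route as the paper: both arguments obtain the isometry by testing $v$ against $U^q$ atoms built on a near-optimal partition (the paper phrases this as ``restricting $u$ to the class of atoms''), and both obtain surjectivity by defining the candidate $v(t) = \ell(\chi_{[t,b)})$ and then extending the identity $B(v,u)=\ell(u)$ from step functions to all of $U^q$ by density. Your write-up is in fact more explicit about the underlying finite $l^p$--$l^q$ duality and about the continuity/convention issues that the paper's sketch leaves implicit.
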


\begin{proof}
We briefly sketch the proof and refer to \cite{MR2526409} for more details.
We first observe that 
\[
 \Vert v \Vert_{V^p} = \sup_{\| u\|_{U^q} \leq 1} B(u,v),
\]
which is easily seen by restricting $u$ to the class of $U^p$ atoms. Therefore,
 $B$ induces an isometry from $V^p \to (U^q)^*$. To see that this map is in effect
an isomorphism,  let $L: U^q \to \R$ be a linear functional. To it we associate the function 
\[
   v(t) = L(\chi([t,b) ).  
\] 
 Then 
\[  
B(v,\chi(t,b)) = v(t) = L(\chi([t,b))) 
\] 
and hence the same identity holds it we replace the characteristic function by 
right continuous step functions, which are dense. 
\end{proof}

\begin{remark} \label{lr} By symmetry, we can use the same bilinear
  form $B$ to also pair left continuous $U^q$ functions $v$ with ruled
  $V^p$ functions $u$ with the same boundary conditions.
\end{remark}

The duality implies characterizations if the  $V^p$ norm:
\[ \Vert v \Vert_{V^p} = \sup \{ B(v,u) : \Vert u \Vert_{U^p} = 1\} \]
It is immediate that we even may restrict the supremum to step functions, and even atoms. Moreover, of the supremum over atoms on the right hand side is finite 
then $v \in V^p$. Similarly 
\[ \Vert u \Vert_{U^q} = \sup \{ B(v,u): \Vert v \Vert_{V^p} =1\}  \] 
and again we may restrict the supremum on the right hand side to step
functions. We will see that right continuous functions for which the
right hand side is bounded when we take the supremum over step
functions are indeed in $U^q$.  But this time the proof is highly
nontrivial.

\begin{theorem}\label{Upbound} 
  Suppose the right continuous ruled function $u$ with    $\lim_{t\to a} u(t) = 0 $       satisfies 
\[ 
\sup   |B(v,u)|  <\infty 
\] 
where the supremum is taken over all  step functions in $V^p_{lc}$
(left continuous step functions in $V^p$) with norm at most $1$ and compact support in $(a,b)$.
Then $u \in U^q$ and  
\[
 \Vert u \Vert_{U^q} = \sup B(v,u). 
\] 
\end{theorem}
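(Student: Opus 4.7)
The inequality $\sup_v B(v,u) \le \|u\|_{U^q}$ is immediate from \eqref{Buv} (applied with the roles swapped, as noted in Remark \ref{lr}). For the reverse inequality, set $M := \sup_v B(v,u)$. The plan is to construct explicit step-function approximations $u_\tau$ of $u$ whose $U^q$ norms are bounded uniformly by $M$, and then to pass to the limit.

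Given a partition $\tau = \{a = t_0 < t_1 < \cdots < t_N = b\}$, let $u_\tau$ denote the right-continuous step function on $\tau$ with $u_\tau(t) = u(t_j)$ for $t \in [t_j, t_{j+1})$; in particular $u_\tau(a) = u(a) = 0$. The key observation is that on the finite-dimensional space of right-continuous step functions on $\tau$ vanishing at $a$, the $U^q$ norm — computed from the atomic definition — is dual via $B$ to the $V^p$ norm on the finite-dimensional space of left-continuous step functions on $\tau$ vanishing at $b$. This duality on a pair of $(N-1)$-dimensional spaces follows from a direct Hahn-Banach argument in finite dimensions combined with the explicit finite-sum formulas for both norms. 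Therefore
\[
\|u_\tau\|_{U^q} \,=\, \sup\{B(v,u_\tau) : v \in V^p_{lc} \text{ a step function on } \tau,\ \|v\|_{V^p} \le 1\}.
\]
For any such $v$, the sum defining $B(v,\cdot)$ involves only the values of the second argument at points of $\tau$, where $u$ and $u_\tau$ coincide; thus $B(v,u_\tau) = B(v,u) \le M$, yielding the uniform bound $\|u_\tau\|_{U^q} \le M$ across all partitions $\tau$.

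To transfer this bound from the approximations to $u$ itself, fix a nested sequence $\tau_1 \subset \tau_2 \subset \cdots$ whose union is dense in $(a,b)$. Through the isometric embedding $U^q \hookrightarrow (V^p)^*$ provided by $B$ (Proposition \ref{p:up-dual}) and Banach-Alaoglu, some subnet of $\{u_{\tau_n}\}$ converges weak* to a functional $\Phi \in (V^p)^*$ with $\|\Phi\|_{(V^p)^*} \le M$. For any left-continuous compactly supported step function $v$, its finitely many jumps lie in $\tau_n$ for all sufficiently large $n$, so $B(v, u_{\tau_n}) = B(v, u)$ eventually, giving $\Phi(v) = B(v, u)$ on this dense test class.

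The main obstacle — and the technical heart of the proof — is to conclude that $\Phi$ is in fact represented by an element $\tilde u \in U^q$ (not merely a general functional in $(V^p)^*$), and subsequently that $\tilde u = u$. The issue is that the embedding $U^q \hookrightarrow (V^p)^*$ is strict in general, so weak* limits need not remain in $U^q$. I would address this by exploiting the explicit atomic decompositions $u_{\tau_n} = \sum_k \lambda_k^{(n)} a_k^{(n)}$ with $\sum_k |\lambda_k^{(n)}| \le M + 1/n$ guaranteed by the finite-dimensional construction: a diagonal extraction, using that each atom is parametrized by its partition and values, yields an $l^1$ atomic decomposition of a limit $\tilde u \in U^q$ with $\|\tilde u\|_{U^q} \le M$, which necessarily represents $\Phi$. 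The identity $\tilde u = u$ then follows by evaluating both sides against the characteristic test functions $v = \chi_{(a,t]}$ — whose $B$-pairing with any right-continuous function vanishing at $a$ equals point evaluation at $t$ — combined with the right-continuity of $u$.
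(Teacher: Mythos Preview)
Your uniform bound $\|u_\tau\|_{U^q}\le M$ is essentially correct (up to the compact-support caveat, which is handled exactly as in the paper's lemma producing a compactly supported $v$ achieving a quarter of the supremum). The genuine gap is the passage to the limit.

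The problem is that $U^q$ is a proper predual of $V^p$ (Proposition~\ref{p:up-dual} gives $(U^q)^*\cong V^p$, not the reverse), so the canonical embedding $U^q\hookrightarrow(V^p)^*$ is strict and \emph{not} weak*-closed. By Goldstine's theorem step functions are weak*-dense in the entire bidual $(V^p)^*$; hence \emph{every} $\Phi\in(V^p)^*$, including those outside $U^q$, arises as a weak* limit of step functions with uniformly bounded $U^q$ norm. Your Banach--Alaoglu step therefore gives no information beyond the hypothesis. The ``diagonal extraction on atomic decompositions'' does not repair this: each $u_{\tau_n}$ is a single scaled atom on an ever-finer partition, and the set of atoms is not compact in any topology strong enough to force the limit into $U^q$. (There is also a minor issue that for $t\notin\bigcup\tau_n$ you get $u_{\tau_n}(t)\to u(t^-)$, not $u(t)$; this is fixable by including the countably many jump points of $u$ in the partitions, but you do not say so.)

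The paper avoids the compactness obstacle entirely by arguing by contradiction. Assuming $u$ is bounded in the dual norm but $\operatorname{dist}(u,U^q)>\tfrac12$, it runs a dichotomy: either there is a point $t$ near which $u-u(t)$ has dual norm bounded below on every one-sided neighborhood, in which case one manufactures infinitely many disjointly supported $v_j\in V^p$ with $B(u,v_j)\ge\varepsilon/8$ and combines them via Lemma~\ref{l:vp-sum} to contradict boundedness; or every point has a neighborhood where $u$ is dual-small, in which case a compactness-of-$[a,b]$ argument produces partitions $\tau_\varepsilon$ with $\|u-u_{\tau_\varepsilon}\|$ small on each subinterval, and one again builds test functions (now with $\|w_j\|_{L^\infty}\to0$) whose $V^p$ norms are controlled by Lemma~\ref{approx}, contradicting the assumed distance from $U^q$. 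The construction of explicit test functions is precisely what replaces the missing compactness.
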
   

Before we turn to the proof we study the behavior of 
these spaces under decomposition of the interval. Here we 
include atoms with $t_1=a$ and denote those spaces by $\tilde U^p$. 

\begin{lemma}\label{updecomp} The following inequality holds for all functions in $\tilde U^p$.
\[ \Vert u \Vert_{\tilde U^p(T_0,T_1)}^p \le \Vert u \Vert_{\tilde U^p(T_0,t)}^p 
+ \Vert u \Vert_{\tilde U^p(t,T_1)}^p \] 
\end{lemma}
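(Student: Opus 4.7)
The plan is to use the atomic characterization of $\tilde U^p$ directly: given good atomic decompositions on the two subintervals $(T_0,t)$ and $(t,T_1)$, I will assemble a single atomic decomposition on $(T_0,T_1)$ by taking a "tensor product" of the two families of atoms with carefully chosen scaling factors, in such a way that the total $\ell^1$ coefficient mass is exactly $(A^p+B^p)^{1/p}$, where $A$ and $B$ are the two one-sided norms.

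Fix $\epsilon>0$ and write $A=\|u\|_{\tilde U^p(T_0,t)}$, $B=\|u\|_{\tilde U^p(t,T_1)}$; the case $AB=0$ is trivial (extend a one-sided atom by zero), so assume $A,B>0$. Choose atomic decompositions $u|_{(T_0,t)}=\sum_k \lambda_k a_k$ and $u|_{(t,T_1)}=\sum_m \mu_m b_m$ with $S_A:=\sum_k|\lambda_k|\le (1+\epsilon)A$ and $S_B:=\sum_m|\mu_m|\le (1+\epsilon)B$. Set
\[
\alpha=\frac{S_A}{(S_A^p+S_B^p)^{1/p}},\qquad \beta=\frac{S_B}{(S_A^p+S_B^p)^{1/p}},
\]
so that $\alpha^p+\beta^p=1$, and for each pair $(k,m)$ define the concatenated step function
\[
e_{k,m}=\alpha\,\mathrm{sgn}(\lambda_k)\,a_k\chi_{(T_0,t)}+\beta\,\mathrm{sgn}(\mu_m)\,b_m\chi_{(t,T_1)}.
\]
Since the step values of $a_k$ and $b_m$ each satisfy $\sum|\phi|^p\le 1$, the rescaled combination $e_{k,m}$ has step values whose $p$-sum is at most $\alpha^p+\beta^p=1$, so $e_{k,m}$ is a $\tilde U^p$ atom on $(T_0,T_1)$.

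Finally, define coefficients $\nu_{k,m}=|\lambda_k||\mu_m|(S_A^p+S_B^p)^{1/p}/(S_A S_B)\ge 0$. A direct check shows that $\sum_m \nu_{k,m}\,\alpha\,\mathrm{sgn}(\lambda_k)=\lambda_k$ and $\sum_k \nu_{k,m}\,\beta\,\mathrm{sgn}(\mu_m)=\mu_m$, so $u=\sum_{k,m}\nu_{k,m}e_{k,m}$, while
\[
\sum_{k,m}\nu_{k,m}=\frac{(S_A^p+S_B^p)^{1/p}}{S_A S_B}\cdot S_A S_B=(S_A^p+S_B^p)^{1/p}\le(1+\epsilon)(A^p+B^p)^{1/p}.
\]
Taking infimum over atomic decompositions and letting $\epsilon\downarrow 0$ yields $\|u\|_{\tilde U^p(T_0,T_1)}\le(A^p+B^p)^{1/p}$, which is the claim after raising to the $p$-th power. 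The proof is essentially combinatorial; the one nonobvious ingredient is discovering the correct normalization $\alpha^p+\beta^p=1$, which is what lets two unit-norm atoms glue to a single unit-norm atom and produces the characteristic $\ell^p$ (rather than $\ell^1$) gain in the one-sided norms. No obstacle of a serious analytic nature arises, since the (possibly infinite) sum $\sum_{k,m}\nu_{k,m}e_{k,m}$ is an absolutely convergent series in $\tilde U^p$ by construction.
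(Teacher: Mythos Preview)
Your proof is correct and is essentially the same argument as the paper's: both take atomic decompositions on the two subintervals, form the doubly-indexed family of concatenated atoms with the $\ell^p$-normalization $\alpha^p+\beta^p=1$, and observe that the resulting $\ell^1$ coefficient mass is exactly $(S_A^p+S_B^p)^{1/p}$. Your presentation is in fact somewhat cleaner---you isolate the normalization constants $\alpha,\beta$ explicitly, whereas the paper carries the equivalent factors $1/M,1/L$ through the computation---but the construction and the arithmetic are identical.
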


\begin{proof}  

Let $b$ and $c$ be $p$ atoms and $\lambda,\mu \in \mathbb{C}$. Then 
\[  a=  \frac{\lambda}{(|\lambda|^p +|\mu|^p)^{\frac1p} }  b
+ \frac{\mu}{(|\lambda|^p+|\mu|^p)^{\frac1p}} c \] 
is a $p$ atom on $U^p(T_1,T_2)$. Now let 
\[ u = \sum \lambda_j b_j,   v= \sum \mu_j c_j \] 
and $L = \sum |\lambda_j|\le C  \Vert u \Vert_{U^p} $, $\Mu = |\mu_j |
\le C \Vert v \Vert_{U^p}$. We can find such a decomposition by definition 
for every $C>1$. Then (with an extension by $0$)  
\[ \sum_j \lambda_j a_j + \sum_l \mu_k c_l 
= \sum_{jl} \frac{\lambda_j |\mu_l|}{\Mu} a_j + \frac{|\lambda_j|\mu_l}{L} c_l \] 
and hence 
\[ 
\begin{split} 
\Vert u+v \Vert_{U^p} 
\le &  \sum_{j,l} |\lambda_j||\mu_l| \left\Vert \frac{\lambda_j}{|\lambda_j|} \frac{a_j}{\Mu} + \frac{\mu_l}{|\mu_l|} \frac{c_l}{L} \right\Vert_{U^p} 
\\ \le & L \Mu \left( \Mu^{-p} + L^{-p}\right)^{1/9} 
\\ = & (L^p+\Mu^p)^{\frac1p} 
\\ \le & C^{2p}\left( \Vert u \Vert_{U^p}^p +\Vert v \Vert_{U^p}^p \right)^{\frac1p} 
\end{split}   
\] 
This inequalities holds for all $C>1$ and hence also for $C=1$.
\end{proof}

There is an analogous statement for functions in $V^p$.

\begin{lemma} \label{l:vp-sum}
Let $ v \in V^p$, $\tau$ be a partition and suppose that $v$ vanishes at the points of the partition. Then 
\[ \Vert v \Vert_{V^p(T_0,T_1)}^p \le   2^{p-1}   \sum \Vert v \Vert^p_{V^p(t_j,t_{j+1})} \]
\end{lemma}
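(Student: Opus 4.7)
The plan is to reduce the estimate to an application of the elementary $\ell^p$ bound $|a+b|^p \le 2^{p-1}(|a|^p + |b|^p)$, taking advantage of the vanishing of $v$ at the points of $\tau = \{T_0 = t_0 < t_1 < \dots < t_N = T_1\}$. Fix an arbitrary partition $\sigma = \{T_0 < s_1 < \dots < s_M = T_1\}$ contributing to the $V^p(T_0,T_1)$ norm; the goal is to estimate $\sum_k |v(s_{k+1}) - v(s_k)|^p$ by the right hand side of the claimed inequality, uniformly in $\sigma$.

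Consider a single pair $s_k < s_{k+1}$ and let $t_{a}, t_{a+1}, \dots, t_{b}$ be the points of $\tau$ lying in the closed interval $[s_k,s_{k+1}]$ (with the convention $s_k \in [t_{a-1},t_a)$ if no such point exists on the left, and similarly on the right). Telescoping gives
\[
v(s_{k+1}) - v(s_k) = \bigl(v(s_{k+1}) - v(t_b)\bigr) + \sum_{i=a}^{b-1}\bigl(v(t_{i+1}) - v(t_i)\bigr) + \bigl(v(t_a) - v(s_k)\bigr),
\]
and the middle sum vanishes identically since $v \equiv 0$ on $\tau$. Thus only two (possibly zero) terms remain, and $|v(s_{k+1}) - v(s_k)|^p \le 2^{p-1}\bigl(|v(s_{k+1}) - v(t_b)|^p + |v(t_a) - v(s_k)|^p\bigr)$. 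When the interval $[s_k, s_{k+1}]$ is contained in a single subinterval $(t_j,t_{j+1})$, the same bound holds trivially (one of the two summands is the original difference, the other is absent or redundant).

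Each of the two terms $|v(s_{k+1}) - v(t_b)|^p$ and $|v(t_a) - v(s_k)|^p$ produced by this splitting is a $p$-th power of a difference of values of $v$ at two points of the refined partition $\sigma \cup \tau$ lying in a common subinterval $(t_j,t_{j+1})$. Summing over $k$ and regrouping by subinterval, each subinterval $(t_j,t_{j+1})$ receives a collection of differences $|v(r_{i+1}) - v(r_i)|^p$ indexed by a partition of $(t_j,t_{j+1})$ whose endpoints are the $s_k$'s lying in that subinterval together with $t_j$ and $t_{j+1}$ themselves. Because $v(t_{j+1})=0$, this collection is admissible for the $V^p(t_j,t_{j+1})$ norm, so its sum is bounded by $\|v\|_{V^p(t_j,t_{j+1})}^p$. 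Combining,
\[
\sum_k |v(s_{k+1}) - v(s_k)|^p \;\le\; 2^{p-1}\sum_{j=0}^{N-1} \|v\|_{V^p(t_j,t_{j+1})}^p.
\]
Taking the supremum over $\sigma$ yields the claimed bound.

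The only real point to be careful about is the bookkeeping when several $s_k$'s fall in the same subinterval of $\tau$, or when $[s_k,s_{k+1}]$ straddles several $t_j$'s; this is handled uniformly by the telescoping identity above, which is where the vanishing hypothesis $v(t_j) = 0$ is used to collapse the middle sum to zero. The factor $2^{p-1}$ is optimal in this approach and matches the statement exactly.
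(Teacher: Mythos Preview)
Your proposal is correct and follows essentially the same approach as the paper: pass to the common refinement $\sigma\cup\tau$, use $|a+b|^p\le 2^{p-1}(|a|^p+|b|^p)$ to control the loss incurred when an interval of $\sigma$ is split by a point of $\tau$, and regroup the resulting consecutive differences by the subintervals $(t_j,t_{j+1})$. Your write-up is more explicit than the paper's three-line sketch---in particular you spell out the telescoping identity that uses the vanishing $v(t_j)=0$ to collapse all intermediate $\tau$-points into just two boundary terms, which is exactly the mechanism behind the paper's assertion that refining ``may decrease the sum at most by the factor $2^{1-p}$.''
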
 

\begin{proof} Let $\tilde \tau$ be another partition and 
  \[ I = \sum |v(\tilde t_{j+1})-v(\tilde t_j)|^p \] We may assume
  that $\tilde \tau$ contains a point in all the intervals of
  $\tau$. Otherwise we omit that interval from $\tau$.  Taking the
  coarsest joint refinement leads to the second factor on the right
  hand side. It may decrease the sum at most by the factor $2^{1-p}$.
\end{proof} 

To left, respectively right continuous functions and partitions we associate step 
functions as follows:

\begin{definition} Given a left continuous function  $v$ and a partition $\tau$ we define 
an associated left continuous step function $v_\tau$ so that
\[  
v_{\tau}(t) = v(t) 
\] 
for each point of the partition. Similarly, for each right continuous function $v$ 
we define an associated right continuous step function $v_\tau$.
\end{definition} 

Then the following lemma is straightforward:

\begin{lemma} Suppose that $v$ is a left continuous step function with
  partition $\tau$. Then for all right continuous functions $u$ we
  have
\[ 
B(v,u) = B(v,u_{\tau}) 
\] 
Symmetrically, if $u$ is a right continuous step function then for all
left continuous functions $B$ we have
\[
 B(v,u) = B(v_\tau,u). 
\]
Moreover 
\[
 \Vert v_\tau \Vert_{V^p} \le \Vert v \Vert_{V^p} 
\] 
and 
\[
 \Vert u_\tau \Vert_{U^p} \le \Vert u \Vert_{U^p}. 
\]
\end{lemma}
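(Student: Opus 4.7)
The plan is to handle the two dualities separately and then deduce the $U^p$ bound from the $V^p$ bound via the pairing.

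First I would establish the two identities by direct computation from the definition of $B$. Recall that if $v$ is a left continuous step function with partition $\tau = \{a = t_0 < t_1 < \cdots < t_N = b\}$, and $u$ is any right continuous function with $u(a)=0$, then
\[
B(v,u) = \sum_{j=1}^{N-1} v(t_j)\bigl(u(t_j) - u(t_{j-1})\bigr),
\]
which depends on $u$ only through its values $u(t_0), \ldots, u(t_{N-1})$ at the partition points. By definition of $u_\tau$, these values coincide with $u_\tau(t_j)$, and hence $B(v,u) = B(v,u_\tau)$. The second identity $B(v,u) = B(v_\tau, u)$ follows symmetrically, using the form
\[
B(v,u) = \sum_{j=1}^{N-1} u(t_j)\bigl(v(t_j) - v(t_{j+1})\bigr).
\]

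Next I would prove the $V^p$ inequality. For any partition $\sigma$, since $v_\tau$ is constant between consecutive points of $\tau$, the sum $\sum_k |v_\tau(s_{k+1}) - v_\tau(s_k)|^p$ is unchanged by refining $\sigma$ to include the points of $\tau$, and the nonzero terms then only come from pairs of consecutive points in $\sigma \cup \tau$ separated by a point of $\tau$. The resulting sum is at most $\sum_j |v(t_{j+1}) - v(t_j)|^p \le \|v\|_{V^p}^p$. Taking the supremum over $\sigma$ gives $\|v_\tau\|_{V^p} \le \|v\|_{V^p}$.

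The $U^p$ bound is then a duality consequence, which I expect to be the only mildly delicate point. Let $q$ be the conjugate exponent. By Theorem~\ref{Upbound}, applied to the right continuous step function $u_\tau$,
\[
\|u_\tau\|_{U^q} = \sup_{\substack{v \in V^p_{lc}\text{ step}\\ \|v\|_{V^p} \le 1}} B(v, u_\tau).
\]
For any such $v$, combining the two identities already proved yields $B(v,u_\tau) = B(v_\tau, u_\tau) = B(v_\tau, u)$. Since $\|v_\tau\|_{V^p} \le \|v\|_{V^p} \le 1$, the supremum on the right is bounded by $\sup_{\|w\|_{V^p} \le 1} B(w, u) = \|u\|_{U^q}$, yielding the desired estimate. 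The only subtlety is that $v_\tau$ need not be a step function associated to the partition of $v$, but it is a left continuous step function (on $\tau$), so it is a valid test function for the dual characterization of $\|u\|_{U^q}$ given by Proposition~\ref{p:up-dual} and Remark~\ref{lr}.
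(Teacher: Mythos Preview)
Your treatment of the two identities and your duality route for $\|u_\tau\|_{U^p}\le\|u\|_{U^p}$ are correct. The paper itself only declares the lemma ``straightforward'' and gives no proof, so there is nothing to compare with there. One remark: invoking Theorem~\ref{Upbound} is fine but unnecessary; the Hahn--Banach characterization $\|u_\tau\|_{U^q}=\sup_{\|v\|_{V^p}\le 1}B(v,u_\tau)$, which follows already from Proposition~\ref{p:up-dual}, suffices for your chain $B(v,u_\tau)=B(v_\tau,u_\tau)=B(v_\tau,u)\le\|v_\tau\|_{V^p}\|u\|_{U^q}$.

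However, there is a genuine error in your argument for $\|v_\tau\|_{V^p}\le\|v\|_{V^p}$. The claim that the $\sigma$-sum $\sum_k|v_\tau(s_{k+1})-v_\tau(s_k)|^p$ is \emph{unchanged} when $\sigma$ is refined to include the points of $\tau$ is false for $p>1$: inserting a jump point of $v_\tau$ between $s_k$ and $s_{k+1}$ replaces $|a+b|^p$ by $|a|^p+|b|^p$, and these differ in general. Concretely, if $v_\tau$ takes the values $0,1,2$ on three consecutive $\tau$-intervals, then a coarse $\sigma$ skipping the middle value produces the term $2^p$, while the refined sum contributes only $1+1=2$. In particular your intermediate conclusion ``the resulting sum is at most $\sum_j|v(t_{j+1})-v(t_j)|^p$'' can fail. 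The fix is short: for each $s_k$ let $j(k)$ be the index with $s_k\in(t_{j(k)-1},t_{j(k)}]$, so that $v_\tau(s_k)=v(t_{j(k)})$; after deleting repetitions in the nondecreasing sequence $(t_{j(k)})_k$ one obtains a genuine partition $\tau'\subset\tau$ with last point $b$, and the $\sigma$-sum for $v_\tau$ is exactly the $\tau'$-sum for $v$, hence $\le\|v\|_{V^p}^p$.
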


\begin{lemma} \label{approx}
Let $v_j$ be a sequence of step functions with 
\[   \Vert v_j \Vert_{V^p} \le 1 \qquad \text{ and } 
\Vert v_{j} \Vert_{sup} \to 0. \]
Then there exists a subsequence $v_{j_l}$  so that 
\[ \left(\Big\Vert \sum_{l=1}^m v_{j_l} \Big\Vert_{V^p}^p\right)^{\frac1p}   \le  (2m)^{\frac1p}. \]
\end{lemma}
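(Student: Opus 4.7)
My plan is to construct the subsequence by induction on $m$: at each step I will choose $v_{j_l}$ from far enough along the sequence so that adding it increases the $p$-th power of the $V^p$ norm of the partial sum by at most $2$. Iterating gives $\bigl\|\sum_{l=1}^m v_{j_l}\bigr\|_{V^p}^p \le 2m$, which is exactly the desired bound.

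For any step function $v$, let $S_v$ denote its (finite) set of break points and $\Delta v(s) := v(s) - v(s-)$ the jump. Since the supremum in the definition of $\|v\|_{V^p}$ is attained by any partition containing $S_v$, one has the explicit identity $\|v\|_{V^p}^p = \sum_{s \in S_v} |\Delta v(s)|^p$, and in particular $|\Delta v(s)| \le 2\|v\|_{\sup}$ for every $s$.

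Set $j_1 = 1$. Suppose inductively that $w_{l-1} := \sum_{k<l} v_{j_k}$ has been produced with $\|w_{l-1}\|_{V^p}^p \le 2(l-1)$; both $\|w_{l-1}\|_{V^p}$ and $|S_{w_{l-1}}|$ are then fixed finite quantities. I claim that for $v = v_{j_l}$ with $\|v\|_{\sup}$ sufficiently small one has
\[
\|w_{l-1}+v\|_{V^p}^p \le \|w_{l-1}\|_{V^p}^p + 2,
\]
which closes the induction. To prove this, I split the jump formula according to $S_{w_{l-1}} \cup S_v$:
\[
\|w_{l-1}+v\|_{V^p}^p = \sum_{s \in S_{w_{l-1}}} |\Delta w_{l-1}(s)+\Delta v(s)|^p + \sum_{s \in S_v\setminus S_{w_{l-1}}} |\Delta v(s)|^p,
\]
with the convention $\Delta v(s) = 0$ for $s \notin S_v$. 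The second sum is at most $\|v\|_{V^p}^p \le 1$. For the first sum, Minkowski's inequality on $l^p(S_{w_{l-1}})$ combined with the bound $|\Delta v(s)| \le 2\|v\|_{\sup}$ yields
\[
\Bigl(\sum_{s \in S_{w_{l-1}}} |\Delta w_{l-1}(s)+\Delta v(s)|^p\Bigr)^{1/p} \le \|w_{l-1}\|_{V^p} + 2\|v\|_{\sup}\,|S_{w_{l-1}}|^{1/p}.
\]
Since $(A+\varepsilon)^p - A^p \to 0$ as $\varepsilon \to 0$ with $A = \|w_{l-1}\|_{V^p}$ fixed, taking $\|v_{j_l}\|_{\sup}$ below a positive threshold determined by the already-fixed $|S_{w_{l-1}}|$ and $\|w_{l-1}\|_{V^p}$ forces this first sum to be at most $\|w_{l-1}\|_{V^p}^p + 1$. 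The hypothesis $\|v_j\|_{\sup}\to 0$ guarantees that this threshold is met for all sufficiently large $j$, so such a choice is always available.

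The main technical point is the Minkowski-based splitting combined with the replacement of the a priori bound $|\Delta v(s)| \le \|v\|_{V^p}$ by the sharper bound $|\Delta v(s)| \le 2\|v\|_{\sup}$ on the finite existing break set $S_{w_{l-1}}$. This is precisely what turns the sup-norm decay into smallness of the crossterm and beats the trivial Minkowski estimate $\|w_m\|_{V^p} \le m$ that one would obtain without exploiting the smallness hypothesis.
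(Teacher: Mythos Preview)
Your inductive strategy is the same as the paper's: add $v_{j_l}$ with small sup norm so that the $p$-th power of the $V^p$ norm increases by at most $2$ at each step. However, your execution of the inductive step rests on a false identity.

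You assert that for a step function $v$, the supremum defining $\|v\|_{V^p}$ is attained at the finest partition, yielding $\|v\|_{V^p}^p = \sum_{s\in S_v}|\Delta v(s)|^p$. This is true for $p=1$ but fails for $p>1$. For instance, take $p=2$ and a step function taking values $0,1,2,0$ on four consecutive intervals. The jumps are $+1,+1,-2$, so your formula gives $1+1+4=6$; yet the partition sampling only the values $0$ and $2$ gives $|2-0|^2+|0-2|^2=8$. Once this identity fails, your Minkowski splitting over $S_{w_{l-1}}$ versus $S_v\setminus S_{w_{l-1}}$ no longer controls the actual $V^p$ norm of the sum.

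The paper's fix is to work directly with an \emph{arbitrary} test partition $\tau$ rather than with jump sets. One separates the intervals of $\tau$ into those on which $v^m$ has a jump (finitely many, independent of $\tau$) and those on which it does not. On the former, each difference picks up at most $2\|v_j\|_{\sup}$ extra; on the latter, $v^m$ contributes nothing and the differences are bounded by the $V^p$ norm of $v_j$. This gives
\[
\sum_k |(v^m+v_j)(t_{k+1})-(v^m+v_j)(t_k)|^p \le \sum_{l}\bigl(|v^m(t_{k_{l+1}})-v^m(t_{k_l})|+2\|v_j\|_{\sup}\bigr)^p + \|v_j\|_{V^p}^p,
\]
where the first sum has a fixed finite number of terms. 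Taking the supremum over $\tau$ and choosing $j$ large then closes the induction. Your argument becomes correct once you replace the jump-sum formula by this partition-based splitting.
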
 

\begin{proof} 
We construct the subsequence and a partition recursively. Suppose we have defined $j_l$ for $l\le m$ so    so that $v^m= \sum_{l=1}^m v_{j_l}$ satisfies $\Vert v^m \Vert_{V^p}  \le (2m)^{\frac1p}$. 
We search for $j_{m+1}$. Let $\tau$ be a partition. Then 
\[ \sum |(v^m+v_j)(t_{k+1}) - (v^m+v_j)(t_k)|^p 
\le   \sum (|v^m(t_{k_{l+1}})-v^m(t_{k_l})|+ 2 \Vert v_j \Vert_{sup} )^p 
+ \sum |v_j(t_{k+1})-v_j(t_k)|^p 
\] 

where the first sum runs only over those indices for which $v^m$ has a jump. 
This implies the statement if we choose $j$ sufficiently large.            
\end{proof}

\begin{proof}[Proof of Theorem  \ref{Upbound}] 
Let 
\[ 
\Vert u \Vert_{\tilde U^p} = \sup \{ B(v,u) : v \in \step_{lc}, \Vert v\Vert_{V^p}=1\} 
\]
and let $\tilde U^p$ be the Banach space of all ruled, right continuous
functions for which this norm is finite. We
have to show that $\tilde U^p= U^p$. By the duality property 
$(U^p)^* = V^q$ we know that $U^p \subset \tilde U^p$, with 
the norm bound  
\[
\| u\|_{\tilde U^p} \lesssim \|u\|_{U^p}
\]
Further, we claim that equality holds. This is because step functions
are dense in $U^p$, and for step functions it suffices to test them with other 
step functions.  It remains to prove the converse inclusion. We begin with an elementary but crucial observation.

\begin{lemma} Let $ u \in \tilde U^p(a,b)$. There exists a step function 
$v \in V^q_{lc}$ with $\Vert v \Vert_{V^q}=1$   
with compact support in $(a,b)$ so that 
\[  B(v,u) \ge \frac14 \sup\{  B(w,u) : w \text{ is a $V^p_{lc}$ 
step function with } \Vert w \Vert_{V^p}=1 \}. \]
\end{lemma}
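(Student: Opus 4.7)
The strategy is to begin with a near-optimizer $w_0$ for the right-hand supremum and modify it near the left endpoint $a$ so that the result becomes compactly supported in $(a,b)$, losing at most a controlled factor in both the $V^p$ norm and the pairing $B(\cdot,u)$.

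First I would observe that compact support near $b$ is automatic: since $w_0 \in V^p_{lc}$ is a left-continuous step function with partition $a < t_1 < \cdots < t_N = b$, the convention $w_0(b) = 0$ together with left continuity forces the value on the final sub-interval $(t_{N-1}, b]$ to be zero, so $w_0$ already vanishes in a one-sided neighborhood of $b$. The only obstruction to compact support is therefore the constant value $c_1 = w_0|_{(a,t_1]}$, which may be nonzero.

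The key trick is to exploit that $u$ is right continuous at $a$ with $u(a) = 0$. For any $\varepsilon' > 0$ we can choose $t_\varepsilon \in (a, t_1)$ with $|u(t_\varepsilon)| \le \varepsilon'$, and define
\[
\tilde w = w_0 \, \chi_{(t_\varepsilon, b]},
\]
which is again a left continuous step function in $V^p_{lc}$, and is now compactly supported in $(a,b)$. A direct partition-point computation gives two estimates. First, $\tilde w$ has the same jumps as $w_0$ except for one new jump of size $|c_1|$ at $t_\varepsilon$; using $|c_1| \le \Vert w_0 \Vert_\infty \le \Vert w_0 \Vert_{V^p} = 1$, this yields
\[
\Vert \tilde w \Vert_{V^p}^p \le |c_1|^p + \Vert w_0 \Vert_{V^p}^p \le 2,
\]
so $\Vert \tilde w \Vert_{V^p} \le 2^{1/p} \le 2$. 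Second, identifying the Stieltjes pairings against the common refined partition $\{a, t_\varepsilon, t_1, \dots, t_N\}$ and using $u(a) = 0$ shows that
\[
B(w_0, u) - B(\tilde w, u) = c_1 \, u(t_\varepsilon),
\]
which is at most $\varepsilon'$ in absolute value.

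Normalizing $v = \tilde w / \Vert \tilde w \Vert_{V^p}$ then produces a compactly supported step function of $V^p$ norm one satisfying
\[
B(v, u) \ge \frac{B(w_0, u) - \varepsilon'}{2^{1/p}} \ge \frac{S(1-\delta) - \varepsilon'}{2},
\]
where $S$ denotes the right-hand supremum and $w_0$ is chosen with $B(w_0, u) \ge S(1 - \delta)$. Taking $\delta$ and $\varepsilon'$ small enough (for instance both smaller than $S/4$) gives the desired $B(v, u) \ge S/4$. The main subtlety is purely bookkeeping of the $V^p$ norm and pairing under insertion of the cutoff point $t_\varepsilon$; the crucial input making this elementary is the hypothesis $u(a) = 0$ together with right continuity, which lets the correction $c_1 u(t_\varepsilon)$ be made arbitrarily small.
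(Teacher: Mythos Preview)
Your proof is correct and follows essentially the same approach as the paper: take a near-optimizer $w_0$, observe it already vanishes near $b$ by left continuity, truncate near $a$ at a point $t_\varepsilon$ where $|u(t_\varepsilon)|$ is small (using $u(a)=0$ and right continuity), compute that the truncation changes $B$ by $c_1 u(t_\varepsilon)$ and raises the $V^p$ norm by at most a factor $2^{1/p}$, then normalize. The paper's version starts from a $\tfrac34$-near-optimizer and is slightly terser in the final arithmetic, but the argument is the same.
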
 

\begin{proof} 
There exists a step function $v_0 \in V^q_{lc}$ of norm $1$ so that 
\begin{equation}  B(v_0,u) \ge \frac34 \sup\{  B(w,u) : w \in V^q_{lc}:  \Vert w \Vert_{V^p}=1 \}. \label{compact} \end{equation} 
Let $\tau=(a,t_1 \dots t_{N-1} , b) $ be the points of the partition associated to $v_0$. By the left  continuity condition,
$v_0$ already vanishes near $t = b$. 
We add one extra point, $t_0 \in (a,t_1)$  and define 
$v_1$ to be $0$ left of $t_0$  and $v_0 $ in $[t_0,b]$.
Then 
\[
\begin{split} 
 B(u,v_0-v_1) =&  - v_0(t_1) B(u, \chi_{a,t_0}) = - v_0(t_0) u(t_0)  
\end{split}  
\] 
By  right continuity 
\[
\lim_{t_0 \to a} u(t_0) = 0
\]
so we can choose $t_0$  so that 
\eqref{compact} holds for $v= v_1$. Since $\Vert v_1 \Vert_{V^q} \le 2^{\frac1q}$
this implies the result. 
\end{proof} 

 Assume by contradiction that  $U^p \not\subset \tilde U^p$.
Then there exists $u \in \tilde U^p$ so that 
\begin{equation}\label{tup-far}
\| u\|_{\tilde U^p} = 1, \qquad \dist(u,U^p) > \frac12.
\end{equation}
Exactly one of the following two alternatives holds for this function $u$:

\begin{enumerate}
\item There exists $\varepsilon>0$  and $t \in [a,b]$  so that for any interval  $(t_0,t_1)$ 
containing $t$ we have
\[ \Vert u-u(t) \Vert_{\tilde U^p(t,t_1)} + \Vert u-u(t_0) \Vert_{\tilde U^p(t_0,t)}\ge \varepsilon \]
with the obvious modification at the endpoints. 
\item For every $\varepsilon>0$ and every $t \in [a,b]$ there exists a 
neighborhood $(t_0,t_1)$ of $t$ so that 
\[ \Vert u-u(t) \Vert_{\tilde U^p(t,t_1)} + \Vert u-u(t_0) \Vert_{\tilde U^p(t_0,t)}<  \varepsilon \]
with the obvious modification at the endpoints. 
\end{enumerate} 
 We will show that neither alternative is true, thereby reaching
a contradiction. 

\bigskip

Suppose that the first alternative holds. Then we must have either
\begin{equation} \label{right}   
\Vert u-u(t) \Vert_{\tilde U^p(t,t_1)} \ge \varepsilon/2  \qquad \text{ for all } t_1 > t
\end{equation} 
or 
\[ 
 \Vert u-u(t_0) \Vert_{\tilde U^p(t_0,t)} \ge \varepsilon/2 \qquad \text{ for all } t_0 < t
\]
In either case we claim that there exists a sequence of step functions
$v_j$ in $V^q$ with disjoint support so that
\[ 
B(u,v_j) \ge \varepsilon/8, \qquad  \Vert v_j \Vert_{V^p} = 1
\] 
Without loss of generality let us assume \eqref{right}. By Lemma \ref{compact} 
there exists $v_1 \in \step_{lc}(t,t_1)$, compactly supported, so that 
\[ 
B(u-u(t_0),v_1) = B(u,v_1) \ge \varepsilon/8  
\] 
Let $t_2$ be the first point of the partition of $v_1$ to the right of
$t$. Then $v_1$ vanishes in $(t,t_1)$. We repeat the above argument in
$(t,t_2)$ to produce a function $v_2 \in \step_{lc}(t,t_1)$, etc.  Let
\[ v = \sum_{j=1}^N  v_j. \]
 Then 
\[ B(u,v) \ge  N \frac{ \varepsilon}8,
 \] 
while by Lemma~\ref{l:vp-sum} we have
\[ 
\Vert v \Vert_{V^q} \le 2^{\frac1q} N^{\frac1p} 
\] 
Hence
\[
 N \frac{\varepsilon}8 \le (2N)^{\frac1q}. 
\] 
This cannot be true for $N$ large. This is a contradiction, which shows that  
the first alternative cannot hold for any function $u \in \tilde U^p$.

It remains to disprove the second alternative. Here we will use the fact that $u$ satisfies 
\eqref{tup-far}. For each $\epsilon > 0$ we can cover the interval $[a,b]$ with intervals
\[
[a,b] = \bigcup_{t\in [a,b]} I_t 
\]
where  $I_t$ are the intervals given by the second alternative.  By compactness,
it follows that there is a finite subcovering,
 \[
[a,b] = \bigcup_{k=1}^N  I_{t_k} 
\]
Then we can find a partition $\tau=\tau(\varepsilon)$ of $[a,b]$ with the property that 
for each interval $I_j(\tau) = [t_j,t_{j+1}]$  in the partition we have 
\begin{equation}\label{small-up}
\| u - u(t^\varepsilon_j)\|_{\tilde U^p(I_j(\tau))} \leq \epsilon
\end{equation}

We now consider the right continuous step function $u_\tau$ which
matches $u$ at the points of the partition $\tau$.  The function
$u-u_\tau$ vanishes on the partition $\tau$, so it is natural to split
it with respect to the partition intervals $I_j(\tau)$,
\[
u - u_\tau = \sum u_j, \qquad u_j = 1_{I_j(\tau)} (u - u_\tau)
\]

On one hand, by \eqref{tup-far} we have
\begin{equation} \label{far}
\| u - u_\tau\|_{\tilde U^p} > \frac12
\end{equation}
On the other hand, as a consequence of  Lemma \ref{updecomp} we have
\begin{equation} \label{tup=split} 
\Vert u-u_\tau \Vert_{\tilde U^p}^p 
\le \sum 2 \Vert u_j \Vert_{\tilde U^p}^p. 
\end{equation} 
For each $u_j$ we can find a corresponding function $v_j \in \step_{lc}$, with similar support,
so that  
\[
B(u_j,v_j) \geq \frac14 \|u_j\|_{\tilde U^p} , \qquad \|v_j\|_{V^p} = 1.
\]
Combining these functions with appropriate weights
we produce the step function $w \in \step_{lc}$,
\[
w = \left(\sum_j  \|u_j\|_{\tilde U^p}^{p}\right)^{-1}  \sum_j  \|u_j\|_{\tilde U^p}^{p-1} v_j  
\]
These will have the following three properties:

\begin{enumerate} 
\item  Boundedness in $V^q$. Indeed, by Lemma~\ref{l:vp-sum} we have
\[
\| w\|_{V^q}^q \leq 2^{q-1}  \left(\sum_j  \|u_j\|_{\tilde U^p}^{p}\right)^{-1}    \sum_j   \|u_j\|_{\tilde U^p}^p    \|v_j\|_{V^q}^q \leq 2^{q-1}.
\]
\item   Large $B(w,u)$. Here we compute
\[
B(w,u) = B(w, u-u_\tau) = \sum B(v_j,u_j) \geq    \sum_j  \|u_j\|_{\tilde U^p}^{p} \geq 2^{-p}
\]
\item  Small pointwise norm. For each $v_j$ we can bound the pointwise norm by $1$,
and they are all nonoverlapping. Thus using \eqref{small-up} we obtain 
\[
\|w\|_{L^\infty} \leq  \sup_j  \|u_j\|_{\tilde U^p}^{p-1}   \left(\sum_j  \|u_j\|_{\tilde U^p}^{p}\right)^{-1} 
\leq \frac14  \sup_j  \|u_j\|_{\tilde U^p}^{p-1}.
\]
\end{enumerate}

Taking a sequence $\epsilon_j \to 0$, this construction yields a sequence of partitions $\tau_j$ 
and associated left continuous step functions $w_j$ with the following properties: 
\begin{enumerate} 
\item $\| w_j\|_{V^p} \lesssim 1$.
\item  $B(u,w_j) \gtrsim 1$.
\item $\|w_{j}\|_{L^\infty} \to 0 $.
\end{enumerate}
To conclude we argue as in the first alternative, this time using
Lemma \ref{approx} to reach a contradiction.
 \end{proof}

Let
\[ V^p_C = \{ v \in V^p\cap C := V^p_{rc} \cap V^p_{lc}: \lim_{t\to a,b} v(t) = 0 \} \] 

\begin{theorem}\label{weak*}  $V^p_C$ is weak* dense in $V^p$. Moreover, if $v,w \in V^p$
  then there exists a sequence $v_j \in V^p_C$ so that $v_jw \to vw$
  in $V^p$ in the weak* sense.
\end{theorem}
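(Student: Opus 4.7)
The plan is to construct, for each $v \in V^p$, a sequence $v_j \in V^p_C$ with uniform $V^p$ bound $\|v_j\|_{V^p} \lesssim \|v\|_{V^p}$ and pointwise convergence $v_j(t) \to v(t)$ for every $t \in (a,b)$. Weak* convergence $B(v_j,u) \to B(v,u)$ for every $u \in U^q$ (with $1/p+1/q=1$) then follows by testing against step functions in $U^q$, where the pairing sees only finitely many pointwise values of $v_j$ and $v$, and extending via the uniform norm bound and \eqref{Buv} to all of $U^q$.

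The key subtlety is the handling of jump points. Because a continuous $v_j$ takes a single value at each point, but the pairing $B(v,u) = \sum_k v(t_k)(u(t_k)-u(t_{k-1}))$ depends on the specific value $v(t_k)$ (which at a jump need equal neither $v(t_k^-)$ nor $v(t_k^+)$), a plain mollification that converges only to one-sided limits will not suffice. To address this I split $v = v^{\mathrm{rc}} + \sum_k c_k \chi_{\{s_k\}}$, where $v^{\mathrm{rc}}(t) := v(t^+)$ is right-continuous, $\{s_k\}$ enumerates the at most countable jump set of $v$, and $c_k := v(s_k) - v(s_k^+)$. Lemma~\ref{l:vp-sum} together with the $V^p$-bound of $v$ controls $\sum_k |c_k|^p \lesssim \|v\|_{V^p}^p$.

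For the construction, I set
\[
v_j := \chi_j \cdot \Bigl( v^{\mathrm{rc}} * \phi_{1/j}^+ + \sum_{k \in K_j} c_k\, b_{j,k} \Bigr),
\]
where $\phi_\varepsilon^+$ is a rightward mollifier supported in $[0,\varepsilon]$ with integral $1$ (so $(v^{\mathrm{rc}}*\phi_{1/j}^+)(t) \to v^{\mathrm{rc}}(t) = v(t^+)$ pointwise by right-continuity), $K_j := \{k \le j\}$ captures finitely many jumps, $b_{j,k}$ is a continuous triangular bump with $b_{j,k}(s_k)=1$ and pairwise disjoint narrow supports, and $\chi_j$ is a continuous cutoff equal to $1$ on $[a_j,b_j]\nearrow(a,b)$ and vanishing at the endpoints. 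The convolution bound \eqref{eq:conv}, the disjoint-support additivity of $V^p$ (from Lemma~\ref{l:vp-sum}) applied to the bump sum giving $\|\sum c_k b_{j,k}\|_{V^p}^p \lesssim \sum |c_k|^p$, and the elementary $V^p$ product rule $\|fg\|_{V^p} \lesssim \|f\|_{L^\infty}\|g\|_{V^p} + \|g\|_{L^\infty}\|f\|_{V^p}$ together give $\|v_j\|_{V^p} \lesssim \|v\|_{V^p}$, uniformly in $j$. At a non-jump point $t$ we have $v_j(t) \to v(t^+) = v(t)$; at a jump point $t=s_k$, once $j\ge k$ we have $v_j(s_k) = (v^{\mathrm{rc}}*\phi_{1/j}^+)(s_k) + c_k \to v(s_k^+) + c_k = v(s_k)$, yielding pointwise convergence everywhere.

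The weak* density now follows: for step $u \in U^q$ with finitely many partition points, $B(v_j,u)$ is a finite sum in $v_j$-values at those points and converges to $B(v,u)$; for general $u \in U^q$, the standard $\varepsilon/3$ argument using $U^q$-density of step functions together with the uniform $V^p$-bound on $v_j$ and \eqref{Buv} closes the estimate. The product statement uses the same sequence $v_j$: the $V^p$ product rule yields $\|v_j w\|_{V^p} \lesssim \|v\|_{V^p}\|w\|_{V^p}$ uniformly in $j$, and for step $u$ the identity $B(v_j w, u) = \sum_k v_j(t_k) w(t_k)(u(t_k)-u(t_{k-1}))$ converges to $B(vw, u)$ by pointwise convergence $v_j(t_k)\to v(t_k)$ at the finitely many partition points; density in $U^q$ extends the conclusion. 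The main obstacle, handled by the explicit jump correction $\sum c_k b_{j,k}$, is precisely the sensitivity of the pairing $B$ to the pointwise values of $v$ at its jump points.
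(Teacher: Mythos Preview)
Your argument is correct and follows the same overall strategy as the paper: one-sided mollification, an explicit correction at the jump points, a cutoff at the endpoints, and then passage from pointwise convergence plus a uniform $V^p$ bound to weak* convergence via density of step functions in $U^q$. The execution differs in that you decompose $v = v^{\mathrm{rc}} + \sum_k c_k\chi_{\{s_k\}}$, mollify the right-continuous part from the right, and restore the point values with continuous bumps, whereas the paper first produces a left-continuous approximant by linear interpolation on shrinking intervals $(t_{j,k},t_j)$ to the left of each left-discontinuity, and then mollifies from the left. Your decomposition makes the dependence of the pairing $B$ on pointwise values especially transparent; the paper's interpolation avoids introducing auxiliary bump functions.

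Two small fixes: under the standard convolution convention $(f*\phi)(t)=\int f(t-s)\phi(s)\,ds$, a ``rightward'' mollifier averaging $v^{\mathrm{rc}}$ over $[t,t+\varepsilon]$ must be supported in $[-\varepsilon,0]$, not $[0,\varepsilon]$. And the bound $\sum_k |c_k|^p \le \|v\|_{V^p}^p$ comes directly from the definition of the $V^p$ norm (use a partition alternating each $s_k$ with a nearby point $s_k'>s_k$ and let $s_k'\to s_k^+$), not from Lemma~\ref{l:vp-sum}; your subsequent use of that lemma for the disjointly supported bump sum is the correct application.
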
 

\begin{proof} We claim that $V_{lc}^p \subset V^p_C$ is weak * dense. 
Let $v \in V^p_C$. There exists an at most countable set of times $t_j$ 
at which $v$ is not left continuous. We include $b$ if $\lim_{t\to b} v(t) \ne 0$. At each point $t_j$ there is a strictly increasing sequence $t_{j,k} $ 
converging to $t_j$ such that either $t_j $ is contained in one of the intervals \[  [t_{l,k},t_l] \] 
for some $l<j$, or $[t_{j,k},t_k]$ is disjoint from all those intervals. 
 For each $k$ we recursively modify $v$ in $(t_{j,k},t_j)$ 
so that in that interval 
\[  v_k(t) =  \frac{t_{j}-t}{t_{j}-t_{j,k}}   v(t_{j,k}) +  \frac{t-t_{j,k}}{t_j-t_{j,k}}   v(t_j) \]
if $t_j$ is not contained in the previous intervals, with a simple modification at $b$. 
Then it is not hard to see that 
\[ \Vert v_k \Vert_{V^p} \le \Vert v \Vert_{V^p} \] 
and 
\[  B(v_k,u) \to B(v,u) \] 
for all $u \in U^q$.

If $a=-\infty$ and $b= \infty$ (which we assume without loss of generality) 
and if $v \in V^p_{lc}$ and if $a$ is a right continuous step function then 
\[  \lim_{h>0, h\to  0}  B(v(.+h),a) = B(v,a). \]                     
Using the atomic decomposition we see that this convergence is true for 
$u \in U^q$ instead of $a$.

We pick a smooth function $\rho$ with support in $(-1,0)$ and integral $1$ and define for $v \in V^p_{lc}$ and $h \in (0,1)$ 
\[ v_h (t) = \int_0^1 v(t+hs)\rho(s) ds \]   
Then again as $h\to 0$ 
\[ B(v_h,u) \to B(v,u). \] 
Since $ u(t) \to 0 $ and $t\to a$ we can truncate on the left and obtain functions in $C^\infty_0$ converging to $v$ in the weak* sense. 
The second claim is proven similarly. 
\end{proof}

\begin{theorem} \label{t:vp-dual}
The map 
\[ u \to (v \to B(v,u) ) \]
is an isometric isomorphism from $U^q$ to $(V^p_C)^* $. 
\end{theorem}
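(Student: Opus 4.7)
The plan is to show that the natural pairing map $\Phi: U^q \to (V^p_C)^*$ defined by $\Phi(u)(v) = B(v, u)$ is an isometric isomorphism, separating the argument into isometry and surjectivity.

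For the isometric embedding, the inequality $\|\Phi(u)\|_{(V^p_C)^*} \leq \|u\|_{U^q}$ is immediate from \eqref{Buv}. The matching lower bound follows from Theorem~\ref{Upbound}, which expresses $\|u\|_{U^q}$ as a supremum of $|B(v, u)|$ over compactly supported left-continuous $V^p$ step functions of norm at most one. Each such $v$ is approximated by its mollification $v_h = \rho_h * v$ with $\rho \in C_c^\infty((0,1))$ of integral one: for small $h$, $v_h$ is continuous and compactly supported in $(a,b)$, hence belongs to $V^p_C$; it satisfies $\|v_h\|_{V^p} \le \|v\|_{V^p}$ via Jensen's inequality applied partition-by-partition; and $v_h \to v$ weak$^*$ in $V^p$ yields $B(v_h, u) \to B(v, u)$. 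This transfers the supremum from step functions to $V^p_C$.

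For surjectivity, given $L \in (V^p_C)^*$, I would extend $L$ to $\tilde L \in V^{p*}$ with $\|\tilde L\| = \|L\|$ via Hahn-Banach, then define a candidate $u$ by $u(a) = 0$ and $u(t) = \tilde L(\chi_{[a, t)})$ for $t \in (a, b]$ (noting $\|\chi_{[a,t)}\|_{V^p} = 1$). For any compactly supported left-continuous step function $v = \sum_k \beta_k \chi_{(s_{k-1}, s_k]}$, the telescoping $u(s_k) - u(s_{k-1}) = \tilde L(\chi_{[s_{k-1}, s_k)})$ combined with the definition of $B$ produces
\[
B(v, u) = \tilde L\Bigl(\sum\nolimits_k \beta_k \chi_{[s_{k-1}, s_k)}\Bigr),
\]
and a direct calculation confirms that the right-continuous step function appearing here has the same $V^p$ norm as $v$, both reducing to $|\beta_1|^p + \sum_{k=1}^{N-1}|\beta_{k+1}-\beta_k|^p + |\beta_N|^p$. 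Hence $|B(v,u)| \le \|L\|\,\|v\|_{V^p}$. To apply Theorem~\ref{Upbound} we additionally need $u$ to be right-continuous and ruled, which one secures by passing to the right-continuous modification; that $u$ is ruled follows from a duality argument applied to disjointly supported indicator combinations yielding $u \in V^q$, and the modification does not affect $B(\cdot, u)$ when paired with continuous test functions. Consequently $u \in U^q$ with $\|u\|_{U^q} \le \|L\|$.

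The main obstacle will be verifying $B(v, u) = L(v)$ for all $v \in V^p_C$. Both sides are $V^p$-norm continuous functionals on $V^p_C$, so it suffices to establish equality on a $V^p$-dense subset. I would approximate $v \in V^p_C$ by continuous piecewise linear $v_n \in V^p_C$ interpolating $v$ at partitions $\tau_n$ chosen to nearly attain the supremum defining $\|v\|_{V^p}^p$. For such $v_n$, a Riemann-Stieltjes computation reduces $B(v_n, u)$ to $\tilde L$ applied to an associated right-continuous step function, which one matches with $L(v_n) = \tilde L(v_n)$ via a parallel manipulation using the piecewise linear structure. The delicate technical point is the $V^p$-norm density of such piecewise linear functions in $V^p_C$, which I would establish through a subadditivity argument for the $V^p$ norm over the subintervals of $\tau_n$ anchored at points where the interpolant already coincides with $v$, paralleling the constructions in the proof of Theorem~\ref{weak*}.
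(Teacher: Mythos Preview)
Your approach parallels the paper's closely in overall structure: both establish the isometry via weak$^*$ approximation (the paper by citing Theorem~\ref{weak*}, you by mollifying step functions directly, which is essentially the proof of that theorem), and for surjectivity both extend $L$ by Hahn--Banach, construct a candidate $u$, pass to its right-continuous modification, and invoke Theorem~\ref{Upbound}. Your explicit formula $u(t) = \tilde L(\chi_{[a,t)})$ is in fact exactly the construction inside the proof of Proposition~\ref{p:up-dual}.

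The genuine gap is in your final step. The $V^p$-norm density of piecewise linear interpolants in $V^p_C$ that you label ``delicate'' is in fact \emph{false}. The Weierstrass-type function $w = \eta \sum_j 2^{-j/p} \sin(2^j x)$ (the very example the paper uses at the end of the appendix to show step functions are not dense in $V^p$) is continuous and lies in $V^p_C$, yet cannot be $V^p$-approximated by any piecewise linear or smooth $\phi$: on each dyadic interval $I$ of length $2^{-N}$ one has $\mathrm{Var}_p(\phi,I) = O(2^{-N})$ while $\mathrm{Var}_p(w,I) \gtrsim 2^{-N/p}$, and summing the $p$-th powers over the $2^N$ intervals gives $\|w-\phi\|_{V^p}^p \gtrsim 1$ uniformly in $\phi$ once $N$ is chosen large depending on $\phi$. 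Your subadditivity sketch cannot overcome this; the interpolants simply do not converge in $V^p$ norm.

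The paper sidesteps this by not trying to prove density from scratch. It extends $L$ only to $V^p_{lc}$, then restricts to $U^p_{lc}$ and invokes Proposition~\ref{p:up-dual} (in the symmetric form of Remark~\ref{lr}) to obtain $u \in V^q$ with $L(v) = B(v,u)$ for \emph{all} $v \in U^p_{lc}$, not just step functions. After establishing $u_{rc} \in U^q$, the identity $B(v,u_{rc}) = B(v,u) = L(v)$ for $v \in V^p_C$ follows since continuous $v$ is insensitive to the countably many points where $u$ and $u_{rc}$ differ. Recognizing that your $u$ is the Proposition~\ref{p:up-dual} construction would have let you cite that result directly and avoid the problematic density argument.
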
 

\begin{proof} 
We first observe that, in view of the bound \eqref{Buv}, the above map is 
bounded,
\[
\| u\|_{(V^p_C)^*}= \sup \{ B(v,u) : v \in V^p_C , \Vert v \Vert_{V^p} =1 \} = \|u\|_{U^q}
\]
The last equality is a consequence of Theorem  \ref{weak*}. 
  It remains to show that this 
map is onto.

Start with  $L \in (V^p_C)^*$. By the theorem of Hahn Banach is has a same norm  extension 
to the left continuous functions in $V^p_{lc} $ which we denote again by $L$.
By the embedding $U^p_{lc} \subset V^p_{rc}$, the map 
\[ 
 v \to L( v ) \in (U^p_{lc})^* 
\] 
has a unique representative $ u  \in V^{q}$. In particular this means that for 
any left continuous step function $v$ we have
\[
L(v) = B(v,u)
\]
Here we have yet no continuity condition on $u$. However, if we replace it 
by its right continuous version $u_{rc}$, then we have for $v \in \step_{lc}$ 
by an abuse of notation 
\[
\ B(v,u_{rc})= \lim_{h >0, h\to 0}  B(v(.-h),u). 
\]
and hence 
\[ \sup\{  B(v,u_{rc}) : v \in \step_{lc}, \Vert v \Vert_{V^p} \le 1\} 
\le   \sup\{  B(v,u) : v \in \step_{lc}, \Vert v \Vert_{V^p} \le 1\}. \] 
Now we are in a position to apply Theorem~\ref{Upbound}  to conclude that
$u_{rc} \in U^q$, and 
\[
\| u_{rc} \|_{U^q} \leq \|L\|
\]
To conclude the proof, we note that for $v \in V^p_C$ we have 
\[
B(v,u_{rc}) = B(v,u) = Lv. 
\]
Thus $u_{rc} \in U^q$ is a representation of $L$ via the bilinear form $B$,
and 
\[
\| L \| \leq \| u_{rc} \|_{U^q} \leq \|L\|
\]
therefore equality must hold. The proof is complete. 
\end{proof}

\begin{lemma} $C^\infty_0$ is dense in $V^p_C$ and weak* dense in $U^p$  and $V^p$. 
\end{lemma}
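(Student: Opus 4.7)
The plan is to deduce all three density statements from the Hahn--Banach theorem, invoking the dualities established earlier in this appendix: $V^p = (U^q)^*$ (Proposition~\ref{p:up-dual}), $U^p = (V^q_C)^*$ and $(V^p_C)^* = U^q$ (Theorem~\ref{t:vp-dual}). In each case the density assertion reduces, via Hahn--Banach or its weak* version, to triviality of the annihilator of $C^\infty_0$ inside the appropriate (pre)dual, and this annihilator statement will be verified by the standard distributional integration by parts.

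First I would handle the norm density of $C^\infty_0$ in $V^p_C$. It suffices to show that if $u \in U^q$ satisfies $B(v,u) = 0$ for all $v \in C^\infty_0$, then $u \equiv 0$. The key identity is
\[ B(v,u) = -\int u\, v'\, dx, \qquad v \in C^\infty_0,\ u \in U^q, \]
which is elementary summation by parts when $u$ is a step function, with boundary terms vanishing thanks to the compact support of $v$ inside $(a,b)$; it then extends to arbitrary $u \in U^q$ by density of step functions in $U^q$ together with the continuity \eqref{Buv} of $B$ in its second argument. Vanishing of this integral for every $v \in C^\infty_0$ forces $u' = 0$ in $\mathcal{D}'(a,b)$, so $u$ is almost everywhere constant, and right continuity together with the boundary value $\lim_{t \to a} u(t) = 0$ built into $U^q$ yields $u \equiv 0$.

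For the two weak* density claims I would apply the same pattern in the weak* topology, using that weak*-continuous functionals on a dual Banach space are precisely evaluations at predual points. Weak* density of $C^\infty_0$ in $V^p = (U^q)^*$ is therefore literally the annihilator statement just proved. Weak* density of $C^\infty_0$ in $U^p = (V^q_C)^*$ is the symmetric claim that any $v \in V^q_C$ with $B(v,u) = 0$ for all $u \in C^\infty_0$ vanishes; here the integration by parts takes the form $B(v,u) = \int v\, u'\, dx$, now literally true since $u$ is smooth, and the same argument (distributional constancy, plus the continuity of $v$ with boundary conditions $v(a) = v(b) = 0$ built into $V^q_C$) forces $v \equiv 0$.

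The hard part, although still routine, will be the careful identification of $B(v,u)$ with the classical integrals $-\int u v'\, dx$ and $\int v u'\, dx$ above at the level of the abstract pairing. For step-function arguments both are literally finite Abel summation; to pass to general $u \in U^q$ one uses norm density of step functions in $U^q$ combined with \eqref{Buv}, while on the $V^q_C$ side the analogous passage invokes the approximability of $V^q_C$ elements by left-continuous step functions evaluated at partition points, as developed in the proof of Theorem~\ref{weak*}. The compact support of the smooth factor makes all limit passages elementary, so no further analytic difficulty arises.
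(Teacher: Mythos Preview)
Your proof is correct and takes a genuinely different route from the paper's. The paper argues constructively: for norm density in $V^p_C$ it simply mollifies (implicitly relying on the regularization already built in the proof of Theorem~\ref{weak*}), and for weak* density in $U^p$ it approximates each atom by a smooth regularization and observes that pairing against any fixed $v \in V^q_C$ converges because $v$ is continuous. Your approach instead reduces all three claims to annihilator statements via Hahn--Banach and the dualities $(V^p_C)^* = U^q$, $U^p = (V^q_C)^*$, $V^p = (U^q)^*$; the three claims then collapse to essentially one distributional computation ($u' = 0$ forces $u$ constant, which the boundary/continuity conditions kill). This is slicker and more uniform, and it cleanly exploits the duality theorems already in place; the paper's approach is more hands-on but less dependent on that machinery.

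One small point: in your last paragraph you propose to verify $B(v,u) = \int v u'\,dx$ for $v \in V^q_C$ and smooth $u$ by approximating $v$ by step functions. This is a little delicate since step functions are \emph{not} norm-dense in $V^q$ (as the paper notes later). The cleanest fix is to observe that you have already established norm density of $C^\infty_0$ in $V^q_C$ (your first part, with $p$ replaced by $q$); for $v,u$ both smooth the identity is ordinary integration by parts, and you can pass to the limit in $v$ using $V^q \hookrightarrow L^\infty$ together with $u' \in L^1$. Alternatively, approximate $u$ by the step functions $u_\sigma$ and use the identity $B(v_\sigma,u)=B(v,u_\sigma)=\sum_j v(s_j)\bigl(u(s_j)-u(s_{j-1})\bigr)$, which is a Riemann--Stieltjes sum converging to $\int v\,du = \int v u'\,dx$ by continuity of $v$. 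Either route closes the argument without circularity.
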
 

\begin{proof} The first statement is simple. For the second it
  suffices to approximate atoms with compact support. Then any
  standard regularization gives the statement.
\end{proof}

Now we turn our attention to bilinear estimates. We begin with the algebra type 
properties:

\begin{lemma} $U^p$ and $V^p$ are algebras, and the following estimates hold:
\begin{equation} \Vert vw \Vert_{V^p} \le \Vert v \Vert_{sup} \Vert w \Vert_{V^p} 
+ \Vert v \Vert_{V^p} \Vert w \Vert_{sup} 
\end{equation}
\begin{equation} \Vert uw \Vert_{U^p} \le 2\Vert u \Vert_{U^p} \Vert w \Vert_{U^p} 
\end{equation} 
\end{lemma}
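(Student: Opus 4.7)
\smallskip

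For the $V^p$ bound, the plan is to use a Leibniz-type identity at the level of a partition together with Minkowski's inequality in $\ell^p$. Given any partition $\tau = (a < t_1 < \cdots < t_N = b)$, write
\[
v(t_{j+1})w(t_{j+1}) - v(t_j)w(t_j) \;=\; v(t_{j+1})\bigl[w(t_{j+1}) - w(t_j)\bigr] \;+\; \bigl[v(t_{j+1}) - v(t_j)\bigr] w(t_j),
\]
estimate the first term pointwise by $\Vert v \Vert_{\sup} |w(t_{j+1}) - w(t_j)|$ and the second by $|v(t_{j+1}) - v(t_j)|\Vert w \Vert_{\sup}$, and then use Minkowski's inequality in $\ell^p_j$ to arrive at the stated bound for this particular partition. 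Taking the supremum over all partitions $\tau$ then yields the $V^p$ inequality. The boundary condition $(vw)(b) = v(b)w(b) = 0$ is automatic, and $vw$ is ruled because $v,w$ are.

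For the $U^p$ bound, the strategy is to reduce, via the atomic decomposition and bilinearity, to the case of a product of two atoms. Write $u = \sum_j \lambda_j a_j$ and $w = \sum_k \mu_k b_k$ as $\ell^1$ sums of $U^p$ atoms with $\sum_j |\lambda_j| \le (1+\varepsilon)\Vert u \Vert_{U^p}$ and $\sum_k |\mu_k| \le (1+\varepsilon)\Vert w \Vert_{U^p}$. The pointwise product is
\[
uw \;=\; \sum_{j,k} \lambda_j \mu_k \, (a_j b_k),
\]
so the claim reduces to showing that $a b$ has controlled $U^p$ norm whenever $a, b$ are atoms. Let $a$ be associated to a partition $\{t_j\}$ with values $\phi_j$ satisfying $\sum |\phi_j|^p \le 1$, and $b$ to $\{s_k\}$ with values $\psi_k$ satisfying $\sum |\psi_k|^p \le 1$. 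On the common refinement $ab$ is a right continuous step function whose values $c_l$ are exactly the products $\phi_{j(l)} \psi_{k(l)}$, with one sub-interval $l$ for every pair $(j,k)$ for which $[t_j,t_{j+1}) \cap [s_k, s_{k+1}) \neq \emptyset$. The key computation is
\[
\sum_l |c_l|^p \;=\; \sum_{(j,k):\,[t_j,t_{j+1}) \cap [s_k,s_{k+1}) \neq \emptyset} |\phi_j|^p |\psi_k|^p \;\le\; \Bigl(\sum_j |\phi_j|^p\Bigr)\Bigl(\sum_k |\psi_k|^p\Bigr) \;\le\; 1,
\]
since dropping the intersection condition only enlarges the sum. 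Hence $ab$ is itself a $U^p$ atom (up to a harmless universal constant), and moreover $(ab)(a) = 0$ because $a(a) = b(a) = 0$. Combining with the double sum representation and letting $\varepsilon \to 0$ yields $\Vert uw \Vert_{U^p} \lesssim \Vert u \Vert_{U^p}\Vert w \Vert_{U^p}$; the explicit constant $2$ in the statement absorbs any slack in the atomic normalizations at the endpoints.

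The main technical obstacle is the bookkeeping around different partitions for $a$ and $b$: one must verify that on the common refinement the sum condition characterizing atoms is preserved, and the above double summation argument is precisely what handles this. Once the atomic bound is in hand, the extension from finite to infinite atomic combinations is routine, since both sides are controlled in $L^\infty$ via $\Vert \cdot \Vert_{L^\infty} \le \Vert \cdot \Vert_{U^p}$, so the $\ell^1$ sums converge absolutely and uniformly.
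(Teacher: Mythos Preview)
Your proof is correct and follows exactly the approach the paper sketches: the $V^p$ bound via the Leibniz identity and Minkowski is what the paper calls ``obvious,'' and your reduction to a product of atoms is precisely what the paper means by ``we have to consider a product of two atoms.'' In fact your common-refinement computation shows the product of two atoms is itself an atom, so you actually obtain the sharper constant $1$ rather than $2$; there is no ``slack at the endpoints'' to absorb.
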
 

\begin{proof} The first part is obvious.  
For the second part we have
  to consider a product of two atoms.
\end{proof}

We are now in a position to define a Stieltjes type integral. 

\begin{definition}\label{stieltjes}  Let $ v\in V^p$ and $u \in U^q$ with $\frac1p+\frac1q=1$.
We define 
\[U=  \int_a^t v(s) du(s) \] 
by 
\begin{equation} \label{vdu}   B(w,U)= B(wv,u) \qquad \text{ for all } w \in V^p_C \end{equation} 
and 
\[ V = - \int_t^b u(s) dv(s) \] 
by 
\[  B(V, w) = B(v,wu) \qquad \text{ for all } w \in U^q\] 
\end{definition} 

The right hand side defines a continuous linear map 
\[  V^p_C \ni w \to B(wv,u) \]
resp
\[ U^q\ni w \to B(v,wu) \]
which by duality has a unique representative. Moreover by Theorem 
\ref{weak*} \eqref{vdu} holds for all $w \in V^p$. In particular

\[ \int_a^t v_1(s) d(\int_a^s v_2(\sigma) du(\sigma)) 
= \int_a^t v_1(s) v_2(s) du(s). \] 
To see this, define $U = \int_a^t v_2(s) du(s)$. Then almost by definition
\[ B(v_1v_2w,u) = B(v_1w, U)  \]
Similarly, if $u_1,u_2 \in U^q$ and $V = -\int_t^b u_2(s) dv(s)$ then 
\[ B(v,w u_1u_2) = B(V,wu_1 ) \] 
and hence 
\[ -\int_t^b u_1u_2 dv = - \int_t^b u_1 \int_s^b u_2 dv. \]   

For PDE applications we are also interested in the distributional interpretation 
of $U^p$ and $V^p$ functions. This is obvious for $U^p$, as any $U^p$ function 
is bounded and uniquely determined by its values a.e.. However, in the case 
of $V^p$ we have allowed single point jumps, which are invisible when testing against 
$C_0^\infty$  test functions. Thus for this purpose it is more natural to work with 
the smaller space $V^p_{lc}$, whose elements are uniquely identified with distributions.

Next we consider the space of distributional derivatives of such functions:

\begin{definition}\label{B16} 
We define 
\[
DU^p = \{ u';\ u \in U^p \}, \qquad  DV^p = \{ v';\ v \in V^p_{lc} \}
\]
with the induced norm.
\end{definition}

There is no difference in the definition of $DV^p$ if we replace $V^p_{lc}$ 
by $V^p_{rc}$ and indeed the second one is more natural for solving  Cauchy 
problems.  

Based on our duality results above, we have the following dual 
characterization of these spaces:

\begin{proposition} 
The spaces $DU^p$ and $DV^p$ can be characterized as the spaces 
of distributions for which the following norms are finite:
\begin{equation} \label{dup}
\Vert f \Vert_{DU^p} = \sup\left\{ \int f \phi dt : \Vert \phi \Vert_{V^q} \le 1,
\phi \in C^\infty_0 \right\} 
\end{equation}
\begin{equation}\label{dvp}
\Vert f \Vert_{DV^p} = \sup\left\{ \int f \phi dt : \Vert \phi \Vert_{U^q} \le 1,
\phi \in C^\infty_0 \right\} 
\end{equation}

for all distributions $f$.
\end{proposition}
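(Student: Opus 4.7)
The plan is to reduce both characterizations in \eqref{dup} and \eqref{dvp} to Proposition \ref{p:up-dual} and Theorem \ref{t:vp-dual}, after identifying $B$ with the distributional pairing against test functions.

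The first step is to verify the integration-by-parts identities, valid for $\phi \in C_0^\infty$:
\[
\int u'\,\phi\, dt = B(\phi, u) \quad (u \in U^p), \qquad \int v'\,\phi\, dt = -B(v, \phi) \quad (v \in V^p_{lc}).
\]
For a step function $u \in U^p$ the distributional derivative is a finite sum of Dirac masses, and pairing with $\phi$ literally reproduces the defining sum for $B(\phi,u)$; the atomic decomposition and \eqref{Buv} then propagate this identity to all $u \in U^p$. The identity for $v \in V^p_{lc}$ follows by a Riemann--Stieltjes argument: $B(v,\phi) = \int v\phi'\, dt$ since $v$ is bounded and ruled while $\phi'$ is smooth and compactly supported, and then ordinary integration by parts supplies the sign.

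With these identities, \eqref{dup} reduces to Theorem \ref{t:vp-dual}, which identifies $U^p$ isometrically with $(V^q_C)^*$ via $u \mapsto B(\cdot,u)$. For $f = u' \in DU^p$, the norm-density of $C_0^\infty$ in $V^q_C$ (first assertion of the preceding lemma) lets one pass from the supremum over $V^q_C$ to the supremum over $C_0^\infty$, and the integration-by-parts identity converts $B(\phi, u)$ into $\int f\phi\, dt$. For the converse, any distribution $f$ with finite right-hand side in \eqref{dup} defines a bounded linear functional on $C_0^\infty$ in the $V^q$ norm; it extends by continuity to $V^q_C$ and is represented by some $u \in U^p$ via Theorem \ref{t:vp-dual}, and the integration-by-parts identity forces $f = u'$ in $\mathcal{D}'$.

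The scheme for \eqref{dvp} is the same but uses Proposition \ref{p:up-dual} ($V^p = (U^q)^*$) in place of Theorem \ref{t:vp-dual}. The main subtlety is that $C_0^\infty$ is only weak*-dense in $U^q$, not norm-dense, so restricting the test functions to $C_0^\infty$ is not free. To handle this I would use the observation from the proof of Proposition \ref{p:up-dual} that the $V^p$ norm is already attained by testing against $U^q$ step-function atoms, together with the convolution bound \eqref{eq:conv}: each step-function atom is approximated in $U^q$ norm by $C_0^\infty$ functions via mollification, and choosing a leftward mollifier (supported in $(-\varepsilon,0)$) ensures $B(v,\phi_n) \to B(v, a)$ despite the possible right-discontinuities of $v \in V^p_{lc}$. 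The converse direction is Hahn--Banach extension from $C_0^\infty$ to $U^q$ and Proposition \ref{p:up-dual}, with the representing $v \in V^p$ replaced by its left-continuous version if needed. The main technical point throughout is the integration-by-parts identity for general $U^p$ and $V^p_{lc}$ functions, which lack bounded variation for $p > 1$; this is resolved by the atomic decomposition and \eqref{Buv}.
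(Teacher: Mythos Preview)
Your proposal is correct and follows essentially the same route as the paper: establish the integration-by-parts identities $B(v,\phi)=\int v\phi'\,dt$ and $B(\phi,u)=-\int u\phi'\,dt$ for $\phi\in C_0^\infty$ (first on step functions, then extended), and then invoke Theorem~\ref{t:vp-dual} for $DU^p$ and Proposition~\ref{p:up-dual} for $DV^p$, with Hahn--Banach for the converse directions. The paper treats the $DV^p$ case by simply declaring it ``similar,'' whereas you go further and address the genuine subtlety that $C_0^\infty$ is not norm-dense in $U^q$; your remedy via atoms plus one-sided mollification is the right idea.

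One phrasing to tighten: you write that ``each step-function atom is approximated in $U^q$ norm by $C_0^\infty$ functions via mollification,'' but mollification does \emph{not} give $\|\phi_\varepsilon - a\|_{U^q}\to 0$ (the $U^q$ norm of a smooth bump of height $1$ is of order $1$ regardless of its width). What you actually need --- and what your argument in fact uses --- is only the norm bound $\|\phi_\varepsilon\|_{U^q}\le\|a\|_{U^q}$ from \eqref{eq:conv} together with the pairing convergence $B(v,\phi_\varepsilon)\to B(v,a)$ coming from left-continuity of $v$. So the logic is sound; just replace ``approximated in $U^q$ norm'' by ``approximated with uniformly bounded $U^q$ norm.''
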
 

\begin{proof}
At the heart of the proof is a very simple observation, namely that 
for $\phi \in C_0^\infty$ we have
\[
B(v,\phi) = \int v \phi' dt, v \in V^p
\]
respectively 
\[
B(\phi,u) = - \int u \phi' dt
\]
This is easily verified directly for step functions, then by density for $U^p$ 
functions, and by $V^p \subset U^q$ embeddings for $V^p$ functions.

Because of this and the bound \eqref{Buv}, it is clear that for $f \in DU^p$ we have
\[
\left|\int f \phi dt \right| \leq \|f\|_{DU^p} \|\phi\|_{V^p}
\]
Conversely, let $f$ be a distribution for which the norm on the right
in \eqref{dvp} is finite.  Then by the Hahn-Banach theorem (or by
density) $f$ extends to a bounded linear functional on $V^p_C$ with
the same norm. Hence by Theorem~\ref{t:vp-dual} there exists $u \in U^q$,
with the same norm, so that
\[
B(u,\phi) = \int f\phi dt \qquad \text{ for all } \phi \in C_0^\infty
\]
Thus 
\[
 - \int u \phi' dt =   \int f\phi dt
\]
i.e. $f = u'$ in the sense of distributions. 

The argument is similar for $DV^p$, using instead the duality in
Proposition~\ref{p:up-dual}.
\end{proof}

Next we consider products of $U^p$ and $V^p$ functions with $DU^p$,
and $DV^p$ functions. Definition \ref{stieltjes} can be understood as
a version of such product. By an abuse of notation we write the
bilinear estimates as
\begin{equation}  \Vert vu' \Vert_{DU^p} \le c \Vert v \Vert_{V^q} \Vert u' \Vert_{DU^p} \end{equation}
and 
\begin{equation}  \Vert u v'  \Vert_{DV^p} \le c \Vert u \Vert_{U^q} \Vert v' \Vert_{DV^p}. \end{equation}

This is an abuse of notation since the product depends on whether the
derivative is in $DU^p$ or in $DV^p$.

There is an interpolation inequality.

\begin{theorem}\label{interpol}  Let $1<p<q< \infty$. There exists $C>0$ so that for  $v \in V^p_{rc}$ and $M>1$ there
exist $u \in U^p$ and $w \in U^q$ so that $v=u+w$ and 
\[ \frac1M \Vert u \Vert_{U^p} + e^{M} \Vert w \Vert_{U^q} \le c \Vert v \Vert_{V^p}. \]
In particular $V^p_{rc} \subset U^q$.
\end{theorem}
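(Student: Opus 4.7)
The plan is to build step-function approximations $v_k$ of $v$ via a stopping-time construction and telescope the decomposition $v = \sum_{k\ge 0}(v_{k+1}-v_k)$ at a cutoff level $K$ chosen as a function of $M$. Normalize $\|v\|_{V^p}=1$, fix $r>1$, and set $v_0\equiv 0$. For $k\ge 1$, define $\tau_k$ recursively as a monotone refinement of $\tau_{k-1}$: within each interval of $\tau_{k-1}$ run the stopping rule $t_{j+1}=\inf\{t>t_j:|v(t)-v(t_j)|\ge r^{-k/p}\}$, truncated at $b$. Right-continuity of $v$ ensures the infimum is either attained at the threshold or equals $b$, so the construction terminates. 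Let $v_k$ be the right-continuous step function equal to $v(t_j^k)$ on $[t_j^k,t_{j+1}^k)$. Two elementary consequences: (i) $\|v-v_k\|_\infty \le r^{-k/p}$, so $v_k\to v$ uniformly; (ii) $\tau_k$ has $N_k\le Cr^k$ intervals, since each consecutive pair contributes at least $r^{-k}$ to $\|v\|_{V^p}^p\le 1$.

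The key estimate concerns the increments $\delta_k:=v_{k+1}-v_k$, which are step functions with at most $Cr^{k+1}$ pieces of magnitude at most $\|v-v_k\|_\infty+\|v-v_{k+1}\|_\infty\le 2r^{-k/p}$. Summing $p$-th powers of the piecewise values gives $\sum|\delta_k|^p\le Cr^{k+1}\cdot(2r^{-k/p})^p\lesssim 1$, whence $\|\delta_k\|_{U^p}\lesssim 1$ uniformly in $k$. Summing $q$-th powers, with $q>p$, yields $\sum|\delta_k|^q\le Cr^{k+1}\cdot(2r^{-k/p})^q\lesssim r^{-k(q/p-1)}$, giving $\|\delta_k\|_{U^q}\lesssim r^{-k(1/p-1/q)}$ --- a genuine geometric decay.

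Given $M>1$, set $K=\lceil M/((1/p-1/q)\ln r)\rceil$ so that $K\sim M$ and $r^{-K(1/p-1/q)}\lesssim e^{-M}$. Split $v=u+w$ by
\[
u \,=\, v_K \,=\, \sum_{k=0}^{K-1}\delta_k, \qquad w \,=\, v-v_K \,=\, \sum_{k\ge K}\delta_k,
\]
where the second series converges absolutely in $U^q$. The triangle inequality yields
\[
\|u\|_{U^p}\le \sum_{k=0}^{K-1}\|\delta_k\|_{U^p}\lesssim K\lesssim M, \qquad \|w\|_{U^q}\le \sum_{k\ge K}\|\delta_k\|_{U^q}\lesssim e^{-M},
\]
from which $M^{-1}\|u\|_{U^p}+e^M\|w\|_{U^q}\lesssim 1$ follows. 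The inclusion $V^p_{rc}\subset U^q$ is then immediate: take $M=1$ and use $U^p\subset U^q$.

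The main technical obstacle is reconciling the decomposition with the boundary conventions $u(a)=0$ for $U^p$ and $v(b)=0$ for $V^p$. Since $v\in V^p_{rc}$ may have $v(a+)\ne 0$, the initial choice $v_0\equiv 0$ produces an artificial jump of size $|v(a+)|\le 1$ at the left endpoint in $\delta_0=v_1$; this is harmless because it is a single step absorbed into the uniform bound $\|\delta_k\|_{U^p}\lesssim 1$. A secondary point is the fine counting argument for $N_{k+1}$ given $\tau_{k+1}\supset\tau_k$: combining the variation bound on each subinterval gives $N_{k+1}\le N_k+r^{k+1}\lesssim r^{k+1}$. Finally, the hypothesis $q>p$ enters essentially through the decay exponent $1/p-1/q>0$; without this gap the tail series defining $w$ would diverge, and no interpolation of this form would be possible.
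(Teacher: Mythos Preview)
Your proof is correct and follows essentially the same approach as the paper: a greedy stopping-time decomposition producing step-function increments with uniform $U^p$ bounds and geometrically decaying $U^q$ bounds, split at a level $K\sim M$. One small point: by the paper's convention, $V^p_{rc}$ already requires the limit at the left endpoint to vanish, so your ``main technical obstacle'' about $v(a+)\ne 0$ does not actually arise.
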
 

\begin{proof} We may assume that $\Vert v \Vert_{V^p} \le 1$. 
Let 
\[ \omega_p(f,t) = \sup_{t_1< t_2 < t_n \le t} \sum |v(t_{j+1})-v(t_j) |. \]
Then $\omega$ is monotonically nondecreasing and $\omega(t) \to  0 $ as $t$ 
tends to the left limit of the interval. 
We define 
\[  t_{jk} = \inf \{ t : \omega(t)> j2^{-k}   \]
and 
\[ u_k (t) = \left\{  \begin{array}{ll} 0 & \text{ if } t_{k,2j} \le t < t_{k,2j+1} \\
u(t_{k,2j+1}) - u(t_{k,2j})&  \text{ if } t_{k,2j+1} \le t < t_{k,2j+2}
\end{array} \right. \] 
Then
\[ \Vert u_k \Vert_{U^r} \le 2^{k (\frac1r-\frac1p)}  \] 
and 
\[ \Vert \sum_{k=1}^N u_k\Vert_{U^p} \le N \] 
and
\[ \Vert \sum_{k=N+1}^\infty u_k \Vert_{U^p} \le   \frac{2^{(N+1)(\frac1q-\frac1p})}{1-2^{(\frac1q-\frac1p})}.\] 
We choose 
\[ N = \frac{M}{\ln 2(\frac1q-\frac1p)} \] 
and $C$ large. 
\end{proof} 

\begin{cor} Suppose that $p <q$. Then $V^p_{rc} \subset U^q$, 
\end{cor}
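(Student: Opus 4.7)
The plan is that this is essentially an immediate consequence of Theorem \ref{interpol}, so the work has already been done. Concretely, given $v \in V^p_{rc}$ I would apply Theorem \ref{interpol} with any fixed value of $M$, say $M = 1$, to obtain a decomposition
\[
v = u + w, \qquad u \in U^p, \quad w \in U^q,
\]
with the bound $\|u\|_{U^p} + e\,\|w\|_{U^q} \le c\|v\|_{V^p}$.

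Next I would invoke the embedding $U^p \subset U^q$ (with constant $1$), which was recorded right after the definition of these spaces at the beginning of Appendix \ref{a:uv}. This gives $\|u\|_{U^q} \le \|u\|_{U^p}$, so $u \in U^q$, and hence by the triangle inequality
\[
\|v\|_{U^q} \le \|u\|_{U^q} + \|w\|_{U^q} \le \|u\|_{U^p} + \|w\|_{U^q} \le (1 + e^{-1})\, c\, \|v\|_{V^p},
\]
which establishes both $v \in U^q$ and the quantitative embedding $V^p_{rc} \hookrightarrow U^q$.

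An alternative, slightly more self-contained route would be to extract from the proof of Theorem \ref{interpol} the explicit decomposition $v = \sum_k u_k$ with $\|u_k\|_{U^r} \le 2^{k(\frac1r - \frac1p)}$ valid for all $r \ge p$. Specializing to $r = q > p$ makes the exponent strictly negative, so the series $\sum_k \|u_k\|_{U^q}$ converges as a geometric series, yielding $v \in U^q$ directly by absolute convergence in the Banach space $U^q$. There is no real obstacle here; the only thing one has to be careful about is making sure to cite (or briefly recall) the elementary embedding $U^p \subset U^q$, which is the sole additional ingredient beyond Theorem \ref{interpol}.
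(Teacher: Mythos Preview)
Your proposal is correct and matches the paper's proof exactly: the paper's entire argument is ``We apply the Theorem \ref{interpol} with $M=1$,'' which is precisely your first route. Your additional explicit norm computation and the alternative geometric-series argument are fine elaborations, but the core idea is identical.
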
 
\begin{proof} We apply the Theorem \ref{interpol} with $M=1$. \end{proof}

We want to relate the spaces to standard Besov spaces. This relies on the following lemma. 

\begin{lemma}\label{dist}  Let $h>0$. Then 
\[ \Vert v(.+h) -v \Vert_{L^p(\R)} \le c h^{\frac1p} \Vert v \Vert_{V^p}. \] 
\end{lemma}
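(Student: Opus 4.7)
The plan is to reduce the $L^p$ norm of the difference $v(\cdot+h)-v(\cdot)$ to a family of $V^p$-type sums over uniform partitions of $\R$ with mesh $h$, and then integrate over the offset. The key observation is that for each fixed offset $t \in [0,h)$, the points $\{kh+t : k \in \Z\}$ form a (countable) monotone partition of $\R$, and the increments of $v$ across this partition are controlled by the $V^p$ norm.

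First I would slice the real line by writing $x = kh+t$ with $k \in \Z$ and $t \in [0,h)$, which gives the change of variables
\[
\int_\R |v(x+h)-v(x)|^p\, dx = \int_0^h \sum_{k \in \Z} |v((k+1)h+t)-v(kh+t)|^p\, dt.
\]
For each fixed $t$, the inner sum is an increment sum of $v$ along the partition $\{kh+t\}_{k\in\Z}$. By the definition of $\|v\|_{V^p}$ (taking the supremum over all finite truncations of this partition and invoking monotone convergence) we have
\[
\sum_{k \in \Z} |v((k+1)h+t)-v(kh+t)|^p \le \|v\|_{V^p}^p.
\]
Integrating this uniform bound over $t \in [0,h)$ yields
\[
\int_\R |v(x+h)-v(x)|^p\, dx \le h\,\|v\|_{V^p}^p,
\]
and taking the $p$-th root gives the claimed inequality, in fact with constant $c=1$.

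The only minor care needed is the boundary convention at $b=\infty$ (we set $v(\infty)=0$), which is harmless here since the countable partition $\{kh+t\}$ has no finite supremum and the $V^p$ supremum is taken over finite sub-partitions. Thus the argument is essentially a one-line Fubini computation once one recognizes the slicing, and I do not anticipate any real obstacle.
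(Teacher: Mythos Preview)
Your proof is correct and in fact cleaner than the paper's. Your Fubini slicing $x=kh+t$ turns the $L^p$ integral directly into an integral over $t\in[0,h)$ of the $V^p$-type increment sum along the uniform partition $\{kh+t\}_{k\in\Z}$, and each such sum is bounded by $\|v\|_{V^p}^p$ by truncating to a finite partition (appending the endpoint $b=\infty$ with $v(\infty)=0$ if one wants to match the paper's convention exactly) and taking the supremum. This yields the estimate with the sharp constant $c=1$.

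The paper takes a different route: after scaling to $h=1$, it bounds $|v(t+1)-v(t)|$ for $t\in[j,j+1)$ by the oscillation of $v$ on $[j,j+2]$, picks near-extremizing pairs $(t_{j,1},t_{j,2})$ in each such interval, and then controls $\sum_j |v(t_{j,2})-v(t_{j,1})|^p$ by the $V^p$ norm via an even/odd decomposition of the overlapping intervals $[j,j+2]$, arriving at a constant $4^{1/p}$. Your argument avoids the oscillation detour and the covering step entirely; the paper's approach is perhaps more robust if one only had a local oscillation-type bound rather than the exact partition definition, but here your direct method is both shorter and gives the better constant.
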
 

\begin{proof} By scaling we may assume that $h=1$. For $ t \in [j,j+1)$ 
\[ |v(t+1)-v(t)| \le \sup_{j \le t_1,t_2  \le j+2} v(t_2)-v(t_1). \]
In particular there exists  sequences $(t_{j,1})$ and $(t_{j,2})$  in $[j,j+2]$ 
so that 
\[  \sup_{j \le t_1,t_2  \le j+2} (v(t_2)-v(t_1)) \le (1+\varepsilon) (v(t_{j,2}) -v(t_{j,1})) \] 
Taking subsequences one see that 
\[ \left( \sum_j |v(t_{j,2})-v(t_{j,1})|^p \right)^{\frac1p} \le 4^{\frac1p} \Vert v \Vert_{V^p}.   \] 
We integrate over the unit sized interval and sum with respect to $j$ to obtain the estimate. 
\end{proof} 

\begin{cor}\label{appBemb}  The following embedding estimates hold. 
\begin{equation} \Vert v \Vert_{B^{\frac1p}_{p,\infty}} \le c \Vert v \Vert_{V^p} \end{equation} 
\begin{equation} \Vert u \Vert_{U^p} \le c \Vert u \Vert_{B^{\frac1p}_{p,1}} \end{equation} 
\end{cor}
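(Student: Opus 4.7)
\textbf{Proof plan for Corollary \ref{appBemb}.}

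The plan is to establish the two embeddings using the difference characterization of $B^{1/p}_{p,\infty}$ and the Littlewood--Paley characterization of $B^{1/p}_{p,1}$, together with the structural results proved earlier in this appendix. The first embedding is a soft consequence of Lemma \ref{dist}; the main content lies in the second, where one must build a $U^p$ atomic decomposition starting from a Besov decomposition.

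For the estimate $\|v\|_{B^{1/p}_{p,\infty}} \le c\|v\|_{V^p}$, I would use the equivalent norm
\[
\|v\|_{B^{1/p}_{p,\infty}} \approx \|v\|_{L^p} + \sup_{h>0} h^{-1/p}\|v(\cdot+h)-v\|_{L^p}.
\]
The supremum term is controlled directly by Lemma \ref{dist}. For the $L^p$ component, one observes that functions in $V^p$ are bounded (by embedding into ruled functions), so there is nothing to do if one takes the homogeneous Besov seminorm; for the inhomogeneous version, the estimate is understood modulo constants, or one works with $V^p$ realized on a bounded interval. This step is straightforward.

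For the estimate $\|u\|_{U^p} \le c\|u\|_{B^{1/p}_{p,1}}$, I would pass through a Littlewood--Paley decomposition $u=\sum_k u_k$, where $u_k$ has Fourier support in $\{|\xi|\sim 2^k\}$, and $\|u\|_{B^{1/p}_{p,1}}\approx \sum_k 2^{k/p}\|u_k\|_{L^p}$. The key step is to bound each piece by
\[
\|u_k\|_{U^p} \lesssim 2^{k/p}\|u_k\|_{L^p},
\]
after which summing in $k$ gives the result, using Lemma \ref{updecomp} or a direct atomic combination. To prove the piece estimate, I would approximate $u_k$ by the right-continuous step function $\tilde u_k$ constant on dyadic intervals $I_j^k=[j2^{-k},(j+1)2^{-k})$, with value $\phi_j$ equal to the mean of $u_k$ over $I_j^k$. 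Since $u_k$ has frequency $\sim 2^k$, Bernstein's inequality yields $\|u_k-\tilde u_k\|_{L^\infty}\lesssim 2^{-k}\|u_k'\|_{L^\infty}\lesssim \|u_k\|_{L^\infty}$, and a more careful telescoping argument controls the difference in $U^p$ by a geometric series at finer scales. For the step function itself, $\sum_j|\phi_j|^p\lesssim 2^k\|u_k\|_{L^p}^p$, so $\tilde u_k$ is $2^{k/p}\|u_k\|_{L^p}$ times a $U^p$ atom.

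The main obstacle is the second part, specifically the fact that Littlewood--Paley pieces $u_k$ are not themselves step functions and are not compactly supported, so the ``approximation by a dyadic step function'' step needs some care: one must show the tail contribution converges in $U^p$, not merely pointwise. The cleanest way is to write $u_k$ as a telescoping sum of dyadic step function approximations at successively finer scales $2^{-k-l}$, $l\ge 0$, and estimate each telescope layer as a $U^p$ atom of size $2^{-\delta l}2^{k/p}\|u_k\|_{L^p}$ using Bernstein on the high-frequency part; summability in $l$ then yields the piece bound. With this in hand, summability in $k$ is an $\ell^1$ assumption built into the $B^{1/p}_{p,1}$ norm, so no further loss occurs.
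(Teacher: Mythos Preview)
Your argument for the first inequality is exactly the paper's: Lemma \ref{dist} gives the modulus-of-continuity bound that characterizes $\dot B^{1/p}_{p,\infty}$, and your remark about the $L^p$ piece correctly identifies that the embedding is really into the homogeneous space.

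For the second inequality your approach is correct but takes a genuinely different route. The paper's proof is one line: ``by duality.'' Concretely, having established $V^q \hookrightarrow \dot B^{1/q}_{q,\infty}$ and the isometry $U^p = (V^q_C)^*$ from Theorem \ref{t:vp-dual} (with $\tfrac1p+\tfrac1q=1$), one writes for smooth compactly supported $v$
\[
|B(v,u)| = \Big|\int u\, dv\Big| \lesssim \|u\|_{\dot B^{1/p}_{p,1}} \|v'\|_{\dot B^{-1/p}_{q,\infty}} \lesssim \|u\|_{\dot B^{1/p}_{p,1}} \|v\|_{V^q},
\]
and takes the supremum over $\|v\|_{V^q}\le 1$. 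No atomic construction is needed.

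Your direct construction also works, and it is instructive because it exhibits the atoms. The piece estimate $\|u_k\|_{U^p}\lesssim 2^{k/p}\|u_k\|_{L^p}$ is correct: the dyadic-mean step function $\tilde u_k^{(0)}$ has $\ell^p$-values controlled by Jensen, and each telescope layer $\tilde u_k^{(l+1)}-\tilde u_k^{(l)}$ has step values bounded by $2^{-k-l}\sup_{I}|u_k'|$ on intervals $I$ of length $2^{-k-l}$; summing these in $\ell^p$ and applying Bernstein ($\|u_k'\|_{L^p}\lesssim 2^k\|u_k\|_{L^p}$) together with the Plancherel--Polya bound $\sum_J \sup_J |f|^p \lesssim 2^k\|f\|_{L^p}^p$ for $f$ band-limited to $|\xi|\lesssim 2^k$ and $|J|=2^{-k}$ gives the decay $2^{-l(1-1/p)}$ in $l$, summable since $p>1$. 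The final sum in $k$ is then just the triangle inequality in $U^p$ (Lemma \ref{updecomp} is not needed here). So your argument is sound, but considerably longer; the paper trades the explicit construction for the abstract duality already built in this appendix.
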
 

\begin{proof} The first inequality is an immediate consequence of 
\eqref{dist}. The second one follows by duality. 
\end{proof} 

We conclude with a density statement. 

\begin{theorem} Step functions are dense in $U^p$ but not in $V^p$. \end{theorem}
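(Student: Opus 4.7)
The density of step functions in $U^p$ is essentially built into the definition. Given $u \in U^p$, I would take an atomic decomposition $u = \sum_{j \ge 1} \lambda_j a_j$ with $\sum_j |\lambda_j| \le (1+\varepsilon)\|u\|_{U^p}$ and atoms $a_j$, each of which is a step function. The partial sums $S_N = \sum_{j \le N} \lambda_j a_j$ are then finite linear combinations of step functions, hence step functions, and convergence $\|u - S_N\|_{U^p} \le \sum_{j>N}|\lambda_j| \to 0$ follows from the triangle inequality and $\|a_j\|_{U^p} \le 1$.

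For the non-density in $V^p$, my plan is to exhibit a single function which lies outside the $V^p$-closure of the step functions; any nonzero continuous $v \in V^p$ will do, for instance any nonzero $v \in C_c^\infty(a,b)$. The guiding geometric observation is that, since $v$ is continuous and any step function $v_n$ has jumps, the difference $v - v_n$ inherits the full jump structure of $v_n$ (with opposite sign) and at precisely the same points. One should therefore expect an inequality of the form $\|v - v_n\|_{V^p} \ge \|v_n\|_{V^p}$, which combined with $\|v_n\|_{V^p} \to \|v\|_{V^p} > 0$ via the triangle inequality rules out any approximating sequence in $V^p$.

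To make this precise I would let $v_n$ be right-continuous with jump points $t_1 < \cdots < t_N$ and constant values $c_j$ on the intervals between consecutive jumps (compactly supported in $(a,b)$ without loss of generality), and introduce the refined test partition $\sigma^\varepsilon$ obtained by inserting a point $t_j - \varepsilon$ immediately before each jump point. The $V^p$ test sum
\[\sum_k |(v-v_n)(s_{k+1})-(v-v_n)(s_k)|^p\]
splits in the limit $\varepsilon \to 0^+$ into two types of terms: those of the form $|v(t_{j+1})-v(t_j)|^p \ge 0$ from intervals on which $v_n$ is constant (where the continuity of $v$ gives control), and those of the form $|c_{j-1}-c_j|^p$ from the $(t_j-\varepsilon, t_j)$ pairs (where the continuity of $v$ lets the jump of $v_n$ survive intact in $v-v_n$). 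Passing to the supremum over partitions yields $\|v - v_n\|_{V^p}^p \ge \|v_n\|_{V^p}^p$, and the contradiction is immediate.

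The main technical point I expect is bookkeeping the $\varepsilon \to 0^+$ limit of the test sum, in particular keeping track of the boundary jumps of $v_n$ at the endpoints of its support so that the full $V^p$ norm of $v_n$ is captured; once that is verified, the remainder is a one-line triangle-inequality maneuver.
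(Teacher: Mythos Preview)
Your argument for density in $U^p$ is correct and matches the paper.

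Your argument for non-density in $V^p$, however, has a fundamental error at the level of the choice of counterexample. You assert that \emph{any} nonzero continuous $v\in V^p$ --- in particular any $v\in C_c^\infty(a,b)$ --- lies outside the $V^p$-closure of step functions. This is false. Indeed, $C_c^\infty(a,b)\subset U^p$ (for instance via the Besov embedding $B^{1/p}_{p,1}\hookrightarrow U^p$), and by the first half of the theorem step functions are dense in $U^p$; since $\|\cdot\|_{V^p}\le 2^{1/p}\|\cdot\|_{U^p}$, every $C_c^\infty$ function \emph{is} a $V^p$-limit of step functions. Concretely, for $v\in C^1_c$ the piecewise constant interpolants $v_n(x)=v(k/n)$ on $[k/n,(k+1)/n)$ satisfy $\|v-v_n\|_{V^p}^p\lesssim n^{1-p}\|v'\|_{L^p}^p\to 0$.

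The specific inequality $\|v-v_n\|_{V^p}\ge\|v_n\|_{V^p}$ you aim for is also false. Your partition $\sigma^\varepsilon$ only yields $\|v-v_n\|_{V^p}^p\ge\sum_j|c_j-c_{j-1}|^p$, the $\ell^p$ sum of \emph{consecutive} jumps. But for $p>1$ this sum is in general strictly smaller than $\|v_n\|_{V^p}^p$: for a monotone staircase $c=(0,1,2,\dots,n,\dots,1,0)$ with $2n$ unit jumps one has $\sum_j|c_j-c_{j-1}|^p=2n$, while the two-point partition through the peak gives $\|v_n\|_{V^p}^p\ge 2n^p$. Coupling such a staircase with the continuous $v(x)$ that linearly undoes each step produces $\|v-v_n\|_{V^p}^p=4n\ll 2n^p=\|v_n\|_{V^p}^p$.

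The paper proceeds differently: it exhibits an explicit function $v=\eta\sum_{j\ge 0}2^{-j/p}\sin(2^jx)$ with $\eta\in C_c^\infty$, which lies in $C^{1/p}\subset V^p$ but, being at the critical regularity, is not in (the closure of) $U^p$ and hence not approximable by step functions. The point is that one must choose $v\in V^p\setminus U^p$, not merely $v$ continuous.
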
 

\begin{proof} The first statement is an immediate consequence of the atomic definition. Let $\eta \in C^{\infty}_0(a,b)$. We define 
\[ v  = \eta \sum_{j=0}^\infty 2^{-\frac{j}p}  \sin(2^j x). \]
It is not hard to see that $v \in C^{\frac1p}$ and hence $v \in V^p$, but it is not close to any step function.  
\end{proof}

We conclude the section with the proof of Lemma~\ref{characterization}.

\begin{proof}[Proof of Lemma~\ref{characterization} ]
First 
\[\Vert u \Vert_{l^2_1 DU^2} \le \Vert u \Vert_{l^2L^2} = \Vert u \Vert_{L^2} \]
hence $L^2 \subset l^2_1 DU^2$. 
Now let $u \in DU^2$ and define 
\[   
a = \sum_{j \in \Z} a_j \delta_j (t), \qquad a_j = \int_{j}^{j+1} u dt
\] 
where $\delta_j$ is the Dirac measure at the point $j$. This is well defined 
since $\chi_{[j,j+1)} \in V^2$. Moreover, since $U^2\subset V^2$,  we have the  
bound
\[ 
\sum_j  a_j^2  \le \Vert u \Vert_{DU^2}^2. 
\] 
In particular $a \in l^2_1 DU^2$. On the other hand, 
we can write 
\[
u - a = \sum_j \chi_{[j,j+1)} - a_j \delta_j
\]
where each term is supported in $[j,j+1]$ and has integral zero.
Thus by Lemma~\ref{updecomp} we  can bound   
\[ \Vert u-a \Vert_{l^2_1 DU^2} \le  \Vert u \Vert_{DU^2}. \] 
Hence  $DU^2 \subset l^2_1 DU^2$. 

Now suppose that $ u \in l^2 DU^2$ and define 
\[   a(t) =   \int_{j}^{j+1} u ds \] 
if $t \in [j+j+1)$. Then $a \in L^2$ and it remains to verify that 
\[  \Vert u-a \Vert_{DU^2} \le c \Vert u \Vert_{l^2(DU^2)} \] 
But this is easily verified for atoms.  
\end{proof}

\newsection{The NLS hierarchy} 

\label{appNLS} 

In this section we collect material on the NLS hierarchy. It well
known in the inverse scattering community and it is logically
independent of the rest of the paper, but we include it to give a more
complete picture. We refer to \cite{MR1462745} for more detail.
                             
The central element of the hierarchies is the Lax operator or, equivalently, the spatial part of the zero curvature condition, which for the focusing NLS is
\begin{equation}   \Psi_x = \left(\begin{matrix}  -iz & u \\ -\bar u & i z \end{matrix} \right)\label{spatial}  \end{equation}  
Here $z$ is a complex number. The matrix is affine in $z$, 
\[  \left(\begin{matrix}  -iz & u \\ -\bar u & i z \end{matrix} \right)= 
z \left(\begin{matrix}  -i & 0 \\ 0  & i  \end{matrix} \right) +  \left(\begin{matrix}  0 & u \\ -\bar u & 0 \end{matrix} \right) \] 
and with matrix coefficients in $su(2)$. We search for equations 
\begin{equation}\label{temporal} \partial_t \psi = B(z,u)
  \psi \end{equation}
so that the compatibility condition between \eqref{spatial} and \eqref{temporal} is an equation for $u$. More precisely we will choose $B$ so that 
\[ \left[\partial_t - B(z,u) ,    \partial_x - \left(\begin{matrix}  -iz & u \\ -\bar u & i z \end{matrix} \right)\right] 
= \left( \begin{matrix} 0 & \partial_t u - F \\ -\overline{ \partial_t u -F}  & 0 \end{matrix} \right) 
\] 
where $F$ is a polynomial in $u$, $\bar u$ and their derivatives. It is important that $F$ is  independent of $z$.

We make the Ansatz
\[  B_k(z,u) = \sum_{j=0}^k  z^{k-j} Q_j(u) \] 
with $Q_j\in su(2)$ where $k \ge 0$. Equation \eqref{spatial} and \eqref{temporal} are compatible if 
\[  
\partial_t  \left(\begin{matrix}  -iz & u \\ -\bar u & i z \end{matrix} \right) - \partial_x  \sum_{j=0}^k  z^{k-j} Q_j(u) + \left[ \left(\begin{matrix}  -iz & u \\ -\bar u & i z \end{matrix} \right),  \sum_{j=0}^k  z^{k-j} Q_j(u)  \right]=0  \] 
It reduces to an equation for $u$ if the $Q_j$ satisfy 

\begin{equation} 
\begin{split} Q_0 = &  \left( \begin{matrix} -i & 0 \\ 0 & i \end{matrix} \right) \\  
  \left[ \left( \begin{matrix} -i & 0 \\ 0 & i \end{matrix} \right), Q_{k+1} \right] = & \partial_x Q_k + \left[ Q_k, \left( \begin{matrix} 0 & u\\-\bar u & 0 \end{matrix} \right) \right] \\ 
  \partial_x Q_k + &\, \left[ Q_k, \left( \begin{matrix} 0 & u\\-\bar u & 0 \end{matrix} \right) \right] \qquad \text{ is off diagonal. } 
\end{split} 
\end{equation} 
 We denote  $ Q_k =  \left( \begin{matrix} -i r_k & p_k \\ -\bar p_k & ir_k \end{matrix} \right)$ and obtain the recursive formulas 
\begin{equation} p_{k+1} = \frac{i}2   p'_k  + r_k u \end{equation}  
\begin{equation} r_k' = i ( p_k \bar u - \bar p_k u).  \end{equation}
We hope to obtain polynomial expressions for $r_k$ in $u$ and its
derivatives. This is indeed true, but not so easy to prove. On the
other hand if we recursively solve the equations and if we obtain
polynomials expressions then the zero curvature conditions yields
equations for $u$. It is not difficult to calculate some of the $Q_k$.
By assumption  $r_0=1$ and  $p_0=0$. Next   $p_1= u $ and  $r_1=0$. 
It is not much harder to obtain   $p_2= \frac{i}2 u'$, $r_2= -\frac{1}2 |u|^2 $ and   $p_3=-\frac{1}4 (u'' + 2|u|^2 u)$,     $r_3=\frac{i}4( u'\bar u - u \bar u') $. 
  The operators are 
\begin{eqnarray*}   \psi_t =& \left( \begin{matrix} -i & 0 \\ 0 & i \end{matrix} \right)\psi , \qquad & \text{ for } k=0  \\
\psi_t =& \left( \begin{matrix} -iz & u \\ -\bar u  & iz \end{matrix} \right)\psi, \qquad & \text{ for } k=1 \\
\psi_t =& 2\left( \begin{matrix} -iz^2 +\frac{i}2 |u|^2   & zu+\frac{i}2 u'  \\ -z\bar u+\frac{i}2 \bar u'   & iz^2-\frac{i}2 |u|^2  \end{matrix} \right)\psi, \qquad & \text{ for } k=2 \\ 
\psi_t =& 4\left( \begin{matrix} -iz^3 +\frac{iz}2 |u|^2+\frac{1}4 (u'\bar u -u \bar u' )    & z^2u+\frac{iz}2 u' -\frac{1}4 (u'' +2|u|^2 u)   \\[2mm] -z^2\bar u+\frac{iz}2 \bar u'+\frac{1}4 (\bar u'' +2|u|^2 \bar u)   & iz^3-i z|u|^2  -\frac{1}4 (u'\bar u -u \bar u' )  \end{matrix} \right)\psi , \qquad & \text{ for } k=3
\end{eqnarray*} 
where we have chosen factors so that we arrive at normalizations we had chosen before.

We obtain the equations for the defocusing hierarchy by replacing $\bar u$ by $-\bar u$ and the KdV hierarchy by replacing $\bar u $ by $1$. 
The status of wellposedness for the full hierarchy seems not well
studied: Gr\"unrock \cite{MR2653659} considered the KdV and mKdV hierarchy (which is the hierarchy of real equations given as half of the NLS hierarchy). Most likely his results extend with marginal changes to the full NLS hierarchy.
It is true though not obvious that the compability conditions are Hamiltonian equations with the symplectic form  
\[ (u,v) \to \real \int i \bar u v dx \] 
and Hamiltonians given by 
\[ -\frac1{k+1}  \int_{-\infty}^\infty \trace \left\{Q_{k+2}\left( \begin{matrix} -i & 0 \\ 0 & i \end{matrix} \right)\right\} dx       ,\] 
see \cite{MR1462745}. These flows
commute, which  can be seen by the action on the reflection coefficient,
or by the fact that $T(z)$ and $T(z')$ are in involution (see Faddeev and Takhtajan
\cite{MR905674}). This last formulation has the advantage that it is well
defined even without decay assumptions on the initial data.

\bigskip{ \bf Acknowledgements:} This research was carried out in part
while the authors were visiting the Hausdorff Research Institute for
Mathematics in Bonn in summer 2014, and in part while the authors were
visiting MSRI in Berkeley in Fall 2015.  The first author was supported by the DFG through SFB 1060. The second author was
supported by the NSF grant DMS-1266182 and by a Simons Investigator
grant from the Simons Foundation.

\bibliography{nls} 
\bibliographystyle{abbrv} 
\end{document}